\theoremstyle{plain}
\newtheorem{algorithm}{Algorithm}
\newtheorem{theorem}{Theorem}[section]
\newtheorem{proposition}{Proposition}[section]
\newtheorem{lemma}{Lemma}[section]
\theoremstyle{remark}
\newtheorem{remark}{Remark}
\newtheorem{corollary}{Corollary}
\newtheorem*{example}{Example}
\def\B{\mathcal{B}}
\def\M{\mathcal{M}}
\def\N{\mathbb{N}}
\def\R{\mathbb{R}}
\def\P{\mathbb{P}}
\def\E{\mathbb{E}}
\def\MM{\mathscr{M}}
\def\FF{\mathscr{F}}
\def\LL{\mathcal{L}}
\renewcommand{\Phi}{\varPhi}
\renewcommand{\epsilon}{\varepsilon}
\newcommand{\var}{\operatorname{var}}
\renewcommand{\limsup}{\varlimsup}
\renewcommand{\liminf}{\varliminf}
\renewcommand{\d}{\text{\rm\,d}}
\newcommand{\eqdist}{\stackrel{\text{(d)}}{=}}
\def\MM{\msc{M}}
\def\CC{\mcl{C}}
\def\TT{\msc{T}}
\def\tt{\mathbf{t}}
\def\ttt{\mathrm{t}}
\def\XX{\msc{X}}
\def\YY{\msc{Y}}
\newcommand{\Ind}[1]{\mathbf{1}_{\left\{#1\right\}}}
\newcommand{\x}[1]{[x^{#1}]}
\renewcommand{\d}{{\mathrm{d}}}
\renewcommand{\L}{\mcl{L}}
\newcommand{\mcl}{\mathcal}
\newcommand{\msc}{\mathscr}
\newcommand{\Ll}{\left}
\newcommand{\Rr}{\right}
\newcommand{\bracket}[1]{\left\langle{#1}\right\rangle}
\begin{document}

\begin{frontmatter}
\title{A growth-fragmentation-isolation process on random recursive trees and contact tracing}
\runtitle{A branching process on random recursive trees}

\begin{aug}
\author[A]{\fnms{Vincent} \snm{Bansaye}\ead[label=e1]{vincent.bansaye@polytechnique.edu}},
\author[B]{\fnms{Chenlin} \snm{Gu}\ead[label=e2]{guchenlin@hotmail.com}}
\and
\author[C]{\fnms{Linglong} \snm{Yuan}\ead[label=e3]{yuanlinglongcn@gmail.com}}
\address[A]{CMAP, Ecole polytechnique, IPP, \printead{e1}}

\address[B]{Mathematics Department, NYU Shanghai \& NYU-ECNU Institute of
Mathematical Sciences,}
\address[B]{Yau Mathematical Sciences Center, Tsinghua University, \printead{e2}}

\address[C]{Department of Mathematical Sciences,
University of Liverpool, \printead{e3}} 
\end{aug}

\begin{abstract}
We consider a random process on  recursive trees, with three types of events. Vertices give birth at a constant rate (growth), each edge may be removed independently (fragmentation of the tree) and clusters (or trees) are  frozen with a rate proportional to their sizes (isolation of connected component). A phase transition occurs when the isolation  is able to stop the growth fragmentation  process and cause extinction. When the  process survives, the number of clusters increases exponentially and we prove that the normalized empirical measure of clusters a.s. converges to a limit law on  recursive trees. We  exploit the branching structure associated to the size of clusters, which is inherited from the splitting property of random recursive trees. This work is motivated by the control of epidemics and contact tracing where clusters correspond to trees of infected individuals that can be identified  and  isolated. We complement  this work  by providing  results on the Malthusian exponent to describe the effect of control policies on epidemics. 
\end{abstract}

\begin{keyword}[class=MSC]
\kwd{60J27, 60J85, 60J80}
\end{keyword}

\begin{keyword}
\kwd{branching process, many-to-two formula, Kesten--Stigum theorem, non-conservative semigroup, random recursive tree,  strong law of large numbers, epidemic, contact tracing}
\end{keyword}

\end{frontmatter}

\section{Introduction}\label{sec:Introduction}
The evolution of  random trees is motivated by various fields: algorithms, queuing systems, population modeling, etc.
The random deletion of edges of a tree has been studied in particular 
by \cite{meir1974cutting, baur2014cutting}. Initially, Meir and Moon \cite{meir1974cutting}
 were interested in the number of steps needed to isolate a distinguished vertex in a random recursive tree, when every deleted edge is chosen uniformly.
 Bertoin \cite{JB2} and Marzouk \cite{Marzouk} have then studied processes where sets  of  vertices  can be  burnt. More precisely,  a connected component of the graph is removed (i.e.\ isolated) at each step. This component is determined
 by a uniform  choice among the vertices. Such dynamics combine the fragmentation of the tree (when an edge is deleted) and the isolation of connected components of the tree (when a vertex provokes a fire).
 
 In this work, we are interested in the long-time behavior of similar dynamics when the random recursive tree  grows, following a binary branching process.
 Our original motivation is to study the effects of 
 control policies of an epidemic.
 The growth of the infected population is modeled by a Yule process in this work, i.e. a binary Markov branching process. 
 The discrete structure of the Yule process  is a random recursive tree. The connected new vertices are the new infected individuals. 
 The fragmentation occurs when one edge is removed, interpreted as the loss of infector-infectee information: tracing of this contact becomes impossible. Various reasons may explain the loss or absence of information on contacts, including memory and storage of information or the fact that the contact is not accessible. We restrict ourselves here to a simple model with a single parameter accounting for a  rate at which contacts (edges) get lost (removed) independently. 
Growth and fragmentation will generate connected components, called \textit{clusters}. Each  cluster is a set of connected infected individuals that can be isolated all together as soon as one individual in the cluster is detected. An isolated cluster is frozen, in the sense that no more event will
  happen to it. It is interpreted as that 
 all the individuals in the cluster are either under treatment or self-isolating.  This is the contact tracing strategy which has been for decades the central public health response to control infectious disease outbreaks. 
 
Numerous  research papers  highlight the importance of contact-tracing for controlling epidemics, using either simulations or real data, and we mention  \cite{keeling2020efficacy, fetzer2021measuring, du2022contact} for Covid-19. 
For  mathematical modeling, we refer to  \cite{Lambert, AGGM, Barlow, bertoin2022model} for related studies which are motivated by contact tracing and exploit a branching structure. Let us describe the main differences.  In  \cite{Lambert}, the author provides   explosion criterion (threshold for $R_0$, the reproduction number), for different levels of tracing. The work \cite{AGGM} is more general regarding the characteristics of epidemics but does not allow for backward tracing  (i.e.\ tracing and isolation of ancestors when a descendant is identified and isolated).  In \cite{Barlow},   a discrete-time model is studied and  a Malthusian behavior is exhibited, with different methods and results. Here we exploit random recursive trees  and the growth fragmentation structure. Our model allows to study the effect of tracing along time, in a Markovian dynamical way. It  allows to keep a simple description of the probabilistic structure at any time, to achieve  fine asymptotic analysis and to make emerge tractable   key quantities for epidemics (by theory or simulations). In particular, we will exhibit a Malthusian coefficient which drives the main features of the epidemics, more precisely the speed of explosion or extinction (depending on its sign). We will see how it changes with the three parameters of the model, which allows to see the respective effect of social distancing measures (increase of infection rate),  detection effort (increase of $\theta$) and tracing effort (decrease of $\gamma$). The role of $\beta$ happens to be quite surprising for us and somewhat reinforce the interest of ``isolation-tracing'' strategy compared to social distancing. Finally, in  \cite{bertoin2022model}  a  model related to ours is considered, where the contact information can be lost  when infection occurs
(and not later). In this model, fragmentation disappears and  more explicit
eigenelements and criterion for explosion are given. Combining both effects (instantaneous or continuous loss of contact) would be interesting.

   \begin{figure}
     \centering
     \includegraphics[scale =0.5]{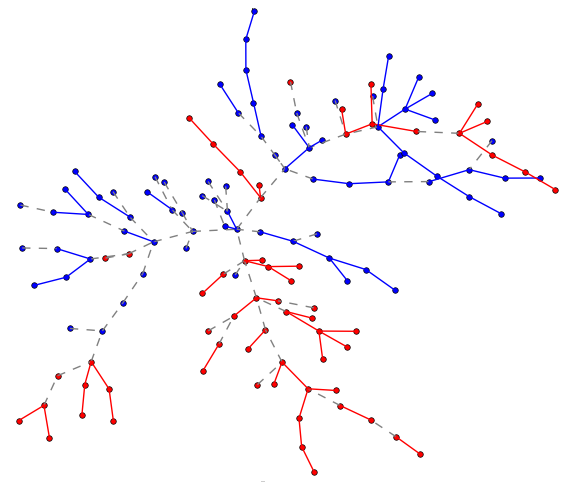}
     \caption{An illustration of the growth-fragmentation-isolation process with 62 active vertices (in red) and 77 inactive vertices (in blue). A cluster (or a connected component, a tree) is formed by solid lines, while the dashed lines split  clusters. }
     \label{fig:cover}
 \end{figure}
 
 Our growth-fragmentation-isolation model is thus a  branching process  to study  the effect of identification-tracing-isolation strategy in the context of simple epidemics, with loss of contact information along time. The second author also considered a similar model with numerical simulations in \cite{gu2020mathematical} to study the propagation of Covid-19.   The approximation of the outbreak by a branching process
 is classical in the early stages of an epidemic, when the whole population is large and the proportion of susceptible individuals is close to one, see for instance \cite{BallDonnelly}. We also obtain  in this work estimations on the speed of convergence of infected profiles, which allow to control in which time window  the Malthusian exponential growth indeed captures the dynamics of the epidemics.
 There are  more epidemiological and control features that could be incorporated to reflect epidemics like   Covid-19. In particular, no recovery happens in our setting, and we consider only large homogeneously mixed populations. 
 We expect  some extensions of our results on the long-time behavior of infected population which would  take into account these features,   even if the probabilistic structure of clusters becomes more complex. 
We give a more detailed discussion in Section~\ref{sec:generalise}. 
 
 Random recursive trees (RRT) have a nice splitting property that allows us to characterize a cluster by its size. 
More precisely, considering the collection  of active (non-isolated) clusters,  the size process turns out to be a branching process with  a countable set of types (i.e.\ type being the size). At fixed time, conditionally on the sizes of the clusters, the collection of clusters are independent RRTs. We  then study the ergodic properties of the first moment semigroup of this branching process and obtain  a phase transition depending on the sign of the maximal eigenvalue (Malthusian growth rate). We  describe the a.s.\ behavior of the process when the active clusters survive, proving a strong law of large numbers for the empirical measure of sizes 
and a Kesten--Stigum type result for the growth of the population. With the knowledge on the size process, we can deduce the a.s.\ behavior of the process of active clusters conditioned on survival.  We can also characterize the a.s.\ behavior of the process of isolated clusters when the active clusters survive, which is by itself non-Markovian. 

 The asymptotic analysis of the mean behavior, the weak convergence and the estimation of the speed of convergence uses now well  developed techniques for branching processes with possibly infinitely many types. We refer  to \cite{bertoin3, bertoinwatson, bansaye2019non, Marguet, TVB, watson} and references therein for  related works on  the asymptotic analysis of growth fragmentation processes. Roughly, it relies on the ergodic properties of the size of a typical cluster, and the fact that the common ancestor of two clusters uniformly chosen at large times is found at small times,  see for instance Theorem 2 in  \cite{Athreyarapid}, see also \cite{harris2020coalescent}.   More precisely, we exploit  the fact that large clusters fragment fast,  and with high probability give one small cluster and one large cluster at fragmentation. Together with isolation, it allows one to control the size of a typical cluster and  the eigenelements of the first moment semigroup. In particular, we prove that the harmonic function is bounded and large clusters have no major impact on the growth of epidemics. Indeed, large clusters are isolated before creating too many small clusters, since isolation occurs at the same scale as fragmentation.
Once active clusters are well described, we can treat the isolated clusters using an additive functional.

Besides, the fact that the number of types is infinite and the loss of Markov property for the isolated clusters raise some technical  difficulties  to get a.s.\ limits (strong convergence).  We refer to \cite{athreya1968some,AsmussenHering, EHK} for classical references  on strong law of large numbers of some classes of multi-type branching processes. We adapt here the argument of \cite{athreya1968some} for strong convergence.  In the context of growth fragmentation,  let us mention respectively  \cite{BW} and \cite{neutrons,watson}  for $L^1$ and strong  convergence.



\bigskip

Let us describe the model more formally. We introduce a  stochastic process on a dynamic tree $G_t = (V_t, E_t)$ with two functions $\Psi_t: V_t \to \{0,1\}, \, \eta_t: E_t \to \{0,1\}.$
\begin{itemize}
\item We identify the vertex set $V_t$ as the set of patients (individuals infected up to time $t$), and label them with the infection time $v \in [0,\infty)$. The function $\Psi_t$ tells us the state of a vertex at time $t$, where vertex $v$ is \textit{active} if $\Psi_t(v)=1$ and $v$ is \textit{inactive} if $\Psi_t(v) = 0$. Only active vertices can infect new ones.  
\item We identify the edge set $E_t$ as the set of (direct) infection links between patients, and the function $\eta_t$ tells us the state of an edge at time $t$. For an edge $e$, we say $e$ is \text{open} if $\eta_t(e)=1$ which means the infection link can still be retrieved (i.e. it can be found who infected the infectee), otherwise $\eta_t(e)=0$ and it is \textit{closed} (i.e.\ it is not possible to know who infected the infectee). A set of vertices connected by open edges is called a  \textit{cluster}. 

\end{itemize}
The growth-fragmentation-isolation model (GFI) process $(G_t, \Psi_t, \eta_t)_{t \geq 0}$ is a Markov jump process, starting from an active vertex as patient zero $V_0 = \{0\}, \Psi_0(0) = 1$,  governed by three positive parameters $(\beta, \theta, \gamma) \in \R_+^3 = (0,\infty)^3$ representing three types of events, with the notation $\P$ for the probability and $\E$ for the associated expectation.
\begin{itemize}
\item \emph{Infection} (growth): every active vertex $v$ independently attaches a new vertex in an exponential waiting time with parameter $\beta$. When a new vertex $u$ is created and attached to $v$, it is active (i.e.\ $\Psi_t(u)=1$) and the edge $\{u,v\}$ is open (i.e.\ $\eta_t(\{u,v\}) = 1$).  

\item \emph{Information decay} (fragmentation): every open edge $e$ independently becomes closed 
in an exponential waiting time with parameter $\gamma$.

\item \emph{Confirmation and contact tracing} (isolation): every active vertex independently gets ``confirmed''  in an exponential waiting time with parameter $\theta$, and once a vertex is confirmed, the associated cluster is isolated and every vertex in this cluster becomes inactive. 
\end{itemize}See Figure~\ref{fig:GFI} for an illustration of this model. If $\gamma = \theta = 0$, this is the well-known Yule tree process. As vertices are indexed by infection times, every cluster is a labeled  \textit{recursive tree} (see Section~\ref{subsec:RRT} for a rigorous definition). 



\begin{figure}
    \centering
    \begin{subfigure}[t]{0.45\textwidth}
        \centering
        \includegraphics[width=0.72\linewidth]{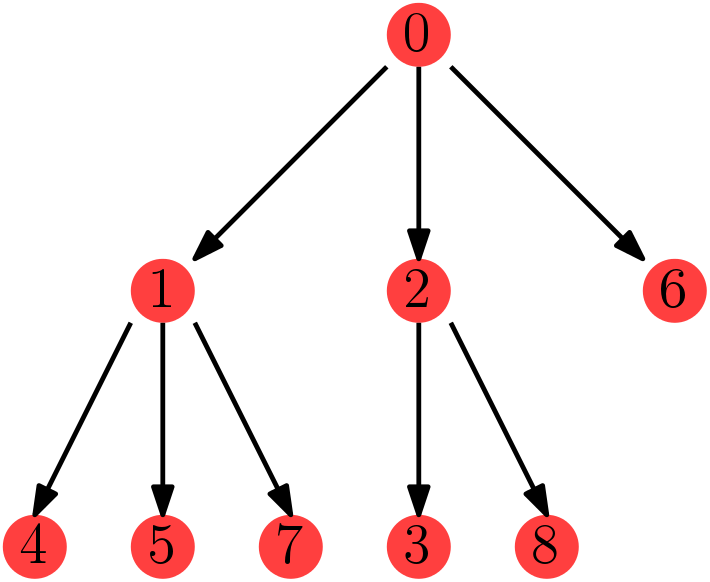} 
        \caption{Growth: starting from vertex $0$, the vertices are attached one by one, and form a recursive tree.} \label{fig:GFI1}
    \end{subfigure}
    \hfill
    \begin{subfigure}[t]{0.45\textwidth}
        \centering
        \includegraphics[width=0.72\linewidth]{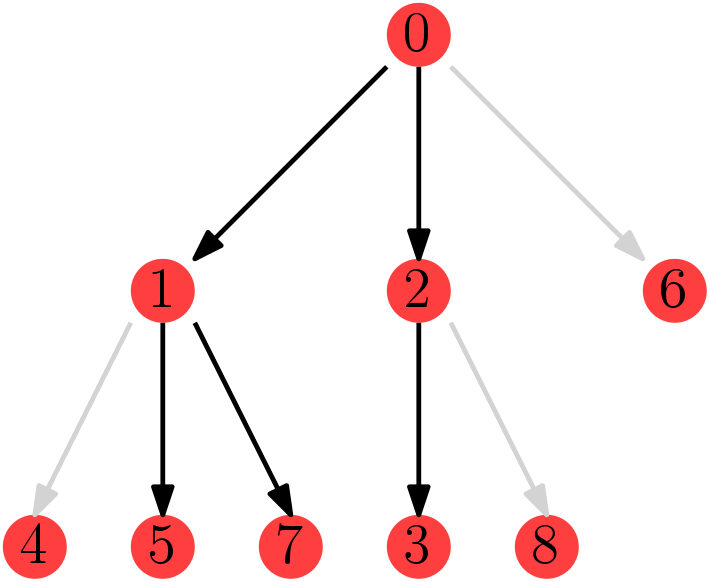} 
        \caption{Fragmentation: 
        links get lost 
        over time; for example the grey links $\{0,6\}, \{1,4\}, \{2,8\}$ in the tree are lost. } \label{fig:GFI2}
    \end{subfigure}

    \vspace{1cm}
    \begin{subfigure}[t]{0.45\textwidth}
    	\vskip 0pt
        \centering
        \includegraphics[width=0.72\linewidth]{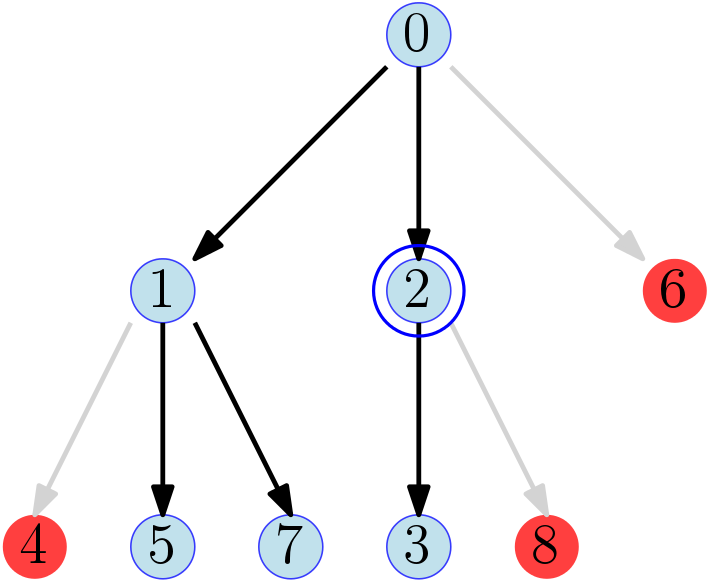} 
        \caption{Isolation: when the vertex $2$ is confirmed,  all the vertices  in the same cluster are isolated. These are the vertices in blue $\{0,1,2,3,5,7\}$.} \label{fig:GFI3}
    \end{subfigure}
    \hfill
    \begin{subfigure}[t]{0.45\textwidth}
    	\vskip 0pt
        \centering
        \includegraphics[width=1\linewidth]{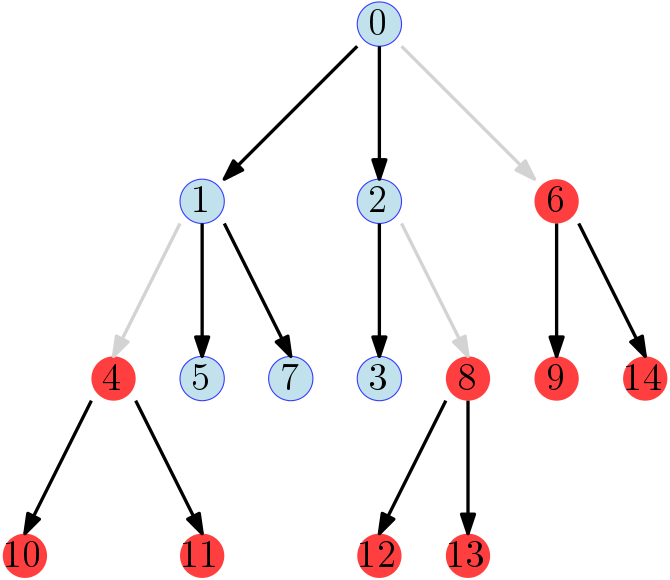} 
        \caption{The isolated vertices are no longer active, while the active vertices continue to attach new vertices.} \label{fig:GFI4}
    \end{subfigure}
    \caption{An illustration of GFI process.}\label{fig:GFI}
\end{figure}
 Because the vertices in a cluster have the same state, it is very natural to decompose the dynamical tree $G_t$ into clusters of individuals connected by open edges: for an isolated cluster, we call it \textit{inactive cluster}; otherwise, it is an \textit{active cluster}. In this paper, we use {\it isolated} and {\it inactive} interchangeably for clusters and also for vertices/patients/infected individuals. We denote by $(\XX_t, \YY_t)_{t \geq 0}$ the associated \textit{cluster process}, where $\XX_t$ is the set of active clusters and $\YY_t$ is the set of inactive clusters:
\begin{equation*}
\begin{split}
\XX_t &= \{\CC \,\vert\,\, \CC \text{ is a cluster in } G_t;\, \forall v \in \CC, \Psi_t(v) = 1\},\\
\YY_t &= \{\CC \,\vert\,\, \CC \text{ is a cluster in } G_t;\, \forall v \in \CC, \Psi_t(v) = 0\}.
\end{split}
\end{equation*}
The process $(G_t, \Psi_t, \eta_t)_{t \geq 0}$ stops evolving when $\XX_t$ is empty. We denote by $\tau$ the corresponding stopping time: 
\begin{align}
\tau := \inf\{t\geq 0 \,\vert\, \XX_t = \emptyset\},
\end{align}
and the event $\{\tau < \infty\}$ is called \textit{extinction}, while $\{\tau = \infty\}$ is called \textit{survival}. \\

In this paper, we are motivated by the following questions.
For what values of the parameters $(\beta,\theta, \gamma)$  does the epidemic reach extinction almost surely ? If the epidemic survives (with positive probability), what is the long-time behavior
of the population of active and inactive clusters ? We give some answers to these questions by first determining the asymptotic behavior of the first moment semigroup associated to the active clusters. It depends
on  the maximal eigenvalue of this semigroup, which is called Malthusian exponent.
When this exponent is negative (subcritical case) or zero (critical case), the population of active clusters reaches extinction almost surely. It corresponds to the fact that the  isolation process is strong enough to stop the epidemic. When this exponent is positive (supercritical case), survival occurs with positive probability and on this event, the growth of the population is exponential with rate given
by this Malthusian exponent.  
In that case, we also  shed some light on the genealogical structure of clusters and describe the asymptotic behavior of the empirical distribution. We prove that a.s.\
we get a collection of recursive trees whose sizes are distributed following the left eigenvector associated to the maximal eigenvalue of the semigroup, i.e.\ the Malthusian exponent. We also show similar asymptotic behaviors for the inactive clusters on the survival event.   

We give the main results and the outline of the paper in the next section. The rest of the paper is dedicated to proofs and some additional results and comments.

\section{Main results}

We first introduce the \textit{Malthusian exponent} $\lambda$ which describes the (mean) exponential growth (or decay) of the number of clusters.
This growth rate coincides for the active and inactive clusters, whose numbers at time $t$ are respectively denoted by $|\XX_t|$  and $|\YY_t|$. As expected in branching 
structures, its sign gives the global extinction and survival criterion, leading to the classification of subcritical, critical and supercritical phases. 

\begin{theorem}[Malthusian exponent]\label{thm:Malthusian}
The following limits exist and coincide and are finite 
\begin{align*}
 \lambda:=\lim_{t\rightarrow\infty} \frac{1}{t} \log(\E[| \XX_t|]) =\lim_{t\rightarrow\infty} \frac{1}{t} \log(\E[| \YY_t|])  \in (-\infty,\infty).
\end{align*}
 If $\lambda \leq 0$, then  extinction occurs a.s., i.e.\ $\P[\tau<\infty]=1$.
Otherwise, survival occurs with positive probability, i.e.  $\P[\tau=\infty]>0$. 
\end{theorem}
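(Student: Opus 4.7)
The plan is to reduce the problem to a spectral analysis of a branching process on the countable type space $\N$, where a type is the size of an active cluster. The RRT splitting property ensures that a cluster of size $n$ is, conditional on its size, a uniform recursive tree on $n$ vertices, and this distributional invariance is preserved by all three event types (growth, fragmentation, isolation). Hence, at any time $t$, the active clusters form an independent family of uniform RRTs given their sizes, and their sizes form a genuine branching process on $\N$. The key object is the first moment semigroup
\begin{equation*}
M_t f(n) := \E_n\!\left[ \sum_{C \in \XX_t} f(|C|) \right],
\end{equation*}
so that $\E[|\XX_t|] = M_t \1(1)$ and, via the identity $\E[|\YY_t|] = \theta \int_0^t M_s \mathrm{id}(1)\, ds$ coming from the isolation rate $\theta n$ on a cluster of size $n$, also the growth of $\E[|\YY_t|]$.

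The existence of $\lambda$ would then follow from a Perron-type structure for $M_t$: an eigenvalue $e^{\lambda t}$ with a positive right eigenfunction $h$ bounded below by $h_* > 0$ and above by $h^* < \infty$, together with a left probability eigenmeasure $\pi$ on $\N$ satisfying $\pi(\mathrm{id}) < \infty$. The heuristic behind boundedness of $h$ is that large clusters disappear fast: the fragmentation rate $\gamma(n-1)$ and isolation rate $\theta n$ are both linear in $n$, and a uniform RRT typically sheds a small piece at fragmentation, so the size of a tagged cluster is tight. These ingredients should yield enough compactness on the cone of positive functions on $\N$ to apply a Krein--Rutman-type argument. With such $h$ at hand, $M_t h(1) = e^{\lambda t} h(1)$ combined with the two-sided bound $h_* \leq h \leq h^*$ yields $t^{-1}\log \E[|\XX_t|] \to \lambda$, and the ergodic estimate $e^{-\lambda s} M_s \mathrm{id}(1) \to h(1)\pi(\mathrm{id})$ together with the integral identity gives the same limit for $\E[|\YY_t|]$.

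For the dichotomy I would treat the three regimes separately. If $\lambda < 0$, then $\E[|\XX_t|] \to 0$ and Markov's inequality, combined with the fact that $\{\XX_t = \emptyset\}$ is absorbing, gives $\P[\tau < \infty] = 1$. If $\lambda = 0$, the process $N_t := \sum_{C \in \XX_t} h(|C|)$ is a non-negative martingale and converges a.s.\ to a finite limit; but on survival the total isolation rate is uniformly bounded below by $\theta$, so isolation events happen infinitely often, and each decreases $N_t$ by at least $h_* > 0$, which is incompatible with a.s.\ convergence of the martingale to a finite limit. Hence $\P[\tau = \infty] = 0$. If $\lambda > 0$, a many-to-two computation, again exploiting the RRT splitting structure together with the integrability properties of $\pi$, gives that $e^{-\lambda t} N_t$ is bounded in $L^2$; it therefore converges in $L^2$, so $\E[N_\infty] = h(1) > 0$, yielding $\P[N_\infty > 0] > 0$. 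Since $N_t = 0$ on $\{\tau < \infty\}$ eventually, this forces $\P[\tau = \infty] > 0$.

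The main obstacle is the spectral step: constructing the eigenelements on the infinite, non-conservative type space $\N$, and especially proving the crucial boundedness of $h$, which encodes the fact that very large clusters do not contribute disproportionately to the mean growth. Once this quantitative control on the fragmentation--isolation balance of large clusters is in place, the sandwich bounds and the martingale / second-moment arguments for the three cases are comparatively routine.
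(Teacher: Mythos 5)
Your overall strategy matches the paper's: reduce to a size branching process via the RRT splitting property, analyze the first-moment semigroup $M_t$, obtain Perron eigenelements $(\lambda,h,\pi)$ with $h$ bounded away from $0$ and $\infty$, sandwich $\E[|\XX_t|]=M_t\mathbf{1}(1)$ between $e^{\lambda t}h(1)/\sup h$ and $e^{\lambda t}h(1)/\inf h$, and handle $\E[|\YY_t|]$ through the integral identity $\E[|\YY_t|]=\theta\int_0^t M_s(\mathrm{id})(1)\,\d s$. Your subcritical and supercritical arguments are essentially the paper's (first-moment decay plus Markov, resp.\ $L^2$-bounded martingale and $\E[W]=h(1)>0$).

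Your critical-case argument, however, is genuinely different from the paper's and worth noting. The paper applies Fatou's lemma to the bounded-in-$L^1$ martingale to get $\liminf_t|\XX_t|<\infty$ a.s., then invokes a classical "accessible absorbing state" argument with a sequence of stopping times $T_n$ separated by unit intervals, each giving a uniformly positive extinction probability. You instead observe that on $\{\tau=\infty\}$ the total isolation rate is bounded below by $\theta$, so isolation events occur infinitely often, each producing a downward jump of size at least $\inf h>0$ in the nonnegative martingale $N_t=\bracket{X_t,h}$; this is incompatible with the a.s.\ convergence of $N_t$, hence $\P[\tau=\infty]=0$. This is a clean, self-contained variant that sidesteps the stopping-time construction entirely and relies on exactly the same eigenfunction lower bound as the paper's route. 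Both are correct.

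The one genuine soft spot is the spectral step, which you (honestly) flag as the main obstacle. Referring to a "Krein--Rutman-type" compactness argument is imprecise for this model: the operators are not compact in the usual sense on the cone of positive functions on $\N$. The paper instead verifies explicit Lyapunov/drift conditions $\L V\le aV+\zeta\psi$ and $b\psi\le\L\psi\le\xi\psi$ with a carefully engineered bounded $\psi$ (Lemma~\ref{lem:HarrisV}), and then applies an ergodicity theorem for non-conservative semigroups from \cite{bansaye2019non}. Getting the two-sided bound $0<\inf h\le\sup h<\infty$ (which your critical argument depends on) requires the additional trick of using Lyapunov functions $V$ increasing arbitrarily slowly. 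So your outline is right in spirit, but the actual machinery is drift-and-minorization rather than Krein--Rutman compactness.
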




The Malthusian exponent $\lambda$ corresponds to the maximal eigenvalue of the first moment semigroup of $(X_t)_{t\geq 0}$ and is also called \textit{Perron's root}. The fact that the cluster size can be any positive integer leads us to use techniques for ergodic behavior in infinite dimension, where the control of large sizes is crucial. 
As usual,  irreducibility on the state of sizes  ensures that the value $\lambda$ does not depend on the initial condition (although the model starts with a single patient, i.e.\ a cluster of size $1$, the setting of initial condition with a single random recursive tree of size $n\geq 1$ will be used later, especially in the size processes). The diagram of these different phases is illustrated in Figure~\ref{fig:Phases}, for fixed $\beta>0$. 
Note that $\theta \geq \min(\beta,\gamma)$ implies a.s. extinction
since in the case $\theta \geq \beta$, individuals are isolated faster than they contaminate and in the case  $\theta \geq \gamma$, isolation is faster than fragmentation. 

We want to emphasize that Theorem~\ref{thm:Malthusian} and the forthcoming results do not depend on the initial condition (random or deterministic), see the discussion in Section~\ref{sec: initial}.  
We also refer to Section~\ref{sec:generalise} for other possible generalizations with
additional epidemic features. 


\begin{figure}
\centering
\includegraphics[scale=0.45]{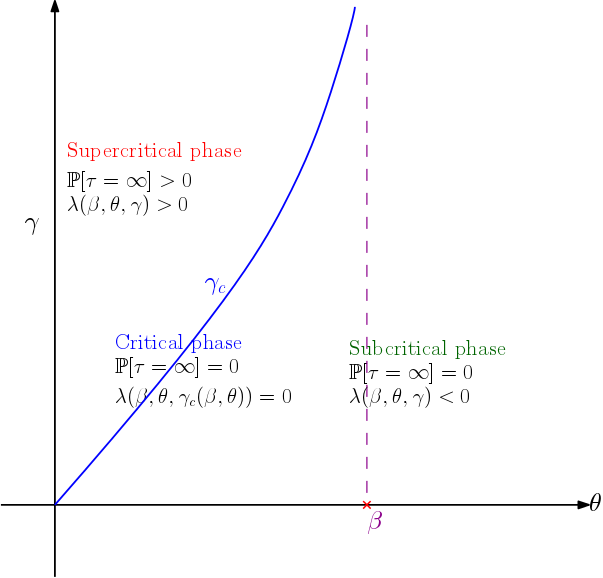}
\caption{An illustration of different phases.}\label{fig:Phases}
\end{figure}

\bigskip

To go further in the analysis of the model, we introduce the following \textit{size process} $(X_t, Y_t)_{t \geq 0}$, where the two empirical measures count the clusters of different sizes
\begin{equation}\label{eq:defEmpirical}
X_t = \sum_{\CC \in \XX_t} \delta_{\vert \CC \vert}, \qquad Y_t = \sum_{\CC \in \YY_t} \delta_{\vert \CC \vert}.
\end{equation} 
Here $\vert \CC \vert$ is the number of vertices in the cluster $\CC$, and we call it \textit{the size of cluster}. 
The process $(X_t)_{t\geq 0}$ is still
 a branching Markov process with respect to its natural filtration. This comes from the splitting 
 property of RRT, which allows to preserve RRT clusters when fragmentation occurs, 
 see the forthcoming Proposition \ref{prop:Split}.
 Moreover, for any fixed time, conditionally on cluster sizes,   all (active and inactive) clusters are independent, see Proposition~\ref{prop:ClusterRRT}. 
 We  can thus reduce the study of our GFI process $(G_t, \Psi_t, \eta_t)_{t \geq 0}$ to that of the size process $(X_t, Y_t)_{t \geq 0}$; see Figure~\ref{fig:Decom}. 

We prove the following strong law of large numbers in the supercritical case. This provides 
the asymptotic behavior of
$ \langle X_t, f \rangle=\sum_{\CC \in \XX_t} f(\vert \CC \vert)$, where $f$ has at most polynomial growth, i.e.\ there exists $p >0$ such that ${\sup_{n\geq 1} \vert f (n)\vert/ n^p < \infty}$. In particular, 
$f={\bf 1}_m$ yields the number of active clusters of size $m$, while the identity function provides the number of active individuals.


\begin{theorem}[Law of large numbers for $(X_t)_{t \geq 0}$]\label{thm:LLN} Assume that $\lambda>0$.
 Then there exists a probability distribution $\pi$ on $\N_+=\{1,2,3,\cdots\}$ and a random variable $W \geq 0$, such that for any function $f: \N_+ \to \R$ of at most polynomial growth, we have $\bracket{\pi, |f|}<\infty$ and 
\begin{align}\label{eq:LLN1}
e^{-\lambda t} \langle X_t, f \rangle \xrightarrow{t \to \infty}  W \langle \pi, f\rangle, \qquad \text{ a.s.  and  in } L^2. 
\end{align}
Besides, $\{\tau = \infty\} = \{W > 0\}$ a.s.  and on this event
\begin{align}\label{eq:LLN2}
\frac{\langle X_t, f \rangle}{\langle X_t, \mathbf{1} \rangle} \xrightarrow{t \to \infty} \langle \pi, f\rangle \quad \text{a.s.} .
\end{align}  
\end{theorem}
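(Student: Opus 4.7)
The plan is to develop the spectral/ergodic theory of the first moment semigroup of $(X_t)_{t\geq 0}$ on $\N_+$ and then import it into the branching process to obtain both $L^2$ and almost sure convergence. Define $M_t f(n) := \E_n[\langle X_t, f\rangle]$ where $\E_n$ denotes expectation when we start from a single active cluster of size $n$ (which is meaningful since, by the splitting property, conditionally on sizes clusters are independent RRTs). The first task is to construct a Perron--Frobenius type triple $(\lambda, h, \pi)$ with $M_t h = e^{\lambda t} h$, $\pi M_t = e^{\lambda t}\pi$ and $\langle \pi, h\rangle = 1$, where $h:\N_+\to (0,\infty)$ and $\pi$ is a probability measure on $\N_+$, $\lambda$ being the Malthusian exponent of Theorem~\ref{thm:Malthusian}. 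The key structural input is that isolation and fragmentation act on a cluster at rates growing linearly in its size, which provides a Lyapunov function forcing $h$ to be bounded and giving $\pi$ all polynomial moments. The central quantitative step will be an exponential ergodicity bound of the form $|e^{-\lambda t}M_t f(n)/h(n) - \langle\pi,f\rangle| \le C\,e^{-\rho t}\|f/h\|_\infty$ for some $\rho>0$, uniformly in $n$.

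With this eigentriple in hand, set $W_t := e^{-\lambda t}\langle X_t, h\rangle$. By the branching property and the eigenvalue identity, $(W_t)_{t\ge 0}$ is a nonnegative martingale, hence converges a.s.\ to some limit $W$. To upgrade to $L^2$ convergence one applies a many-to-two formula (alluded to in the abstract/introduction): $\E[\langle X_t,h\rangle^2]$ decomposes as the contribution from pairs sharing a common ancestral line plus an integral, over fragmentation/isolation events along a distinguished (spinal) line, of $\E[\langle X_{t-s},h\rangle]^2$ weighted by the post-split sizes. Boundedness of $h$, combined with the ergodicity of the typical-line dynamics and the fact that the common ancestor of two individuals sampled at large times sits at bounded depth, yields $\sup_t\E[W_t^2]<\infty$. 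Therefore $W_t\to W$ in $L^2$, and in particular $\E[W]=h(1)>0$, so $\P(W>0)>0$.

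To extend \eqref{eq:LLN1} from $f=h$ to arbitrary $f$ of polynomial growth, I would use the branching/Markov property at an intermediate time $s<t$: conditionally on $\FF_s$, each active cluster $\CC\in\XX_s$ generates an independent copy of the process, so
\begin{equation*}
e^{-\lambda t}\langle X_t,f\rangle \;=\; e^{-\lambda s}\sum_{\CC\in\XX_s} e^{-\lambda(t-s)} \langle X^{(\CC)}_{t-s},f\rangle.
\end{equation*}
The conditional mean of each summand is $e^{-\lambda(t-s)}M_{t-s}f(|\CC|)$, which by the ergodicity bound is close to $h(|\CC|)\langle\pi,f\rangle$; summing transforms the conditional mean into $\langle\pi,f\rangle\,W_s$, which tends to $\langle\pi,f\rangle\,W$ as $s\to\infty$. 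The fluctuation about the conditional mean is controlled by a second-moment estimate uniform in $t-s$, analogous to the one above. Following the scheme of Athreya~\cite{athreya1968some}, I would choose a sequence $s_k\to\infty$ and apply Borel--Cantelli along a discrete skeleton $(t_k)$ with $t_k-s_k$ also growing, then transfer to continuous time by Doob-type regularity of $\langle X_t,f\rangle$. The polynomial-growth clause is absorbed using $\|f/h\|_\infty$ bounded by a polynomial moment of $\pi$.

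Finally, for $\{\tau=\infty\}=\{W>0\}$ a.s.\ and for \eqref{eq:LLN2}, extinction trivially forces $W=0$. For the converse, the branching decomposition at the first event makes $q':=\P(W=0)$ satisfy the same functional equation as the extinction probability $q=\P(\tau<\infty)$; since $\E[W]>0$, the solution $q'=1$ is ruled out, forcing $q'=q$. Equation \eqref{eq:LLN2} then follows by taking the ratio of \eqref{eq:LLN1} for $f$ and for $\mathbf{1}$ on $\{W>0\}$. The main obstacle throughout is the infinite-dimensional spectral analysis: establishing exponential ergodicity of $M_t$ on the unbounded type space $\N_+$, for which the informal mechanism ``large clusters fragment or are isolated quickly'' must be turned into a rigorous Lyapunov/Doeblin type argument; once that and the harmonicity/boundedness of $h$ are secured, the branching-process machinery follows standard lines.
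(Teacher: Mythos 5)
Your outline of the eigentriple $(\lambda,h,\pi)$ via a Lyapunov function on the unbounded type space, the $L^2$ control of $\mathcal{M}_t=e^{-\lambda t}\langle X_t,h\rangle$ via the many-to-two formula, and the Borel--Cantelli argument along a discrete skeleton, all match the paper's architecture. There is, however, a genuine gap at the single hardest step, namely the transfer from the skeleton $\{k\Delta\}$ to continuous time. ``Doob-type regularity of $\langle X_t,f\rangle$'' does not work here: for a general polynomial $f$ the process $\langle X_t,f\rangle$ is not a sub- or supermartingale, its jumps are unbounded (an isolation of a cluster of size $n$ subtracts $f(n)$ instantaneously, and a single large cluster can dominate the sum), and the mass escaping to large sizes is not controlled by any direct maximal inequality. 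This is precisely why the paper does not invoke the Asmussen--Hering route you are implicitly following; they state explicitly that those assumptions are stronger than what the semigroup estimate \eqref{eq:Spectral} delivers. What replaces it is a two-layer argument imported from Athreya: first, for $f=\mathbf{1}_n$ one only proves the lower bound $\liminf_t e^{-\lambda t}X_t(n)\geq W\pi(n)$, using the pathwise inequality $X_t(n)\geq X_{k\Delta}(n)-N_{k,\Delta}(n)$ and a Hoeffding bound on the number $N_{k,\Delta}(n)$ of clusters that experience any event in the interval; the matching $\limsup$ bound is then deduced for free from the convergent martingale and Fatou, since exceeding $W\pi(n)$ for one size would force a deficit at some other size. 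Second, for $f$ of polynomial growth one truncates at level $K$, handles the low part by the one-type result, and controls $\langle X_t,[x^p]_{>K}\rangle$ on each interval $[k\Delta,(k+1)\Delta)$ by coupling with a pure-growth Yule process (growth only increases $[x^p]$, while fragmentation and isolation decrease it), together with conditional mean and variance estimates (Lemma~\ref{appen}). None of this is a regularity or maximal-inequality argument, and without some version of it the almost-sure statement for unbounded $f$ is not established.

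A secondary, smaller gap is the Kesten--Stigum step. Your functional-equation argument (``$\P(W=0)$ satisfies the same equation as the extinction probability, and $q'=1$ is ruled out'') is stated for the one-type setting; with countably many types, sizes that grow without bound, and fragmentation kernels depending on size, uniqueness of the non-trivial fixed point is not automatic. The paper instead argues directly: on survival $|\XX_t|\to\infty$, from which $\limsup_t X_t(1)=\infty$ since each cluster creates a size-one fragment with probability bounded below uniformly in its size; stopping when $N$ size-one clusters are present gives $N$ independent copies of $W$, hence $\P[W=0,\tau=\infty]\leq \P[W=0]^N\to 0$. This bypasses any fixed-point uniqueness question. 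Your overall plan is the right one, but both of these steps need the concrete mechanisms above rather than the shortcuts you proposed.
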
  


Recall that $\lambda$ is defined in Theorem~\ref{thm:Malthusian}. It  gives the a.s. exponential growth of the number of active clusters and active individuals. The probability distribution $\pi$ gives the distribution of the size of  clusters for large times. This distribution is the (positive normalized) left eigenvector of the first moment semigroup   $M=(M_t)_{t\geq 0}$  associated to the size process $(X_t)_{t\geq 0}$. Equivalently, it can be characterized as the positive normalized left eigenvector of the generator $\L$
of $M$, i.e.  $\pi \L=\lambda \pi$,  where 
$$\L f(n) = \beta n (f(n+1) - f(n)) - \theta n f(n)  + \sum_{j=1}^{n-1}  \frac{\gamma n}{j(j+1)} \Ll(f(j) + f(n-j) - f(n)\Rr),$$
for real valued functions $f$ and $n\geq 0$.
Thus $(\pi(n))_{n\geq 0}$ satisfies a linear system given by the dual operator of $\L$. We refer to Section~\ref{semigene} for  rigorous 
statements and details on the  semigroup and generator.
The fact that $\bracket{\pi, |f|}<\infty$ will also be given in Proposition~\ref{Perroneigen}.

The random variable $W$ in the statement is the limit of the Malthusian martingale $e^{-\lambda t}\bracket{X_t, h}$, where $h$ is the right eigenvector of the semigroup: $M_th=e^{\lambda t}h$, for any $t\geq 0$; again see Proposition~\ref{Perroneigen}. Because the cluster size can be arbitrarily large, the classical finite-dimensional Perron--Frobenius theorem does not apply and we need a more precise analysis on the semigroup and its generator to ensure the properties of eigenvectors; see Section~\ref{sec:Root}.  

Note that the above results hold for functions of at most polynomial growth. This is inherited from our Lyapunov functions which are polynomial; see Lemma~\ref{lem:HarrisV} and Proposition~\ref{Perroneigen}. 

\smallskip

A similar result about the inactive clusters can be derived.

\begin{corollary}[Law of large numbers for $(Y_t)_{t \geq 0}$]\label{cor:LLNY}
For any function $f: \N_+ \to \R$ of at most polynomial growth, we have that 
\begin{equation*}
e^{-\lambda t} \langle Y_t, f \rangle \xrightarrow{t \to \infty}  \Ll(\frac{\theta}{\lambda}\Rr) \, W\, \sum_{n=1}^{\infty} n\pi(n)f(n) \quad
\text{ a.s. and  in } L^2,
\end{equation*}
and 
$$\frac{\langle Y_t, f \rangle}{\langle Y_t, \mathbf{1} \rangle} \xrightarrow{t \to \infty} \langle \widetilde{\pi}, f\rangle, \qquad \,\,\,\,\,\,\quad \text{ a.s. on } \{\tau = \infty\},$$
where  $\widetilde{\pi}$ is a probability law on $\N_+=\{1,2,3,\cdots\}$ 
given by
\begin{equation}\label{eq:PiTilde}
\widetilde{\pi}(n) := \frac{\pi(n) n}{\sum_{j=1}^{\infty} \pi(j) j}. 
\end{equation}
\end{corollary}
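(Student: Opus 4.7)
The plan is to express $\langle Y_t, f\rangle$ as an additive functional along the trajectory of the active cluster process. Each isolation event contributes $f(n)$ to $\langle Y_t, f\rangle$, where $n$ is the size of the isolated cluster, and an active cluster of size $n$ is isolated at rate $\theta n$ (its $n$ vertices are confirmed independently at rate $\theta$ each). Compensating by Dynkin's formula therefore yields
\begin{equation*}
\langle Y_t, f\rangle \;=\; \theta \int_0^t \langle X_s, g\rangle \, ds \;+\; M_t^f,
\end{equation*}
where $g(n) := n f(n)$ and $(M_t^f)_{t \geq 0}$ is a purely discontinuous local martingale with predictable quadratic variation $\theta \int_0^t \langle X_s, h\rangle \, ds$ and $h(n) := n f(n)^2$. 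Because $f$ has at most polynomial growth, so do $g$ and $h$, so Theorem~\ref{thm:LLN} is applicable to each.

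The compensator is analyzed via Theorem~\ref{thm:LLN}: $e^{-\lambda s}\langle X_s, g\rangle \to W\langle \pi, g\rangle$ a.s.\ and in $L^2$. A standard Ces\`aro-type lemma (if $u_s \to u_\infty$ as $s\to\infty$, then $e^{-\lambda t}\int_0^t e^{\lambda s} u_s\,ds \to u_\infty/\lambda$, proved by splitting at a large fixed $T$ and letting $T\to\infty$) then gives
\begin{equation*}
\theta\,e^{-\lambda t}\int_0^t \langle X_s, g\rangle \, ds \;\xrightarrow{t \to \infty}\; \frac{\theta}{\lambda}\,W\,\sum_{n=1}^\infty n\,\pi(n)\,f(n), \quad\text{a.s. and in } L^2.
\end{equation*}

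For the martingale term, the first-moment semigroup estimate behind Theorem~\ref{thm:LLN} yields $\E[\langle X_s, h\rangle] \leq C\,e^{\lambda s}$, hence $\E[(M_t^f)^2] = \theta \int_0^t \E[\langle X_s, h\rangle]\,ds \leq C'\,e^{\lambda t}$, so $e^{-\lambda t}M_t^f \to 0$ in $L^2$. To upgrade to almost sure convergence I would apply Doob's $L^2$ maximal inequality on unit intervals together with Borel--Cantelli: $\E[\sup_{t \in [n,n+1]}(e^{-\lambda t}M_t^f)^2] \leq 4\,e^{-2\lambda n}\,\E[(M_{n+1}^f)^2] \lesssim e^{-\lambda n}$, which is summable. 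Combining the compensator and martingale pieces establishes the first assertion. The ratio statement then follows by applying the first assertion to both $f$ and to the constant function $\mathbf{1}$, using $\{W>0\}=\{\tau=\infty\}$ a.s.\ from Theorem~\ref{thm:LLN}, and simplifying the quotient, since on $\{W>0\}$ the common factor $(\theta/\lambda)W$ does not vanish.

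The main obstacle I anticipate is the a.s.\ control of the martingale term: the bound $\E[(M_t^f)^2] \lesssim e^{\lambda t}$ is only just enough to obtain summability along unit intervals via Doob's inequality, so this step must be set up carefully. A minor secondary point is that the Ces\`aro-type limit has to be justified pathwise, including on $\{W=0\}$ where the claim reduces to $0=0$, and that the polynomial growth hypothesis is preserved under the passages $f \mapsto g$ and $f \mapsto h$, which is immediate.
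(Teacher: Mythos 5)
Your proposal is correct, but it takes a genuinely different route from the paper. You compensate $\langle Y_t,f\rangle$ directly, writing it as $\theta\int_0^t\langle X_s, \x{}f\rangle\,\d s + M^f_t$ with $M^f$ a purely discontinuous square-integrable martingale, then handle the drift by the law of large numbers for $X$ (Theorem~\ref{thm:LLN}) plus a Ces\`aro lemma, and the noise by the bound $\E[(M^f_t)^2]\leq C e^{\lambda t}$ together with Doob's maximal inequality and Borel--Cantelli; the ratio statement follows by taking $f$ and $\mathbf{1}$ and using $\{W>0\}=\{\tau=\infty\}$. This is essentially a rigorous implementation of the heuristic stated at the beginning of Section~\ref{subsec:LimitSizeY}, and it treats all polynomial-growth test functions in one pass. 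The paper instead proceeds in three steps: a Gr\"onwall-based $L^2$ estimate for $\langle Y_t,f\rangle$, the specific harmonic martingale $H_t=\langle X_t,h\rangle-(\lambda/\theta)\langle Y_t,h/\x{}\rangle$ (shown to vanish after normalization via an explicit second-moment computation, Doob and Borel--Cantelli), and then general $f$ through the decomposition $H^f_t=\langle\pi,f\rangle H_t+A_t+B_t$, a discrete skeleton $\{k\Delta\}$, and the monotonicity of $t\mapsto Y_t$ applied to $f^\pm$. What each buys: your route is shorter and more transparent, with the only delicate points being the ones you name — justifying that the compensated process is a true $L^2$ martingale with the stated predictable quadratic variation (this needs the same localization argument as Lemma~\ref{lemma:duhamel}, since jump sizes $f(n)$ are unbounded), and the borderline summability $e^{-\lambda n}$ in the Doob/Borel--Cantelli step, which indeed works because $\lambda>0$; the paper's route stays entirely within the first/second moment semigroup machinery it has already set up and, through its Step 1 estimate \eqref{eq:YL2}, also records an explicit $L^2$ convergence rate for $\langle Y_t,f\rangle$ with quantified dependence on $f$, which your argument does not produce as stated (though it could, by tracking constants). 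No genuine gap.
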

We observe  that interestingly, $\widetilde{\pi}$ is a size-biased version of $\pi$. It means that the isolated clusters are of larger size than the active clusters. This phenomenon is also recorded in the recent work of Jean Bertoin \cite[Corollary 4.3]{bertoin2022model}. The reason for this phenomenon  is that every active cluster gets isolated at a rate proportional to its size. 
 

\begin{figure}
\centering
\includegraphics[scale=0.4]{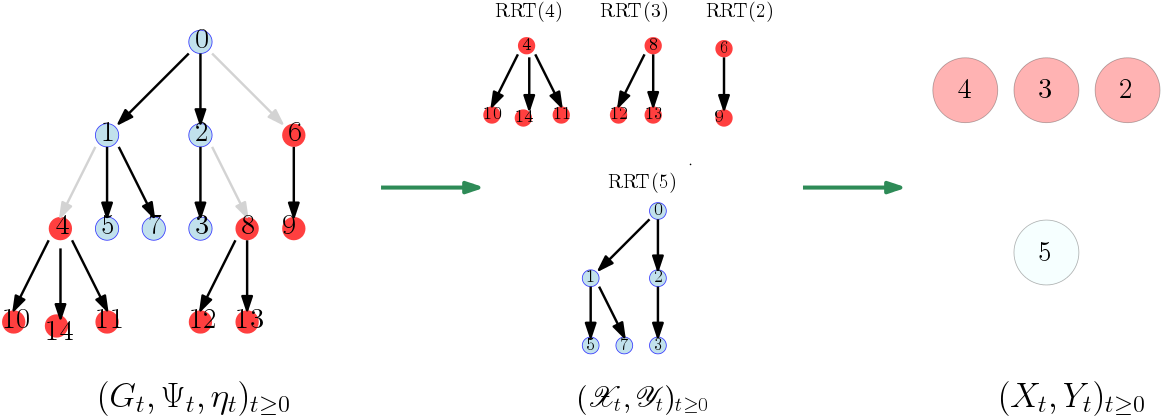}
\caption{Illustration of the reduction
of 
the study of GFI process $(G_t, \Psi_t, \eta)_{t \geq 0}$. The first simplification is to decompose the graph into clusters and study the cluster process $(\XX_t, \YY_t)_{t \geq 0}$, where every cluster is a RRT whose law only depends on its size. The second simplification is to study size process $(X_t, Y_t)_{t \geq 0}$, which provides the key   information for the law of $(\XX_t, \YY_t)_{t \geq 0}$.}\label{fig:Decom}
\end{figure}

\bigskip

Recall that every vertex $v \in \R$ is labeled by its infection time and every cluster is a  tree. We consider these clusters up to an equivalence relation, which consists in keeping the order between vertices but forgetting their infection times,  see  Section~\ref{subsec:RRT} for a rigorous definition.  
We denote by  $\TT$ the space of equivalence classes of trees of all sizes. We denote by $T_{\pi}$  the random tree whose  size is distributed as $\pi$, and conditionally on its size  the tree is a RRT. With a slight abuse of notation, for any recursive tree $\ttt$, we write   $\tt\in \TT$ for its equivalence class and
for  any function $f:\TT\mapsto \R,$ we let $f(\ttt)=f(\tt)$. We also write $\vert \tt \vert$  for the number of vertices in the equivalence class $\tt.$ 


Using the splitting property, we derive the long-time behavior of the empirical measures on (active and inactive) clusters
from the results on the size processes obtained above.
\begin{theorem}[Limit of the empirical measure of clusters]\label{thm:LLNRRT}  Consider any $f : \TT \to \R$ such that there exists  $p > 0$ satisfying $$\sup_{\tt \in \TT} \frac{\vert f(\tt) \vert}{\vert \tt \vert^p}  < \infty.$$
 Then  on the event $\{\tau = \infty\}$,
\begin{align*}
 \frac{1}{\vert \XX_t \vert}\sum_{\CC \in \XX_t} f(\CC) \stackrel{t \to \infty}{\longrightarrow} \E[f(T_\pi)],  \quad  \frac{1}{\vert \YY_t \vert}\sum_{\CC \in \YY_t} f(\CC)  \stackrel{t \to \infty}{\longrightarrow}  \E[f(T_{\widetilde{\pi}})] \quad \text{a.s.}.
\end{align*}
\end{theorem}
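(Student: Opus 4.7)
The plan is to combine the strong law of large numbers for the size processes (Theorem~\ref{thm:LLN} and Corollary~\ref{cor:LLNY}) with the conditional independence statement of Proposition~\ref{prop:ClusterRRT}: at any fixed time $t$, conditional on the vector of cluster sizes, the active (resp.\ inactive) clusters are independent recursive trees with those sizes. I focus on the active case; the inactive case is analogous using Corollary~\ref{cor:LLNY}, with the size-biased distribution $\widetilde{\pi}$ arising naturally because a cluster is isolated at a rate proportional to its size.

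Since Theorem~\ref{thm:LLN} applied to $f\equiv 1$ gives $e^{-\lambda t}|\XX_t| \to W$ a.s.\ with $\{W>0\}=\{\tau=\infty\}$, it suffices to prove
$$e^{-\lambda t}\sum_{\CC \in \XX_t} f(\CC) \xrightarrow{t\to\infty} W\,\E[f(T_\pi)] = W\sum_{n\geq 1}\pi(n)\,\E[f(T_n)] \quad \text{a.s.},$$
where $T_n$ denotes an RRT of size $n$; absolute convergence of the series follows from the polynomial growth of $f$ together with the polynomial moments of $\pi$ granted by Proposition~\ref{Perroneigen}. A truncation reduces this to a single size: splitting $f=f\mathbf{1}_{|\tt|\le N}+f\mathbf{1}_{|\tt|>N}$ and applying Theorem~\ref{thm:LLN} to $m\mapsto m^p\mathbf{1}_{m>N}$ yields
$$\limsup_{t\to\infty} e^{-\lambda t}\sum_{\CC\in\XX_t}|f(\CC)|\mathbf{1}_{|\CC|>N} \leq C\, W \sum_{m>N}\pi(m)\,m^p \quad \text{a.s.},$$
which vanishes as $N\to\infty$. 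By linearity, it then suffices to show that, for each fixed $n$ and each bounded $g$ on size-$n$ recursive trees (automatically bounded since there are only finitely many such trees),
$$Z_t^{(n)} := e^{-\lambda t}\sum_{\CC \in \XX_t^{(n)}} g(\CC) \xrightarrow{t\to\infty} W\,\pi(n)\,\E[g(T_n)] \quad \text{a.s.},$$
where $\XX_t^{(n)}=\{\CC\in\XX_t:|\CC|=n\}$.

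By Proposition~\ref{prop:ClusterRRT}, conditional on $X_t$ the clusters in $\XX_t^{(n)}$ are i.i.d.\ RRTs of size $n$, so
$$\E[Z_t^{(n)}\mid X_t] = e^{-\lambda t}X_t^{(n)}\,\E[g(T_n)], \qquad \var(Z_t^{(n)}\mid X_t) = e^{-2\lambda t}X_t^{(n)}\,\var(g(T_n)).$$
Theorem~\ref{thm:LLN} gives $e^{-\lambda t}X_t^{(n)}\to W\pi(n)$ a.s., so the conditional mean converges to the target. Since $X_t^{(n)} \leq |\XX_t|$ and the $L^2$ convergence in Theorem~\ref{thm:LLN} yields $\E[X_t^{(n)}]=O(e^{\lambda t})$, the unconditional $L^2$ deviation from the conditional mean is $O(e^{-\lambda t})$. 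This immediately gives $L^2$ convergence of $Z_t^{(n)}$ and, through Chebyshev combined with Borel--Cantelli, a.s.\ convergence along integer times.

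The main technical obstacle is upgrading this to a.s.\ convergence for all $t$. Between jumps $Z_t^{(n)}$ varies only through the $e^{-\lambda t}$ factor, but each infection, fragmentation, or isolation event changes the unnormalized sum by at most $\|g\|_\infty$, while the event rate on $[k,k+1]$ is of order $|V_k|=O(e^{\lambda k})$, so naive interpolation between integer times produces an $O(1)$ error after normalization. I would address this following the strategy adapted from \cite{athreya1968some} employed elsewhere in the paper: decompose $Z_t^{(n)}$ into a predictable compensator plus a \cadlag martingale in the natural filtration of the GFI process, and book-keep the jump types that affect $\XX_t^{(n)}$ (two infection-driven, two fragmentation-driven, one isolation-driven) to show that the predictable quadratic variation on $[k,k+1]$ is $O(e^{-\lambda k})$ in expectation. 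Doob's $L^2$-inequality and Borel--Cantelli then give a.s.\ uniform control of the martingale over unit intervals, while the compensator is handled by the same mean computation as above via Theorem~\ref{thm:LLN}, completing the argument.
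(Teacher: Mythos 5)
Your skeleton coincides with the paper's: a truncation in the cluster size reducing the statement to bounded shape functionals of a fixed size $n$ (handled exactly as you do, via Theorem~\ref{thm:LLN} applied to $m\mapsto m^p\Ind{m>N}$ and $\bracket{\pi,\x{p}}<\infty$), followed by the observation from Proposition~\ref{prop:ClusterRRT} that, given the sizes, the size-$n$ clusters are i.i.d.\ RRTs, so the conditional variance of $e^{-\lambda t}\sum_{\vert\CC\vert=n}g(\CC)$ is $O(e^{-\lambda t})$ and Chebyshev--Borel--Cantelli gives the limit along a discrete grid. (Working with general bounded $g$ on $\TT_n$ rather than with the indicators $\Ind{\CC\sim\tt}$ as the paper does is equivalent, since $\TT_n$ is finite.) Where you genuinely diverge is the passage from the grid to all $t$. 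The paper never controls the upward fluctuations at all: it only proves the one-sided bound $X_t(\tt)\geq X_{k\Delta}(\tt)-N_{k,\Delta}(\tt)$, where $N_{k,\Delta}(\tt)$ counts the type-$\tt$ clusters at time $k\Delta$ hit by an event in the interval (Hoeffding plus Borel--Cantelli, then $\Delta\to0$), and recovers the matching limsup from the martingale limit $W$ by the Fatou ``no dissipation of mass'' computation \eqref{eq:limsupTrick}, exactly as in the proof of Theorem~\ref{thm:LLN}. You propose instead a two-sided control through a semimartingale decomposition and Doob's inequality; this is more machinery than the paper needs, but the martingale half of it is sound: the normalized predictable quadratic variation over a unit interval is indeed $O(e^{-\lambda k})$ in expectation.

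The soft spot is your compensator. Over a unit interval its normalized increment is $O(1)$, not $o(1)$ (the drift of the unnormalized sum has order $e^{\lambda t}$), so it cannot simply be absorbed; and identifying its limit ``by the same mean computation via Theorem~\ref{thm:LLN}'' is not available, because the drift is not a size functional: it contains terms such as the expected value of $g$ on the size-$n$ fragment produced when a larger cluster splits, or on the size-$n$ tree obtained when a size-$(n-1)$ cluster grows -- i.e.\ precisely the empirical shape-functional limits you are in the middle of proving, which makes the step circular as written. Two repairs are available: (i) keep your decomposition but work on a mesh of size $\Delta$, bound the compensator increment only in absolute value by $C_n\Vert g\Vert_\infty\,\Delta\,\sup_{s}e^{-\lambda s}\bracket{X_s,\x{}}=O(\Delta)$ a.s.\ (using Theorem~\ref{thm:LLN} for $\x{}$, valid for all $t$), conclude up to an $O(\Delta)$ error and let $\Delta\to0$; or (ii) drop the compensator altogether and use the paper's one-sided counting bound together with the Fatou trick, which needs only a liminf and is what keeps the argument of \cite{athreya1968some} short. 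With either repair your proof goes through; as literally stated, the last step does not.
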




\bigskip

The Malthusian exponent $\lambda$ is the key characteristic of the model and is fully determined by the model parameters $(\beta,\gamma,\theta)$. For both mathematical and practical purposes, it is highly important to understand how $\lambda$ depends on $(\beta,\theta,\gamma)$ The main results are presented below. \begin{theorem}\label{thm:Regularity}
The mapping $(\beta, \theta, \gamma) \mapsto \lambda(\beta, \theta, \gamma)$ is continuous, and the sets of parameters resulting in respectively $\lambda > 0, \lambda = 0, \lambda < 0$ are non-empty. The Malthusian exponent depends on the three parameters monotonically that
\begin{enumerate}
    \item $\theta \mapsto \lambda(\beta, \theta, \gamma)$ is decreasing;
    \item $\gamma \mapsto \lambda(\beta, \theta, \gamma)$ is increasing;
    \item $\beta \mapsto \lambda(\beta, \theta, \gamma)$ is increasing if $\gamma > \theta$, constant if $\gamma = \theta$, and decreasing if $\gamma < \theta$.
\end{enumerate}
\end{theorem}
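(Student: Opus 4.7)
For the continuity of $(\beta,\theta,\gamma)\mapsto\lambda$, I would use the characterisation of $\lambda$ as the Perron root of the first-moment semigroup $M$. The generator $\L$ depends continuously (indeed affinely) on $(\beta,\theta,\gamma)$, and the polynomial Lyapunov function analysis developed later in the paper (Lemma~\ref{lem:HarrisV} and Proposition~\ref{Perroneigen}) provides uniform tail control as parameters vary on compact sets; this upgrades joint continuity of $M_t$ in $(t,\beta,\theta,\gamma)$ to continuity of the spectral radius. For non-emptiness of the three phases, the remark after Theorem~\ref{thm:Malthusian} already gives $\lambda\leq 0$ whenever $\theta\geq\min(\beta,\gamma)$, and in the opposite direction, when $\theta\downarrow 0$ the identity $\frac{d}{dt}\E[|\XX_t|]=\gamma(\E[Z_t]-\E[|\XX_t|])$ combined with $\E[Z_t]=e^{\beta t}$ gives $\lambda\to\beta>0$; continuity then realises $\lambda=0$ between these two regimes.

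For monotonicity in $\theta$ and in $\gamma$, I would use explicit couplings, exploiting that clusters, being connected components of the open-edge graph with uniform active/inactive status, are determined by the infection and edge-removal clocks alone. For $\theta_1<\theta_2$, share the infection and edge-removal clocks (so both processes have the same open-edge graph, hence the same cluster partition) and add to process~$2$ independent extra confirmation clocks of rate $\theta_2-\theta_1$ per vertex; an induction on jump times shows that the set of vertices active in process~$1$ contains that of process~$2$ at every time, whence every active cluster of process~$2$ is also an active cluster of process~$1$ and $|\XX_t^{(2)}|\leq|\XX_t^{(1)}|$. For $\gamma_1<\gamma_2$, share infection and confirmation clocks and add independent extra edge-removal clocks of rate $\gamma_2-\gamma_1$ per edge in process~$2$; the open-edge graph of process~$2$ is sparser and its cluster partition refines that of process~$1$, and a symmetric induction yields the opposite inclusion of active vertex sets, so every active cluster of process~$1$ is a disjoint union of active clusters of process~$2$ and $|\XX_t^{(1)}|\leq|\XX_t^{(2)}|$. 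Passing to expectations and to the growth rate then yields items~(1)--(2).

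Monotonicity in $\beta$ is the subtle claim. A direct coupling is inconclusive because the extra vertices created by a larger $\beta$ both enlarge clusters (favouring growth) and trigger extra isolations (hurting growth). The key observation is the identity
\begin{equation*}
\lambda=\bracket{\pi,\L\mathbf{1}}=(\gamma-\theta)\,\mu-\gamma,\qquad \mu:=\sum_{n\geq 1}n\,\pi(n),
\end{equation*}
obtained by testing $\pi\L=\lambda\pi$ against the constant function $\mathbf{1}$ and using only the fact that $\L\mathbf{1}(n)=(\gamma-\theta)n-\gamma$ is independent of $\beta$. When $\gamma=\theta$ this immediately gives $\lambda=-\theta$, independent of $\beta$, proving the constant case of item~(3). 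When $\gamma\ne\theta$, it shows that the sign of $\partial_\beta\lambda$ equals the sign of $(\gamma-\theta)\,\partial_\beta\mu$, which matches items~(1)--(3) provided the mean stationary cluster size $\mu(\beta,\theta,\gamma)$ is strictly increasing in $\beta$.

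The main obstacle is therefore to prove this monotonicity of $\mu$, since $\pi$ is only characterised abstractly as a left eigenvector of $\L$. My intended route combines the Perron perturbation formula
\begin{equation*}
\partial_\beta\lambda=\frac{\bracket{\pi,\,n\bigl(h(\cdot+1)-h(\cdot)\bigr)}}{\bracket{\pi,h}}
\end{equation*}
with the claim that the right eigenvector $h$ has increments $h(n+1)-h(n)$ whose sign matches that of $\gamma-\theta$ for every $n\geq 1$. The base case $n=1$ is immediate from $\L h(1)=\lambda h(1)$, which yields $\beta(h(2)-h(1))=(\lambda+\theta)h(1)$, whose sign is fixed by $\lambda+\theta=(\gamma-\theta)(\mu-1)$ and the fact that $\mu>1$ (since $\pi$ is non-degenerate whenever $\gamma>0$). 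For the general step, one would propagate the sign of $h(n+1)-h(n)$ by induction using $\L h=\lambda h$, or alternatively deduce it from the probabilistic interpretation $h(n)\propto\lim_{t\to\infty}e^{-\lambda t}\E_n[|\XX_t|]$ via a stochastic comparison between the processes started from clusters of different sizes; once the sign of the increments of $h$ is under control, the perturbation formula delivers the precise sign of $\partial_\beta\lambda$ announced in item~(3).
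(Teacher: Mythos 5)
Your treatment of continuity, of the existence of the three phases, and of the $\theta$- and $\gamma$-monotonicities is broadly in the spirit of the paper (the paper also invokes couplings for items (1)--(2), though it proves continuity by a more careful three-step semigroup comparison and exhibits the phases via explicit test functions rather than a $\theta\downarrow 0$ limit, which as you state needs continuity up to the boundary $\theta=0$ that is never established). The genuine gap is in item (3) for $\gamma\neq\theta$. Your identity $\lambda=\bracket{\pi,\L\mathbf 1}=(\gamma-\theta)\mu-\gamma$ is correct and neatly settles the case $\gamma=\theta$ (matching the paper's observation that $h\equiv 1$, $\lambda=-\theta$), but it merely relocates the difficulty: everything now hinges on showing that $\mu$ is increasing in $\beta$, equivalently that the increments $h(n+1)-h(n)$ all carry the sign of $\gamma-\theta$, and neither route you offer closes. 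The induction on $\L h=\lambda h$ does not propagate: from $\beta n\bigl(h(n+1)-h(n)\bigr)=(\lambda+\theta n)h(n)-\gamma n\sum_{j=1}^{n-1}\frac{1}{j(j+1)}\bigl(h(j)+h(n-j)-h(n)\bigr)$, the fragmentation sum is not sign-definite under the induction hypothesis ($h$ is only known to be bounded above and below, so $h(j)+h(n-j)-h(n)$ has no fixed sign), and so the sign of $h(n+1)-h(n)$ cannot be read off. The alternative ``stochastic comparison between processes started from clusters of different sizes'' is precisely where the naive coupling fails: a larger initial cluster fragments faster but is also isolated faster (rate $\theta n$), which is the very competition that makes $\beta$-monotonicity subtle in the first place.

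The paper's resolution, which is absent from your proposal, is to pass to a modified process in which isolation is attached to edges (rate $\theta(n-1)$ for a cluster of size $n$), whose generator is $\L-\theta\,\mathrm{Id}$, hence has the same eigenvectors and Malthusian exponent $\lambda-\theta$. In that process the fragmentation/isolation alternative becomes size-independent, with probabilities $\gamma/(\gamma+\theta)$ and $\theta/(\gamma+\theta)$, and a monotone coupling (larger $\beta$ or larger initial size gives shorter lifetimes and larger child sizes, with the same fragmentation-versus-isolation outcome) combined with a stopping-line/branching argument compares the mean numbers of clusters at time $T$; the extra evolution time contributes a mean factor $\geq 1$ or $\leq 1$ exactly according to the sign of $\gamma-\theta$, which simultaneously yields the monotonicity of $h$ in $n$ and of $\lambda$ in $\beta$. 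Without this device (or an equivalent one) the key step of item (3) is missing. Note also that your Perron perturbation formula $\partial_\beta\lambda=\bracket{\pi,\x{}\,(h(\cdot+1)-h(\cdot))}/\bracket{\pi,h}$ presupposes differentiability of $\lambda$ and of the eigenelements in $\beta$ in this infinite-dimensional setting, which is nowhere justified (only continuity is proved in the paper), so even granted the sign of the increments of $h$, that last step would require additional work.
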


In the above statements, the continuity, the existence of phases, and also the monotonicity with respect to $\theta$ and $\gamma$, are not surprising. However, the change of monotonicity of  $\beta \mapsto \lambda(\beta, \theta, \gamma)$ seems not  obvious. Indeed,   the increment of $\beta$ makes the clusters grow faster, thus the clusters will split (fragmentation) or be detected (isolation) faster. Faster fragmentation makes the number of clusters increase while faster detection has the opposite effect. The complex competition  between the two forces leads to the simple Statement 3 in Theorem~{thm:Regularity} surprisingly. 
At the end,  the increment of the infection rate $\beta$ will not necessarily speed up the epidemics, but does make the  epidemics weaker when $\gamma < \theta$. In practical terms,  in this regime,
if the contact tracing policy  is effective so that the population is in the  subcritical regime, the emergence of a more contagious variant will make the epidemic decay faster, even if $\theta$ and $\gamma$ are unchanged (it means we use the same contact tracing practice as before). 

\bigskip

The rest of the paper is organized as follows. In Section~\ref{sec:Pre} we introduce  notations  and we  recall the key splitting property of the random recursive trees and derive the size process. The study of the first moment semigroup of $(X_t)_{t\geq 0}$ and the associated martingale and $L^2$ estimates
is carried out  in Section~\ref{sec:Root}, using
in particular Lyapunov functions. We can then prove  Theorem~\ref{thm:Malthusian}. Section~\ref{sec:Limit} is devoted to the strong convergences  and we  prove  Theorem~\ref{thm:LLN}, Corollary~\ref{cor:LLNY} and  Theorem~\ref{thm:LLNRRT}. Then we prove Theorem~\ref{thm:Regularity} in Section \ref{sec:Chara} and present related simulations in Section \ref{sec:examsimu}. Finally we discuss possible extensions and generalizations of the model in Section~\ref{Sec:Comp}.

\section{Preliminaries}\label{sec:Pre}
In this section, we  introduce  notations and explain why the size process captures the essential information of the cluster process. 

\begin{figure}
\centering
\includegraphics[scale=0.4]{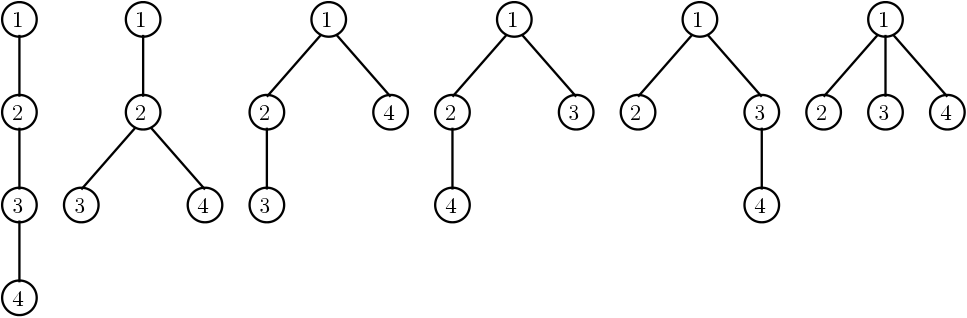}
\caption{All the recursive trees (as representatives of equivalence classes) in $\TT_4$.} \label{fig:RRT4}
\end{figure}

\subsection{Ulam-Harris-Neveu labeling of clusters}\label{UHN}
We introduce  the notation for the genealogical tree of clusters (active and inactive). Recall that every vertex is labeled by its infection time.  For any cluster $\CC$ (active or inactive),  we call the vertex with the minimum label \textit{the root} of $\CC$, and denote it by $\mathrm{root}(\CC)$. We  label every cluster using the Ulam-Harris-Neveu (UHN) notation:
\begin{align*}
    \mathcal U :=\bigcup_{n\geq 0} \{1,2\}^n,
\end{align*} 
where an  element $u\in \mathcal U$ is called a \textit{label} or \textit{word}. For the initial cluster, we use the label $\emptyset$ as convention. Then by induction, for any cluster $\CC$ labeled by a word $u \in \mcl{U}$: this label is unchanged during the growth of the cluster (infection); this
label becomes inactive  when the cluster is isolated;
this label is replaced by two labels $u1$ and $u2$ when fragmentation occurs. By convention, $u1$ is the label of the subcluster containing $\mathrm{root}(\CC)$ and is called \textit{the first child}, while $u2$ is for the other subcluster that we call \textit{the second child}. 

Every cluster except the initial one has a unique parent. There exists a partial order $\preccurlyeq$ on $\mcl U$ defined by the genealogy, i.e. for two words $u$ and $uv$ with $v \neq \emptyset$, the former is an \textit{ancestor} of the latter, while the latter is a \textit{descendant} of the former, and we denote $u \preccurlyeq uv$. For two words $u, v \in \mcl{U}$, we denote by $u \wedge v$ the most recent common ancestor of $u$ and $v$.

\begin{figure}
    \centering
    \includegraphics[scale = 0.5]{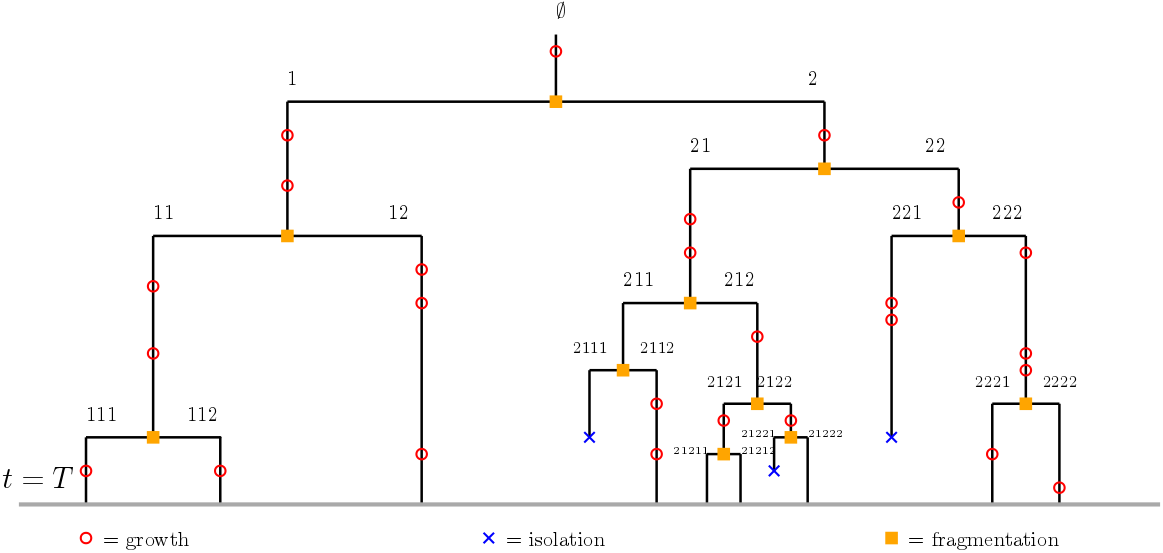}
    \caption{An illustration of the genealogy of clusters.}
    \label{fig:firstChild}
\end{figure}

We denote by $\mcl{U}_t$ the collection of labels of active clusters at time $t$, while $\mcl{U}^{\dagger}_t$ gathers the labels of inactive clusters at this time. For $u \in \mcl U$, we use  $\mcl{U}(u)$ to represent the genealogical tree rooted at $u$
\begin{align*}
    \mcl{U}(u) := \Ll\{w \in \mcl U \, \vert w \succcurlyeq u \text{ or }  w = u \Rr\}.
\end{align*}
We also use $\mcl{U}_t(u)$ (resp. $\mcl{U}^{\dagger}_t(u)$) to denote the set of labels of  clusters in $\mcl{U}(u)$ which are active (resp. inactive) at time $t$. One should notice that $\mcl{U}_t(u)$ can be either the set of labels of active descendants of $u$ at time $t$, or $u$ itself if it is still alive at that moment, or empty if it encounters isolation first at a time no later than $t$,  so we consider $\mcl{U}_t(u)$ as the ``set of labels of active clusters at $t$ issued from the cluster of the label $u$''. Finally, if $u \in \mcl{U}_t$ (resp.\ $u \in \mcl{U}^{\dagger}_t$), we denote by $\XX_t^u$ (resp.\ $\YY_t^u$) for its associated cluster, and $X_t^u$ (resp.\ $Y_t^u$) its size, i.e. $X_t^u = \vert \XX_t^u \vert$ (resp.\ $Y_t^u = \vert \YY_t^u \vert$); we write $X_t^u = 0$ for $u \notin \mcl{U}_t$. We also use $X_t(n)$ (resp.\ $Y_t(n)$) for the number of active clusters (resp.\ inactive clusters) of size $n$ at time $t$. With these notations, we have $X_t= \sum_{u \in \mcl{U}_t} \delta_{X_t^u}$ and 
for any function $f$ from $\N_+$ to $\R$:
\begin{align*}
\bracket{X_t, f} = \sum_{\CC \in \XX_t}f(\vert \CC \vert) = \sum_{u \in \mcl{U}_t}f(X_t^u) = \sum_{n = 1}^\infty  X_t(n) f(n).
\end{align*}

\subsection{Random recursive trees}\label{subsec:RRT}



The genealogy in a cluster is given by a recursive tree and here we define some notations related to the latter. Given ${V=\{a_1, \cdots, a_n\} \subset \R}$ with increasing order $a_1 < a_2 < \cdots < a_n$, a \textit{recursive tree} $\ttt$ on $V$ is a rooted tree with $V$ as the set of vertices, such that for any $a_i, 2 \leq i \leq n$, the path from $a_1$ to $a_i$ is increasing. Thus, a descendant has a larger label than that of the parent. The minimal element $a_1$ is called the \textit{root} of $\ttt$. The collection of all the recursive trees on $V$ has cardinality $(\vert V \vert-1)!$.

We define the equivalence relation $\sim$ between recursive trees on different ordering sets.  Denoting by $\ttt_1$ a recursive tree on $V_1$ and $\ttt_2$ a recursive tree on $V_2$, then $\ttt_1 \sim \ttt_2$ if and only if there exists an order-preserving function $\psi : V_1 \to V_2$, which 
induces a bijection between the trees $\ttt_1$ and $\ttt_2$. We denote by $\TT_n$ the set of recursive trees of size $n$ up to the equivalence relation $\sim$, and use the recursive trees defined on $\{1, \cdots, n\}$ to represent 
equivalence classes (canonical representation); see Figure~\ref{fig:RRT4} for an example of $\TT_4$. Finally, we define the space of finite recursive trees up to equivalence relation $\sim$
\begin{align}\label{eq:defRRTWhole}
\TT := \bigcup_{n=1}^{\infty} \TT_n.
\end{align}

A \textit{(uniform) random recursive tree (RRT)} of size $n$ is a  random element chosen uniformly in $\TT_n$. We denote by $T_n$ this random equivalence class. With a slight abuse,  RRT can refer both to  the equivalence class
or a specific labeling (for instance with the first integers).
Since $\TT$ defined in \eqref{eq:defRRTWhole} contains only countably many elements, the space $\TT$ is a Polish space under the trivial distance. We can construct more general probability measures than uniform random recursive tree on $\TT$. For example, for any $\nu$ a probability measure on $\N_+$, we use the notation $T_\nu$ to represent a random variable on $\TT$, such that we sample first the size by $\nu$, then sample an equivalence class uniformly given its size, i.e.
for any bounded function $f$ on $\TT$,
\begin{equation}\label{eq:defTnu}
\begin{split}
\E[f(T_\nu)] 
= \sum_{n=1}^{\infty} \nu(n) \E[f(T_n)] 
= \sum_{n=1}^{\infty} \nu(n) \Ll(\frac{1}{(n-1)!} \sum_{\tt \in \TT_n} f(\tt)\Rr).
\end{split}
\end{equation}
This gives the rigorous definition for $\E[f(T_\pi)]$ and $\E[f(T_{\widetilde \pi})]$ 
in Theorem~\ref{thm:LLNRRT}.\\

There are many ways to construct $T_n$. One classical construction is the recursive approach: let $T_1$ be the tree with the single vertex $1$, and construct $T_{k+1}$ by attaching the vertex labeled $(k+1)$ uniformly onto a vertex of $T_k$. This construction explains why  our infection process (Yule process),  conditioned on its size, is a RRT. Indeed
each individual contaminates a new individual
with the same rate, which amounts to attaching a new vertex 
to a uniformly chosen vertex of the tree, independent of the history of construction given the current state.\\

The key property of RRT that we need is the splitting property. We state this property below and its proof can be found in \cite{meir1974cutting} and \cite{baur2014cutting}. We will explain what role this property plays in our model in the next section.
\begin{proposition}[Splitting property]\label{prop:Split}
Let $n\geq 2$ and  $T_n$ the canonical random recursive tree of size $n$. We choose uniformly one edge in $T_n$ and remove it. Then $T_n$ is  split into two subtrees $T_n^0$ and $T_n^*$, corresponding to  two connected components, where $T_n^0$ contains the root of $T_n$
and  $T_n^*$ does not. Then
\begin{equation}\label{eq:Split}
\P\Ll[\vert T^*_n \vert = j\Rr] = \frac{n}{n - 1} \frac{1}{j(j+1)}, \qquad j = 1,2,\cdots, n-1.
\end{equation}
Furthermore, conditionally on $\vert T^*_n \vert = j$, $T_n^0$ and $T_n^*$ are two independent RRTs of size respectively $(n-j)$ and $j$.
\end{proposition}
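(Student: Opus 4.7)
The plan is a direct combinatorial enumeration on the $(n-1)!\,(n-1)$ pairs of the form (canonically labeled RRT on $\{1,\ldots,n\}$, one of its edges). For each ordered pair of equivalence classes $(\tt_0, \tt_*) \in \TT_{n-j} \times \TT_j$ I will count the number of such configurations whose edge-deletion yields two components equal, after order-preserving relabeling, to $\tt_0$ (the component containing vertex $1$) and $\tt_*$ respectively. If this count depends only on $(n, j)$ and not on the chosen equivalence classes, then the conditional joint law of $(T_n^0, T_n^*)$ given $|T_n^*|=j$ is automatically uniform on $\TT_{n-j} \times \TT_j$, which delivers both the independence and the uniform marginals in one stroke; summing over $(\tt_0, \tt_*)$ then yields the marginal formula for $|T_n^*|$.

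To realize a given $(\tt_0, \tt_*)$, I parametrize by (i) the vertex set $V^* \subseteq \{2,\ldots,n\}$ of the non-root component, with $|V^*| = j$ (vertex $1$ must remain in $T_n^0$); (ii) the shapes on $V^*$ and on $V^0 := \{1,\ldots,n\} \setminus V^*$, which are forced by $\tt_*$ and $\tt_0$ via the unique order-preserving bijections onto $\{1,\ldots,j\}$ and $\{1,\ldots,n-j\}$; (iii) the parent $p \in V^0$ of $\min V^*$ used to glue the two components together. The recursive-tree property forces $p < \min V^*$, and since every label strictly below $\min V^*$ already lies in $V^0$, there are exactly $\min V^* - 1$ admissible parents; the reconstruction $(V^*, p) \mapsto (T_n, e)$ is then a bijection onto the (RRT, edge) pairs of interest.

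Hence the count is
\begin{equation*}
N(n, j, \tt_0, \tt_*) \;=\; \sum_{m=2}^{n-j+1} (m-1)\binom{n-m}{j-1},
\end{equation*}
depending only on $(n,j)$ as desired. Invoking the identity $\sum_{m=2}^{n-j+1}(m-1)\binom{n-m}{j-1}=\binom{n}{j+1}$ (proved by classifying $(j+1)$-subsets of $\{1,\ldots,n\}$ by their second-smallest element $m$), multiplying by $|\TT_{n-j}|\cdot|\TT_j| = (n-j-1)!\,(j-1)!$ to sum over all $(\tt_0, \tt_*)$, and dividing by the total $(n-1)!\,(n-1)$, I obtain $\P[|T_n^*|=j] = \frac{n}{(n-1)\,j(j+1)}$ after elementary simplification.

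The main obstacle is making the bijectivity claim in step (ii)--(iii) airtight: I must check that an order-preserving relabeling of a recursive tree remains recursive, that attaching $\min V^*$ to any $p < \min V^*$ in $V^0$ preserves the ``increasing paths from the root'' property of the reassembled tree, and that no (RRT, edge) arises twice from different $(V^*, p)$. Each verification is short in isolation, but together they are precisely what makes the count independent of $(\tt_0, \tt_*)$, and thus what powers the whole argument.
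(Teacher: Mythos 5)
Your proof is correct and follows essentially the same route as the paper's own argument: for fixed equivalence classes of the two components you enumerate the (labeled RRT, removed edge) configurations, find that the count equals $\binom{n}{j+1}$ independently of the shapes, and deduce simultaneously the marginal law of $|T_n^*|$ and the conditional independence and uniformity of the two subtrees. The only difference is cosmetic — you organize the double sum by $\min V^*$ and then the parent, whereas the paper sums over the parent label $k$ and uses $\sum_{k=1}^{n-j}\binom{n-k}{j}=\binom{n}{j+1}$ — so the two computations coincide.
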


\subsection{Reduction to the size process}

Let us explain more explicitly how  the study of $(\XX_t, \YY_t)_{t\geq 0}$ 
can be reduced to that of the size process $(X_t, Y_t)_{t \geq 0}$, with the help of the splitting property. We denote by $\mathcal M$ the finite point measures on $\N_+$ and endow it with the weak topology and corresponding Borel algebra. Then $(X_t, Y_t)_{t \geq 0}$ is a $\mcl M^2$-valued process and  we denote by $(\mcl F_t)_{t \geq 0}$ its natural filtration. In fact, $(\mcl F_t)_{t \geq 0}$ is also the natural filtration of $(X_t)_{t\geq 0}$ since $Y_t$ is a deterministic function of  $(X_s)_{0\leq s\leq t}$ for any $t\geq 0$.

\begin{proposition}\label{prop:ClusterRRT} 
Fix arbitrary $t \geq 0$.  Conditionally on $(X^u_t)_{u \in \mcl U_t}$ and $(Y^u_t)_{u \in \mcl U^\dagger_t}$, the clusters in ${\XX_t \cup \YY_t}$ are independent RRTs whose sizes are given by $(X^u_t)_{u \in \mcl U_t}$ and $(Y^u_t)_{u \in \mcl U^\dagger_t}$. Moreover, $(X_t, Y_t)_{t \geq 0}$ is a measure-valued Markov branching process in $(\mcl M^2, (\mcl F_t)_{t \geq 0}, \P)$.
\end{proposition}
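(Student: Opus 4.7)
The plan is to prove the first (conditional distribution) claim by induction on the number of jumps up to time $t$, and then deduce the Markov branching property from it. The key input is the splitting property (Proposition~\ref{prop:Split}), together with the observation that each of the three event types is driven by independent Poisson clocks whose rates depend only on cluster sizes.

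\textbf{Base case and inductive step for statement (1).} At $t=0$ there is a single vertex, which is trivially a RRT. For the inductive step, fix a cluster $\CC$ of size $n$ and assume that conditionally on sizes, all current clusters are independent RRTs. I would analyse the three possible events inside $\CC$ separately. For an \emph{infection} inside $\CC$, each of the $n$ active vertices carries an independent exponential clock of rate $\beta$, so conditionally on the event occurring in $\CC$ the attached vertex is uniform in $\CC$; a standard fact about the recursive construction of RRTs then says that a uniform RRT of size $n$ with a new vertex attached to a uniformly chosen vertex is a uniform RRT of size $n+1$. For \emph{fragmentation}, each of the $n-1$ open edges has an independent rate-$\gamma$ clock, so conditionally on the event the removed edge is uniform; Proposition~\ref{prop:Split} then says that the two subtrees, conditionally on their sizes $(n-j,j)$, are independent RRTs of those sizes, and the split probabilities $\tfrac{n}{n-1}\tfrac{1}{j(j+1)}$ combine with the event rate $\gamma(n-1)$ to give the rate $\tfrac{\gamma n}{j(j+1)}$ appearing in $\L$. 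For \emph{isolation}, the tree structure is untouched and the cluster simply migrates from $\XX_t$ to $\YY_t$. Since clocks in different clusters are independent, the "independent RRTs given sizes" property is preserved jump-by-jump, so by induction it holds at every deterministic $t$.

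\textbf{Markov property of $(X_t,Y_t)$.} Write $(\mcl G_t)_{t\ge0}$ for the natural filtration of $(\XX_t,\YY_t)$, which is a jump Markov process on a countable state space. For any bounded measurable $\phi$ on $\mcl M^2$ and any $s\geq 0$,
\[
\E[\phi(X_{t+s},Y_{t+s})\mid \mcl G_t] = F_s(\XX_t,\YY_t)
\]
for some measurable $F_s$. From statement (1), the conditional law of $(\XX_t,\YY_t)$ given $(X_t,Y_t)$ depends only on $(X_t,Y_t)$ (it is the product of uniform laws on $\TT_n$'s dictated by the sizes). Moreover, because infection, fragmentation and isolation rates inside a cluster depend only on its size, $F_s(\XX_t,\YY_t)$ in fact depends on $(\XX_t,\YY_t)$ only through $(X_t,Y_t)$. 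Taking conditional expectation with respect to $\mcl F_t \subset \mcl G_t$ then yields $\E[\phi(X_{t+s},Y_{t+s})\mid \mcl F_t] = \widetilde F_s(X_t,Y_t)$, which is the Markov property.

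\textbf{Branching property.} Here I would use the memoryless property together with the independence of the Poisson clocks: conditionally on $\mcl F_t$, each active cluster $\CC\in\XX_t$ evolves using only its own clocks (the infection clocks of its vertices, the fragmentation clocks of its open edges, and its isolation clock), and its future size trajectory is a measurable functional of those independent inputs plus its starting RRT. By statement~(1) the starting RRTs are independent given the sizes, and the driving clocks are independent across clusters, so the future size processes $(X_s^u,Y_s^u)_{s\ge t}$ issued from distinct $u\in\mcl U_t\cup \mcl U_t^\dagger$ are conditionally independent given $(X_t,Y_t)$. Summing, $(X_{t+s},Y_{t+s})$ equals in distribution the independent sum of the processes issued from each current cluster, which is the measure-valued branching property.

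The main obstacle is the first step: one must make sure that \emph{simultaneously} the split clusters produced by a fragmentation are independent RRTs of the prescribed sizes and independent of all other clusters. Proposition~\ref{prop:Split} only handles one cluster in isolation, so I would combine it with the independence of the Poisson clocks across clusters and the inductive hypothesis to propagate the joint conditional structure through each jump. Once this is established, the Markov and branching statements are essentially immediate.
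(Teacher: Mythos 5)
Your proposal takes essentially the same approach as the paper: a jump-by-jump induction through the three event types (growth, fragmentation, isolation), using the splitting property for fragmentation, the uniform-attachment construction of RRTs for growth, and independence of the exponential clocks across clusters to propagate the product structure. The branching argument is also the paper's.

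One step in the Markov argument is stated imprecisely and, as written, is false. You claim that $F_s(\XX_t,\YY_t)$ depends on $(\XX_t,\YY_t)$ only through the sizes $(X_t,Y_t)$ "because the rates depend only on size." But the \emph{split-size distribution} when a uniform edge is removed depends on the actual tree, not only on its size (a size-$n$ path and a size-$n$ star give very different subtree-size laws), so $F_s$ genuinely depends on the tree structures. What is true, and what you need, is that $\E\bigl[F_s(\XX_t,\YY_t)\mid\mcl F_t\bigr]$ is a function of $(X_t,Y_t)$; this follows because, conditionally on the whole size history $\mcl F_t$ (a slightly stronger form of statement (1) than conditioning on $\sigma(X_t,Y_t)$, but exactly what your induction establishes), the clusters are independent RRTs with the prescribed sizes, and averaging over this conditional RRT law turns the tree-dependent split law into the universal $\tfrac{n}{n-1}\tfrac{1}{j(j+1)}$ kernel. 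Once the averaging is placed there rather than inside $F_s$, the rest of your argument is sound and matches the paper's (terser) reasoning.
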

\begin{proof}
The  RRT-distribution of clusters and conditional independence
is obvious at the initial time when there is only one single vertex. Let us check that this property remains valid along time and  at the same time that the size process satisfies the Markov property. The branching property of the size process is  a direct consequence of the branching property of the cluster process. 

We need to consider three events and their corresponding rates. First, a cluster is isolated with a rate depending only on its size, and then becomes inactive. Second, the growth rate of a cluster also only depends on its size, and the new vertex is added independently of the state of other clusters. Thus, after a growth,  the new cluster remains independent from the other ones (conditionally on the sizes). Moreover, by the construction of RRT by adding a vertex uniformly at random, we know that the new cluster is also distributed as a RRT.  Third, for fragmentation, we invoke the splitting property (Proposition~\ref{prop:Split}), which guarantees that the two new clusters are independent RRTs   conditionally on their sizes.
This also ensures the Markov property thanks to the absence of memory for each event.
\end{proof}

Let us sum up the dynamics of active clusters and the  transition for the size process. Independently, each active cluster of size $n$
\begin{itemize}
\item[i)] becomes an inactive cluster of size $n$ at rate $\theta n$ \\(death of the active cluster and birth of an inactive cluster);
\item[ii)] becomes a RRT of size $(n+1)$ at rate $\beta n$ (cluster size increases by 1);
\item[iii)] splits into two RRTs
 of sizes $(n-j, j)$ at rate $\gamma n\frac{1}{j(j+1)}$, for ${n \geq 2, 1 \leq j \leq n-1}$\\ (fragmentation).  Here $(n-j)$ is the size of the first child (containing the root) and $j$ is the size of the second child. 
\end{itemize}
Thus, each active cluster of size $n$
lives an exponential time of parameter
$(\beta + \theta+\gamma)n-\gamma$.
We introduce now the infinitesimal generator
$\mathcal A$ of the Markov process $(X_t, Y_t)_{t \geq 0}$ for sizes. It is defined  on a suitable subspace of measurable bounded functions on $\mathcal M^2$. Consider two functions $f,g : \N_+\rightarrow \R_+$
and  ${F: \R^2 \to \R}$ a bounded Borel function. We set 
$${F_{f,g} : (\mu,\nu)\in\mathcal M^2\rightarrow F(\bracket{\mu, f}, \bracket{\nu, g})\in \R},$$
and define
\begin{equation}\label{eq:Fxy}
\begin{split}
 \mathcal A F_{f,g}(\mu, \nu)
&= \sum_{n=1}^{\infty} \mu(\{n\}) \beta n\Ll( F( \bracket{\mu+\delta_{n+1}-\delta_n, f}, \bracket{\nu, g}) - F(\bracket{\mu, f}, \bracket{\nu, g})\Rr) \\
& \quad + \sum_{n=1}^{\infty} \mu(\{n\}) \theta n\Ll( F( \bracket{\mu-\delta_n, f}, \bracket{\nu+\delta_{n}, g}) - F(\bracket{\mu, f}, \bracket{\nu, g})\Rr) \\
& \quad + \sum_{n=1}^{\infty} \mu(\{n\}) \gamma n   \sum_{j=1}^{n-1} \Ll( \frac{1}{j(j+1)}\Rr)\\ 
& \qquad \qquad \times \Ll( F( \bracket{\mu+\delta_{j} + \delta_{n-j}-\delta_n, f}, \bracket{\nu, g}) - F(\bracket{\mu, f}, \bracket{\nu, g})\Rr).
\end{split}
\end{equation} 
Although we start from one single active individual to define our model for convenience (see Section \ref{sec: initial} for a discussion on the initial condition), 
we remark that the size process can be defined  from any finite initial state since it is a measure-valued process on integers.

\section{First moment semigroup of $(X_t)_{t\geq 0}$ and  Perron's root }\label{sec:Root}
In this part, we study the first moment semigroup associated to the process $(X_t)_{t \geq 0}$. We will establish the existence of Perron's eigenelements and speed of convergence and prove Theorem~\ref{thm:Malthusian}.

\subsection{Semigroup and generator}\label{semigene}
Thanks to Section~\ref{subsec:RRT}, the study of the model
is reduced to the  long-time behavior of the measure-valued Markov branching process $(X_t, Y_t)_{t \geq 0}$. We denote the first moment semigroup associated to $(X_t)_{t\geq 0}$ by  $M=(M_t)_{t\geq 0}$.  
It is defined for any non-negative function $f$ by
\begin{align}\label{mtf}
M_tf(n) := \E_{\delta_n}[\langle X_t, f\rangle], \quad \forall t\geq 0, n\geq 1, 
\end{align}
where  $\P_{\delta_n}$  stands for the size process with  initial condition $(X_0, Y_0) = (\delta_n, 0)$ and  $\E_{\delta_n}$ is its associated expectation. 
In particular we consider 
for any $n,m\in \mathbb N_+$,
\begin{align*}
M_t(n,m) :=M_t\mathbf 1_m(n)= \E_{\delta_n}[\langle X_t, \mathbf 1_m\rangle]=\E_{\delta_n}\Big[\#\{ \CC \in \XX_t  \, : \, \vert \CC\vert =m\}\Big],
\end{align*}
which is  the mean number of clusters of size $m$ at time $t$ descending from one single RRT of size $n$ at time $0$.

We use the notation $\x{p}$ for the polynomial function such that $\x{p}(n) = n ^p, n\geq 1$. When $p=1$ it is just the identity function $\x{} (n) = n$.  We introduce $\mcl{B}$ the set of functions from $\mathbb N_+$ to $\R$ with at most polynomial growth:
\begin{align}\label{eq:defB}
\mcl{B} := \left\{ f :  \mathbb N_+ \rightarrow \R, \, \exists p>0 \text{ such that } \sup_{n\geq 1} |f(n)|/n^p <\infty\right\}.    
\end{align}
Let $\mcl{B}_p$ be the set of functions with $p>0$ fixed
\begin{align}\label{eq:defBp}
\mcl B_p :=\Ll\{  f: \N_+ \rightarrow \R, \, \sup_{n\geq 1}|f(n)|/n^p<\infty \Rr\}.
\end{align}

Notice that $\B_p\subset \B_{p'}\subset \B$ for $0<p\leq p'$, and $\B=\bigcup_{p>0}\B_p.$ We then extend now the first moment semigroup to the space $\mcl{B}$. 
\begin{lemma}\label{lemma:duhamel}
\begin{enumerate}[label=(\roman*)]
    \item For any $p\geq 1, t\geq 0, n\geq 1$, we have 
    \begin{align*}
        M_t([x^p])(n) \leq e^{(2^{p-1} p \beta  - \theta) t} n^p.
    \end{align*}
    
    \item For any  $f\in\mcl{B} $, setting $f_+$ (resp.\ $f_-$) to be the positive part (resp.\ negative part) of $f$, the functions $t\in [0,\infty)\rightarrow M_tf_+$ 
and $t\in [0,\infty)\rightarrow M_tf_-$  are well defined  and finite.  We can thus set
for any $t\geq 0$ and $n\in \mathbb N_+$,
$$M_tf(n)=\E_{\delta_n}\big[\langle X_t,f \rangle\big]:=M_tf_+(n)-M_tf_-(n).$$
    
    \item $(M_t)_{t\geq 0}$ is a  positive semigroup on $\mcl{B}$ and for any  $f\in \mcl{B}$, we have for $t\geq0$ and $n\geq 1$
\begin{align}\label{eq:defGenerator}
\frac{\d}{\d t} M_t f(n) = M_t (\L f)(n),
\end{align}
where the linear operator $\L : \mcl{B} \rightarrow \mcl{B}$ is defined for any $n\geq 1$ by  
\begin{multline}\label{eq:Generator}
\L f(n) 
= \underbrace{\beta n (f(n+1) - f(n))}_{\text{``growth''}} \underbrace{- \theta n f(n)}_{\text{``isolation''}} \\ + \underbrace{\sum_{j=1}^{n-1}  \frac{\gamma n}{j(j+1)} \Ll(f(j) + f(n-j) - f(n)\Rr)}_{\text{``fragmentation''}}.
\end{multline}
\end{enumerate}
\end{lemma}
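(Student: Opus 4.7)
The plan is to establish (i) first via a pointwise Lyapunov inequality for $\L$ applied to $[x^p]$, combined with Dynkin's formula and Gronwall's lemma, and then to derive (ii) and (iii) from (i).

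For (i), I first compute $\L[x^p]$ directly from the three jump rates described just after Proposition~\ref{prop:ClusterRRT}, and control each term for $p\geq 1$ and $n\geq 1$. The growth contribution $\beta n((n+1)^p - n^p)$ is bounded by $2^{p-1}p\beta\, n^p$ using the mean value theorem together with $n+1\leq 2n$; the isolation contribution equals $-\theta n^{p+1}\leq -\theta n^p$; and the fragmentation contribution is nonpositive because $j^p+(n-j)^p\leq n^p$ for $p\geq 1$ by the superadditivity inequality $(a+b)^p\geq a^p+b^p$. Altogether, $\L[x^p](n)\leq C_p\, n^p$ pointwise with $C_p:=2^{p-1}p\beta-\theta$. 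To promote this to the semigroup bound, I introduce the stopping times $\tau_K:=\inf\{t\geq 0: \langle X_t,\mathbf{1}\rangle>K\}$; since the total active mass is dominated by a Yule process with rate $\beta$, the process is non-explosive and $\tau_K\uparrow \infty$ a.s. Applying Dynkin's formula to the linear functional $\mu\mapsto\langle\mu,[x^p]\rangle$, which is bounded on $[0,\tau_K]$, and inserting the pointwise bound yields
\[
\E_{\delta_n}\bigl[\langle X_{t\wedge\tau_K},[x^p]\rangle\bigr]\leq n^p+C_p\int_0^t \E_{\delta_n}\bigl[\langle X_{s\wedge\tau_K},[x^p]\rangle\bigr]\,\mathrm{d}s.
\]
Gronwall's lemma then gives $\E_{\delta_n}[\langle X_{t\wedge\tau_K},[x^p]\rangle]\leq n^p e^{C_p t}$, and Fatou as $K\to\infty$ concludes (i).

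For (ii), let $f\in\mcl{B}_p$ and set $q:=\max(p,1)$; then $|f(n)|\leq C\, n^q$ for some $C>0$ and all $n\geq 1$, hence (i) gives $M_tf_\pm(n)\leq C\, M_t([x^q])(n)<\infty$, and $M_tf:=M_tf_+-M_tf_-$ is a well-defined element of $\mcl{B}_q\subset\mcl{B}$. For (iii), the semigroup identity $M_{t+s}=M_tM_s$ follows from the Markov and branching properties of $(X_t,Y_t)$ established in Proposition~\ref{prop:ClusterRRT}. The formula \eqref{eq:Generator} is read off directly from the jump rates i)--iii), and a short computation using $\sum_{j\geq 1}\frac{1}{j(j+1)}=1$ shows that $\L$ maps $\mcl{B}_p$ into $\mcl{B}_{p+1}$, so (ii) applies to $\L f$. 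The differential identity \eqref{eq:defGenerator} then follows from the integrated Dynkin formula $M_tf(n)=f(n)+\int_0^t M_s(\L f)(n)\,\mathrm{d}s$ by differentiating in $t$, using continuity of $s\mapsto M_s(\L f)(n)$ (a consequence of (ii) applied to $\L f\in\mcl{B}_{p+1}$ together with dominated convergence).

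The main obstacle is the rigorous passage from the pointwise Lyapunov bound $\L[x^p]\leq C_p[x^p]$ to the integrated exponential bound on $M_t([x^p])$: this requires non-explosion of the measure-valued process and a careful interchange of expectation and integral via the localization by $\tau_K$ together with a Fatou/monotone argument. Once this is secured, parts (ii) and (iii) are routine consequences of (i) and the branching structure.
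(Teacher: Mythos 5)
Your overall strategy is the same as the paper's: the pointwise Lyapunov bound $\L[x^p]\leq (2^{p-1}p\beta-\theta)[x^p]$ (growth via convexity, isolation via $n\geq 1$, fragmentation nonpositive by superadditivity), then localization, Dynkin, Gr\"onwall and Fatou for (i), and the integrated Kolmogorov identity for (iii). However, there are two concrete gaps. First, your localization does not do the job it is supposed to do: you stop at $\tau_K=\inf\{t:\langle X_t,\mathbf{1}\rangle>K\}$, i.e.\ when the \emph{number of clusters} exceeds $K$, and then claim that $\mu\mapsto\langle\mu,[x^p]\rangle$ is bounded on $[0,\tau_K]$. This is false: before $\tau_K$ the cluster count is at most $K$ but the cluster \emph{sizes} are unbounded, so $\langle X_{t\wedge\tau_K},[x^p]\rangle$ is not bounded by any deterministic constant and the jump rates of the stopped process (which are proportional to size) are not bounded either; thus the application of Dynkin's formula is exactly as unjustified as for the unstopped process. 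The fix is to stop on the total population size, $\tau_m=\inf\{t:\langle X_t,[x]\rangle\geq m\}$, as the paper does: then the stopped process lives on a finite state space with bounded rates, $\langle X_{t\wedge\tau_m},[x^p]\rangle\leq m^p$ on $\{t<\tau_m\}$, and Dynkin's formula is legitimate; your Gr\"onwall-then-Fatou conclusion then goes through.

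Second, in (iii) you invoke ``the integrated Dynkin formula $M_tf(n)=f(n)+\int_0^t M_s(\L f)(n)\,\mathrm{d}s$'' for general $f\in\mcl B$ as if it were available, but establishing this identity for polynomially growing $f$ is the substantive part of the proof and does not follow from (i), which only yields a one-sided inequality via Fatou. One must pass to the limit in the localized identity $\E_{\delta_n}[\langle X_{t\wedge\tau_m},f\rangle]=f(n)+\E_{\delta_n}\big[\int_0^{t\wedge\tau_m}\langle X_s,\L f\rangle\,\mathrm{d}s\big]$ on \emph{both} sides, and in particular show that the boundary term $\E_{\delta_n}\big[|\langle X_{t\wedge\tau_m},f\rangle|\Ind{t\geq\tau_m}\big]$ vanishes as $m\to\infty$. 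The paper does this by bounding it by $m^p\,\P_{\delta_n}[\tau_m\leq t]$, dominating the total size by a Yule process $\langle\widetilde X_t,[x]\rangle$ (negative binomial at fixed $t$), and using a Markov inequality with $2p$-th moments so that $m^p\,m^{-2p}\E[\langle\widetilde X_t,[x]\rangle^{2p}]\to 0$; the remaining terms are handled by dominated convergence, which is where (i) enters. Without this step your differentiation of the integrated identity has nothing rigorous to differentiate, so you should either reproduce this argument or supply an equivalent control of the stopped boundary term.
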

\begin{proof}
Linear operator $\L$ yields the generator of the first moment semigroup of the size process.
We focus on $(i)$ and study $\L (\x{p})$ at first . Notice that for $p \geq 1$ and any $x,y>0$, $(x+y)^p \geq x^p + y^p$, 
so the contribution of the fragmentation term in $\L([x^p])$ is negative. Thus we have 
\begin{align*}
\L ([x^p])(n) \leq \beta n ((n+1)^p - n^p) - \theta n^{p+1}.
\end{align*} 
 We then apply convexity for $n \in \N_+$
\begin{align*}
(n+1)^p - n^p \leq p (n+1)^{p-1} \leq p 2^{p-1}n^{p-1}.
\end{align*}
It gives us 
\begin{align}\label{eq:lpBound}
\L ([x^p]) \leq (2^{p-1}p \beta - \theta \x{}) \x{p} \leq (2^{p-1}p \beta - \theta ) \x{p}.
\end{align}
Here we use simply the fact that $\x{}(n)=n \geq 1$ for all $n$ for the isolation term.

The rest of the proof follows classical arguments of localization, see e.g.\ Theorem 1 in  \cite{MT}, and we give here the main lines only. We assume that $X_0=\delta_n$ for any given $n\geq 1.$ 
We consider the stopped process 
$(X^m_t,Y^m_t)_{t \geq 0}$ defined by $X_t^m=X_{t\wedge \tau_m}, Y_t^m= Y_{t\wedge \tau_m}$, where 
$$\tau_m := \inf\{t \geq 0\,  : \,  \bracket{X_t,[x]}\geq m\}.$$
Notice that on the event $\{\tau_m\geq t\}$, we have $\bracket{X_t,[x]}\leq m$ and $\bracket{X_t,[x^p]}\leq m^p.$ The process $(X^m_t,Y^m_t)_{t \geq 0}$ lives on a finite state space and has bounded transition rates. 
 Consider three  functions ${f,g : \N_+\rightarrow \R_+}$
and  ${F: \R^2 \to \R}$ a bounded 
Borel function and recall that $F_{f,g}(\mu,\nu)
=F(\bracket{\mu, f}, \bracket{\nu, g})$. 
We get by  Dynkin's formula $$\E_{\delta_n}[F_{f,g}(X_t^m,Y_t^m)]=\E_{\delta_n}\left[F_{f,g}(X_0^m,Y_0^m)\right]+\E_{\delta_n}\left[\int_0^{t\wedge \tau_m} \mathcal A F_{f,g}(X_s,Y_s) \, \d s \right], $$
where $\mathcal A$   is defined in
\eqref{eq:Fxy}. 
 We apply this equation 
with $F(x,y)=x\wedge m^p$ and $f=[x^p]$ and $g = 0$ to obtain 
\begin{equation}\label{ke}
    \E_{\delta_n}[\langle X_t^m, [x^p]\rangle]= \E_{\delta_n}[\langle X_0^m, [x^p]\rangle]+\E_{\delta_n}\left[\int_0^{t\wedge \tau_m} \langle X_s, \L [x^p]\rangle  \, \d s\right].\end{equation}
Using the above display and $\eqref{eq:lpBound}$ yields 
$$\E_{\delta_n}[\langle X_t^m, [x^p]\rangle]\leq \E_{\delta_n}[\langle X_0^m, [x^p]\rangle]+(2^{p-1}p \beta - \theta)\E_{\delta_n}\left[\int_0^{t\wedge \tau_m} \langle X_s, [x^p]\rangle  \, \d s\right].$$
Since the process $(X,Y)$ is non explosive,  $\tau_m$ tends a.s.\ to infinity as $m$ tends to infinity. Moreover, $\tau_m$ is increasing in $m$.  Applying Fatou's lemma on the left hand side and monotone convergence 
on the right hand side, the above inequality yields 
$$M_t[x^p](n) \leq M_0[x^p](n)+(2^{p-1}p \beta - \theta)\int_0^t M_s[x^p](n) \, \d s.$$
Gr\"onwall's lemma then ensures that $(i)$ and  $(ii)$ are immediate consequences.

For $(iii)$, thanks to the fact $\B_p\subset \B_{p'}\subset \B = \cup_{p > 0} \B_p$ for any $p'\geq p>0$, we only need to show that for any $f\in \B_p$ with $p\geq 1$, we have 
\begin{equation}\label{bf}\E_{\delta_n}[\langle X_t, f\rangle]= \E_{\delta_n}[\langle X_0, f\rangle]+\E_{\delta_n}\left[\int_0^{t} \langle X_s, \L f\rangle  \, \d s\right].\end{equation}
Let $C>0$ be such that $|f(n)|\leq Cn^{p}, |\L f(n)|\leq Cn^{p+1}$ for all $n$. 
Similarly to \eqref{ke}, we have 
\begin{equation}\label{kef}\E_{\delta_n}[\langle X_t^m, f\rangle]= \E_{\delta_n}[\langle X_0^m, f\rangle]+\E_{\delta_n}\left[\int_0^{t\wedge \tau_m} \langle X_s, \L f\rangle \, \d s\right].\end{equation}
We will show the limits of the left and right terms in above display. We first study the left term which is the sum of the two terms below 
\begin{equation}
\label{2terms}\E_{\delta_n}[\langle X_t^m, f\rangle\Ind{t<\tau_m}]\,\,\text{ and }\,\,\E_{\delta_n}[\langle X_t^m, f\rangle\Ind{t\geq \tau_m}].
\end{equation} 
For the first term in \eqref{2terms}, we have 
$$\langle X_t^m, f\rangle\Ind{t<\tau_m}\xrightarrow{m \to \infty}  \langle X_t, f\rangle,\quad\text{ almost surely, } $$
and 
$$|\langle X_t^m, f\rangle\Ind{t<\tau_m}|\leq \langle X_t, |f|\rangle\leq C\langle X_t, [x^p]\rangle.$$
By $(i)$, the last term in the above display has a finite mean. Using dominated convergence  theorem, we obtain 
$$\lim_{m\to\infty}\E_{\delta_n}[\langle X_t^m, f\rangle\Ind{t<\tau_m}]=\E_{\delta_n}[\langle X_t, f\rangle].$$
We prove now that the second term in \eqref{2terms} 
 vanishes. We use a coupling argument below
$$\E_{\delta_n}[|\langle X_t^m, f\rangle\Ind{t\geq \tau_m}|]\leq m^p\P_{\delta_n}[\tau_m\leq t]\leq m^p\P_{\delta_n}[\langle \widetilde X_t, [x]\rangle\geq m],$$
where $(\widetilde X_t)_{t\geq 0}$ is the size process with only growth term (i.e.\ $\beta>0, \theta=\gamma=0$) and $\widetilde X_0=\delta_n$. Then $(\langle \widetilde X_t, [x]\rangle)_{t\geq 0}$ is a Yule process with initial value $n$. Thus for fixed $t$, $\langle \widetilde X_t, [x]\rangle$ follows the negative binomial distribution with parameters $n,1-e^{-\lambda}$.  Using the above display, we get 
$$\E_{\delta_n}[|\langle X_t^m, f\rangle\Ind{t\geq \tau_m}|]\leq m^p\frac{\E_{\delta_n}[(\langle \widetilde X_t, [x]\rangle)^{2p}]}{m^{2p}}\xrightarrow{m \to \infty} 0.$$ Combining the analysis of the two terms in \eqref{2terms},
the term on the left hand side  in  \eqref{kef} converges to
$\E_{\delta_n}[\langle X_t, f\rangle]$ as $m\to\infty.$

Now we turn to the right hand side of \eqref{kef}. Note that 
$$\left|\int_0^{t\wedge \tau_m} \langle X_s, \L f\rangle  \, \d s\right|\leq \int_0^{t} \langle X_s, |\L f|\rangle  \, \d s\leq C\int_0^{t} \langle X_s, [x^{p+1}]\rangle \, \d s,\quad \forall m \in \N_+.$$
Due to $(i)$, the last term has finite mean. Moreover 
${\int_0^{t\wedge \tau_m} \langle X_s, \L f\rangle  \, \d s\longrightarrow \int_0^{t} \langle X_s, \L f\rangle  \, \d s},$
 almost surely as $m\to\infty$.
Applying dominated convergence theorem
yields 
$$\E_{\delta_n}\left[\int_0^{t\wedge \tau_m} \langle X_s, \L f\rangle \, \d s\right]\xrightarrow{m \to \infty} \E_{\delta_n}\left[\int_0^{t} \langle X_s, \L f\rangle  \, \d s\right].$$
Letting  $m\to\infty$ in \eqref{kef} gives 
 \eqref{bf} and  ends the proof.  
\end{proof}

\subsection{Perron's root and eigenvectors}

In this part, we study the asymptotic behavior of the first moment semigroup $(M_t)_{t\geq 0}$ of $(X_t)_{t\geq 0}.$ Under general assumptions extending the Perron--Frobenius theory in finite dimension, the ergodic behavior of the positive semigroup is given by the unique triplet of eigenelements corresponding to the maximal eigenvalue. We refer
in particular to \cite{bansaye2019non,MS} and references therein for general statements and applications to  growth fragmentation.
In this work, we apply a general statement of 
\cite{bansaye2019non} on the ergodic behavior of positive semigroups. It allows us to exploit practical sufficient conditions which are satisfied by our process:  irreducibility properties of the dynamic of the cluster sizes  \eqref{eq:Irreducible}, and the fast splitting or isolation of large clusters which provides a Lyapunov function for a typical cluster \eqref{eq:Lyapunov}. The fact that splitting is very asymmetric (such that one child cluster is close to the parent in size) and the fact that a typical active cluster at a given time has avoided isolation make the  proof 
of the required lower bound  delicate.  It  involves a subtle compensation of fragmentation and isolation terms.  Moreover, we will show the exponential speed of convergence of the semigroup. This will
be useful  in particular for the proof of the a.s. convergences in the next section.  

In what follows, notation ``$f \leq g$'' means the point-wise comparison for functions (including constants). In this case, we say $f$ is upper bounded by $g$ or $g$ is lower bounded by $f$.  We also use $f\sim g$ to denote the fact that $\lim_{n\to\infty}\frac{f(n)}{g(n)}=1$.  We prove at first Lemma~\ref{lem:HarrisV}, the key technical ingredient  for  Proposition~\ref{Perroneigen}, the latter is the main result of this subsection. The following space of sublinear functions is useful to control the harmonic function of the semigroup:
\begin{equation}\label{eq:Sublinear}
\begin{split}
    \mcl S := \Big\{f:\N_+ \to &[1,\infty), \text{ such that } \\
    &\,\, a) \, f \text{ is increasing and } \lim_{n\to \infty} f(n) = \infty \\
    &\,\, b) \, f \text{ is sublinear } \frac{f(n+1)}{n+1} \leq \frac{f(n)}{n}, \text{ and } C_f := \sum_{j=1}^{\infty} \frac{f(j)}{j(j+1)} < \infty\Big\}.
\end{split}    
\end{equation}

\begin{lemma}\label{lem:HarrisV}
There exists a positive function $\psi$ defined on $ \N_+$,  such that
\begin{align*}
{0<\inf_{\N_+} \psi<\sup_{\N_+}  \psi \leq 1}    
\end{align*}
 and for every ${V \in \mcl{S} \cup \{\x{p}: p \geq 1\}}$:
\begin{enumerate}[label=(\roman*)]
    \item There exist real constants $a < b$ and $\zeta>0$ 
 such that    
\begin{align}\label{eq:Lyapunov}
\L V \leq a V + \zeta \psi, \qquad   b \psi \leq  \L \psi \leq \xi \psi.
\end{align}

    \item For $R$ large enough,  the set   $K = \{ x\in \mathbb N_+ :   \psi(x) \geq V(x)/R\}$ is a non-empty finite set and
 for any $x,y \in K$ and $t_0>0$,
\begin{align}\label{eq:Irreducible}
 M_{t_0}(x,y) > 0.
\end{align}
\end{enumerate}
\end{lemma}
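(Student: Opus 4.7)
The aim is a bounded $\psi$ with $\L\psi$ bounded, exploiting the cancellation of the $O(n)$ contributions from fragmentation and isolation. For a bounded $\psi$ with $\psi(n)\to\psi_\infty$, a direct calculation gives
$$S_\psi(n):=\sum_{j=1}^{n-1}\frac{\psi(j)+\psi(n-j)-\psi(n)}{j(j+1)}\xrightarrow[n\to\infty]{} A:=\sum_{j\geq 1}\frac{\psi(j)}{j(j+1)},$$
so that $\L\psi(n)\sim n(\gamma A-\theta\psi_\infty)$ at leading order. I enforce the cancellation $\gamma A=\theta\psi_\infty$: fix $\epsilon\in(0,\min(1,\gamma/\theta))$ and an integer $N_0>\gamma/\theta$; set $\psi(n)=\epsilon$ for $n\geq N_0$; and choose $\psi(1),\dots,\psi(N_0-1)\in(0,1]$, not all equal, such that
$$\sum_{j=1}^{N_0-1}\frac{\psi(j)}{j(j+1)}=\epsilon\Bigl(\frac{\theta}{\gamma}-\frac{1}{N_0}\Bigr).$$
The right-hand side lies in $(0,1-1/N_0]$, so this is solvable, and by choosing a non-uniform distribution of the mass across $j<N_0$ I obtain $0<\inf\psi<\sup\psi\leq 1$.

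\textbf{Bounds on $\L\psi$.} For $n\geq 2N_0$, I split $S_\psi(n)$ into the three sub-ranges $j\in[1,N_0-1]$, $[N_0,n-N_0]$, $[n-N_0+1,n-1]$: the first contributes $\sum_{j<N_0}\psi(j)/(j(j+1))$, the second $\epsilon(1/N_0-1/(n-N_0+1))$, the third $O(1/n^2)$. This gives $S_\psi(n)=A-\epsilon/n+O(1/n^2)$, and since $\gamma A=\theta\epsilon$ and the growth term $\beta n(\psi(n+1)-\psi(n))$ vanishes for $n\geq N_0$, one obtains $\L\psi(n)=-\gamma\epsilon+O(1/n)$. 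On the complement $\{1,\dots,2N_0-1\}$, $\L\psi$ is a finite sum of bounded terms. Hence $\L\psi$ is globally bounded, and since $\inf\psi>0$ the quotient $\L\psi/\psi$ is bounded above and below; taking $b:=\inf_n\L\psi(n)/\psi(n)$ and $\xi:=\sup_n\L\psi(n)/\psi(n)$, both finite, yields $b\psi\leq\L\psi\leq\xi\psi$.

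\textbf{Lyapunov inequality for $V$.} For $V=\x{p}$ with $p\geq 1$, Lemma~\ref{lemma:duhamel}(i) gives $\L V(n)\leq(2^{p-1}p\beta-\theta n)V(n)$, which is $\leq aV(n)$ outside a finite set for any prescribed $a\in\R$; on that set, $\L V$ is bounded and the term $\zeta\psi$ (with $\zeta$ large enough, using $\inf\psi>0$) absorbs the excess. For $V\in\mcl{S}$, sublinearity implies $\beta n(V(n+1)-V(n))\leq\beta V(n)$ and subadditivity gives $0\leq V(j)+V(n-j)-V(n)\leq V(j)$, so $\gamma nS_V(n)\leq\gamma nC_V$. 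Therefore $\L V(n)\leq(\beta-\theta n)V(n)+\gamma C_Vn$; since $V(n)\to\infty$, for $n$ large enough $\theta nV(n)/2\geq\gamma C_Vn+(\beta+|a|)V(n)$, whence $\L V(n)\leq aV(n)$, and once again $\zeta\psi$ handles the bounded remainder on the finite set. In both cases $a$ can be made arbitrarily negative, so the constraint $a<b$ is satisfied.

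\textbf{Property (ii) and main obstacle.} Since $V$ is increasing to $\infty$ and $\psi\leq 1$, $K=\{x:V(x)/R\leq\psi(x)\}\subseteq\{V\leq R\}$ is finite; for $R\geq V(1)/\psi(1)$, $1\in K$ makes it non-empty. For the irreducibility $M_{t_0}(x,y)>0$, I exhibit an explicit positive-probability path in time $t_0$: chain $y-x$ growth events if $y>x$, use a single fragmentation $x\mapsto(x-y,y)$ of rate $\gamma x/(y(y+1))>0$ if $y<x$, and use the event ``no jump on $[0,t_0]$'' if $y=x$. Since all three jump families have positive density on any interval via independent exponential clocks, any such finite sequence of transitions is realized within $t_0$ with positive probability. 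The hard part will be the construction of $\psi$: naive candidates (constant, polynomial, exponential) all make $\L\psi$ blow up like $\pm n$, because fragmentation produces mass at small sizes at rate $\gamma nA$ while isolation removes mass at rate $\theta n\psi_\infty$, and these terms do not generically balance. The fine-tuned choice above, with the balance $\gamma A=\theta\psi_\infty$, provides the exact cancellation that makes $\L\psi$ bounded and the whole Lyapunov structure work for every $(\beta,\theta,\gamma)\in\R_+^3$.
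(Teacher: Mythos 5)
Your construction is correct but uses a genuinely different $\psi$ from the paper. The paper picks the geometric $\psi(n)=A-(A-B)q^{n-1}$ and tunes $q,A,B$ so that an explicit series constant $C_q^{A,B}$ equals $\theta/\gamma$; you take an eventually-constant $\psi$ (equal to $\epsilon$ beyond $N_0$) and instead solve a finite linear constraint $\sum_{j<N_0}\psi(j)/(j(j+1))=\epsilon(\theta/\gamma-1/N_0)$. Both encode the same balance $\gamma\sum_j\psi(j)/(j(j+1))=\theta\psi_\infty$, which is precisely the cancellation that makes the $O(n)$ parts of $\L\psi$ vanish, and your expansion $S_\psi(n)=A-\epsilon/n+O(1/n^2)$, hence $\L\psi(n)=-\gamma\epsilon+O(1/n)$, is correct. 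Your version is more elementary, and as a side benefit it actually delivers the strict inequality $\inf\psi<\sup\psi$ in the case $\gamma=\theta$, where the paper's own choice $A=B=1$ gives $\psi\equiv 1$ and hence $\inf\psi=\sup\psi$, in tension with the statement of the lemma. The drift estimates for $V\in\mcl{S}\cup\{\x{p}\}$ and the absorption of the bounded remainder by $\zeta\psi$ follow the same lines as the paper, and your explicit path-by-path construction for irreducibility is a valid expansion of the paper's one-sentence assertion.

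One small inaccuracy in the $\psi$ construction should be repaired. You assert that $\epsilon<\min(1,\gamma/\theta)$ ensures the right-hand side $\epsilon(\theta/\gamma-1/N_0)$ lies in $(0,1-1/N_0]$, but when $\theta>\gamma$ this only gives $\epsilon(\theta/\gamma-1/N_0)<1-\gamma/(\theta N_0)$, and $1-\gamma/(\theta N_0)>1-1/N_0$ precisely when $\gamma<\theta$, so the upper bound does not follow. The fix is trivial since you only need some admissible $\epsilon$: replace your condition with $\epsilon\leq\gamma(1-1/N_0)/\theta$ (say), which gives $\epsilon(\theta/\gamma-1/N_0)\leq\epsilon\theta/\gamma\leq 1-1/N_0$ for every $(\beta,\theta,\gamma)$. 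With that adjustment the construction is airtight.
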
 
\begin{proof}
 To find the Lyapunov-type functions in $(i)$, the main difficulty is to find the lower bound of $\L \psi$ in \eqref{eq:Lyapunov}. As we can see in \eqref{eq:Generator}, the isolation term $- \theta n f(n)$ cannot be bounded from below uniformly in $n$ by $f$ times a constant. The  strategy is to use the growth term and fragmentation term to compensate the isolation term.

\textit{Step 1: Construction of $\psi$ - setup.} We set
\begin{align}\label{psi}
\psi(n)  := A - (A-B)q^{n-1},
\end{align}
with $A,B \in (0,\infty)$ and $q \in (0,1)$ to be chosen later. Then $\psi$ is bounded between $A$ and $B$ and $\lim_{n \to \infty}\psi(n) = A$. 
We decompose $\L \psi$ as follows
\begin{multline}\label{eq:psi1}
\L \psi(n) = \underbrace{\beta n (\psi(n+1)-\psi(n))}_{\mathbf{I}} - \underbrace{(\theta + \gamma)n\psi(n)}_{\mathbf{II}} + \gamma \psi(n) \\ + \underbrace{\gamma n\sum_{j=1}^{n-1} \frac{1}{j(j+1)} \Ll(\psi(j) + \psi(n-j)\Rr)}_{\mathbf{III}}.
\end{multline}
Firstly 
\begin{equation}\label{c1}
\vert  \mathbf{I} \vert = \vert \beta(A-B) n (q^{n-1} - q^{n}) \vert \leq C_1 \psi(n),
\end{equation}
for some $C_1>0$, since $\psi\geq \min\{A,B\}>0$.
Secondly, we observe that
\begin{align*}
\lim_{n \to \infty} \frac{\sum_{j=1}^{n-1} \Ll(\frac{1}{j(j+1)} \psi(j)\Rr)}{\psi(n)}
&= \frac{\sum_{j=1}^{\infty} \frac{1}{j(j+1)} \Ll( A - (A-B)q^{j-1}  \Rr)   }{A}\\
&=1-\Ll(1-\frac{B}{A}\Rr)q^{-1}\Big(1+(q^{-1} - 1) \ln(1-q)\Big) =: C_q^{A,B}.
\end{align*}
Here we used $\sum_{j=1}^{\infty} \frac{1}{j(j+1)}=1$ and $\sum_{j=1}^\infty \frac{q^{j-1}}{j(j+1)}=q^{-1}\Big(1+(q^{-1} - 1) \ln(1-q)\Big)$. Moreover, 
\begin{align*}
\lim_{n \to \infty} \frac{\sum_{j=1}^{n-1} \frac{1}{j(j+1)}  \psi(n-j)}{\psi(n)} &= \frac{\lim_{n \to \infty}\sum_{j=1}^{n-1} \frac{1}{j(j+1)}\Ll( A - (A-B)q^{n-j-1}  \Rr)}{A} = 1,
\end{align*}
since 
$\sum_{j=1}^{n-1} \frac{1}{j(j+1)}  q^{n-j-1}$
goes to $0$ as $n\rightarrow \infty$. 
Combining the above two displays, we obtain 
$$\mathbf{III} \sim (1+C_q^{A,B}) \gamma n \psi(n),\quad \text{ as } n\to\infty.$$

\smallskip

\textit{Step 2: Construction of $\psi$ - choice of parameters.}
We add and subtract the term ${(1+C_q^{A,B}) \gamma n \psi(n)}$ 
and reformulate \eqref{eq:psi1} as 
\begin{equation*}
\L \psi(n) = \gamma \psi(n) + \underbrace{(C_q^{A,B}\gamma  - \theta)n\psi(n)}_{\mathbf{II'}}  + \underbrace{R_1(n,q) + R_2(n,q) + \beta n (\psi(n+1)-\psi(n))}_{\mathbf{III'}},
\end{equation*}
where the term $\mathbf{III'}$ is the remainder term and 
\begin{equation}\label{eq:psiRemainder}
\begin{split}
R_1(n,q) &:= \gamma n \Ll(\sum_{j=1}^{n-1} \Ll(\frac{1}{j(j+1)} \psi(j)\Rr) -  C_q^{A,B} \psi(n)\Rr),\\
R_2(n,q) &:= \gamma n \Ll(\sum_{j=1}^{n-1} \Ll(\frac{1}{j(j+1)} \psi(n-j)\Rr) - \psi(n)\Rr).
\end{split}
\end{equation}
We choose  $q, A, B$ such that the term $\mathbf{II'}$ is $0$ (i.e.\ $C_q^{A,B}  = \theta/\gamma$) and $0<A,B\leq 1$ (then $0<\inf_{\N_+} \psi<\sup_{\N_+}  \psi \leq 1$). More precisely, we distinguish three cases:
\begin{itemize}
\item If $\gamma = \theta$, we can choose $A=B=1$.
\item If $\gamma > \theta$, we can choose $q$  close to $1$ such that ${q^{-1}\Big(1+(q^{-1} - 1) \ln(1-q)\Big) \in \Ll(1 - \frac{\theta}{\gamma}, 1\Rr)}$ and then choose $0<B<A\leq 1$ such that $C_q^{A,B}  = \theta/\gamma$.
\item If $\gamma < \theta$, it suffices to fix some $q \in (0,1)$ and then choose $0 < A < B \leq 1$  such that $C_q^{A,B}  = \theta/\gamma$.
\end{itemize}
Besides, the convergences in Step 1 ensure that there exists $C_2 \in (0, \infty)$ such that
\begin{align}\label{eq:psiTerm2}
\sup_{n \in \N_+}\Ll\{ \Ll\vert \frac{R_1(n,q)}{\psi(n)} \Rr\vert + \Ll\vert\frac{R_2(n,q)}{\psi(n)}\Rr\vert \Rr\} \leq C_2.
\end{align}
Together with \eqref{c1},
we obtain that
\begin{align*}
(\gamma - C_1 - C_2) \psi \leq \L \psi \leq (\gamma + C_1 + C_2) \psi.
\end{align*}
 This guarantees that the last two inequalities
 of  \eqref{eq:Lyapunov} hold with the following choice of parameters:
\begin{align*}
b:= \gamma - C_1 - C_2, \qquad \xi:= \gamma + C_1 + C_2.
\end{align*}

\smallskip

\textit{Step 3: Find $a, \zeta$.} 
For $V = \x{p}$ with $p\geq 1$, we  pick a real number $a$ such that $$ a < \min\{ 2^{p-1}p \beta - \theta, b\}.$$
Using the first inequality in \eqref{eq:lpBound} and distinguishing if $2^{p-1}p \beta - \theta n$ is larger than $a$ or not, 
 we can write
\begin{align*}
\L [x^p](n) 
&\leq  a n^p + (2^{p-1}p \beta - \theta n  - a) n^p\Ind{2^{p-1}p \beta - \theta n   \geq a}\\
&\leq a [x^p](n) + \zeta \psi(n),
\end{align*}
with $\zeta\in (0,\infty)$. The above result holds because $\psi$ is bounded and 
 there exist only finitely many  $n$ satisfying ${2^{p-1}p \beta - \theta n \geq a}$. This ends the proof of $(i)$ for $p\geq 1$.
 Besides,  for any large $R$,  the set $K $ is finite and non-empty. The combination of growth, fragmentation and isolation ensures the irreducibility of $(X_t)_{t\geq 0}$ which allows one to end the proof of  $(ii)$. So both $(i)$ and $(ii)$ are proved for $V=[x^p]$ with $p\geq 1.$

Now we treat the case $V \in \mcl S$ and verify the condition $(i)$  and $(ii)$ with  $\psi$ given in Step 1.  For condition $(ii)$, we only have to show that $K$ is non-empty and finite, which is straightforward to see since $V$ is increasing to infinity while $\psi$ is a bounded function. For condition $(i)$, we only have to show the first half of \eqref{eq:Lyapunov}. We calculate $\L V$ and use the decomposition in \eqref{eq:Generator}. Note that $\frac{V(n+1)}{n+1} \leq \frac{V(n)}{n}$ implies for the growth term that
\begin{align*}
    \mathbf{I} = \beta n (V(n+1) - V(n)) \leq \beta n \Ll(\frac{n+1}{n} V(n) - V(n)\Rr) \leq \beta V(n).
\end{align*}
Then the fact that  $V$ increases
and $C_V$ in \eqref{eq:Sublinear} is finite yields for the fragmentation term  $\mathbf{III}$: 
\begin{align*}
  \gamma (n-1)\sum_{j=1}^{n-1} \frac{n}{n-1} \frac{1}{j(j+1)} \Ll(V(j) + V(n-j) - V(n)\Rr) \leq \gamma n\sum_{j=1}^{n-1} \frac{V(j)}{j(j+1)} < C_V \gamma n.
\end{align*}
The above two displays entail that 
\begin{align}\label{eq:LV}
    \L V(n) \leq (\beta - \theta n) V(n) +  C_V \gamma n. 
\end{align}
Now we pick a real number $a$ such that  $a < \min\{\beta - \theta, b\}.$
Using $\lim_{n \to \infty}V(n) = \infty$, we notice that
$E := \{n \in \N_+ :  (\beta - \theta n) V(n) +  C_V \gamma n > a V(n)\}$
is a non-empty finite set. Then distinguishing the cases whether $n$ belongs to $E$ or not in  \eqref{eq:LV} yields
\begin{align*}
    \L V(n) 
    &\leq a V(n) \Ind{n \in E^c} + ( (\beta - \theta n) V(n) +  C_V \gamma n) \Ind{n \in E} \\
    &= a V(n) + ((\beta - a - \theta n) V(n) +  C_V \gamma n) \Ind{n \in E} \\
   &\leq a V(n) + \zeta \psi(n). 
\end{align*}
Here the constant $\zeta$ is defined by 
\begin{align*}
    \zeta := \max_{n \in E} \frac{(\beta - a - \theta n) V(n) +  C_V \gamma n}{\psi(n)}\in(0,\infty).
\end{align*}
We conclude that both conditions $(i)$ and $(ii)$ are verified. The whole proof is complete and finished.
\end{proof}

Now we come to the main result of this subsection which gives the existence of eigenelements and asymptotic behavior of the semigroup, based on Theorem 2.1 in \cite{bansaye2019non}.

\begin{proposition}\label{Perroneigen}
There exists a  unique  triplet $(\lambda, \pi, h)$ where $\lambda \in \R$ and ${\pi=(\pi(n))_{n\in \mathbb N_+}}$ is a positive vector of probability distribution
and  $h : \N_+\rightarrow  (0,\infty)$ is a positive function, such that for all $t\geq 0$,
$$ \pi M_t = e^{\lambda t} \pi, \qquad M_t h =  e^{\lambda t} h,$$
 and $0<\inf_{n\geq 1} h(n) \leq  \sup_{n\geq 1} h(n) <\infty$  and $\sum_{n\geq 1}\pi(n) = \sum_{n\geq 1}  \pi(n) h(n) =1.$

Besides, for every $p > 0$  there exists $C, \omega > 0$ such that for any $n,m\geq 1$, $t\geq 0$,
\begin{align}\label{eq:Spectral}
  \big\vert e^{-\lambda t} M_t(n,m) -  h(n) \pi(m) \big\vert  &\leq C n^{p} m^{-p} e^{-\omega t}, \qquad \sum_{n\geq 1} \pi(n) n^p <\infty.
\end{align}
\end{proposition}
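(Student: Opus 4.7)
The plan is to deduce Proposition~\ref{Perroneigen} from the general ergodic theorem for positive semigroups proved in \cite[Theorem~2.1]{bansaye2019non}, whose two standing hypotheses, a Lyapunov drift inequality with a spectral gap and an irreducibility condition on a small set, are precisely what Lemma~\ref{lem:HarrisV} supplies for $\L$.

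First I would apply the general theorem with $V(n)=n^p$ for each fixed $p\geq 1$. Feeding in the pair $(V,\psi)$ produced by Lemma~\ref{lem:HarrisV}, this yields a Perron triplet $(\lambda_p,h_p,\pi_p)$ satisfying the eigen-relations $\pi_p M_t=e^{\lambda_p t}\pi_p$ and $M_t h_p=e^{\lambda_p t}h_p$, the two-sided bounds $\underline c\,\psi\leq h_p\leq\overline c\,V$ and the moment estimate $\sum_n\pi_p(n)n^p<\infty$, together with an exponential convergence in the $V$-weighted supremum norm of the form
$$\sup_n\frac{\bigl|e^{-\lambda_p t}M_tf(n)-h_p(n)\pi_p(f)\bigr|}{V(n)}\;\leq\; Ce^{-\omega t}\sup_n\frac{|f(n)|}{V(n)},\qquad \omega>0.$$
Specializing to $f=\mathbf 1_m$ and using $\pi_p(\mathbf 1_m)=\pi_p(m)$ together with $\|\mathbf 1_m/V\|_\infty=m^{-p}$ produces the pointwise estimate~\eqref{eq:Spectral}. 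Uniqueness of the Perron triplet under normalization, combined with the inclusion $\mcl B_p\subset\mcl B_{p'}$ for $p\leq p'$, then forces $(\lambda_p,h_p,\pi_p)$ to be independent of $p\geq 1$; writing the common triplet $(\lambda,h,\pi)$, the moment bound extends to every $p\geq 1$, and hence to every $p>0$ since $\pi$ is a probability measure.

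The main obstacle I foresee is upgrading the upper bound on $h$ from polynomial ($h\leq\overline c V$) to uniform ($\sup h<\infty$); the lower bound $h\geq\underline c\,\psi\geq\underline c\,\inf_{\N_+}\psi>0$ is immediate since $\psi$ is bounded away from zero by Lemma~\ref{lem:HarrisV}. The natural strategy is to re-run the first step with a sublinear Lyapunov function $V\in\mcl S$ (for example $V(n)=\log(1+n)$, which lies in $\mcl S$ after checking sublinearity and summability of $V(j)/(j(j+1))$), which Lemma~\ref{lem:HarrisV} explicitly allows, and invoke uniqueness to conclude $h\leq\overline c_V V$ for any slowly growing $V\in\mcl S$. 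A bootstrap on the eigenvalue equation $\L h=\lambda h$, rewritten as
$$\beta n\,h(n{+}1)=\bigl[(\beta+\theta)n+\gamma(n-1)+\lambda\bigr]h(n)-\gamma n\sum_{j=1}^{n-1}\frac{h(j)+h(n-j)}{j(j+1)},$$
then implements the probabilistic heuristic highlighted in the introduction: large clusters are isolated at rate $\theta n$ before they can fragment into many small clusters, because the splitting kernel $1/(j(j+1))$ concentrates on small~$j$. Quantitatively, once $h$ is known to grow only sublinearly the fragmentation sum on the right is $O(n)$, and matching this against the $\theta n\,h(n)$ contribution on the left forces $h$ to be uniformly bounded.

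Once $h$ is bounded we have $\sum_n\pi(n)h(n)<\infty$, so the triplet can be normalized by $\sum\pi(n)=\sum\pi(n)h(n)=1$. Together with the exponential convergence estimate from the first step, this delivers the full statement of Proposition~\ref{Perroneigen}.
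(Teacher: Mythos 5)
Your overall route is the same as the paper's: verify the drift and irreducibility hypotheses of \cite[Theorem 2.1]{bansaye2019non} via Lemma~\ref{lem:HarrisV}, read off the Perron triplet and the $V$-weighted exponential ergodicity, and specialize $f=\mathbf 1_m$, $\mu=\delta_n$ to get \eqref{eq:Spectral}. Your lower bound on $h$ (via $h\gtrsim\psi\geq\inf\psi>0$ from the general theorem) is a legitimate variant of the paper's probabilistic one, which instead shows directly that $M_1h(n)\geq c\,h(1)$ because a size-$n$ cluster produces a size-one fragment within unit time with a size-uniform lower-bounded probability. Where you deviate is the upper bound on $h$. You correctly identify that Lemma~\ref{lem:HarrisV} also certifies the hypotheses with any $V\in\mcl S$, hence $h\leq\overline c_V\,V$ for any such $V$; but instead of stopping there, you then try to bootstrap ``$h$ sublinear'' into ``$h$ bounded'' directly from the recursion $\L h=\lambda h$. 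That bootstrap is both unnecessary and underdeveloped: since $V\in\mcl S$ can be taken to grow arbitrarily slowly (and the paper says exactly this, citing \cite[Lemma 3.4]{bansaye2019non}), a function dominated by \emph{every} $V\in\mcl S$ is automatically bounded — if $h$ were unbounded, construct $V\in\mcl S$ with $V=o(h)$ along a subsequence and contradict $h\lesssim V$. Your one-line claim that the recursion ``forces $h$ bounded'' is not obviously true as stated: the coefficient of $h(n)$ in your rearranged identity tends to $1+(\theta+\gamma)/\beta>1$, so the conclusion hinges on a cancellation between the $\theta n\,h(n)$ term and the fragmentation sum that would need a genuine estimate, not a heuristic. (Also, $\log(1+n)$ fails the requirement $V\geq 1$ at $n=1$; use $1+\log n$ or similar.) With the bootstrap dropped and replaced by the sweep-over-$\mcl S$ argument, your proof coincides with the paper's.
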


\begin{proof}  Lemma \ref{lem:HarrisV} together with Lemma \ref{lemma:duhamel} (iii)
ensures that the semigroup $M$   satisfies
the drift and irreducibility conditions  given in \cite{bansaye2019non} (Propositions 2.2 and 2.3 therein).  More precisely, these conditions
are met with $V=[x^p]$ for $p > 0$ and $\varphi=\psi$ (defined in \eqref{psi}), while  $\psi \leq V$ is guaranteed by the fact that $\psi\leq 1$. Using these conditions, 
we can apply Theorem 2.1 in \cite{bansaye2019non}, which yields the proposition, except the boundedness of $h$. In particular,  \eqref{eq:Spectral} is obtained  by specifying the initial condition $\mu=\delta_n$ and using the test function $1_m$. 

We prove at first that $h$ is upper bounded.
Lemma  3.4  in \cite{bansaye2019non} ensures
that $h$ is upper bounded by $V$ times a constant (i.e.\ $h\lesssim V$ in their notation). Adding that
 Lemma~\ref{lem:HarrisV} guarantees that  
we can pick a $V \in \mcl{S}$ that increases arbitrarily slowly, we obtain that
 $h$ is upper bounded.
Finally, we justify that $h$ is lower bounded. Indeed $h(n)=e^{\lambda}M_1h(n)\geq ch(1)$ for all $n\geq 1$, where $c>0$. This is because, due to \eqref{eq:Split}, the probability that a cluster of size $n$ produces (by fragmentation) a cluster of size one before unit time $1$ and that this latter stays unchanged in the remaining time within the unit time  is lower bounded by a positive constant independent of its size $n$. 
\end{proof}
Equation \eqref{eq:Spectral}  ensures that for any $f\in \B_p,$ 
  \begin{align}\label{eq:fSpectral}
 \big\vert e^{-\lambda t} M_tf(n) -  h(n) \bracket{\pi,f} \big\vert  &\leq C n^{p+2} \parallel f\parallel_p e^{-\omega t},
\end{align}
where ${\parallel f\parallel_p :={\sum_{m\geq 1}m^{-(p+2)}|f(m)|<\infty}}$. This result will be useful later. 

The fact that the eigenvector $h$ is lower and upper bounded in $(0,\infty)$ ensures that the impact of  the size of the initial cluster on the first order approximation of the profile 
remains bounded. 
Furthermore, we expect that restricting the set of test functions to bounded functions, the current result \eqref{eq:Spectral} can be enhanced as uniform ergodic convergence.
Indeed, we may  apply
 Theorem  3.5 in \cite{BCG} with
$\nu=\delta_1$, using again that  large clusters produce with high probability  clusters of small sizes at fragmentation and  are fast isolated. The remaining difficulty lies in controlling uniformly $M_t{\bf 1}(n)/M_t{\bf 1}(1)$ in time and size. \\



At this point one may want to apply \cite{AsmussenHering} to
prove strong convergence using the asymptotic behavior of the first moment semigroup $(M_t)_{t\geq 0}$. But \cite{AsmussenHering} requires stronger assumptions than what is obtained in \eqref{eq:Spectral}, in particular in terms of 
the stationary distribution $\pi$. 
Besides,  we are interested in finer and more quantitative estimates, with    motivations coming from inference
and epidemiology. We thus  follow  another approach via $L^2$ estimates and control of fluctuations.

\subsection{$L^2$ martingale}
Using the first moment semigroup, we can compute the second moment of $\langle X_t, f\rangle$ for $f\in \B$, which consists in the so-called \textit{formula for forks}  or \textit{many-to-two formula}, see e.g. \cite{BDMT, Marguet} and references therein. The idea is to use the most recent common ancestor of two individuals  to decouple their values.
\begin{lemma}
\label{lem:L2}
For any $x\in \N_+$ and   $f\in\B$, we have
 \begin{multline*}
 \E_{\delta_x}\left[ \langle X_t,f\rangle^2 \right] \\
 = M_t(f^2)(x) +2\int_0^t \sum_{n\geq 1} M_s(x, n) \Ll( \sum_{ 1\leq j \leq n-1} \kappa(n,j)  M_{t-s}f(j)M_{t-s}f(n-j)  \Rr) \,   \d s,
 \end{multline*} 
 where $\kappa(n,j) = \frac{\gamma n}{j(j+1)}$ is the rate at which a cluster of size $n$ breaks into two clusters of sizes $(n-j)$ (first child) and $j$ (second child).
\end{lemma}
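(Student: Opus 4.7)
My plan is to prove the identity via the genealogical decomposition of pairs of clusters, which is the standard route to many-to-two formulas for branching processes. Starting from the UHN labeling of Section~\ref{UHN}, I would write
\[
\langle X_t,f\rangle^2 \;=\; \sum_{u\in\mcl U_t} f(X_t^u)^2 \;+\; \sum_{\substack{u,v\in\mcl U_t\\ u\neq v}} f(X_t^u)\,f(X_t^v),
\]
so that the diagonal sum equals $\langle X_t,f^2\rangle$, whose expectation under $\P_{\delta_x}$ is precisely $M_t(f^2)(x)$ by the definition \eqref{mtf} of the first moment semigroup. Only the off-diagonal sum needs work, and its expectation will produce the integral term in the claimed identity.

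For the off-diagonal sum I would decompose according to the most recent common ancestor (MRCA) of the pair in the genealogy of clusters. Every unordered pair $\{u,v\}$ with $u\neq v$ in $\mcl U_t$ has a unique MRCA $w=u\wedge v\in\mcl U$, and the fragmentation that produced the labels $w1$ and $w2$ happened at some time $s\in(0,t)$ to a cluster of size $n$, with rate $\kappa(n,j)=\gamma n/(j(j+1))$ of splitting into a first child of size $n-j$ and a second child of size $j$. By the branching property (Proposition~\ref{prop:ClusterRRT}), once $w1$ and $w2$ are born, their subsequent size subprocesses are independent and distributed as the size process started from $\delta_{n-j}$ and $\delta_j$, respectively. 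Hence the conditional expected contribution of all pairs whose MRCA is that specific fragmentation event is exactly $M_{t-s}f(n-j)\,M_{t-s}f(j)$. Integrating this against the intensity $M_s(x,n)\,\kappa(n,j)\,\d s$ of such events, summing over $n$ and over $j\in\{1,\dots,n-1\}$, and finally multiplying by $2$ to pass from unordered pairs to ordered pairs $u\neq v$, produces the integral term of the claimed formula.

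To make this rigorous I would realize the dynamics of Section~\ref{sec:Pre} through independent Poisson random measures driving the growth, isolation and fragmentation events on each cluster, so that the strong Markov property at the MRCA fragmentation time and the independence of the two post-split subtrees are immediate; an equivalent route is to compute $\mcl A g_f$ for $g_f(\mu)=\langle\mu,f\rangle^2$ using \eqref{eq:Fxy} and verify by differentiation that the proposed right-hand side satisfies the Dynkin equation for $\phi_t(x):=\E_{\delta_x}[\langle X_t,f\rangle^2]$ with initial value $f(x)^2$. The main obstacle is integrability: for general $f\in\mcl B$ the quantities $M_s(x,n)$, $M_{t-s}f(j)\,M_{t-s}f(n-j)$ and $\kappa(n,j)$ only have polynomial control in the sizes, and even $\E_{\delta_x}[\langle X_t,f\rangle^2]<\infty$ needs justification. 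I would first establish the identity for $f$ of finite support, where bounded rates make all Fubini exchanges and the MRCA conditioning routine, and then extend to $f\in\mcl B$ by dominated convergence using the first moment bound of Lemma~\ref{lemma:duhamel}(i) together with the localization scheme $\tau_m=\inf\{t:\langle X_t,[x]\rangle\geq m\}$ already used in the proof of that lemma.
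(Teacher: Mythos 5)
Your proposal follows essentially the same route as the paper's proof: the diagonal/off-diagonal split of $\langle X_t,f\rangle^2$, decomposition of the cross term by the most-recent-common-ancestor fragmentation event, the strong Markov and branching properties applied at that event, and integration against the first-moment intensity $M_s(x,n)\,\d s$, with $\kappa(n,j)=\gamma n/(j(j+1))$ arising as the branching rate $\gamma(n-1)$ times the splitting probability $n/((n-1)j(j+1))$. Your closing remarks on Poisson constructions and on extending from finitely supported $f$ by localization are sensible technical safeguards that the paper treats more lightly, but they do not change the substance of the argument.
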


\begin{remark}Combining this identity with the estimates on the semigroup $M$ obtained in the previous subsection gives $L^2$ convergence of the empirical measure of clusters, with exponential speed and size dependency; see next section.\end{remark}
\begin{proof}
Recalling notations in Section \ref{UHN},
we have
$$\langle X_t,f\rangle=\sum_{u\in \mcl{U}_t} f(X_t^u).$$
Recall also that, for any $u, v \in \mathcal{U}$,   $u\wedge v$ is the label of the most recent common ancestor of $u$ and $v$, and $\mcl U_t(u)$ as the active clusters at $t$ issued from $u$. We first notice that
\begin{equation}\label{eq:mart1}
\begin{split}
 \langle X_t,f\rangle^2 =\sum_{u,v\in \mcl{U}_t} f(X_t^u)f(X_t^v) &=\sum_{u\in \mcl{U}_t} f^2(X_t^u)+\sum_{\substack{   w \in \mcl U }} \sum_{\substack{u,v \in \mcl U_t, \\ u \neq v,  u \wedge v = w}} f(X_t^u)f(X_t^v) \\
 &=\sum_{u\in \mcl{U}_t} f^2(X_t^u)+\sum_{\substack{   w \in \mcl U }} \Ind{b(w)<t}  I_t(w),
\end{split}
\end{equation}
where for any $w \in \mathcal U$, $b(w)$ is the time at which the cluster labeled by $w$  branches  (i.e. the time  when it splits into two clusters, labeled $w1$ and $w2$; potentially infinite if that does not happen due to isolation) and
\begin{align*}
I_t(w)= \sum_{\substack{i,j \in \{1,2\},\, i\ne j\\
 u\in \mcl U_t(wi),\, v\in  \mcl U_t(wj)  } } f(X_t^u)f(X_t^v)=2
 \left( \sum_{u\in \mcl U_t(w1)} f(X_t^u) \, \times\,
\sum_{v\in \mcl U_t(w2)} f(X_t^v)\right).
\end{align*}
Thus $I_t(w)$ is the cross term between the active clusters on the two genealogical subtrees rooted at $w1$ and $w2$.

Concerning the equation \eqref{eq:mart1}, we have firstly
\begin{align*}
\E_{\delta_x}\left[ \sum_{u\in \mcl{U}_t} f^2(X_t^u)\right]= M_t(f^2)(x).
\end{align*}
Secondly, we deal  with $\E_{\delta_x}\left[\sum_{  w \in \mathcal U } \Ind{b(w)<t}\,  I_t(w)\right]$.
For any $w\in \mathcal U$ and for any $i\in\{1,2\}$, we use strong Markov property to get
\begin{align*}
\Ind{b(w)<t}\, \E_{\delta_x}\left[\sum_{u\in \mcl U_t(wi)} f(X_t^u) \, \Big\vert  \, b(w), X_{b(w)}^{wi}\right]=\Ind{b(w)<t}\, M_{t-b(w)}f(X_{b(w)}^{wi}).
\end{align*}  
For any  $w \in \mathcal U$, the branching property then yields 
\begin{align*}
\Ind{b(w)<t} 
\E_{\delta_x}\left[I_t(w) \, \big\vert  \mathcal F_{b(w)}, b(w) \right]
  = 2 \Ind{b(w)<t} \, M_{t-b(w)}f(X_{b(w)}^{w1})M_{t-b(w)}f(X_{b(w)}^{w2}).
\end{align*}
 Combining  these identities, we obtain
 \begin{align*}
 \E_{\delta_x}\left[\sum_{ w\in \mathcal U}
 \Ind{b(w)<t}\, I_t(w) \right] &= 2 \E_{\delta_x}\left[  \sum_{w\in \mathcal{U}} \Ind{b(w)<t}  \, M_{t-b(w)}f(X_{b(w)}^{w1})M_{t-b(w)}f(X_{b(w)}^{w2})   \right]\\
 &=2\E_{\delta_x}\left[  \sum_{w\in \mathcal{U}} \Ind{b(w)<t}  \,  g (X^{w}_{b(w)-},b(w))\right],
\end{align*} 
where we introduce
\begin{align*}
g(X^{w}_{b(w)-},b(w)) :=  \E_{\delta_x}\left[ M_{t-b(w)} f(X_{b(w)}^{w1})M_{t-b(w)}f(X_{b(w)}^{w2}) \, \big\vert X^{w}_{b(w)-},b(w)  \right].
\end{align*} 
This function involves the fragmentation event and can be written explicitly by recalling that, when a  cluster of size $n$ splits, the probability that the size of the first child is $(n-j)$ and the second child is $j$ is $n/((n-1)\cdot j\cdot (j+1))$. That is, we can obtain
\begin{align}\label{eq:defg}
g(n,s) &= \sum_{1\leq j\leq n-1} \frac{n}{n-1} \frac{1}{j(j+1)} M_{t-s} f(j)M_{t-s}f(n-j).\
\end{align}
Adding that  the branching rate of a cluster of size $n$ is $\gamma(n-1)$, we obtain
\begin{align*}
&\E_{\delta_x}\left[\sum_{w \in \mathcal U} \Ind{b(w)<t}\,  g(X^{w}_{b(w)-},b(w))\right]\\ 
&=\int_{0}^t  \sum_{w \in \mathcal U, n\geq 1} g(n,s) \P_{\delta_x}[ w\in \mathcal U_{s-},\,  X^{w}_{b(w)-}=n,  \, b(w) \in \d s]\\
&=\int_{0}^t  \sum_{w \in \mathcal U, n\geq 1} g(n,s) \P_{\delta_x}[ w\in \mathcal U_{s-}, X^{w}_{b(w)-}=n]\, \gamma(n-1) \d s \\
&=\int_{0}^t  \sum_{n\geq 1} g(n,s)  \gamma(n-1)   M_s(x,n) \, \d s.   
\end{align*}
This equation and \eqref{eq:defg} give us the expression of $\kappa$. The proof is thus completed.
\end{proof}

With the help of this $L^2$ expression, we can deal with the martingale associated to the harmonic function $h$.
\begin{proposition}\label{prop:Mt}
The process $(\MM_t)_{t \geq 0}$ defined as 
\begin{align}\label{eq:Mt}
\MM_t = e^{-\lambda t} \bracket{X_t, h},
\end{align}
is a non-negative  martingale, which converges almost surely to a non-negative finite random variable $W$ as $t$ tends to infinity. Moreover, if $\lambda > 0$, $(\MM_t)_{t \geq 0}$  converges in the $L^2$ norm to $W$.
\end{proposition}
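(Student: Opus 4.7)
\medskip
\noindent\textbf{Proof proposal.}
My plan is to handle the three claims in order. For the martingale property, I would invoke the branching structure of $(X_t)_{t\ge 0}$ established in Proposition~\ref{prop:ClusterRRT}: conditionally on $\mathcal F_t$, the active clusters evolve as independent copies of the size process started from their current sizes. Combined with the right-eigenvector identity $M_s h = e^{\lambda s} h$ from Proposition~\ref{Perroneigen}, this gives
\begin{equation*}
\mathbb E\bigl[\langle X_{t+s},h\rangle\mid \mathcal F_t\bigr]
= \sum_{u\in \mathcal U_t} M_s h(X_t^u)
= e^{\lambda s}\sum_{u\in \mathcal U_t} h(X_t^u)
= e^{\lambda s}\langle X_t,h\rangle,
\end{equation*}
so $\mathbb E[\mathcal M_{t+s}\mid\mathcal F_t]=\mathcal M_t$. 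Non-negativity comes from $h>0$, and the almost-sure convergence to a finite limit $W\ge 0$ follows from Doob's convergence theorem for non-negative martingales.

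For the $L^2$ convergence under $\lambda>0$, the strategy is to show $\sup_t \mathbb E[\mathcal M_t^2]<\infty$ by applying the many-to-two formula (Lemma~\ref{lem:L2}) with $f=h$. Since $h$ is bounded (Proposition~\ref{Perroneigen}), say $h\le H$, and $\sum_{j\ge 1} 1/(j(j+1))=1$, one gets
\begin{equation*}
\sum_{j=1}^{n-1}\kappa(n,j)\,h(j)h(n-j)\;\le\;\gamma H^2\,n.
\end{equation*}
Using $M_{t-s}h=e^{\lambda(t-s)}h$ throughout the integrand, Lemma~\ref{lem:L2} then yields
\begin{equation*}
\mathbb E_{\delta_x}\bigl[\langle X_t,h\rangle^2\bigr]
\;\le\; M_t(h^2)(x)+2\gamma H^2\,e^{2\lambda t}\int_0^t e^{-2\lambda s}\,M_s[x](x)\,\mathrm ds.
\end{equation*}

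The two remaining ingredients come from the spectral estimate \eqref{eq:fSpectral} in Proposition~\ref{Perroneigen}: applied to $f=h^2$ (a bounded function) and to $f=[x]$ (which belongs to $\mathcal B_1$ with $\|[x]\|_1=\sum_m m^{-2}<\infty$), they give $M_t(h^2)(x)\le C_x\,e^{\lambda t}$ and $M_s[x](x)\le C'_x\,e^{\lambda s}$. Plugging these bounds in, and using that $\lambda>0$ so that $\int_0^t e^{-\lambda s}\,\mathrm ds \le 1/\lambda$, we get
\begin{equation*}
\mathbb E[\mathcal M_t^2]=e^{-2\lambda t}\mathbb E_{\delta_x}\bigl[\langle X_t,h\rangle^2\bigr]\;\le\; C_x e^{-\lambda t}+\tfrac{2\gamma H^2 C'_x}{\lambda},
\end{equation*}
which is bounded uniformly in $t$. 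An $L^2$-bounded martingale is uniformly integrable and converges in $L^2$ to its almost-sure limit $W$.

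The only real obstacle is ensuring that the quadratic term $\int_0^t e^{-2\lambda s} M_s[x](x)\,\mathrm ds$ stays bounded, which is precisely where the positivity of $\lambda$ is used: the integrand decays like $e^{-\lambda s}$ thanks to the Malthusian rate of $M_s[x]$, making the integral summable. This is also what makes the result fail in the critical and subcritical regimes, and is consistent with the fact that $W=0$ a.s.\ there, as is implicit in Theorem~\ref{thm:Malthusian}. Everything else reduces to bookkeeping of the eigenvector bounds and the many-to-two identity.
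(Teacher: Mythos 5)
Your proposal is correct and follows essentially the same route as the paper: branching property plus $M_sh=e^{\lambda s}h$ for the martingale property, Doob's theorem for a.s.\ convergence, and the many-to-two formula with $f=h$, the boundedness of $h$, and the spectral estimate \eqref{eq:fSpectral} to show $\sup_t\E[\MM_t^2]<\infty$ when $\lambda>0$. The only (harmless) difference is bookkeeping: you sum over $n$ first and apply \eqref{eq:fSpectral} to $\x{}$, whereas the paper applies it to $M_s(1,n)$ via $h(1)\pi(n)$ before summing, using $\sum_n n\,\pi(n)<\infty$.
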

\begin{proof}
The martingale property is classical and the proof is given for the sake of completeness. Recall the notation $\mathcal U_t$ and $X_t^u$ introduced in Section \ref{UHN}. For any $u\in \mathcal U_t,$ recall that $\mathcal U_{t+s}(u)$ is the set of labels of all the clusters active at time $(t+s)$ issued from the cluster labeled by $u$ active at time $t$.
Then we have
\begin{align*}
\E\Ll[\MM_{t+s} \,\vert \mcl F_t\Rr] & =  e^{- \lambda(t+s)} \E\Ll[\sum_{u \in \mcl U_{t+s}}h(X^u_{t+s}) \, \Big\vert \mcl F_t  \Rr] \\
& = e^{- \lambda(t+s)} \sum_{u \in \mcl U_t}\E_{\delta_{X^u_t}}\Ll[\sum_{v \in \mcl U_{t+s}(u)}h(X^{v}_{t+s}) \, \Big\vert \mcl F_t  \Rr] \\
&= e^{- \lambda(t+s)} \sum_{u \in \mcl U_t} M_s h(X^u_t) =\MM_t,
\end{align*}
since $M_s h= e^{\lambda s}h$.
As $\MM$ is non-negative,  it converges almost surely to a finite random variable. 

Let us now prove the $L^2$ convergence. We apply Lemma \ref{lem:L2}  with $x=1$ and obtain 
\begin{align*}
 &\E\left[ \langle X_t,h\rangle^2 \right] \\
 &= M_t(h^2)(1) + 2\int_0^t \sum_{n\geq 1} M_s(1, n) \Ll( \sum_{ 1\leq j \leq n-1} \kappa(n,j)  M_{t-s}h(j)M_{t-s}h(n-j)  \Rr) \,   \d s \nonumber\\
 &= M_t(h^2)(1) +  2 e^{2\lambda t} J_t,
\end{align*} 
where
\begin{align*}
J_t = \int_0^t \sum_{n\geq 1} e^{-2\lambda s} M_s(1, n) \Ll( \sum_{ 1\leq j \leq n-1} \kappa(n,j) h(j)h(n-j)  \Rr) \,  \d s.
\end{align*} 
Using that
$\kappa(n,j) = \gamma n/(j(j+1))$  for all $n\geq 1$ and $1\leq j \leq n-1$
and that 
 $h$ is bounded given in Proposition~\ref{Perroneigen}, we get
 that $\sum_{ 1\leq j \leq n-1} \kappa(n,j) h(j)h(n-j)$ grows at most linearly with $n$.
Moreover  we can apply
 \eqref{eq:fSpectral} to control the gap
 between 
 $e^{-\lambda s }M_s(1, n)$ and $h(1)\pi(n)$.
Combining these estimates ensures that  for any $p>2$,  there exists $C>0$ such that
\begin{align*}
0\leq J_t \leq C \int_0^t e^{-\lambda s}\sum_{n\geq 1} n \gamma(h(1)\pi(n)+ n^{-p}) \d s, \quad \forall n\geq 1, t\geq 0,
\end{align*}
which is uniformly upper bounded for all $n\geq 1, t\geq 0$. Adding that $\pi(n)$ decreases to $0$ faster than $n^{-3}$ ensures
that 
$\sup_{t\geq 0 }J_t<\infty.$
Finally
$$\E\left[(\MM_t)^2\right]=e^{-2\lambda t}\E\left[ \langle X_t,h\rangle^2 \right]=e^{-2\lambda t}M_t(h^2)(1)+ J_t,$$
and we apply \eqref{eq:fSpectral} to conclude that $\sup_{t\geq 0}\E\left[(\MM_t)^2\right]<\infty.$ Then by the martingale convergence theorem, we obtain that $\E[W^2]<\infty$ and $(\MM_t)_{t\geq 0}$ converges in the $L^2$ norm to $W$. 
\end{proof}

\begin{remark}\label{rmk:W}
In fact, Proposition~\ref{prop:Mt}  holds for  $(X_t)_{t \geq 0}$ under $\P_{\delta_n}$ for any $n\geq 1$, with the limit $W$ depending on $n$. The proof is essentially the same as for $n=1$.   We state here the result under $\P=\P_{\delta_1}$ so that the limit $W$ is consistent with Theorem~\ref{thm:LLN} and Corollary~\ref{cor:LLNY}.
\end{remark}

\subsection{Proof of Theorem~\ref{thm:Malthusian}}

With the help of Proposition~\ref{Perroneigen} and Proposition~\ref{prop:Mt}, we are now ready to prove our  Theorem~\ref{thm:Malthusian}.
 
\begin{proof}[Proof of Theorem~\ref{thm:Malthusian}]
We  use the classical notation $\liminf$ and $\limsup$ respectively for the limit inferior and limit superior of a sequence defined on discrete or continuous time.  We notice that 
$\E[\vert \XX_t \vert] = \E[\bracket{X_t, 1}] = \sum_{j=1}^{\infty}M_t(1,j)$
and  we apply \eqref{eq:fSpectral} with $f \equiv \mathbf{1}$ (constant function) and  $n = p = 1$. This ensures
 that ${\lim_{t\rightarrow\infty}  \log(\E[| \XX_t|])/t = \lambda}$. 

To study the  limit of $ \log(\E[| \YY_t|])/t$, we use   Kolmogorov's equation. 
More precisely,  following the localization argument in the proof of Lemma \ref{lemma:duhamel} $(ii)-(iii)$, we check that  ${F_{f,g}(\mu, \nu) = \bracket{\nu, 1}}$ belongs to the domain of the extended generator of $(X_t,Y_t)_{t\geq 0}$ (see \eqref{eq:Fxy}). 
Then we get
\begin{align*}
\E[| \YY_t|] = \E[\bracket{Y_t, 1}] = \int_0^t \E[\bracket{X_s, \theta \x{}}] \, \d s = \int_0^t M_s (\theta \x{})(1) \, \d s,
\end{align*}
and we conclude using  \eqref{eq:fSpectral}.

Lastly, we study the the survival probability $\P[\tau = \infty]$. 
\begin{itemize}
    \item In the subcritical phase $\lambda < 0$,   \eqref{eq:fSpectral} and the classical first moment estimate prove that extinction is almost sure.
    \item In the supercritical phase $\lambda > 0$,  we use the $L^2$ martingale 
of Proposition~\ref{prop:Mt} and  the optional stopping theorem to get
\begin{align*}
h(1) = \E\Ll[\lim_{t \to \infty} e^{-\lambda (t \wedge \tau)}\bracket{X_{t \wedge \tau}, h}\Rr] = \E\Ll[ W \Ind{\tau = \infty}\Rr]. 
\end{align*}
Adding that $h > 0$ from Proposition~\ref{Perroneigen}  implies that $\P[\tau = \infty] > 0$ and $\P[W>0]>0$. 
    \item In the critical phase $\lambda = 0$, we first observe that
the probability of extinction 
   starting from one cluster, within a unit time, is greater than a positive constant (uniformly
 with respect to the cluster size that we started with).
 Besides, $\liminf_{t\rightarrow \infty}
 | \XX_t|<\infty$ a.s.\ since Fatou's lemma ensures that
$$
    \E[\liminf_{t\rightarrow \infty}\vert \XX_t\vert] = \E[\liminf_{t\rightarrow \infty}\bracket{X_t,\mathbf{1}}] \leq \lim_{t\rightarrow \infty} \E[\bracket{X_t, h}]/\inf_{n\geq 1}h(n)=h(1)/\inf_{n\geq 1}h(n)<\infty.
$$
  This ensures that extinction occurs a.s.\ in finite time by a classical argument for Markov processes with accessible absorbing points. Indeed,
for any $K\geq 1$, on the event ${\liminf_{t\rightarrow \infty} | \XX_t| \leq  K}$, extinction occurs a.s. since we can construct an infinite sequence of stopping times $T_n$ (separated at least by a unit time)
 such that $| \XX_{T_n}|\leq K$ and for each $n$, extinction occurs with a positive (lower bounded) probability during $[T_n,T_{n}+1]$. 
\end{itemize}
 \end{proof}

\section{Strong convergences}\label{sec:Limit}
The Perron's root $\lambda \in \R$ and associated eigenelements have been characterized in Proposition \ref{Perroneigen}. The sign of $\lambda$ determines if the first moment semigroup goes to $0$ or infinity. We turn now to trajectorial results and find first an equivalent statement of being extinct (Kesten--Stigum). 
Then we focus on the supercritical regime $\lambda>0$ and prove strong law of large numbers for the distribution of clusters.

\subsection{Kesten--Stigum limit theorem}

A  fundamental and classical question is whether $\{W>0\}$ coincides with survival event $\{\tau = \infty\}$ or not. This is one part of the Kesten--Stigum theorem in branching processes, see e.g. \cite{kesten1966limit, kurtz1997conceptual}. In our case, the $L^2$ computation ensures that $\P[W>0]>0$ 
and we will get a positive answer to the question posed. 
\begin{proposition}\label{prop:KS} Assume $\lambda>0$. Then  $\P[W>0]>0$, and
$\{ W >0 \}=\{\tau = \infty \} \, \text{a.s.}$. 
\end{proposition}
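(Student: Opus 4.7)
The plan is to split the claim and identify $\{\tau=\infty\}\subseteq\{W>0\}$ as the main difficulty. The reverse inclusion $\{W>0\}\subseteq\{\tau=\infty\}$ is immediate: on $\{\tau<\infty\}$, $X_t\equiv 0$ for $t\geq \tau$, so $\MM_t=0$ eventually and $W=0$. The positivity $\P[W>0]>0$ then follows from the $L^2$ convergence of Proposition~\ref{prop:Mt}, since $\E[W]=\MM_0=h(1)>0$.

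For the forward inclusion, set $p(n):=\P_{\delta_n}[W=0]$ and $\xi(n):=1-p(n)$. The branching structure (Proposition~\ref{prop:ClusterRRT}) combined with the additive decomposition $W=e^{-\lambda t}\sum_{u\in\mcl{U}_t}W^u$, where conditionally on $\mcl{F}_t$ the $W^u$ are independent limits of the subtree martingales issued from $\delta_{X^u_t}$, yields the functional identity
\begin{equation*}
p(n)=\E_{\delta_n}\Big[\prod_{u\in\mcl{U}_t}p(X^u_t)\Big],\qquad t\geq 0.
\end{equation*}
Bounding $\prod_u (1-\xi(X^u_t))\leq \exp(-\xi_0|\mcl{U}_t|)$ with $\xi_0:=\inf_{n\geq 1} \xi(n)$ reduces the proof to establishing \emph{(i)} $\xi_0>0$ and \emph{(ii)} $|\mcl{U}_t|\to\infty$ a.s.\ on $\{\tau=\infty\}$; under these, dominated convergence gives $p(n)\leq\P_{\delta_n}[\tau<\infty]$, which together with the trivial reverse inequality yields $\{W=0\}=\{\tau<\infty\}$ a.s. For \emph{(i)} I would condition on the first event from $\delta_n$ with $n\geq 2$: the splitting law of Proposition~\ref{prop:Split} ensures this is a fragmentation producing a size-$1$ child with probability at least $c_0:=\gamma/(2(\beta+\gamma+\theta))>0$, uniformly in $n$; since the split-off size-$1$ subtree is independent and distributed as under $\P_{\delta_1}$, this gives $\xi(n)\geq c_0\,\xi(1)$ for $n\geq 2$, and $\xi(1)>0$ was already secured.

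The main obstacle is \emph{(ii)}, because the cluster-size state space is infinite and finite-state recurrence does not apply directly. The plan is to argue by contradiction: if $\P[\liminf_{t\to\infty}|\mcl{U}_t|\leq K,\, \tau=\infty]>0$ for some $K$, then the process revisits states with at most $K$ active clusters infinitely often. The crucial estimate is then a uniform-in-size lower bound on the probability of extinction within bounded time starting from such a state, exploiting that the isolation rate $n\theta$ of a cluster of size $n$ grows linearly in $n$ while fragmentation at rate $(n-1)\gamma$ simultaneously breaks large clusters into smaller pieces accessible to isolation, in the spirit of the Lyapunov estimates of Lemma~\ref{lem:HarrisV}. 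Combined with the strong Markov property at the return times, this would produce a Borel--Cantelli contradiction with $\{\tau=\infty\}$.
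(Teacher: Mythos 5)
Your proposal is correct, and it takes a genuinely different route through the hard inclusion $\{\tau=\infty\}\subseteq\{W>0\}$ than the paper does. The paper's scheme is: (a) show $\vert\XX_t\vert\to\infty$ a.s.\ on survival; (b) upgrade this to $\limsup_{t}X_t(1)=\infty$ a.s.\ on survival; (c) at the stopping time $\tau_N$ when $N$ size-$1$ clusters are present, exploit that their subtrees are i.i.d.\ copies of the original process under $\P_{\delta_1}$ to get $\P[W=0,\tau=\infty]\leq(\P[W=0])^N\to 0$. You replace (b)--(c) with the fixed-time product identity $p(n)=\E_{\delta_n}\bigl[\prod_{u\in\mcl U_t}p(X^u_t)\bigr]$ and the uniform lower bound $\xi_0=\inf_{n}(1-p(n))>0$, giving $p(n)\leq\E_{\delta_n}[e^{-\xi_0\vert\mcl U_t\vert}]\xrightarrow{t\to\infty}\P_{\delta_n}[\tau<\infty]$ by dominated convergence, and hence $\{W=0\}=\{\tau<\infty\}$ a.s.\ after combining with the easy inclusion. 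Both routes rest on ingredient (a); what your version buys is that you do not need the proliferation of size-$1$ clusters, at the cost of establishing $\xi_0>0$, and your proof of that lemma is valid: the first event is a fragmentation producing a size-$1$ piece with probability at least $\gamma/(2(\beta+\theta+\gamma))$ uniformly in $n\geq 2$, and the split-off piece carries an independent copy of $W$ under $\P_{\delta_1}$, which is positive with probability $\xi(1)>0$. One small remark on your sketch of (ii): the appeal to the Lyapunov machinery of Lemma~\ref{lem:HarrisV} is an overcomplication. The paper's argument is elementary and uniform in size: the probability that a cluster of any size is isolated before any other event happens to it within a unit time is bounded below by $q:=\frac{\theta}{\beta+\theta+\gamma}\bigl(1-e^{-(\beta+\theta)}\bigr)>0$, so on any return to $\{\vert\mcl U_t\vert\leq K\}$ the independent clusters go extinct within unit time with probability at least $q^K$, and Borel--Cantelli along return times (spaced at least one unit apart) gives the contradiction; no drift estimates are required.
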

\begin{proof} 
The fact that $\P[ W > 0] > 0$ 
comes from  the $L^2$ martingale convergence in Proposition~\ref{prop:Mt}. 
Besides, it is straightforward to see that
$\{ W >0 \} \subset \{\tau = \infty \}$. Thus if $\P[ W > 0] = \P[\tau = \infty ]$, the proof is done.  
The  lines of the proof are classical, even though the sizes of clusters being unbounded requires some specific arguments. 

First, we use the fact that any cluster can be isolated (before any other event happening to it) during a unit time interval, with the isolation probability lower bounded by a positive value for all sizes. As a result, the number of clusters has to tend to infinity to survive:
$$\{\tau=\infty\} =\{ \lim_{t \to \infty} \vert \XX_t \vert = \infty\} \quad \text{a.s.} .$$
Second, we deduce from the above result that the number of clusters of size $1$ tends to infinity on the survival event.
Indeed, during a unit time interval, 
clusters of size one have a positive probability to stay unchanged and other clusters have a positive probability to create (by fragmentation) one cluster of size one, and the latter probability is lower bounded uniformly with respect to the size $n\geq 2$. 
By independence of clusters and Markov inequality, this ensures that
\begin{align*}
    \limsup_{t\rightarrow \infty} X_t(1)=\infty \quad \text{a.s.} \text{ on }\{\tau = \infty\}.
\end{align*}
On the event $\{\tau=\infty\}$, we can thus  define a sequence of stopping times for $N\geq 1$  
\begin{align*}
\tau_N := \inf \{t :  X_t(1) \geq N\}.
\end{align*}
We obtain for $t \geq \tau_N$ 
\begin{align*}
e^{- \lambda t}\bracket{X_t, h} &\geq e^{- \lambda \tau_N} \sum_{u \in \mcl A_N} e^{-\lambda (t - \tau_N)} \sum_{v \in \mcl U_t(u)} h(X^v_t), 
\end{align*}
 where $\mcl A_N := \{ u \in \mcl U_{\tau_N}: X^u_{\tau_N} = 1\}.$
By Proposition~\ref{prop:Mt},  ${e^{-\lambda (t - \tau_N)} \sum_{v \in \mcl U_t(u)} h(X^v_t)}$  converges to a non-negative
 random variable denoted by $W(u)$ which is equal in law to $W$. Besides,
 $\{W(u)\}_{u \in \mcl A_N}$ are i.i.d.\ random variables. Thus we have 
\begin{align*}
\P[W = 0, \tau = \infty] 
&\leq \P[W = 0, \tau_N < \infty] \\
&\leq \P[\{\tau_N < \infty\} \cap \{W(u)=0, \forall u \in\mcl A_N\}] \leq  (\P[W = 0])^N.
\end{align*}
As a result, $\P[W = 0]=1$ or $\P[W = 0, \tau = \infty]=0$ (by letting $N \to \infty$). Only the latter is possible since we know that $\P[W>0]>0.$ The proof is thus finished. 
\end{proof}

\subsection{Strong law of large numbers for the size process of active clusters}\label{subsec:LimitSize}
In this part, we prove Theorem~\ref{thm:LLN} using the estimates of the first moment semigroup, the $L^2$ estimates and the martingale associated to the harmonic function. The  $L^2$ estimates ensure weak convergence, and the convergence speed obtained  entails strong convergence of subsequences. 
The strategy is then to  control fluctuations to prove the strong convergence along $t \in \R_+$. To that purpose, we follow the idea from \cite{athreya1968some}.  We divide the proof  into three steps.
\begin{proof}[Proof of Theorem~\ref{thm:LLN}]
\textit{Step 1: $L^2$ convergence.}
We prove first the $L^2$ convergence of $e^{- \lambda t}\bracket{X_t, f}$ to $W \bracket{\pi, f}$ for any $f\in \B_p$ with $p>0$. We develop the difference as follows 
\begin{multline}\label{eq:Functional}
 e^{- \lambda t}\bracket{X_t, f} -  W \bracket{\pi, f} \\
= \underbrace{e^{- \lambda t}\bracket{X_t, f} - e^{- \lambda t}\bracket{X_t, h}\bracket{\pi, f}}_{{\mathbf{I}}} + \underbrace{e^{- \lambda t}\bracket{X_t, h}\bracket{\pi, f} - W \bracket{\pi, f}}_{{\mathbf{II}}}. 
\end{multline}
The second term $\mathbf{II}$ is nothing but $(\MM_t - W)\bracket{\pi, f}$, which converges in $L^2$ to $0$ by Proposition~\ref{prop:Mt}. We only have to prove the $L^2$ convergence of the term $\mathbf{I}$ to $0$. Denoting  $g:= f - \bracket{\pi, f}h$,   Lemma~\ref{lem:L2} yields
\begin{align}\label{g2j}
e^{2\lambda t}\E[\vert \text{$\mathbf{I}$} \vert^2]&= \E\left[\bracket{X_t, g}^2 \right] = M_t(g^2)(1) + J_t,
\end{align}
where 
\begin{align*}
J_t := 2\int_0^t \sum_{n\geq 1} M_s(1, n) \Ll( \sum_{ 1\leq j \leq n-1} \kappa(n,j)  M_{t-s}g(j)M_{t-s}g(n-j)  \Rr) \,   \d s.
\end{align*}
Recall that $\parallel f \parallel_p = \sum_{m\geq 1}\vert f (m)\vert m^{-(p+2)}\in(-\infty,\infty)$. Observe that 
$g\in  \mcl B_p$ and  let ${p'> 2p+8}$. By \eqref{eq:fSpectral}, there exists $C'>0$ such that for any $n\in \N_+$ and $s,t\geq 0,$ 
\begin{align*}
\vert e^{-\lambda t }M_tg(n)-h(n)\bracket{\pi,g} \vert &\leq C'n^{p+2} \parallel g \parallel_p
e^{-wt}, \\
\vert e^{-\lambda s }M_s(1,n)-h(1)\pi(n) \vert &\leq C'n^{-p'}e^{-ws}.
\end{align*} 
Since $\bracket{\pi, g}=0$ and $\kappa(n,j)\leq \gamma n$, using the above two displays, there exists $C_1>0$ such that 
 \begin{align*}
 \vert J_t \vert 
 &\leq C_1\parallel g \parallel_p e^{2(\lambda-\omega)t}\int_0^t e^{(2\omega-\lambda)s} \sum_{n\geq 1}n^{2p+6}(h(1)\pi(n)+n^{-p'}) \, \d s.
 \end{align*}
Using the second statement in  \eqref{eq:Spectral} and $p'>2p+8$, the sum $\sum_{n\geq 1}n^{2p+6}(h(1)\pi(n)+n^{-p'})$ in the above display is finite and there exists $C_2>0$ such that 
 $$|J_{t}|\leq C_2\parallel g \parallel_p e^{2(\lambda -\omega)t}\int_0^te^{(2\omega-\lambda)s} \d s,\quad \forall n\geq 1, t\geq 0.$$
Moreover, by \eqref{eq:fSpectral}, $e^{-2\lambda t}M_t(g^2)(1)\leq \bracket{\pi,g^2}h(1)e^{-\lambda t}+C\parallel g^2 \parallel_pe^{-(\lambda+\omega) t}, $ for any $t\geq 0.$ Plugging  in these estimates to \eqref{g2j}, we see that there exists $C_3>0$ such that 
\begin{equation}\label{eq:L2Error}
\E[\vert \mathbf{I} \vert^2]\leq C_3\left(\bracket{\pi,g^2}+\parallel g^2 \parallel_p+\parallel g \parallel_p\right)te^{-(\lambda \wedge 2\omega)t}, \quad \forall t\geq 0.
\end{equation}
Note that $C_1,C_2,C_3$ do not depend on $f$ and $t$.  Then step 1 is finished.


\begin{remark}
A byproduct of \eqref{eq:L2Error} and Proposition~\ref{prop:Mt} is that, for the case $\lambda > 0$ there exists a constant $C_0 > 0$ and an exponent $\sigma \in (0, \lambda)$, such that for any $f \in \mcl B_p$, 
\begin{align}\label{eq:XL2Bound}
\E\Ll[ \bracket{X_t, f}^2\Rr] \leq  C_0 e^{2\lambda t}\Big(\vert \bracket{\pi, f}\vert^2 + \left(\bracket{\pi,g^2}+\parallel g^2 \parallel_p+\parallel g \parallel_p\right)  e^{ - \sigma t}\Big),\quad \forall t\geq 0, 
\end{align}
with the notation $g = f - \bracket{\pi, f}h$. 
\end{remark}

\smallskip

\textit{Step 2: Almost sure convergence for one type.} The main idea is to extend an elegant argument from \cite{athreya1968some} to our countable-type branching process. 

First, we  establish  an almost sure  convergence for a discrete scheme, using the speed of convergence obtained from the $L^2$ estimates.
 We can pick a step size $\Delta > 0$  and apply the decomposition \eqref{eq:Functional}. Then the martingale part $\mathbf{II}$ converges to $0$ almost surely, and for the term $\mathbf{I}$,   \eqref{eq:L2Error} yields
\begin{align}
\E\Ll[ \vert e^{- \lambda k \Delta}\bracket{X_{k\Delta}, f} - e^{- \lambda k \Delta}\bracket{X_{k\Delta }, h}\bracket{\pi, f} \vert^2 \Rr] \leq C k \Delta e^{- (\lambda \wedge 2 \omega) k\Delta}, \quad \forall k\geq 0.
\end{align}
By Borel--Cantelli lemma, we get
\begin{equation}\label{nconv}
{e^{- \lambda k \Delta}\bracket{X_{k\Delta}, f}  \xrightarrow{k \to \infty}  W \bracket{\pi, f} },  \qquad \text{ almost surely. }
\end{equation} 
Recall that on the event $\{W=0\}$, by Proposition~\ref{prop:KS}, the extinction occurs a.s.. So 
we focus on the event $\{W > 0\}$.
Let us first prove that
\begin{align}\label{eq:psXn}
e^{-\lambda t} X_t(n) \xrightarrow{t\to\infty} W\pi(n), \qquad \text{almost surely}.
\end{align}
Given the almost sure convergence in discrete times, we need to control the fluctuations in the intervals $[k\Delta, (k+1)\Delta)$. A nice observation in \cite{athreya1968some} is that we only need to prove the following sufficient  (and necessary) condition
\begin{align}\label{eq:psKeyLem}
\liminf  \limits_{t \to \infty} e^{-\lambda t} X_t(n) \geq W\pi(n), \qquad \text{almost surely for all } n\geq 1.
\end{align}
We first show that \eqref{eq:psKeyLem}  implies  \eqref{eq:psXn} using
 that the martingale convergence controls the dissipation of mass. To this purpose, for any fixed $n\geq 1$, consider any sequence of (random) times $(t_k)_{k \in \N_+}$ such that
 $\limsup  \limits_{t \to \infty} e^{-\lambda t} X_t(n)=\lim_{k \to \infty} e^{-\lambda t_k} X_{t_k}(n)$.
  Proposition~\ref{prop:Mt} and Fatou's lemma and 
  \eqref{eq:psKeyLem} ensure 
 \begin{equation}\label{eq:limsupTrick}
     \begin{split}
          \limsup  \limits_{t \to \infty} e^{-\lambda t} X_t(n)h(n)&=\lim_{k\rightarrow\infty} \left(\sum_{i\geq 1} e^{-\lambda t_k} X_{t_k}(i)h(i) -\sum_{i\geq 1, i\ne n} e^{-\lambda t_k} X_{t_k}(i)h(i)\right)\\
 & \leq W-\sum_{i\geq 1, i\ne n} \liminf  \limits_{k \to \infty} e^{-\lambda t_k} X_{t_k}(i)h(i)\\
 & \leq W -\sum_{i\geq 1, i\ne n} W\pi(i)h(i)=W\pi(n)h(n).
     \end{split}
 \end{equation}
Then together with \eqref{eq:psKeyLem} we obtain  \eqref{eq:psXn}.

We need now to prove \eqref{eq:psKeyLem} 
following the argument in \cite{athreya1968some}. Recall  $\Delta > 0$ is the time step size. The proof relies on the following lower bound: 
\begin{align}\label{eq:psFluctuation}
\forall t\in [k\Delta, (k+1)\Delta), \qquad    X_t(n) \geq X_{k\Delta}(n) - N_{ k,\Delta}(n),
\end{align}
where $N_{k,\Delta}(n)$ is the number of active clusters of size $n$ at time  $k\Delta$  that will encounter at least one event within $(k\Delta, (k+1)\Delta)$. Indeed, to prove \eqref{eq:psKeyLem}, we can find a lower bound for $\liminf_{t\to\infty}e^{-\lambda t}X_t(n)$ using the above display. 
More precisely,
\begin{align*}
\liminf  \limits_{t \to \infty} e^{-\lambda t} X_t(n)\geq \liminf  \limits_{k \to \infty} e^{-\lambda (k+1)\Delta } X_{k\Delta} (n) -
\limsup \limits_{k \to \infty} e^{-\lambda k \Delta } N_{ k,\Delta}(n).
\end{align*}
Using \eqref{nconv} for the first term on the right hand side, we obtain
$$\liminf  \limits_{t \to \infty} e^{-\lambda t} X_t(n)\geq  e^{-\lambda \Delta } \pi(n) W -
\limsup \limits_{k \to \infty} e^{-\lambda k \Delta } N_{ k,\Delta}(n).$$
It suffices to prove that  $\lim_{k \to \infty} e^{-\lambda k\Delta } N_{ k,\Delta}(n)=0$ a.s. and then let $\Delta$ go to $0$.
We introduce  
\begin{align*}
D_k =D_{\Delta, n,k,\epsilon}
:=\left\{N_{k,\Delta}(n)>\epsilon X_{k\Delta }(n), \quad X_{k\Delta}(n)>k\right\}, \quad k\geq 1.
\end{align*}
By branching property, we know that
\begin{align*}
N_{k,\Delta}(n)   \eqdist \sum_{i=1}^{ X_{k\Delta}(n)} \xi_i,
\end{align*}
where $\{\xi_i\}_{i \geq 1}$ are i.i.d. Bernoulli random variables, independent of $ X_{k\Delta}(n)$ and 
$${\P[\xi_i = 0 ]= 1 -\P[\xi_i = 1] = \exp(- r_n \Delta)}, \qquad  {r_n = (\beta + \theta + \gamma)n - \gamma}.$$
Indeed, $r_n$ is the total jump rate
 of an active cluster of size $n$.
Choose $\Delta$ small such that ${\P[\xi_i = 1]<\epsilon.}$ Then, 
\begin{align*}
\sum_{k\geq 1}\P [D_k] \leq \sum_{k\geq 1}\P\left[N_{k,\Delta}(n) >\epsilon X_{k\Delta }(n)\,\,|\,\, X_{k\Delta }(n)>k\right]<\infty,
\end{align*}
using that $\P[\sum_{i=1}^k \xi_i >\varepsilon k]$ decreases exponentially as $k$ grows thanks to  Hoeffding inequality. Borel--Cantelli lemma then ensures that a.s.\ $D_k$ happens a finite number of times.


Recalling now from \eqref{nconv} that  $X_{k\Delta }(n)$ grows exponentially on the event $\{W>0\}$, so $\{X_{k\Delta}\leq k\}$ also happens a.s.\ a finite number of times. As a result, a.s.\ on the event 
$\{W>0\}$, we have $N_{k,\Delta}(n) \leq \epsilon X_{k\Delta }(n)$ for $k$ large enough. We conclude that $\lim_{k \to \infty} e^{-\lambda k\Delta } N_{ k,\Delta}=0$ a.s.\ on the event $\{W>0\}$ since $\varepsilon$ can be arbitrarily small. This ends the proof of \eqref{eq:psKeyLem} and we obtain \eqref{eq:psXn}. We have thus also proved Theorem~\ref{thm:LLN} for functions with bounded  support. 

\smallskip

\textit{Step 3: Almost surely convergence - general test function.} 
It suffices to prove the results for functions in 
$\B_p$ with $p\geq 1$, thanks to the fact $\B = \cup_{p > 0} \B_p$. For convenience, we consider instead
\begin{align}\label{eq:defBpbar} \overline\B_p :=\Ll\{  f: \N_+ \rightarrow \R, \, \sup_{n\geq 1}|f(n)|/n^p<1 \Rr\}, \quad p\geq 1. \end{align}
Indeed, $\overline \B_p\subset \B_p$ and for any $f\in\B_p$ there exists $g\in \overline \B_p$ and $c\in\R$ such that $f=cg.$

We define the cutoff operator at some level $K \in \N_+$
\begin{align}\label{eq:cutoff}
f_{\leq K}(n) := f(n)\Ind{n \leq K}, \qquad f_{>K}(n) := f(n)\Ind{n > K}. 
\end{align}
First, using \eqref{eq:psXn},  we obtain 
\begin{align*}
\sup_{f \in \overline \B_p} \vert e^{-\lambda t} \bracket{X_t, f_{\leq K}} - W \bracket{\pi, f_{\leq K}} \vert
& \leq K^p \sum_{n=1}^K   \vert e^{-\lambda t} X_t(n) - W \pi(n)\vert \xrightarrow{t \to \infty} 0,\quad a.s..
\end{align*}
Second, recall the definition of $[x^p]$ and $[x]$ introduced above \eqref{eq:defB}. Then 
\begin{align*}
\sup_{f \in \overline \B_p} \vert e^{-\lambda t} \bracket{X_t, f_{>K}} \vert \leq  e^{-\lambda t} \bracket{X_t, \x{p}_{>K}}, \qquad  \sup_{f \in \overline \B_p} \vert W \bracket{\pi, f_{>K}} \vert \leq W \bracket{\pi, \x{p}_{>K}}.
\end{align*}
Combining these estimates and 
\begin{multline*}
\vert e^{-\lambda t} \bracket{X_t, f} - W \bracket{\pi, f} \vert \\ 
 \leq  \vert e^{-\lambda t} \bracket{X_t, f_{>K}} \vert +\vert e^{-\lambda t} \bracket{X_t, f_{\leq K}} - W \bracket{\pi, f_{\leq K}} \vert + \vert W \bracket{\pi, \x{p}_{>K}}  \vert,    
\end{multline*}
it yields
\begin{align*}
\limsup_{t\rightarrow \infty}    
  \sup_{f \in \overline\B_p}  \vert e^{-\lambda t} \bracket{X_t, f} - W \bracket{\pi, f} \vert 
 \leq  \limsup  \limits_{t \to \infty}  e^{-\lambda t} \bracket{X_t, \x{p}_{>K}}+W \bracket{\pi, f_{>K}},
\end{align*}
for any $K\geq 1$.
To show that the right hand side in the above display goes to $0$ as $K$ goes to infinity, it suffices to prove:
\begin{align}\label{eq:TailV}
\lim_{K \to \infty}\limsup  \limits_{t \to \infty}  e^{-\lambda t} \bracket{X_t, \x{p}_{>K}} = 0.
\end{align}
Since \eqref{nconv} ensures
\begin{align}\label{5.45+}
\lim_{k \to \infty}  e^{-\lambda k \Delta} \bracket{X_{k \Delta}, \x{p}_{>K}} = W \bracket{\pi, \x{p}_{>K}}, \end{align}
we just need to control what happens on the time intervals  $[k \Delta, (k+1)\Delta)$.

For that purpose, we use a coupling argument. On every interval $[k\Delta, (k+1)\Delta)$, we consider a size process $\widetilde{X}_t$
starting at time $k\Delta$
with the same value $\widetilde{X}_{k\Delta}:=X_{k\Delta}$; we let the rates of fragmentation and isolation be zero in $\widetilde{X}_t$, while for any cluster at $k\Delta$, 
the growth events occurring to it on $(k\Delta, (k+1)\Delta)$ are constructed by the common exponential clocks in $\widetilde{X}_t$ and $X_t$, until it gets isolated or fragmented in the latter. Notice that the isolation events make negative contribution to $\LL([x^p]), p \geq 1$, so are the fragmentation events because  ${(a+b)^p \geq a^p + b^p}$ for all $a,b>0, p \geq 1$. Therefore, we obtain
\begin{align*}
&\sup_{t \in [k\Delta, (k+1)\Delta)} \bracket{X_{t}, \x{p}_{>K}} \leq \sup_{t \in [k\Delta, (k+1)\Delta)} \bracket{\widetilde{X}_{t}, \x{p}_{>K}}. 
\end{align*}
The term on the right hand side is monotone in $t$ and we get
\begin{align*}\label{eq:BdCoupling}
&\sup_{t \in [k\Delta, (k+1)\Delta)} \bracket{X_{t}, \x{p}_{>K}} \leq  \bracket{\widetilde{X}_{(k+1)\Delta-}, \x{p}_{>K}}. 
\end{align*}
As a result, setting
$$B_k=B_{\Delta, n,k}^K:=\left\{ \bracket{\widetilde X_{(k+1)\Delta-}, \x{p}_{>K}}>2\bracket{X_{k\Delta}, \x{p}_{>K}}\right\},$$
it suffices to prove
\begin{equation}\label{iobk}
\P[\{\text{i.o.\ }B_k\} \cap \{W > 0\}]=0,
\end{equation}
to get that $\limsup  \limits_{t \to \infty}  e^{-\lambda t} \bracket{X_t, \x{p}_{>K}}\leq 2
\lim_{k \to \infty}  e^{-\lambda k \Delta} \bracket{X_{k \Delta}, \x{p}_{>K}}$ a.s.. Together with \eqref{5.45+}, we can conclude that \eqref{eq:TailV} holds. 

To this purpose, we use  a truncation technique.
Define 
$$C_k=C^K_{\Delta, n,k,\epsilon}:=\Ll\{e^{-\lambda k\Delta}\bracket{X_{k\Delta}, \x{p}_{>K}} \geq \epsilon\Rr\} \cap  \Ll\{e^{-\lambda k\Delta}\bracket{X_{k\Delta}, \x{2p}}\leq 1/\epsilon \Rr\},$$
for   $\epsilon>0$. We do a split 
\begin{align}\label{ckdev}\P[\{\text{i.o.\ }B_k\} \cap \{W > 0\}] \leq \P[\{\text{i.o.\ }B_k \cap C_k\} ] + \P[\{\text{i.o.\ }B_k \cap (C_k)^c\} \cap \{W > 0\}].\end{align}
The second term on the right hand side has the following upper bound thanks to  \eqref{nconv} and dominated convergence theorem
$$ \P\Big[W\bracket{\pi,\x{p}_{>K}}\in (0,2\epsilon)\Big] + \P\Big[ W\bracket{\pi,\x{2p}}>1/(2\epsilon)\Big],$$
which converges to $0$ as $\epsilon\to 0.$

We now deal with the first term on the right hand side in \eqref{ckdev}. Defining $$
Z_{k,p,K}:= \bracket{\widetilde{X}_{(k+1)\Delta-}, \x{p}_{>K}} - \bracket{\widetilde{X}_{k\Delta}, \x{p}_{>K}},$$ 
and using Markov inequality,
\begin{equation}\label{eq:bcIntegralBound}
\begin{split}
\P[B_k \,\, \vert \,\,  \mcl F_{k\Delta}] &= \P\Ll[\bracket{\widetilde X_{(k+1)\Delta-}, \x{p}_{>K}} - \bracket{\widetilde X_{k\Delta}, \x{p}_{>K}}> \bracket{\widetilde X_{k\Delta}, \x{p}_{>K}} \,\, \Big\vert \,\,  \mcl F_{k\Delta}\Rr] \\
&\leq  \frac{\var[Z_{k,p, K} \vert \,\,  \mcl F_{k\Delta}]}{\left( \bracket{\widetilde X_{k\Delta}, \x{p}_{>K}}  - \E[Z_{k,p, K} \vert \mcl F_{k\Delta}]\right)^2}.
\end{split}
\end{equation}
We need now to evaluate the conditional expectation and variance.
We will apply the following lemma whose proof will be provided in Appendix~\ref{varianceApp}. 
\begin{lemma}\label{appen} For any $k, K \in \N_+$ and $p \geq 1$, we have
\begin{align}\label{eq:BdExptConditional}
\E[Z_{k,p,K}\,\,\vert\,\, \mcl F_{k\Delta}] & \leq C_{\Delta}\bracket{X_{k\Delta},[x^p]}, 
\end{align}
 where $C_{\Delta}=e^{2^{p-1} p\beta  \Delta} - 1+ (1-e^{-\beta  \Delta K}) K^p$ and
 \begin{align}\label{eq:BdVarConditional}
\var\Ll[Z_{k,p,K} \,\, \vert  \,\, \mcl F_{k\Delta}\Rr] \leq 2\beta \Delta(4^p p+K^{2p+1})\bracket{X_{k\Delta},\x{2p}}.
\end{align}
\end{lemma}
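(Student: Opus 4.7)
My plan is to reduce both estimates to single-cluster Yule computations via the branching structure, and to extract the $\beta\Delta$-scaling in the variance through a compensation argument between a crude moment bound and the no-jump probability of the Yule process.

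Under the coupling, $(\widetilde X_t)_{t \in [k\Delta,(k+1)\Delta)}$ is a pure-growth process starting from $\widetilde X_{k\Delta} = X_{k\Delta}$, in which each cluster $u \in \mcl U_{k\Delta}$ evolves independently as a Yule process of rate $\beta$ on its size. Writing $n_u := X_{k\Delta}^u$ and $Y_u$ for the size of $u$ at time $(k+1)\Delta-$, I decompose
\begin{align*}
Z_{k,p,K} = \sum_{u \in \mcl U_{k\Delta}} \zeta_u, \qquad \zeta_u := Y_u^p \Ind{Y_u > K} - n_u^p \Ind{n_u > K},
\end{align*}
with $(\zeta_u)_u$ conditionally independent given $\mcl F_{k\Delta}$ and each $Y_u$ distributed as $Y_{n_u}(\Delta)$, where $Y_n(\Delta)$ denotes a Yule process of rate $\beta$ started from $n$ and run for time $\Delta$. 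It therefore suffices to bound $\E[\zeta_u \mid n_u = n]$ by $C_\Delta n^p$ and $\E[\zeta_u^2 \mid n_u = n]$ by $2\beta\Delta(4^p p + K^{2p+1}) n^{2p}$ uniformly in $n \in \N_+$. The two analytic inputs I will use are the moment bound $\E[Y_n(\Delta)^q] \leq e^{2^{q-1} q \beta\Delta} n^q$ (for any $q \geq 1$), obtained by Gr\"onwall applied to the pure-growth generator inequality $\beta m((m+1)^q - m^q) \leq 2^{q-1} q \beta m^q$ that already appeared in the proof of Lemma~\ref{lemma:duhamel}(i), together with the identity $\P[Y_n(\Delta) = n] = e^{-n\beta\Delta}$ for the no-jump event.

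For the conditional mean I split into the cases $n > K$ and $n \leq K$. In the first case the indicators are automatic and $\E[\zeta_u \mid n_u = n] = \E[Y_n(\Delta)^p] - n^p \leq (e^{2^{p-1} p \beta \Delta} - 1) n^p \leq C_\Delta n^p$. In the second case $\zeta_u = Y_u^p \Ind{Y_u > K}$, and I use the pointwise inequality $Y_n(\Delta)^p \Ind{Y_n(\Delta) \leq K} \geq n^p \Ind{Y_n(\Delta) = n}$ (valid because $Y_n(\Delta) = n \leq K$ on the latter event) to deduce
\begin{align*}
\E\bigl[Y_n(\Delta)^p \Ind{Y_n(\Delta) > K}\bigr] \leq \E[Y_n(\Delta)^p] - n^p e^{-n\beta\Delta} \leq (e^{2^{p-1} p \beta \Delta} - e^{-n\beta\Delta}) n^p.
\end{align*}
Writing this bound as $(e^{2^{p-1} p \beta \Delta} - 1)n^p + (1 - e^{-n\beta\Delta}) n^p$ and using $n \leq K$ together with $1 \leq K^p$ controls the second summand by $(1 - e^{-K\beta\Delta}) K^p n^p$, giving $C_\Delta n^p$. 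Summing over $u$ yields \eqref{eq:BdExptConditional}.

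For the conditional variance I use independence to reduce to $\sum_u \E[\zeta_u^2 \mid n_u]$. When $n > K$, expanding the square and using $\E[Y_n(\Delta)^p] \geq n^p$ (since $Y_n(\Delta) \geq n$) yields $\E[(Y_n(\Delta)^p - n^p)^2] \leq \E[Y_n(\Delta)^{2p}] - n^{2p} \leq (e^{4^p p \beta\Delta} - 1) n^{2p}$, which via $e^x - 1 \leq x e^x$ is of order $4^p p \beta\Delta \cdot n^{2p}$. When $n \leq K$, the same compensation idea as in the mean gives $\E[Y_n(\Delta)^{2p} \Ind{Y_n(\Delta) > K}] \leq (e^{4^p p \beta \Delta} - e^{-n\beta\Delta}) n^{2p}$, which splits into a term of order $4^p p \beta \Delta \cdot n^{2p}$ and a term bounded by $n \beta\Delta \cdot n^{2p} \leq K \beta\Delta \cdot n^{2p} \leq K^{2p+1} \beta\Delta \cdot n^{2p}$. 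Combining both cases and absorbing the residual exponential factor into the prefactor $2$ delivers \eqref{eq:BdVarConditional}.

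The main technical subtlety lies in the $n \leq K$ half of the variance bound: a naive application of $\Ind{Y > K} \leq 1$ would destroy the crucial $\beta\Delta$ factor. The saving point is to subtract the no-jump contribution $n^{2p} e^{-n\beta\Delta}$ from the full moment $\E[Y_n(\Delta)^{2p}]$, converting the $\beta\Delta$-smallness of the probability of any birth into the $\beta\Delta$-smallness of $\E[\zeta_u^2]$. Together with the branching-independence reduction, this is essentially the only non-trivial ingredient of the proof.
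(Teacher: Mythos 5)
Your proof is correct and follows essentially the same route as the paper's: a per-cluster decomposition via the branching property, the Gr\"onwall-type moment bound for the pure-growth (Yule) coupling, reduction of the variance to a second-moment difference via $(a-b)^2\le a^2-b^2$, and control of the threshold-crossing contribution by the probability of at least one birth in time $\Delta$. The only cosmetic difference is that you subtract the no-jump mass $n^p e^{-n\beta\Delta}$ from $\E[Y_n(\Delta)^p]$ where the paper uses the pointwise inequality $\x{p}_{>K}(b)-\x{p}_{>K}(a)\le (b^p-a^p)+K^p\Ind{a\le K<b}$; both yield the stated constants (and both, like the paper, implicitly absorb the residual factor $e^{4^p p\beta\Delta}$ into the prefactor $2$, which is legitimate since $\Delta$ is taken small in the application).
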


Plugging in the estimates \eqref{eq:BdExptConditional} and \eqref{eq:BdVarConditional} in Appendix to  \eqref{eq:bcIntegralBound}, we  obtain
\begin{align*}
1_{C_k}\P[B_k \,\, \vert \,\,  \mcl F_{k\Delta}] 
&\leq \frac{2\beta \Delta(4^p p+K^{2p+1}) \times \epsilon^{-1} e^{\lambda k \Delta}}{\Big(\epsilon e^{\lambda k \Delta} - \beta \Delta(2^p p+K^{p+1}) \times \epsilon^{-1} e^{\lambda k \Delta}\Big)^2}.
\end{align*}
We pick $\Delta =\epsilon^2$ with $\epsilon$ small enough. Then we obtain $\P[B_k \,\, \vert \,\,  \mcl F_{k\Delta}] \leq C e^{-\lambda k \Delta}$ conditional on $C_k$.
Adding that $\P[B_k \,\, \vert \,\,  C_k] = \E\Big[\P[B_k \,\, \vert \,\,  \mcl F_{k\Delta}]\,\, \Big\vert \,\,  C_k \Big]$, we obtain 
$\sum_{k\geq 1}\P[B_k \,\, \vert \,\,  C_k]<\infty$.
By Borel--Cantelli lemma, $\P[\text{i.o.\ }B_k\cap C_k]=0$ for $\epsilon$ small enough. Therefore we have proved the term on the right hand side in \eqref{ckdev} is equal to $0$. This implies 
\eqref{iobk} and the proof for general test functions is finished. 
\end{proof}

\subsection{Strong law of large numbers for the size process of inactive clusters}\label{subsec:LimitSizeY}

We prove Corollary~\ref{cor:LLNY} in this part. A heuristic argument to obtain the asymptotic limit is to use the generator \eqref{eq:Fxy} and the convergence of $X_t$ in Theorem~\ref{thm:LLN} : 
\begin{align*}
\lim_{s \searrow t}\frac{\E[\langle Y_s, f\rangle - \langle Y_t, f\rangle \vert \mcl F_t]}{s-t}   &= \theta \langle X_t, \x{} f \rangle 
\sim_{t\rightarrow \infty}  \theta e^{\lambda t} W \langle \pi, \x{}\rangle \langle \widetilde{\pi},  f \rangle,
\end{align*}
with $\widetilde{\pi}$ defined in \eqref{eq:PiTilde}. 
In the sequel, we prove the result, with a suitable set of test functions using in particular  martingale analysis.

\begin{proof}[Proof of Corollary~\ref{cor:LLNY}]
Let $f \in \mcl B_p$ for some fixed $p > 0$ throughout the proof.
The proof can be divided into 3 steps. In Step 1, we control the value $\bracket{Y_t, f}$. In Step 2 we prove the result with a specific function $f = h/\x{}$ which gives us a martingale. In Step 3, we generalize this result to general $f \in \mcl B_p$. Let  $C_0$ be a constant, independent of $f$, which may change from
line to line.

\textit{Step 1: $L^2$ estimate.}
We  will use again the estimation \eqref{eq:XL2Bound}. 
We can check that
$F_{g,f}(\mu,\nu)=\bracket{ \nu, f}$ belongs to the domain of extended generator of $(X_t,Y_t)_{t\geq 0}$  (see \eqref{eq:Fxy}) using the same localization argument as in the proof of Lemma \ref{lemma:duhamel}. So we get
\begin{align*}
\frac{\d}{\d t} \E\Ll[\bracket{ Y_t, f}^2\Rr] &= 2\theta\E\Ll[\bracket{ Y_t, f}\bracket{X_t, \x{} f}\Rr] + \theta \E\Ll[ \bracket{ X_t, \x{} f^2} \Rr],\quad \forall  t\geq 0.
\end{align*}
Using Young's inequality with  $\alpha > 0$ (to be chosen later), 
$$\theta\E\Ll[\bracket{ Y_t, f}\bracket{X_t, \x{} f}\Rr]\leq \alpha \E\Ll[\bracket{ Y_t, f}^2\Rr] + \Ll(\frac{\theta^2}{\alpha}\Rr)\E\Ll[\bracket{ X_t, \x{}f}^2\Rr].$$ 
Then we use Gr\"onwall's lemma to get
\begin{align*}
\E\Ll[\bracket{ Y_t, f}^2\Rr]  \leq   \int_0^t e^{\alpha(t-s)}\Ll( \Ll(\frac{\theta^2}{\alpha}\Rr)\E\Ll[\bracket{ X_s, \x{}f}^2\Rr] +  \theta \E\Ll[ \bracket{ X_s, \x{} f^2} \Rr]\Rr)\, \d s.
\end{align*}
Combining  the $L^2$ estimate of $\bracket{ X_s, \x{}f}$ obtained  in \eqref{eq:XL2Bound} and the $L^1$ estimate of $\bracket{ X_t, \x{} f^2}$ in
\eqref{eq:fSpectral}, we get
\begin{multline*}
\E\Ll[\bracket{ Y_t, f}^2 \Rr]  \leq   C_0\int_0^t e^{\alpha(t-s)}  \Ll(\frac{\theta^2}{\alpha}\Rr) \Ll(  \bracket{\pi, \x{}f}^2  e^{2\lambda s} + \parallel f\parallel_p e^{(2\lambda - \sigma)s} \Rr)\, \d s\\
+    \int_0^t e^{\alpha(t-s)}  \theta\Ll(  \bracket{\pi, \x{}f^2}  e^{\lambda s} +C \parallel f \parallel_p e^{(\lambda - w)s}\Rr)\, \d s.
\end{multline*}
We choose $\alpha \in (0, \lambda - \max(\sigma/2,w))$ and conclude that there exists $C'>0$ such that
\begin{align}\label{eq:YL2}
\E[\bracket{ Y_t, f}^2]  \leq  C'  \Ll(\bracket{\pi, \x{}f}^2  e^{2\lambda t} + \bracket{\pi, \x{}f^2}  e^{\lambda t} + \parallel f\parallel_p \Ll(e^{(2\lambda - \sigma)t} + e^{(\lambda - w)t}\Rr)\Rr).
\end{align}

\smallskip

\textit{Step 2: A martingale for $Y$ which tends to $0$.}
We  introduce the  function $$F_{h,h/\x{}}(\mu,\nu)=\bracket{\mu,h}-\Ll(\frac{\lambda}{\theta}\Rr) \bracket{\nu, h/\x{}}.$$
We can again check that it belongs to the domain of the extended generator of $(X_t,Y_t)_{t\geq 0}$. 
It turns out to be a harmonic function: $\mathcal A F_{h,g}=0$.
Then we obtain that
\begin{align}\label{eq:MtY}
H_t := \bracket{X_t, h} - \Ll(\frac{\lambda}{\theta}\Rr) \bracket{Y_t, h/\x{}}.
\end{align}
is a martingale with respect to $(\FF_t)_{t\geq 0}$.
Let us prove that $e^{-\lambda t}H_t$ converges to $0$ as $t\rightarrow \infty$ and thus   Corollary \ref{cor:LLNY} holds for the specific test function $h/\x{}$. 
This vanishing property is due to the fact that the two parts in $H$ compensate each other.
We prove first $L^2$ convergence
using \eqref{eq:Fxy}: 
\begin{align*}
&\frac{\d}{\d t}\E[\vert H_t \vert^2]\\
&= \E\Ll[\sum_{n=1}^{\infty} X_t(n) \beta n\Ll( \vert H_t + h(n+1) - h(n) \vert^2 - \vert H_t \vert^2\Rr) \Rr] \\
& \quad + \E\Ll[\sum_{n=1}^{\infty} X_t(n) \theta n\Ll( \Ll\vert H_t - h(n) - \Ll(\frac{\lambda}{\theta}\Rr) h(n)/n \Rr\vert^2 - \vert H_t \vert^2\Rr) \Rr] \\
& \quad + \E\Ll[\sum_{n=1}^{\infty} \Ll(X_t(n) \gamma n \sum_{j=1}^{n-1} \Ll( \frac{1}{j(j+1)}\Rr) \Ll( \vert H_t + h(j) + h(n-j) - h(n) \vert^2 - \vert H_t \vert^2\Rr)\Rr) \Rr].
\end{align*}
We develop this equation and recognize  the generator $\L$ defined in \eqref{eq:Generator}.  As
$\L h = \lambda h$, we get 
\begin{align*}
&\frac{\d}{\d t}\E[\vert H_t \vert^2]\\
&= 
   \E\Ll[\sum_{n=1}^{\infty} X_t(n) \Ll(  \beta n \vert h(n+1) - h(n) \vert^2 + \theta n \Ll\vert  h(n) + \Ll(\frac{\lambda}{\theta}\Rr) h(n)/n \Rr\vert^2 \Rr)\Rr] \\
&\qquad + \E\Ll[\sum_{n=1}^{\infty} X_t(n) \Ll( \gamma n \sum_{j=1}^{n-1} \Ll( \frac{1}{j(j+1)}\Rr) \Ll\vert h(j) + h(n-j) - h(n)\Rr\vert^2 \Rr)\Rr].
\end{align*}
Since $h$ is bounded (see Proposition~\ref{Perroneigen}), we obtain 
\begin{align}\label{eq:HtL2}
\E[\vert H_t \vert^2] \leq C_0 \int_0^t \E\Ll[\bracket{X_s, \x{}}\Rr] \, \d s  \leq C_0 e^{\lambda t}.
\end{align}
This implies 
the  $L^2$ convergence of $e^{-\lambda t} H_t$ to $0$ as $t\rightarrow \infty$.
For the almost sure convergence, we set the step size $\Delta > 0$. As $(H_t)_{t\geq 0}$ is a martingale, for any $\epsilon > 0$, we combine Markov's inequality, Doob's inequality for $H_t$ and the estimate \eqref{eq:HtL2} to obtain that
\begin{align*}
\P\Ll[\sup_{t \in [k\Delta, (k+1)\Delta)} \vert e^{-\lambda t} H_t \vert > \epsilon \Rr] 
&\leq \P\Ll[e^{-\lambda k\Delta} \sup_{t \in [k\Delta, (k+1)\Delta)} \vert H_t \vert > \epsilon \Rr] \\
&\leq \epsilon^{-2} e^{-2 \lambda  k\Delta} \E\Ll[  \Ll( \sup_{t \in [k\Delta, (k+1)\Delta)} \vert H_t \vert  \Rr)^2 \Rr]\\ 
&\leq 4\epsilon^{-2} e^{-2 \lambda k\Delta} \E\Ll[   \vert H_{(k+1)\Delta} \vert^2 \Rr]\\
&\leq C_0\epsilon^{-2} e^{-\lambda  k\Delta}.
\end{align*}
By  Borel--Cantelli lemma, 
we obtain the a.s.\
convergence of $e^{-\lambda t} H_t$
to $0$.

\smallskip

\textit{Step 3: Convergence for general test functions.} Now we need to obtain the result for a general test function $f \in \mcl B_p$. 
The idea is similar: we define 
\begin{align}\label{eq:MtYf}
H^f_t := \bracket{X_t, f} - \Ll(\frac{\lambda}{\theta}\Rr) \bracket{Y_t, f/\x{}}=\bracket{\pi, f} H_t+ A_t+B_t,
\end{align}
where $A_t=\bracket{X_t, f - \bracket{\pi, f}h}$ and $B_t=\Ll(\frac{\lambda}{\theta}\Rr)\bracket{Y_t, (f - \bracket{\pi, f}h)/\x{}}$.
We apply $L^2$ estimates from  
\eqref{eq:XL2Bound} and \eqref{eq:YL2} to ensure that
\begin{align*}
e^{-2\lambda t}\E\Ll[A_t^2 + B_t^2\Rr] \leq C_0\left(1+\sup_{n\geq 1}\frac{|f(n)|}{n^p}\right)^2 \Ll(e^{-\lambda t} + e^{-\sigma t} + e^{-(\lambda + \sigma/2)t}\Rr).
\end{align*}
Here $C_0$ may depend on $p$ but not on (the specific choice of) $f.$ By Borel--Cantelli lemma, this implies  the convergence of $e^{-\lambda t}H^f_{t}$ along a subsequence $\{k\Delta\}_{k \geq 1}$ with $\Delta > 0$
\begin{align*}
e^{-\lambda k\Delta} H^f_{k\Delta} \xrightarrow{k \to \infty} 0, \qquad \text{ in } L^2 \text{ and almost surely}.
\end{align*}
We get
\begin{align*}
\lim_{k \to \infty}e^{-\lambda k\Delta}\bracket{Y_{k\Delta}, f/\x{}} = \lim_{k \to \infty}\Ll(\frac{\theta}{\lambda}\Rr)e^{-\lambda k\Delta} \bracket{X_{k\Delta}, f} = \Ll(\frac{\theta}{\lambda}\Rr) \bracket{\pi, f}W, \quad \text{ in } L^2 \text{ and a.s.}.
\end{align*}
Finally, to obtain the convergence along $t \in \R_+$, we decompose $f$ as the difference of two positive functions $f = f^+ - f^-$ and use  that $Y_t$ is increasing in $t$:  \begin{align*}
\forall t \in [k\Delta, (k+1)\Delta), \qquad e^{-\lambda (k+1)\Delta}\bracket{Y_{k\Delta}, f^+} \leq e^{-\lambda t} \bracket{Y_t, f^+} \leq e^{-\lambda k\Delta}\bracket{Y_{(k+1)\Delta}, f^+}.
\end{align*} 
We obtain then 
\begin{multline*}
 e^{-\lambda \Delta}\Ll(\frac{\theta}{\lambda}\Rr) \bracket{\pi, \x{}f^+}W \leq \lim_{k \to \infty} e^{-\lambda (k+1)\Delta}\bracket{Y_{k\Delta}, f^+} \leq \liminf_{t \to \infty} e^{-\lambda t} \bracket{Y_t, f^+} \\
\leq \limsup_{t \to \infty} e^{-\lambda t} \bracket{Y_t, f^+}  \leq \lim_{k \to \infty} e^{-\lambda k\Delta}\bracket{Y_{(k+1)\Delta}, f^+} = e^{\lambda \Delta} \Ll(\frac{\theta}{\lambda}\Rr) \bracket{\pi, \x{}f^+}W.
\end{multline*}
We take $\Delta \searrow 0$ to get 
\begin{align*}
\lim_{t \to \infty} e^{-\lambda t} \bracket{Y_t, f^+} = \Ll(\frac{\theta}{\lambda}\Rr) \bracket{\pi, \x{}f^+}W,\quad a.s.
\end{align*}
A similar argument also works for $f^-$. We combine these two terms and use $\widetilde{\pi}$ to prove the almost sure convergence in Corollary~\ref{cor:LLNY}. The $L^2$ convergence can be done similarly and we skip the details. 
\end{proof}

\subsection{Limit on recursive trees}\label{subsec:LimitTree}
In this part, we prove the convergence of the empirical measure on clusters.

\begin{proof}[Proof of Theorem~\ref{thm:LLNRRT}]
We will prove only the convergence on active clusters $\XX_t$, as the proof for $\YY_t$ follows in the same manner. The main idea is similar to the size process $(X_t, Y_t)_{t \geq 0}$, which involves one type convergence and the cut-off argument. Without loss of generality, we suppose that for any $\tt \in \TT$, $\vert f(\tt) \vert\leq \vert \tt \vert^p$ for some $p > 0$.

\textit{Step 1: Cut-off argument.}
We do the following decomposition on the event $\XX_t$ being not empty.
\begin{equation}\label{eq:LimitTreeDecom}
\begin{split}
&\frac{1}{\vert \XX_t \vert}\sum_{\CC \in \XX_t} f(\CC) - \E[f(T_\pi)]=\underbrace{\sum_{n=1}^{\infty} A_t(n)}_{\mathbf{I}}+\underbrace{\sum_{n=1}^{\infty} B_t(n)}_{\mathbf{II}},\\
\end{split}
\end{equation}
where, if we write $g(n) = \E[f(T_n)]$,
$$A_t(n)=\frac{X_t(n)}{\langle X_t, 1\rangle} \Ll( \frac{1}{X_t(n)} \sum_{\CC \in \XX_t, \vert \CC \vert = n} f(\CC) -g(n)\Rr), \quad B_t(n)=\Ll(\frac{X_t(n)}{\langle X_t, 1\rangle} - \pi(n)\Rr)g(n).$$
Theorem~\ref{thm:LLN} implies the a.s.\ convergence  of the second term $\mathbf{II}:$ 
\begin{align*}
\lim_{t \to \infty} \sum_{n=1}^{\infty} B_t(n) = \lim_{t \to \infty} \Ll(\frac{\langle X_t, g\rangle}{\langle X_t, 1\rangle} - \langle \pi, g\rangle\Rr) = 0,\text{ on }\{\tau=\infty\}.
\end{align*}
For the first term $\mathbf{I}$, we use a cut-off argument and  $\vert f( \CC )\vert \leq \vert \CC \vert^p$:
\begin{equation}\label{eq:LimitTreeDecom2}
\Ll\vert \sum_{n=1}^{\infty} A_t(n) \Rr\vert 
 \leq \Ll\vert \sum_{n=1}^{K} A_t(n)\Rr\vert +  2\Ll\vert \sum_{n= K+1}^{\infty} \frac{X_t(n) n^p}{\langle X_t, 1\rangle} \Rr\vert.
\end{equation}
For the first term on the right hand side in \eqref{eq:LimitTreeDecom2}, we admit the following convergence (to be proved in Step 2)
\begin{align}\label{eq:ClusterOneType}
\forall n \in \N_+, \quad \lim_{t \to \infty} \frac{1}{X_t(n)} \sum_{\CC \in \XX_t, \vert \CC \vert = n} f(\CC)  = \E[f(T_n)], \qquad \text{ almost surely on } \{\tau = \infty\},
\end{align}
which can also be seen as a generalized law of large numbers. By Theorem \ref{thm:LLN}, for any $n \in \N_+$, $\frac{X_t(n)}{\langle X_t, 1\rangle}$ converges a.s.\ as $t\to\infty$. Then the above display ensures that $
{\lim_{t \to \infty} \Ll\vert \sum_{n=1}^{K} A_t(n) \Rr\vert  = 0}$, almost surely on $\{\tau=\infty\}$. The second term on the right hand side in \eqref{eq:LimitTreeDecom2} also converges a.s.\ due to Theorem~\ref{thm:LLN}: 
\begin{align*}
\lim_{t \to \infty} \Ll\vert \sum_{n= K+1}^{\infty} \frac{X_t(n) n^p}{\langle X_t, 1\rangle} \Rr\vert = \langle \pi, \x{p}_{>K}\rangle, \qquad  \text{ almost surely on }\{\tau=\infty\},
\end{align*}
where $\x{p}_{>K}(n) = n^p \Ind{n > K}$. We put these results back in to \eqref{eq:LimitTreeDecom} and obtain that
\begin{align*}
\lim_{t \to \infty} \Ll\vert \frac{1}{\vert \XX_t \vert}\sum_{\CC \in \XX_t} f(\CC) - \E[f(T_\pi)] \Rr\vert \leq 2\langle \pi, \x{p}_{>K}\rangle, \qquad  \text{ almost surely on }\{\tau=\infty\}.
\end{align*} 
Then we let $K \to \infty$ and prove Theorem~\ref{thm:LLNRRT}. 

\smallskip

\textit{Step 2: Convergence for one type.} It remains to prove \eqref{eq:ClusterOneType}. 
We follow the same spirit as in Step 2 in the proof of Theorem~\ref{thm:LLN}, with some minor technical differences. We recall $\TT_n$ the space of equivalence classes of RRT of size $n$, and denote by 
\begin{align*}
\forall \tt \in \TT_n, \qquad X_t(\tt) := \sum_{\CC \in \XX_t} \Ind{\CC \sim \tt},
\end{align*}
the number of active clusters of type $\tt$. Because the space $\TT_n$ is finite ($\vert \TT_n \vert = (n-1)!$), it suffices to prove that 
\begin{equation}\label{eq:LLNClusterOneType}
\forall n \in \N_+, \forall \tt \in \TT_n, \qquad \lim_{t \to \infty} e^{-\lambda t}X_t(\tt) = W \frac{\pi(n)}{(n-1)!},  \quad a.s.,
\end{equation} 
and this can be dealt with using the same technique in the proof of Theorem~\ref{thm:LLN}: we only need to prove the following 
\begin{align}\label{eq:LLNClusterOneTypeInf}
\forall n \in \N_+, \forall \tt \in \TT_n, \qquad \liminf \limits_{t \to \infty} e^{-\lambda t}X_t(\tt) \geq W \frac{\pi(n)}{(n-1)!}.
\end{align}
This is similar to proving \eqref{eq:limsupTrick}. Let $(t_k)_{k \in \N_+}$ be any (random) time sequence such that ${\limsup \limits_{t \to \infty} e^{-\lambda t}X_t(\tt) = \lim_{k \to \infty} e^{-\lambda t_k}X_{t_k}(\tt)}$. Then we have
 \begin{equation*}
     \begin{split}
          \limsup  \limits_{t \to \infty} e^{-\lambda t} X_t(\tt) &=\lim_{k\rightarrow\infty} \left(\sum_{\tt' \in \TT_n} e^{-\lambda t_k} X_{t_k}(\tt') -\sum_{\tt' \in \TT_n, \tt' \neq \tt} e^{-\lambda t_k} X_{t_k}(\tt')\right)\\
 & \leq W\pi(n)-\sum_{\tt' \in \TT_n, \tt' \neq \tt} \liminf  \limits_{k \to \infty} e^{-\lambda t_k} X_{t_k}(\tt')\\
 & \leq W\pi(n) -\sum_{\tt' \in \TT_n, \tt' \neq \tt} W \frac{\pi(n)}{(n-1)!} =W \frac{\pi(n)}{(n-1)!}.
     \end{split}
 \end{equation*}
Here from the first line to the second line, we use Fatou's lemma, and from the second line to the third line we use \eqref{eq:LLNClusterOneTypeInf} . This equation controls the upper bound of $\limsup  \limits_{t \to \infty} e^{-\lambda t} X_t(\tt)$. Using also \eqref{eq:LLNClusterOneTypeInf}, we conclude that \eqref{eq:LLNClusterOneType} is true.

Finally, we prove \eqref{eq:LLNClusterOneTypeInf}, which requires convergence along discrete time sequences and the control of fluctuation. We calculate the $L^2$ moment as follows
\begin{align*}
&\E\Ll[ \Ll(e^{-\lambda t} \sum_{\CC \in \XX_t, \vert \CC \vert = n}\Ll(\Ind{\CC \sim \tt} - \frac{1}{(n-1)!}\Rr)\Rr)^2 \Rr] \\
&= e^{-2\lambda t} \E\Ll[\sum_{\CC \in \XX_t, \vert \CC \vert = n} \E\Ll[\Ll(\Ind{\CC \sim \tt} - \frac{1}{(n-1)!}\Rr)^2\,\,\Big\vert \,\, \mcl F_t\Rr]\Rr]\\
&\leq e^{-2\lambda t} \E\Ll[X_t(n)\Rr]=O(e^{-\lambda t}) \xrightarrow{t \to \infty} 0.
\end{align*}
In the first line, we use the fact that the clusters are i.i.d.\ RRTs of size $n$, see Proposition~\ref{prop:ClusterRRT}. In the second line, we use \eqref{eq:XL2Bound}. Notice that the $L^2$ moment 
decreases at least exponentially, we can thus take a discrete time sequence $\{k\Delta\}_{k \geq 1}$ with $\Delta > 0$ and use Borel--Cantelli lemma to obtain that for any $\Delta > 0$
\begin{align*}
\forall n \in \N_+, \forall \tt \in \TT_n, \qquad \lim_{k \to \infty} e^{-\lambda k\Delta}X_{k\Delta}(\tt) = W \frac{\pi(n)}{(n-1)!}. 
\end{align*}
Let us now control fluctuations and  let $N_{k,\Delta}(\tt)$ be the number of active clusters of type $\tt$ at time $k\Delta$, on which occurs some 
event during $(k\Delta, (k+1)\Delta)$. Then like \eqref{eq:psFluctuation}, we have
\begin{align*}
\forall t\in [k\Delta, (k+1)\Delta), \qquad    X_t(\tt) \geq X_{k\Delta}(\tt) - N_{ k,\Delta}(\tt),
\end{align*}
and it suffices to prove $\lim_{k \to \infty}e^{-\lambda k\Delta}N_{ k,\Delta}(\tt) = 0$ to conclude \eqref{eq:LLNClusterOneTypeInf}. We skip the details as it follows exactly the same lines starting from \eqref{eq:psFluctuation} in Step 2 in the proof of Theorem~\ref{thm:LLN}. 
\end{proof}

\section{Characterization of phases and regularity}\label{sec:Chara}
Recall the three parameters $\beta,\theta,\gamma$ introduced when describing the GFI process in Section~\ref{sec:Introduction}. The Malthusian exponent $\lambda=\lambda(\beta,\theta,\gamma)$ allows us to define the phases in terms of the sign of $\lambda$ (i.e. $\lambda>0, =0, <0$). So far, we have not shown how many phases exist and how they are related. The main objective of this section is to justify the phase diagram shown in Figure~\ref{fig:Phases} and prove Theorem~\ref{thm:Regularity}. The proof strategy is organized as follows: in Section~\ref{subsec:Regularity1}, we prove some regularity properties about the mapping $(\beta, \theta, \gamma) \mapsto \lambda(\beta, \theta, \gamma)$. In Section~\ref{subsec:ExitPhases},  we justify Figure~\ref{fig:Phases}. In  Section~\ref{subsec:Regularity2}, we give a brief proof for the change of monotonicity of $\beta\mapsto \lambda(\beta.\theta,\gamma)$ and finally prove Theorem \ref{thm:Regularity}.

\subsection{Regularity of Perron's root}\label{subsec:Regularity1}
The occurrence of different phases relies on the parameters $(\beta,\theta,\gamma).$ We will study how the Perron's root $\lambda(\beta, \theta, \gamma)$ depends on the three parameters in the following proposition. Recall that $(\beta, \theta, \gamma)\in \R_+^3.$
\begin{proposition}[Regularity of Perron's root]\label{prop:Regularity}
For the Perron's root $\lambda$ associated to $\L$ in \eqref{eq:Generator}, the mapping $(\beta, \theta, \gamma) \mapsto \lambda(\beta, \theta, \gamma)$ satisfies the following properties: 
\begin{enumerate}
\item Monotonicity: $\lambda(\beta, \theta, \gamma)$ is increasing on $\gamma$ and decreasing on $\theta$.
\item Regime: 
\begin{itemize}
\item if $\theta \geq \beta$, or $\theta \in (0, \beta)$ and $0 < \gamma < \theta\Ll(2^{1 - \frac{\theta}{\beta}} - 1\Rr)^{-1}$, then $\lambda  < 0$;
\item if $\theta \in (0, \beta)$ and $\gamma > \max \Ll\{ 2\theta\Ll(\frac{1 + \frac{\theta}{\beta}}{1 - \frac{\theta}{\beta}}\Rr), \frac{3}{2}\theta \Ll(1 + \frac{\theta}{\beta}\Rr)  \Rr\}$, then $\lambda > 0$.
\end{itemize}
\item Homogeneity: for any $\rho > 0$, we have $\lambda(\rho\beta, \rho\theta, \rho\gamma) = \rho \lambda(\beta, \theta, \gamma)$.
\item Continuity: $\lambda(\beta, \theta, \gamma)$ is continuous with respect to the three parameters.
\end{enumerate}
\end{proposition}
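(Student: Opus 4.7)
The plan is to treat the four properties in increasing order of difficulty, reusing the coupling ideas from the rest of the paper and the first-moment semigroup estimates from Section~\ref{sec:Root}.

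For \emph{homogeneity}, the strategy is a pure time-change argument. If $(X_t)_{t\geq 0}$ is the size process with parameters $(\beta,\theta,\gamma)$, then the rescaled process $(X_{\rho t})_{t\geq 0}$ is again Markov, with every jump rate multiplied by $\rho$, hence it has the same law as the size process with parameters $(\rho\beta,\rho\theta,\rho\gamma)$. Therefore $\E_{\delta_1}[|\XX_{\rho t}|]$ computed at parameters $(\beta,\theta,\gamma)$ equals $\E_{\delta_1}[|\XX_t|]$ computed at parameters $(\rho\beta,\rho\theta,\rho\gamma)$, and taking $t^{-1}\log$ on both sides and letting $t\to\infty$ using Theorem~\ref{thm:Malthusian} gives $\lambda(\rho\beta,\rho\theta,\rho\gamma)=\rho\lambda(\beta,\theta,\gamma)$.

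For \emph{monotonicity}, I would build two GFI processes on a common probability space using the same Poisson clocks, and compare them pathwise. Increasing $\gamma$ only adds extra fragmentation events, each of which turns one active cluster into two without killing any vertex; hence $|\XX_t|$ is pathwise non-decreasing in $\gamma$, so $\lambda$ is increasing in $\gamma$ (after taking first moment and the $t^{-1}\log$ limit). Decreasing $\theta$ only removes isolation events, each of which strictly decreases $|\XX_t|$, so $\lambda$ is decreasing in $\theta$. This coupling is essentially the parameter coupling sketched earlier in the paper.

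For the \emph{regime claim}, I would reduce to classical branching comparison. When $\theta\geq\beta$, the total number of active vertices $\langle X_t,\mathrm{id}\rangle$ is dominated pathwise by a linear birth-death process with birth rate $\beta$ and death rate $\theta$; this birth-death process goes extinct a.s., so $\P[\tau<\infty]=1$ and Theorem~\ref{thm:Malthusian} gives $\lambda\leq 0$ (and actually $\lambda<0$ by a short refinement using fragmentation). For the small-$\gamma$ bound, view each cluster as an individual whose ``children'' are the new clusters produced by its fragmentations before its isolation; bounding the fragmentation probability at each event by $(1-\theta/\beta)$-type terms and comparing with a subcritical Galton--Watson tree yields the stated threshold $\gamma<\theta(2^{1-\theta/\beta}-1)^{-1}$. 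For the large-$\gamma$ bound, run the same cluster-as-individual comparison from below: track only clusters of size $1$ and the event that such a cluster grows to size $2$ and then fragments back into two size-$1$ clusters before isolation; bounding this probability gives a super-critical Galton--Watson lower bound exactly when $\gamma$ exceeds the displayed expressions. In both thresholds, the key inequality is to quantify, uniformly in cluster size, the chance that the next event of a cluster is a fragmentation rather than an isolation or a growth step.

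For \emph{continuity}, which I expect to be the most delicate part, I would combine the monotonicity just proved with the homogeneity and the quantitative spectral estimate \eqref{eq:Spectral}--\eqref{eq:fSpectral}. The plan is the following. First, homogeneity reduces the question to continuity on the compact simplex $\{\beta+\theta+\gamma=1\}\cap\R_+^3$. Second, from Lemma~\ref{lem:HarrisV} and Proposition~\ref{Perroneigen}, the Lyapunov constants $a,b,\xi,\zeta$, the minorization set $K$, and the resulting spectral gap $\omega$ can all be chosen locally uniformly in $(\beta,\theta,\gamma)$, since the construction of $\psi$ in \eqref{psi} is continuous in the parameters. Hence \eqref{eq:fSpectral} provides, for $(\beta',\theta',\gamma')$ in a neighbourhood of $(\beta,\theta,\gamma)$, a single exponential rate $\omega$ and constant $C$ such that $|e^{-\lambda(\beta',\theta',\gamma')t}M_t^{\beta',\theta',\gamma'}\mathbf 1(1)-h^{\beta',\theta',\gamma'}(1)|\leq Ce^{-\omega t}$. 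Third, for any fixed $t$, the quantity $M_t^{\beta,\theta,\gamma}\mathbf 1(1)=\E[|\XX_t|]$ is continuous in $(\beta,\theta,\gamma)$ (the process can be constructed as a functional of independent Poisson clocks whose rates depend continuously on the parameters, and dominated convergence in the first moment is justified by the Lyapunov bound in Lemma~\ref{lemma:duhamel}(i), again locally uniformly). Combining the two estimates with $\lambda=\lim_{t\to\infty} t^{-1}\log M_t\mathbf 1(1)$ and choosing $t$ large enough that the $e^{-\omega t}$ term is negligible yields $|\lambda(\beta',\theta',\gamma')-\lambda(\beta,\theta,\gamma)|\to 0$ as $(\beta',\theta',\gamma')\to(\beta,\theta,\gamma)$. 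The main technical hurdle will be verifying that the Lyapunov and minorization constants can indeed be chosen locally uniformly; this comes down to checking that the three cases distinguished in Step~2 of the proof of Lemma~\ref{lem:HarrisV} (according to the sign of $\gamma-\theta$) can be fused into a single choice of $(q,A,B)$ valid on any compact sub-region of parameter space, which is a continuity exercise in the parameter $C_q^{A,B}$.
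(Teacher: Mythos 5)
Your treatment of homogeneity (time-change / linearity of the generator) and monotonicity (parameter coupling) matches the paper, but both the regime and continuity parts contain gaps.

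For the regime thresholds, the paper derives the explicit constants by a test-function argument, not a branching comparison: for subcriticality it plugs $f=[x^p]$, $p\in(0,1)$, into $\L$ and uses the Jensen-type bound $j^p+(n-j)^p-n^p\le(2^{1-p}-1)n^p$ to force $\L f\le (p\beta-\theta)f$ whenever $\gamma(2^{1-p}-1)\le\theta$, optimizing at $p=\theta/\beta$; for supercriticality it uses the two-valued test function $f=\mathbf 1_{\{n=1\}}+\kappa\mathbf 1_{\{n\ge 2\}}$. Your cluster-as-individual Galton--Watson comparison cannot reproduce the term $2^{1-\theta/\beta}-1$: the per-event fragmentation probability for a cluster of size $n$ is $(n-1)\gamma/\bigl(n(\beta+\theta)+(n-1)\gamma\bigr)$, and no manipulation of such quantities naturally produces a factor of the form $2^{1-p}-1$, which is purely a Jensen artifact of the power function. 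The GW comparison does give cruder thresholds (roughly $\gamma\le\theta$ and $\gamma>2(\beta+\theta)^2/(\beta-\theta)$, as in a commented-out version of this argument elsewhere in the source) and hence would prove that all three phases are non-empty, but it will not prove the statement with the stated thresholds.

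For continuity, your route is genuinely different from the paper's and the hurdle you flag is real but unresolved. The paper's argument is concrete: first show $\lambda$ is locally bounded (monotonicity plus homogeneity), then show $\langle\pi,[x]\rangle$ is locally bounded by contradiction using $\frac{\d}{\d t}\widetilde M_t h\le\lambda\widetilde M_t h-c(\theta_n-\theta_0/2)\widetilde M_t[x]+2c(\gamma_n-2\gamma_0)\widetilde M_t[x]$ together with the Malthusian asymptotics of $\widetilde M_t$, and finally bound $|\widetilde\lambda-\lambda|$ by the same perturbation acting on the eigenfunction $h$. Nothing in this argument needs the abstract spectral constants of \cite{bansaye2019non} to be locally uniform in the parameters. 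Your plan rests on precisely that uniformity, and verifying it would require opening the black box: the pair $(C,\omega)$ produced by Theorem~2.1 of \cite{bansaye2019non} depends on $(a,b,\xi,\zeta,R)$ and the minorization constant in a way that is not visibly continuous, and the case split in Step~2 of Lemma~\ref{lem:HarrisV} genuinely changes the $(q,A,B)$ recipe across $\gamma=\theta$. So your plan is plausible but stops exactly at the point where the work is; the paper's perturbation-of-$h$ argument is shorter and sidesteps the issue entirely.
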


\begin{proof}
\textit{Monotonicity.} If the clusters split faster, then the number of active clusters will increase. If the clusters become inactive faster, then the number of active clusters will decrease. This intuition is easy to prove by providing a coupling argument for each of the parameters. Then we have the monotonicity property.

\textit{Regime.} 
To find a regime where $\lambda < 0$, we make a test with $f=[x^p]$ with ${p \in (0, 1)}$. Jensen's inequality implies that 
\begin{align*}
\frac{x^p + y^p}{2} \leq \Ll(\frac{x + y}{2}\Rr)^p,\quad \forall x,y\geq 0.
\end{align*}
This entails that
\begin{align*}
(j)^p + (n-j)^p - n^p  \leq \Ll(2^{1-p} - 1\Rr) n^p.
\end{align*}
We apply the above inequality into $\L f$ and get that
\begin{equation}\label{eq:TestPoly}
\begin{split}
\L f(n) &\leq \beta n ((n+1)^p - n^p) - \theta n^{p+1} + \gamma \Ll(2^{1-p} - 1\Rr)(n-1) n^p \\ 
& \leq \Ll(p\beta - \Ll(2^{1-p} - 1\Rr) \gamma\Rr) f(n) + \Ll(\Ll(2^{1-p} - 1\Rr) \gamma - \theta\Rr)nf(n)\\
&=\Ll(p\beta-\theta + \gamma(2^{1-p} - 1-\theta)(n-1) \Rr) f(n).
\end{split} 
\end{equation} 
Now, we distinguish two situations.
If  $\theta \in (0, \beta)$,  the condition $\gamma < \theta/\Ll(2^{1 - \frac{\theta}{\beta}} - 1\Rr)$ allows us to find some $p<\theta/\beta$ 
such that
$\L f(n) \leq (p \beta - \theta) f(n)$ for all $n$. Consequently 
$ M_t f \leq \exp((p \beta - \theta)t)f$. Then  Proposition \ref{Perroneigen} entails $\lambda < 0$.
If  $\theta \geq \beta$, it suffices to choose some $p$ close to $1$ such that $\gamma(2^{1-p} - 1)- \theta < 0$  and we get also $\lambda < 0$.\\

To find a regime where $\lambda > 0$, we consider the test function 
\begin{align*}
f(n) = \Ind{n=1} + \kappa\Ind{n \geq 2},
\end{align*}
with constant $\kappa \in (0,2)$ to be fixed. Recalling \eqref{eq:Generator},  it is clear that 
\begin{align*}
n = 1, \quad \L f(1) & = \beta(\kappa - 1) - \theta,\\
n = 2, \quad \L f(2) &= -2 \theta\kappa + \gamma(2 - \kappa),\\
n \geq 3, \quad \L f(n) & \geq - n \theta  \kappa + \gamma(n-1).
\end{align*}
For  $$\gamma > \max \Ll\{ 2\theta\Ll(\frac{1 + \frac{\theta}{\beta}}{1 - \frac{\theta}{\beta}}\Rr), \frac{3}{2}\theta \Ll(1 + \frac{\theta}{\beta}\Rr)  \Rr\},$$
we get $\L f(n) > 0$ for any $n\geq 1$. Taking  $\kappa = \frac{1}{2}\Ll(\Ll(1 + \frac{\theta}{\beta}\Rr) + \Ll(\frac{2\gamma}{3\theta}\Rr)\Rr)$, we obtain ${\L f(n) \geq C f(n)}$ for all $n$, with $C>0$. It implies $\lambda >0$.

\textit{Homogeneity.} This is due to the fact that the generator $\L$ is linear with respect to the three parameters. Let $\widetilde{\L}$ and $(\widetilde{M}_t)_{t\geq 0}$ be respectively the generator and semigroup associated to $(\rho\beta, \rho\theta, \rho\gamma)$, while we denote by $\lambda, h, \pi$ the eigenelements associated to $\L$. Then by the definition of $(X_t)_{t\geq 0}$, it is clear that $\widetilde{\L} = \rho \L$, so $\widetilde{M}_t = M_{\rho t}$. This also implies 
\begin{align*}
\pi \widetilde{\L} = \rho \pi \L = \rho \lambda \pi , \qquad \widetilde{\L} h = \rho \L h = \rho \lambda h,
\end{align*} 
so $\rho\lambda, h, \pi$ are  eigenelements associated to $\widetilde{\L}$. Then by Proposition \ref{Perroneigen}, $\rho \lambda$ is Perron's root for $\widetilde{\L} = \rho \L$.


\textit{Continuity.} We will use three steps to prove it.

\textit{Step 1: $\lambda(\beta,\theta,\gamma)$ is locally bounded.} For any $\epsilon>0$ such that $$\frac{1}{1+\epsilon}<\frac{\beta_0}{\beta}<1+\epsilon,\quad\frac{1}{1+\epsilon}<\frac{\theta_0}{\theta}<1+\epsilon,\quad\frac{1}{1+\epsilon}<\frac{\gamma_0}{\gamma}<1+\epsilon,$$
we have \begin{align*}
&|\lambda(\beta_0,\theta_0,\gamma_0)|\leq &\\
&\quad\quad\quad\quad (1+\epsilon)\max\left\{\big|\lambda(\beta, (1+\epsilon)^{2}\theta,(1+\epsilon)^{-2}\gamma)\big|,\big|\lambda(\beta, (1+\epsilon)^{-2}\theta,(1+\epsilon)^2\gamma)\big| \right\}&.
\end{align*}
The above result is due to the homogeneity of $\lambda$ on all parameters and the monotonicity of $\lambda$ on $\theta$ and $\gamma$. Then the local boundedness of $\lambda$ is proved. 

\smallskip

\textit{Step 2: $\bracket{\pi(\beta,\theta,\gamma),[x]}$ is locally bounded.} We prove it by contradiction. Assume that $(\beta_n,\theta_n,\gamma_n)$ converges to $(\beta_0,\theta_0,\gamma_0)$. Let $(\beta,\theta,\gamma)=(\beta_0,\theta_0/2,2\gamma_0)$. Let $M_t,\pi,h,\lambda$ be associated to $(\beta,\theta,\gamma)$ and $\widetilde M_t,\widetilde \pi,\widetilde h,\widetilde \lambda$ to $(\beta_n,\theta_n,\gamma_n)$. We assume that $\bracket{\widetilde \pi,[x]}$ converges to infinity as $n\to\infty$. 
Due to homogeneity of $\lambda$ on its parameters, we can assume $\beta_n=\beta_0$ without loss of generality. 

By the assumption above, for any $K>0,$ let $n_K>0$ such that for any $n\geq n_K$, we have $\bracket{\widetilde \pi,[x]}>K$ and $\theta_n>3\theta_0/4$ and $\gamma_n<7\gamma_0/4$. By Proposition~\ref{Perroneigen}, there exist $0<c<C$ such that $c\leq h(n)\leq C$ for any $n\geq 1$. Then using (iii) in Lemma~\ref{lemma:duhamel},  we have 
\begin{equation}\label{eq:mupbound0}
\begin{split}
\frac{\d}{\d t} \widetilde{M}_t h &= \widetilde{M}_t (\widetilde{\L} h) \\
&\leq \widetilde{M}_t (L h) - c(\theta_n-\theta_0/2) \widetilde{M}_t (\x{})+2c(\gamma_n-2\gamma_0)\widetilde M_t([x])\\
&= \lambda \widetilde{M}_th - c(\theta_n-\theta_0/2) \widetilde{M}_t (\x{})+2c(\gamma_n-2\gamma_0)\widetilde M_t([x]).
\end{split}
\end{equation} 
Using again Proposition~\ref{Perroneigen}, as $t\to\infty$,
\begin{equation}\label{eq:mtopi}\widetilde M_t h(n)\sim \widetilde h\bracket{\widetilde \pi,h}e^{\widetilde \lambda t},\quad \widetilde M_t ([x])(n)\sim \widetilde h\bracket{\widetilde \pi,[x]}e^{\widetilde \lambda t}.\end{equation}
Then for any fixed $n\geq n_K$, and $t$ large enough, we have 
$$\widetilde M_t([x])(n)\geq \frac{K}{2C}\widetilde M_t h(n).$$
Then we obtain that 
\begin{equation}\label{eq:mupbound}
\begin{split}
\frac{\d}{\d t} \widetilde{M}_t h(n) &\leq  \lambda\widetilde{M}_th(n) - c(\theta_n-\theta_0/2) \widetilde{M}_t (\x{})(n)+2c(\gamma_n-2\gamma_0)\widetilde M_t([x])(n)\\
&\leq  \left(\lambda-cK\frac{\theta_0+2\gamma_0}{8C}\right)\widetilde{M}_th(n).
\end{split}
\end{equation} 
The above display implies that 
$$\widetilde \lambda\leq \lambda-cK\frac{\theta_0+2\gamma_0}{8C}.$$
As $K$ can be arbitrarily large, we obtain that $\widetilde \lambda \to -\infty$ as $n\to\infty$, which is contradictory to the fact that $\lambda$ is locally bounded as proved in Step 1. Therefore we conclude that $\bracket{\pi(\beta,
\theta,\gamma),[x]}$ is locally bounded.  

\smallskip

\textit{Step 3: $\lambda$ is continuous with respect to the three parameters.} The approach is very similar to that in Step 2. Assume that $(\beta_n,\theta_n,\gamma_n)$ converges to $(\beta,\theta,\gamma)$. Let $M_t,\pi,h,\lambda$ be associated to $(\beta,\theta,\gamma)$ and $\widetilde M_t,\widetilde \pi,\widetilde h,\widetilde \lambda$ to $(\beta_n,\theta_n,\gamma_n)$. We use again that there exist $0<c<C$ such that $c\leq h(n)\leq C$ for any $n\geq 1$. Due to homogeneity, we can assume that $\gamma_n=\gamma$. We prove next $\lim_{n\to\infty}\widetilde \lambda=\lambda.$

By the assumption above, for any $K>0,$ let $n_K>0$ such that for any $n\geq n_K$, we have 
\begin{align*}
\vert \beta_n-\beta \vert \leq \beta/K, \quad  \vert \theta_n-\theta \vert \leq \theta/K.    
\end{align*}
Since $\bracket{\widetilde{\pi},[x]}$ is locally bounded, using \eqref{eq:mtopi} and $c\leq h\leq C$, there exists $l=l(\beta,\theta,\gamma)>1$ such that for fixed $n\geq n_K$ and $t$ large enough, we have 
$$\widetilde M_t h(n)/l\leq \widetilde M_t([x])(n)\leq  l\widetilde M_t h(n).$$
Then similarly to \eqref{eq:mupbound}, we have 
\begin{equation}\label{}
\begin{split}
\frac{\d}{\d t} \widetilde{M}_t(h) (n)&\leq \left(\lambda+Cl\frac{\beta+\theta}{K}\right)\widetilde{M}_t (h)(n).
\end{split}
\end{equation} 
Since $K$ can be arbitrarily large, we obtain that $\limsup_{n\to\infty}\widetilde \lambda\leq \lambda.$ The other direction can be proved similarly. Then the proof for continuity is finished.  
\end{proof}

\subsection{Existence of phases}\label{subsec:ExitPhases}

The main objective of this part is to prove the following result, which describes the phases and justifies  Figure~\ref{fig:Phases}. 

\begin{proposition}[Classification of phases]\label{prop:Curve}
The regime of the three phases of the GFI process depends on the parameters $(\beta, \theta, \gamma)$ in the following way.
\begin{enumerate}
\item Critical phase: the regime is a critical surface defined as
\begin{align*}
\{ \lambda(\beta, \theta, \gamma) = 0\} = \Ll\{(\beta, \theta, \gamma) \in \R^3_+ \,\vert 0 < \theta < \beta, \gamma = \gamma_c(\beta,\theta)\Rr\}.
\end{align*}
Here $\gamma_c(\beta,\theta)$ is a function, such that for fixed $\beta$, the mapping $\theta \mapsto \gamma_c(\beta, \theta)$ is strictly increasing, continuous, and satisfies
\begin{align*}
\lim_{\theta \searrow 0} \gamma_c(\beta, \theta) = 0, \qquad \lim_{\theta \nearrow \beta} \gamma_c(\beta, \theta) = \infty.
\end{align*}

\item Subcritical phase: the regime stays below the critical surface 
\begin{align*}
\{\lambda(\beta, \theta, \gamma) < 0\} &= \Ll\{(\beta, \theta, \gamma) \in \R^3_+ \,\vert \theta \geq \beta\Rr\}\\
& \qquad \qquad \cup \Ll\{(\beta, \theta, \gamma) \in \R^3_+ \,\vert 0 < \theta < \beta , 0 < \gamma < \gamma_c(\beta, \theta)\Rr\}
\end{align*}
\item Supercritical phase: the regimes stays above the critical surface
\begin{align*}
\{\lambda(\beta, \theta, \gamma) > 0\} = \Ll\{(\beta, \theta, \gamma) \in \R^3_+ \,\vert 0 < \theta < \beta, \gamma > \gamma_c(\beta, \theta)\Rr\}.
\end{align*}
\end{enumerate}
\end{proposition}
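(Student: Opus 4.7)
My approach is to combine the regularity and explicit bounds of Proposition~\ref{prop:Regularity} with an intermediate value argument to produce $\gamma_c$, then to read off its qualitative properties from the monotonicity of $\lambda$.

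First I dispose of the half-space $\{\theta \geq \beta\}$. By Proposition~\ref{prop:Regularity}(2), whenever $\theta \geq \beta$ one has $\lambda(\beta,\theta,\gamma) < 0$ for every $\gamma > 0$, so this region is entirely subcritical and contains no critical points. It thus suffices to analyse the slab $0 < \theta < \beta$.

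Next, with $\beta$ and $\theta \in (0,\beta)$ fixed, I study $\gamma \mapsto \lambda(\beta,\theta,\gamma)$. Proposition~\ref{prop:Regularity}(1) and (4) give monotonicity and continuity, while the explicit bounds in (2) yield $\lambda < 0$ as soon as $\gamma < \theta(2^{1-\theta/\beta}-1)^{-1}$ and $\lambda > 0$ as soon as $\gamma > \max\{2\theta(1+\theta/\beta)/(1-\theta/\beta), \tfrac{3}{2}\theta(1+\theta/\beta)\}$. The intermediate value theorem then produces a non-empty compact set $\{\gamma : \lambda(\beta,\theta,\gamma) = 0\}$. The crux is to show this set is a single point, i.e.\ that $\gamma \mapsto \lambda$ is \emph{strictly} increasing; I then define $\gamma_c(\beta,\theta)$ as the unique zero, and the descriptions of the subcritical and supercritical regions follow at once from monotonicity.

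The stated properties of $\gamma_c(\beta,\cdot)$ follow by standard arguments, granted strict monotonicity of $\lambda$ in both $\gamma$ and $\theta$. Strict monotonicity of $\gamma_c$ in $\theta$: if $\theta_1 < \theta_2$ then $\lambda(\beta,\theta_2,\gamma_c(\beta,\theta_1)) < \lambda(\beta,\theta_1,\gamma_c(\beta,\theta_1)) = 0$, so strict monotonicity in $\gamma$ forces $\gamma_c(\beta,\theta_2) > \gamma_c(\beta,\theta_1)$. Continuity is an implicit-function-type consequence of joint continuity of $\lambda$ plus strict monotonicity in $\gamma$. The limit $\gamma_c(\beta,\theta) \to 0$ as $\theta \searrow 0$ comes from the supercritical threshold in Proposition~\ref{prop:Regularity}(2) vanishing like $\theta$; the divergence $\gamma_c(\beta,\theta) \to \infty$ as $\theta \nearrow \beta$ comes from the subcritical threshold $\theta(2^{1-\theta/\beta}-1)^{-1}$ blowing up.

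The main obstacle is upgrading the monotonicity of $\gamma \mapsto \lambda$ (and likewise of $\theta \mapsto \lambda$) to strict monotonicity. The coupling that produces the weak version adds fragmentation or isolation events pathwise, which is not enough. I would attack it via an eigenvector perturbation: with $\pi_1$ the left eigenvector at $\gamma_1 < \gamma_2$, $h_2$ the right eigenvector at $\gamma_2$, and writing the generator as $\L^{(\gamma_2)} = \L^{(\gamma_1)} + (\gamma_2 - \gamma_1)\L_F$ where $\L_F$ is the fragmentation part of the generator, testing $\L^{(\gamma_2)} h_2 = \lambda_2 h_2$ against $\pi_1$ yields
\[ (\lambda_2 - \lambda_1)\langle \pi_1, h_2\rangle = (\gamma_2 - \gamma_1)\langle \pi_1, \L_F h_2\rangle, \]
so strictness reduces to $\langle \pi_1, \L_F h_2\rangle > 0$, which I would extract from the positivity of $\pi_1$ and $h_2$ (Proposition~\ref{Perroneigen}) combined with the irreducibility guaranteed by Lemma~\ref{lem:HarrisV}. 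A backup plan is to compare with a finite-state truncation of the size process, where classical Perron--Frobenius theory provides strict analytic dependence of the principal eigenvalue on $\gamma$, and then to pass to the limit using the quantitative ergodic estimates of Proposition~\ref{Perroneigen}.
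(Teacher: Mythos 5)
The decisive step is upgrading the (weak) monotonicity of $\gamma\mapsto\lambda$ to uniqueness of the critical value, and that is exactly where your proposal has a real gap. Your perturbation identity
\begin{align*}
(\lambda_2 - \lambda_1)\langle \pi_1, h_2\rangle = (\gamma_2 - \gamma_1)\langle \pi_1, \L_F h_2\rangle
\end{align*}
is the right reduction, and $\langle \pi_1, h_2\rangle>0$ indeed follows from Proposition~\ref{Perroneigen}. But you then assert that $\langle \pi_1, \L_F h_2\rangle > 0$ follows from ``positivity of $\pi_1$ and $h_2$ plus irreducibility,'' and this does not hold: writing out $\L_F h_2(n) = \sum_{j=1}^{n-1}\frac{n}{j(j+1)}\bigl(h_2(j)+h_2(n-j)-h_2(n)\bigr)$, positivity of $h_2$ alone gives no control on the sign of the summand $h_2(j)+h_2(n-j)-h_2(n)$; it can be negative unless $h_2$ is subadditive. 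Irreducibility (Lemma~\ref{lem:HarrisV}) only gives communication between states, not the required convexity-type inequality on the eigenvector. The missing ingredient is precisely the subadditivity $h(n+m)\le h(n)+h(m)$, which the paper isolates as Lemma~\ref{lem:hRegularity} and proves by a coupling: starting a process from a single RRT of size $n+m$ and another from two independent RRTs of sizes $n$ and $m$ (obtained by deleting a uniform edge, using the splitting Proposition~\ref{prop:Split}), one shows $\E[N_t^{n+m}]\le \E[N_t^n]+\E[N_t^m]$ for all $t$ and lets $t\to\infty$. Remark~\ref{rmk:NonTrivial} then provides strictness at $n=m=1$, so $\L_F h$ is nonnegative and not identically zero, which is what your inner product needs. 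Once you have subadditivity your perturbation route does close, and is arguably cleaner than the paper's argument (which instead shows by contradiction, via the spectral estimate \eqref{eq:fSpectral}, that $\widetilde M_t h$ would have to grow linearly if two distinct critical $\gamma$'s existed). But without it the sign claim is unsupported.

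Two smaller points. First, to deduce strict monotonicity of $\theta\mapsto\gamma_c(\beta,\theta)$ you use strict decrease of $\theta\mapsto\lambda$, whereas Proposition~\ref{prop:Regularity}(1) only states (weak) monotonicity; the paper supplies the quantitative version in Lemma~\ref{lem:StrictTheta}. This is easy to repair with exactly your perturbation technique, since $\L^{(\theta+\delta)}=\L^{(\theta)}-\delta\,[x]\cdot$ and $\langle\pi_1,[x]h_2\rangle>0$ is automatic (no sign issue here), but you should say so explicitly rather than cite the non-strict statement. Second, your reduction to the slab $0<\theta<\beta$, the use of the explicit regime bounds of Proposition~\ref{prop:Regularity}(2) for the intermediate-value argument and for the limits of $\gamma_c$ as $\theta\searrow 0$ and $\theta\nearrow\beta$, and the continuity of $\gamma_c$ from joint continuity plus strict monotonicity, all match the paper's Steps 2--4 and are fine.
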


The key step in the proof of Proposition~\ref{prop:Curve} relies on some strict increment estimates which are proved in the following two lemmas.

\begin{lemma}\label{lem:StrictTheta}
Perron's root $\lambda$ associated to the generator $\L$ in \eqref{eq:Generator} satisfies the following estimates
\begin{equation}\label{eq:StrictTheta}
\begin{split}
\forall \delta > 0, \quad \lambda(\beta, \theta+\delta, \gamma) \leq \lambda(\beta, \theta, \gamma) - \delta,\\
\forall \delta \in (0, \theta), \quad \lambda(\beta, \theta-\delta, \gamma) \geq \lambda(\beta, \theta, \gamma) + \delta.
\end{split}
\end{equation}
\end{lemma}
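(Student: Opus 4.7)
The plan is to exploit the linearity of the generator $\L$ in $\theta$. Denote by $\L^\theta$ the generator \eqref{eq:Generator} viewed as a function of $\theta$ alone (with $\beta,\gamma$ fixed), and write $(\lambda(\theta), h, \pi)$ for the associated Perron triplet from Proposition~\ref{Perroneigen}. Reading off \eqref{eq:Generator}, one has the pointwise identity
\begin{align*}
\L^{\theta+\delta} f(n) - \L^\theta f(n) = -\delta\, n\, f(n).
\end{align*}
Applying this to the right eigenvector $h$, and using that $n \geq 1$ and $h>0$, I would obtain the pointwise supersolution bound
\begin{align*}
\L^{\theta+\delta} h(n) = \bigl(\lambda(\theta) - \delta n\bigr) h(n) \leq \bigl(\lambda(\theta) - \delta\bigr) h(n).
\end{align*}

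Next, I would transfer this bound to the semigroup by a Gr\"onwall-type comparison. Since $h$ is bounded by Proposition~\ref{Perroneigen}, the function $[x]\,h$ lies in $\B_1$, so Lemma~\ref{lemma:duhamel}(iii) applies and gives $\partial_t M^{\theta+\delta}_t h = M^{\theta+\delta}_t(\L^{\theta+\delta} h)$. Setting $c := \lambda(\theta) - \delta$, positivity of $M^{\theta+\delta}_t$ then yields
\begin{align*}
\partial_t \bigl( e^{-ct} M^{\theta+\delta}_t h \bigr) = e^{-ct}\, M^{\theta+\delta}_t\bigl( \L^{\theta+\delta} h - c\, h \bigr) \leq 0,
\end{align*}
so that $M^{\theta+\delta}_t h(n) \leq e^{(\lambda(\theta)-\delta)\,t}\, h(n)$ for every $t \geq 0$ and $n \geq 1$.

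To conclude, I would invoke the ergodic asymptotics of Proposition~\ref{Perroneigen} applied to the parameter $\theta + \delta$: denoting by $(h_{\theta+\delta}, \pi_{\theta+\delta})$ the corresponding eigenelements, \eqref{eq:fSpectral} gives
\begin{align*}
e^{-\lambda(\theta+\delta)\,t} M^{\theta+\delta}_t h(n) \xrightarrow{t \to \infty} h_{\theta+\delta}(n)\, \bracket{\pi_{\theta+\delta}, h},
\end{align*}
and this limit is strictly positive because $h$ and $h_{\theta+\delta}$ are bounded below by positive constants (Proposition~\ref{Perroneigen}). If one had $\lambda(\theta+\delta) > \lambda(\theta) - \delta$, dividing the semigroup bound by $e^{\lambda(\theta+\delta)t}$ and letting $t \to \infty$ would force this limit to vanish, a contradiction. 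Hence $\lambda(\theta+\delta) \leq \lambda(\theta) - \delta$. The second inequality follows by the mirror argument, where $\L^{\theta-\delta} h \geq (\lambda(\theta) + \delta)\, h$ pointwise yields $M^{\theta-\delta}_t h \geq e^{(\lambda(\theta)+\delta)\,t}\, h$, and then $\lambda(\theta - \delta) \geq \lambda(\theta) + \delta$ by the same asymptotic comparison.

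The main obstacle I anticipate is rigorously justifying the Gr\"onwall comparison in this countably-infinite-dimensional setting: because $[x]$ is unbounded, one has to verify that the forward identity $\partial_t M_t h = M_t(\L h)$ is valid for $h$ and that $\L^{\theta\pm\delta} h$ remains in the class of test functions on which the semigroup is well defined. This is precisely what Lemma~\ref{lemma:duhamel}(iii) supplies once one observes that $h \in \B$ implies $\L^{\theta\pm\delta} h \in \B$. The remainder of the proof is a clean combination of positivity of the semigroup and the spectral gap of Proposition~\ref{Perroneigen}.
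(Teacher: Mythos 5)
Your proposal is correct and follows essentially the same route as the paper's proof: you compute $\L^{\theta+\delta}h = (\lambda-\delta n)h \le (\lambda-\delta)h$, transfer this to the semigroup by a Gr\"onwall comparison using Lemma~\ref{lemma:duhamel}(iii), and read off the inequality on the Perron root from the ergodic asymptotics. The only difference is that the paper leaves the last step implicit (``$\widetilde M_t h \le e^{(\lambda-\delta)t}h$, which proves the first statement''), while you spell it out via \eqref{eq:fSpectral}; this is a harmless and sound elaboration of the same argument.
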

\begin{proof}
Let $\widetilde{\L}$ and $(\widetilde{M}_t)_{t\geq 0}$ be the generator and semigroup associated to $(\beta, \theta+\delta, \gamma)$, let $h$ be the eigenvector associated to $\L$ of parameters $(\beta, \theta, \gamma)$. Using a similar development like \eqref{eq:mupbound0} and using also $[x]\geq 1$, we obtain 
\begin{align*}
\frac{\d}{\d t} \widetilde{M}_t h &= \widetilde{M}_t (\L h) - \delta \widetilde{M}_t (\x{} h) = \lambda \widetilde{M}_t h - \delta \widetilde{M}_t (\x{} h) \leq (\lambda-\delta) \widetilde{M}_t h.
\end{align*}
Using Gr\"onwall's inequality, the above display implies that $\widetilde{M}_t h \leq e^{(\lambda-\delta)t} h$, which proves the first statement in \eqref{eq:StrictTheta}. For the second, the proof is similar and we skip it. 
\end{proof}

\begin{lemma}\label{lem:hRegularity}
Let $h$ be the eigenvector of Perron's root associated to the generator $\L$ in \eqref{eq:Generator}. Then in the critical phase $\lambda = 0$, $h$ is subadditive, i.e. 
\begin{align}\label{eq:hRegularity}
\forall n,m \in \N_+, \quad h(m+n) \leq h(m) + h (n).
\end{align}
\end{lemma}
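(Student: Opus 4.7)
The plan is to deduce subadditivity from a coupling argument that exploits the martingale property of $(\bracket{X_t,h})_{t\geq 0}$ coming from $\lambda=0$ (Proposition \ref{prop:Mt}). Specifically, I would build two GFI processes on a single probability space: Process A starts from a uniform RRT $T$ of size $m+n$, while Process B starts from two independent uniform RRTs $T_m,\,T_n$ of sizes $m$ and $n$. Using the splitting property (Proposition \ref{prop:Split}), the joint law can be arranged so that $T$ is exactly $T_m$ and $T_n$ joined through a distinguished edge $e^\star$. The processes share all Poisson clocks (growth and isolation clocks attached to vertices, fragmentation clocks attached to edges), with the understanding that the fragmentation clock of $e^\star$ only acts in Process A.

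The critical stopping time is $\tau^\star:=\inf\{t\geq 0:\text{either }e^\star\text{ is removed, or the A-cluster containing }e^\star\text{ gets isolated}\}$. Because $\tau^\star$ is dominated by $\mathrm{Exp}(\gamma)$, it is a.s. finite and integrable. I would then prove, by induction on successive events, that for every $t<\tau^\star$ the unique A-cluster $C^\star_t$ containing $e^\star$ has vertex set $L^m_t\cup L^n_t$, the $T_m$-side and $T_n$-side connected components reached via still-open edges, and that these two sets form exactly two separate B-clusters, while every other active cluster coincides (as a set of vertices) in A and in B. Setting $Z_t:=\bracket{X^B_t,h}-\bracket{X^A_t,h}$, this pairing yields
\[
Z_t=h(|L^m_t|)+h(|L^n_t|)-h(|L^m_t|+|L^n_t|), \qquad t<\tau^\star.
\]

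Next I would analyze $Z_{\tau^\star}$. In case (i) the edge $e^\star$ is cut in A, so A and B have identical cluster structures from $\tau^\star$ onward and $Z_t\equiv 0$. In case (ii), the isolation clock firing at some vertex $v\in L^m_{\tau^\star-}$ (say) causes A to lose the whole cluster $C^\star_{\tau^\star-}$ while in B only $L^m_{\tau^\star-}$ becomes inactive; the other half $L^n_{\tau^\star-}$ keeps evolving as an independent GFI process whose $\sum h(|\cdot|)$ functional is a non-negative martingale starting at $h(|L^n_{\tau^\star-}|)>0$. Either way $Z_{\tau^\star}\geq 0$ almost surely. Since both $\bracket{X^A_t,h}$ and $\bracket{X^B_t,h}$ are martingales in the joint filtration, $Z_t$ is a martingale too; boundedness of $h$ (Proposition \ref{Perroneigen}) gives $|Z_t|\leq 3\sup_n h(n)$ on $[0,\tau^\star]$, so the optional stopping theorem yields
\[
h(m)+h(n)-h(m+n)=Z_0=\E[Z_{\tau^\star}]\geq 0,
\]
which is the desired subadditivity.

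The hard part will be the initial coupling: Process A must genuinely start from a uniform RRT of size $m+n$ so that $\bracket{X^A_t,h}$ is a martingale, and at the same time must carry a designated edge $e^\star$ whose removal produces two independent uniform sub-RRTs of sizes $m$ and $n$. Reconciling these two demands is precisely what Proposition \ref{prop:Split} enables, via the joint law of a uniform RRT and a uniformly chosen edge conditioned on the split being $(m,n)$. Once the joint law has been carefully set up, the rest of the proof is a clean martingale comparison as above.
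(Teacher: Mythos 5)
Your overall strategy is the paper's own: couple the process started from the joined tree (A) with the process started from the two halves (B) through shared growth/fragmentation/isolation clocks, with the clock of the distinguished edge acting only in A, and use the splitting property (Proposition~\ref{prop:Split}) to identify the law of the halves. Where you genuinely differ is the conclusion mechanism: the paper compares expected numbers of active clusters, $\E[N^{m+n}_t]\le \E[N^m_t]+\E[N^n_t]$, and lets $t\to\infty$ using $\E[N^{k}_t]\to h(k)\bracket{\pi,\mathbf 1}$ in the critical case, whereas you stop the $h$-martingale of Proposition~\ref{prop:Mt} at the decoupling time $\tau^\star$ and use $Z_{\tau^\star}\ge 0$. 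That part of your argument (the cluster pairing before $\tau^\star$, the sign of $Z_{\tau^\star}$, boundedness of $h$, a.s.\ finiteness of $\tau^\star$) is sound and is arguably a cleaner finishing move than a $t\to\infty$ limit of means.

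The gap is precisely at the point you yourself flag as the hard part, and the resolution you propose is false: conditioning the pair (uniform RRT $T_{m+n}$, uniformly chosen edge) on the split sizes being $(m,n)$ does \emph{not} leave the marginal law of the big tree uniform. For $m=1,n=2$, conditioning on the non-root part having size $2$ forces the tree to be the path and excludes the tree where both $2$ and $3$ hang from the root; for $m=n=2$, the size-$4$ ``star'' has no edge realizing a $(2,2)$ split at all, so it receives conditional probability zero. Hence Process A does not start from a uniform RRT of size $m+n$, and then nothing in the paper gives you that $\bracket{X^A_t,h}$ is a martingale: the reduction to the size process with generator $\L$, and the identity $M_t h=e^{\lambda t}h$ behind Proposition~\ref{prop:Mt}, rest on each cluster being, conditionally on its size, a uniform RRT (Propositions~\ref{prop:Split} and \ref{prop:ClusterRRT}); a non-uniform initial tree does not fragment with the size law $\frac{n}{n-1}\frac{1}{j(j+1)}$, so the key identity $h(m)+h(n)-h(m+n)=Z_0=\E[Z_{\tau^\star}]$ is unjustified. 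If instead you run your coupling without conditioning (uniform tree, uniform edge, random split $J$), both martingale properties do hold, but optional stopping then only yields the averaged inequality $\sum_{j}\frac{m+n}{m+n-1}\frac{1}{j(j+1)}\bigl(h(j)+h(m+n-j)\bigr)\ge h(m+n)$, which is weaker than pointwise subadditivity. To be fair, the paper's write-up is brisk at the very same spot (it tacitly identifies the mean cluster count started from the conditioned, hence non-uniform, tree with $\E[N^{m+n}_t]$), but your route uses the exact finite-time martingale property for A, so the missing uniformity enters even more directly, and the statement you invoke to supply it is not what Proposition~\ref{prop:Split} says.
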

\begin{proof}
The proof relies on the probabilistic representation of $h$. Let $N_t^n$ be the number of clusters at time $t$ with the initial state being a single cluster of size $n$. Then we have
\begin{align*}
\E[N_t^n]=\bracket{\delta_n M_t,  1}.
\end{align*}
Using \eqref{eq:fSpectral}, in the case $\lambda = 0$, 
we have 
\begin{align}\label{eq:hProba1}
\lim_{t \to \infty}\E[N_t^n] = \lim_{t \to \infty}\bracket{\delta_n M_t,  1} = h(n) \bracket{\pi, 1}.
\end{align}
Then we think of a coupling as follows: consider two processes of $(\XX_t)_{t\geq 0}$, one with  initial state being a single RRT of size $(m+n)$, and the other one with the same initial state but has a uniformly selected edge removed which results in two smaller clusters of sizes respectively $m$ and $n$. By Proposition \ref{prop:Split}, these two clusters are independent RRTs with sizes respectively $m,n$.  Assume that the two processes have the same random events in their life times, except that a splitting may happen in the particular edge that exists in the first process and does not exist in the second process.  In this coupling, it is easy to see that the number of clusters at any time $t>0$ in the first process will not be larger than the number in the second process. Since the clusters evolve independently, we have \begin{align}\label{eq:hProba2}
\E[N_t^{n+m}] \leq \E[N_t^{n}]  + \E[N_t^{m}].
\end{align}
Combining \eqref{eq:hProba1} and \eqref{eq:hProba2}, we obtain the desired result \eqref{eq:hRegularity}.
\end{proof}
\begin{remark}\label{rmk:NonTrivial}
In the critical case, since $\L h=0h=0,$ we solve $\L h(1) = 0$ to obtain that $h(2) = \Ll(1 + \frac{\theta}{\beta}\Rr)h(1)$. The criticality also implies $\beta > \theta$. Therefore
\begin{align*}
h(1) + h(1) - h(2) = \Ll(1 - \frac{\theta}{\beta}\Rr) h(1) > 0.
\end{align*} 
So the $``="$ in \eqref{eq:hRegularity} is not always established.
\end{remark}

\begin{proof}[Proof of Proposition~\ref{prop:Curve}]
We divide the proof into 4 steps. Steps 1-3 are about the critical phase, and the Step 4 proves the regime of the other phases.

\textit{Step 1: Unique critical point.} Fix $\beta > 0$ and $0 < \theta < \beta$. Using the regimes discovered for subcritical and critical phases in Proposition~\ref{prop:Regularity} and also the continuity and monotonicity property of $\lambda$ therein, we know that $\{\gamma: \lambda(\beta, \theta, \gamma) = 0\}$ is a non-empty closed interval. We aim to prove that this interval is a single point by contradiction. Suppose that there exist $\gamma>0, \delta>0$ such that ${\lambda(\beta, \theta, \gamma) = \lambda(\beta, \theta, \gamma+\delta) = 0}$. Let $\widetilde{\L}$ and $(\widetilde{M}_t)_{t\geq 0}$ be the generator and semigroup associated to $(\beta, \theta, \gamma+\delta)$, while we denote by $\L$ the generator and $h$ the eigenvector for Perron's root associated to $(\beta, \theta, \gamma)$. We also define the function $G$ 
\begin{align*}
G(n) :=  \sum_{j=1}^{n-1} \frac{n}{j(j+1)}\Ll(h(j) + h(n-j) - h(n)\Rr). 
\end{align*}
Since $h$ is subadditive (Lemma \ref{lem:hRegularity}) and non-negative (Proposition \ref{Perroneigen}), we have 
\begin{align*}
G(n)\leq \sum_{j=1}^{n-1} \frac{n}{j(j+1)} \Ll(j h(1) + (n-j)h(1)\Rr) \leq n^2h(1).  
\end{align*}
Moreover by Remark~\ref{rmk:NonTrivial}, $G$ is a non-zero positive function.  Then by a similar argument like \eqref{eq:mupbound0} and using $\L h = \lambda h = 0$, we have 
\begin{align}\label{eq:CriticalIntegration}
\frac{\d}{\d t} \widetilde{M}_t h =  \widetilde{M}_t (\widetilde{\L} h) =\widetilde{M}_t (\L h+\delta G)= \widetilde{M}_t (\L h) + \delta  \widetilde{M}_t G = \delta  \widetilde{M}_t G.
\end{align}
By Proposition \ref{Perroneigen}, there exist Perron's eigenelements $\widetilde{h}, \widetilde{\pi}$, and also a constant $\widetilde{\omega} > 0$, corresponding to $\widetilde \L$, and the estimate \eqref{eq:fSpectral} holds with the corresponding terms. Then we apply \eqref{eq:fSpectral} and $G(n)\leq n^2 h(1)$ to obtain
\begin{align*}
\widetilde{M}_t G \geq \bracket{\widetilde{\pi}, G}\widetilde{h} - C e^{- \omega t}.
\end{align*} 
Here $C>0$ is a constant that does not depend on $t$. 
We plug in the above display to  \eqref{eq:CriticalIntegration} and obtain
\begin{align*}
\frac{\d}{\d t}\widetilde{M}_t h \geq \delta \Ll(\bracket{\widetilde{\pi}, G} \widetilde{h} - C e^{- \omega t}\Rr),
\end{align*}
If $\lambda(\beta, \theta, \gamma+\delta) = 0$, then by 
\eqref{eq:fSpectral}, $\widetilde{M}_t h$, as a vector, converges to $ \bracket{\widetilde \pi,h} \widetilde h$ as $t\to\infty$ for every element. However since $G$ is a non-zero positive function (which implies $\bracket{\widetilde{\pi}, G} >0$), the above display shows that $\widetilde{M}_t h$ increases at least linearly to infinity as $t\to\infty$ for every element. This is a contradiction, so by the monotonicity of $\lambda$ in $\gamma$, we have $\lambda(\beta, \theta, \gamma+\delta)>0$. We conclude that for fixed $0<\theta<\beta$, there exists only one $\gamma$ which makes $\lambda(\beta,\theta,\gamma)=0.$ We denote it by $\gamma_c=\gamma_c(\beta,\theta)$.  

\smallskip

\textit{Step 2: Strictly increasing and continuous function.} Using Lemma~\ref{lem:StrictTheta} (first inequality), we have 
\begin{align*}
\lambda(\beta, \theta+\delta, \gamma_c(\beta, \theta)) \leq \lambda(\beta, \theta, \gamma_c(\beta, \theta)) - \delta = -\delta.
\end{align*}
Since $\lambda(\beta,\theta,\gamma)$ is increasing with respect to $\gamma$,  we obtain that ${\gamma_c(\beta, \theta + \delta) > \gamma_c(\beta, \theta)}$.

Next we prove the continuity of ${\theta \mapsto \gamma_c(\beta, \theta)}$. 
By the monotonicity of $\lambda$ in $\gamma$, we have  for any $\epsilon > 0$,  $\lambda(\beta, \theta, \gamma_c(\beta, \theta)+\epsilon) > 0$. Then by the continuity of $\lambda$ on $\theta$, there exists some $\delta_0 > 0$ such that for all $\delta \in (0, \delta_0)$, $\lambda(\beta, \theta + \delta, \gamma_c(\beta, \theta)+\epsilon) > 0$. This implies, using the uniqueness of $\gamma_c$ and monotonicity of $\lambda$ on $\gamma$, that for any $\delta \in (0, \delta_0)$
\begin{align*}
\gamma_c(\beta, \theta + \delta) < \gamma_c(\beta, \theta) + \epsilon,
\end{align*}
which proves the right continuity of $\gamma_c(\beta,\theta)$ on $\theta$. The left continuity can be proved exactly in the same way.

\smallskip

\textit{Step 3: Asymptotic behavior.} By the monotonicity property of $\lambda$ on $\gamma$ and regimes for subcriticality and supercriticality given in Proposition~\ref{prop:Regularity}, we have: 
\begin{align}\label{eq:BoundCriticalPhase}
\theta\Ll(2^{1 - \frac{\theta}{\beta}} - 1\Rr)^{-1} \leq \gamma_c(\beta, \theta) \leq  \max \Ll\{ 2\theta\Ll(\frac{1 + \frac{\theta}{\beta}}{1 - \frac{\theta}{\beta}}\Rr), \frac{3}{2}\theta \Ll(1 + \frac{\theta}{\beta}\Rr)  \Rr\}.
\end{align}
Then it suffices to let $\theta \searrow 0$ and $\theta \nearrow \beta$ to prove the asymptotic limits. 

\smallskip

\textit{Step 4: Regime.} Once we have proved that the critical phase is the single point for $\beta, \theta$ fixed in an admissible domain, we apply the monotonicity about $\gamma$ in Proposition~\ref{prop:Regularity} and justify the regime for other phases.
\end{proof}

\subsection{Monotonicity of $h$  and $ \lambda$}\label{subsec:Regularity2}
The dependence of $\lambda$ in function of $\theta, \gamma$ has been discussed in the previous subsections, while the dependence on $\beta$ is more complicated. Indeed, in terms of number of  active clusters,  isolation and fragmentation have opposite effect and increasing the cluster size accelerate both. This problem is  closely related to the monotonicity of the eigenfunction $h$.  The main result in this part is the following one.

\begin{proposition}\label{prop:betaMono}
The monotonicity of the mappings $h(., \theta, \gamma)$ and $\lambda(., \theta, \gamma)$ depends on the values of $\gamma, \theta$:
\begin{itemize}
    \item if $\gamma > \theta$, the two functions are increasing;
    \item if $\gamma = \theta$, the two functions are constants;
    \item if $\gamma < \theta$, the two functions are decreasing.
\end{itemize}
\end{proposition}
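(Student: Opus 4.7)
The plan is to first handle the case $\gamma = \theta$ and then reduce the $\beta$-monotonicity of $\lambda$ (and of $h$) to the $n$-monotonicity of $h$, which is the key technical input.

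When $\gamma = \theta$, a direct computation gives
$$\L \mathbf{1}(n) = -\theta n + \gamma \sum_{j=1}^{n-1}\frac{n}{j(j+1)} = -\theta n + \gamma(n-1) = -\gamma,$$
so the constant function is a positive bounded eigenfunction with eigenvalue $-\gamma$; the uniqueness of Perron eigenelements in Proposition~\ref{Perroneigen} identifies $h \equiv 1$ and $\lambda = -\gamma$, both independent of $\beta$, which is the middle case of the proposition.

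For $\gamma \neq \theta$, let $\L_\beta$ denote the generator \eqref{eq:Generator} with infection rate $\beta$. The elementary perturbation identity
$$\L_{\beta'} - \L_\beta = (\beta' - \beta)\,[x]\,\Delta, \qquad \Delta f(n) := f(n+1) - f(n),$$
applied to the eigenfunction $h_\beta$ gives $\L_{\beta'} h_\beta = \lambda_\beta h_\beta + (\beta' - \beta)\,[x]\,\Delta h_\beta$. Assuming that $h_\beta$ is monotone in $n$ with $\Delta h_\beta \geq 0$ (the case $\gamma > \theta$), the right-hand side dominates $\lambda_\beta h_\beta$ whenever $\beta' > \beta$, and a Gr\"onwall-type argument on the positive semigroup $M^{\beta'}$ along the lines of the inequalities \eqref{eq:mupbound0}--\eqref{eq:mupbound} of Section~\ref{subsec:Regularity1}, combined with the spectral asymptotics \eqref{eq:fSpectral} for the triplet $(\lambda_{\beta'}, h_{\beta'}, \pi_{\beta'})$, yields $\lambda_{\beta'} \geq \lambda_\beta$. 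The case $\gamma < \theta$ is symmetric. The pointwise $\beta$-monotonicity of $h_\beta(n)$ can then be extracted from the same comparison: integrating the identity $(\L_{\beta'} - \lambda_{\beta'})(h_{\beta'} - h_\beta) = (\lambda_{\beta'} - \lambda_\beta) h_\beta - (\beta' - \beta)[x] \Delta h_\beta$ against $\pi_{\beta'}$ produces the consistency relation $(\lambda_{\beta'} - \lambda_\beta) \bracket{\pi_{\beta'}, h_\beta} = (\beta' - \beta) \bracket{\pi_{\beta'}, [x] \Delta h_\beta}$, from which, together with a Fredholm-type analysis of the resolvent of $\L_{\beta'} - \lambda_{\beta'}$ on the complement of $\mathrm{span}(h_{\beta'})$, the sign of $h_{\beta'} - h_\beta$ follows.

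The crux of the argument is thus the monotonicity of $n \mapsto h_\beta(n)$. A first data point is free from the eigenvalue equation at $n = 1$:
$$h(2) - h(1) = \frac{\lambda + \theta}{\beta}\, h(1),$$
whose sign coincides with that of $\gamma - \theta$, because $\lambda$ is strictly increasing in $\gamma$ by Proposition~\ref{prop:Regularity} and $\lambda(\beta, \theta, \theta) = -\theta$ by the first paragraph. To propagate this sign to all $n$, I would attempt a strong induction using the explicit recursion
$$\beta n (h(n+1) - h(n)) = (\lambda + \theta n) h(n) - \gamma n \sum_{j=1}^{n-1} \frac{h(j) + h(n-j) - h(n)}{j(j+1)},$$
signing the fragmentation sum from the inductively known monotonicity of $(h(j))_{j \leq n}$; a probabilistically cleaner alternative is to exploit the asymptotic identity $h(n+1)/h(n) = \lim_{t \to \infty} \E_{\delta_{n+1}}[|\XX_t|]/\E_{\delta_n}[|\XX_t|]$ (from \eqref{eq:fSpectral} with $f \equiv 1$) and couple the size processes started from clusters of sizes $n$ and $n+1$ by viewing the latter as the former decorated by an extra tagged leaf, whose net long-time contribution to the cluster count should scale with $\gamma - \theta$ (gain $\gamma$ from fragmentations off the tag, loss $\theta$ from isolations of the whole cluster through the tag). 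The main difficulty I anticipate lies precisely here: in either route one must disentangle the ``fragmentation-driven'' gain of order $\gamma$ from the ``isolation-driven'' loss of order $\theta$ cleanly enough that their difference $\gamma - \theta$ controls the sign uniformly in $n$, given the strong asymmetry of the splitting kernel $1/(j(j+1))$ and the unboundedness of the weight $[x]$ appearing in the recursion.
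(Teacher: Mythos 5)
Your plan correctly isolates $\gamma=\theta$ (where $\L\mathbf 1 = -\theta\mathbf 1$ gives $h\equiv 1$, $\lambda=-\theta$) and correctly reduces the $\beta$-monotonicity of $\lambda$ to the $n$-monotonicity of $h$: the perturbation identity $\L_{\beta'} h_\beta = \lambda_\beta h_\beta + (\beta'-\beta)[x]\Delta h_\beta$, the positivity of the semigroup $M^{\beta'}$, and the spectral asymptotics \eqref{eq:fSpectral} together do yield $\lambda_{\beta'}\geq\lambda_\beta$ from $\Delta h_\beta\geq 0$ (and the $n=1$ eigenvalue equation correctly fixes the sign of $\Delta h_\beta(1)$ to match $\operatorname{sign}(\gamma-\theta)$). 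However, the argument as written is genuinely incomplete, and the gap is exactly where you flag it: the $n$-monotonicity of $h$ is never established. The recursion route does not close by strong induction, because the fragmentation sum $\gamma n\sum_j\frac{h(j)+h(n-j)-h(n)}{j(j+1)}$ does not have a usable sign under only the inductive hypothesis that $h$ is increasing up to $n$: for large $n$ it converges to $\gamma n\sum_j h(j)/(j(j+1))$, which competes at the same order $O(n)$ with the isolation term $\theta n\,h(n)$, and the inequality $\sum_j \frac{h(j)}{j(j+1)} < \frac{\theta}{\gamma}h(\infty)$ that you would need is not available a priori. Your alternative probabilistic route (tagged extra leaf) is closer in spirit to what is needed but also left as a sketch, and the ``Fredholm-type analysis'' invoked to extract the pointwise sign of $h_{\beta'}-h_\beta$ from the scalar consistency relation $\bracket{\pi_{\beta'},\cdot}$ is not an argument (an operator identity integrated against a single left eigenvector cannot recover the sign of the full solution).

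The paper takes a different route precisely to avoid proving $n$-monotonicity of $h$ analytically. It introduces a modified GFI process in which isolation is triggered by edges at rate $\theta(n-1)$ rather than by vertices at rate $\theta n$, so that the modified generator satisfies $\overline{\L}=\L-\theta\,\mathrm{Id}$, $\overline\lambda=\lambda-\theta$, with unchanged $(\pi,h)$. The payoff of this modification is structural rather than spectral: in the modified process, conditionally on the next event being fragmentation-or-isolation, the probability of each is $\gamma/(\gamma+\theta)$ versus $\theta/(\gamma+\theta)$ \emph{independently of the cluster size}, so the modified process is supercritical, critical, or subcritical exactly according to the sign of $\gamma-\theta$ (Lemma~\ref{lem:modiGFIphase}). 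Combined with a monotone coupling (Lemma~\ref{lem:DomSto}, Proposition~\ref{prop:MonoCoupling}) between two modified processes with $\beta\leq\beta'$, $n_0\leq n_0'$ — larger $\beta$ or larger initial size leads to strictly shorter cluster lifetimes and larger child sizes — and a stopping-line argument, each active cluster at time $T$ in the slower process corresponds to one in the faster process that has a strictly positive ``extra time'' to evolve, and whether that extra time increases or decreases the mean cluster count is determined by whether the modified process is super- or subcritical, i.e.\ by $\operatorname{sign}(\gamma-\theta)$. Taking $\beta'>\beta$ yields the $\beta$-monotonicity of $\lambda$, and taking $\beta'=\beta$ with $n_0'>n_0$ yields the $n$-monotonicity of $h$, so the two conclusions are obtained in parallel rather than in sequence, sidestepping the difficulty you ran into.
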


The case $\gamma = \theta$ is  special: using \eqref{eq:Generator},  we find that $h \equiv 1$ is an eigenvector and  ${\lambda = - \theta}$. The other cases are more delicate.  Roughly, increasing $\beta$ makes the infection progress faster and accelerates the process. If  $\gamma > \theta$, this benefits the fragmentation more and thus makes the number of clusters and $\lambda$  increase. Otherwise, isolation is more impacted and the converse happens.  

This effect of  acceleration due to the increase of $\beta$ can be understood via the following simplified problem. Let us consider a linear birth and death process $(S_t)_{t \geq 0}$ starting from a single particle: i.e. each particle (representing a cluster) lives during an exponential time of parameter $1/\rho$  and is replaced by two particles  (resp. zero) with probability $p_2=\gamma/(\theta+\gamma)$ (resp. $p_0=\theta/(\theta+\gamma)$), corresponding respectively to fragmentation and isolation.  This birth and death process $(S_t)_{t \geq 0}$ satisfies $ \E[S_t] = \exp(\lambda_{BD} t )$ with 
\begin{align*}
\lambda_{BD}=\rho \Ll(p_2-p_0\Rr)=\rho\Ll( \frac{\gamma-\theta}{\gamma+\theta}\Rr). 
\end{align*}
The monotonicity of $\lambda_{BD}$ in function of $\rho$ thus depends on the sign of $(\gamma-\theta)$, i.e. on the fact that the process is subcritical or supercritical.

Such a dichotomy also exists in our model, and will be clear in a modified process,  which is close to our original GFI process and the Malthusian coefficients are explicitly linked. This modification yields a simpler process, since the probability for a cluster to be isolated or fragmented does not depend on its size any longer and $\gamma=\theta$ provides the critical case for this latter. This is the key property, which will allow us to make a coupling for the modified processes with different $\beta$ and then verify the monotonicities. 

 We divide the proof into three steps. Firstly, we introduce a \textit{modified GFI process},  by slightly changing the isolation term.  Secondly, we obtain coupling  results for times and sizes of our modified GFI process. Finally, we use this coupling to prove Proposition~\ref{prop:betaMono}. We skip some technical details in the second and the third step, which can be found in the Appendix B of the long version \cite{bansaye2021growth} of this paper on arXiv.

\subsubsection{A modified GFI process}\label{subsubsec:mGFI}

In this part, we introduce our modified GFI process. Its dynamic of growth and fragmentation is the same as the original one.  For the dynamic of isolation, we let each edge (instead of each vertex) have rate $\theta$ to be detected independently and then the cluster is isolated. This minor modification of the isolation rate does not change the splitting property, thus the study of the modified GFI process can also be reduced to its size process $(\overline{X}_t, \overline{Y}_t)_{t \geq 0}$. For the parameters $(\beta, \theta, \gamma)$, the associated generator $\overline{\L}$ of the first moment is defined as 
\begin{multline}\label{eq:Generator2}
\overline{\L} f(n) 
= \beta n (f(n+1) - f(n)) - \theta (n-1) f(n) \\ + \sum_{j=1}^{n-1}  \frac{\gamma n}{j(j+1)} \Ll(f(j) + f(n-j) - f(n)\Rr).
\end{multline}    
Compared to the generator $\L$ defined in  \eqref{eq:Generator} with the same parameters, the isolation rate is $\theta(n-1)$ instead of $\theta n$ for a cluster of size $n$, and it is clear that we have
\begin{align}\label{eq:GeneratorEquation}
    \overline{\L} = \L - \theta Id.
\end{align}
Therefore, Lemma~\ref{lem:HarrisV} also applies to $\overline{\L}$. The couple $(\pi, h)$ associated to  $\L$  is also the eigenelement of $\overline{\L}$, with the Perron's root / Malthusian exponent
\begin{align}\label{eq:LL}
    \overline{\lambda} h = \overline{\L} h = (\lambda - \theta) h \Longrightarrow \overline{\lambda} = \lambda - \theta.
\end{align}
For fixed $\gamma, \theta > 0$, the mapping $\beta \mapsto \lambda(\beta, \theta, \gamma)$ shares the same monotonicity with ${\beta \mapsto \overline{\lambda}(\beta, \theta, \gamma)}$, so it suffices to study the the latter one.

The modified GFI process provides some advantages. We observe that for a cluster of size $n$, the fragmentation rate and the isolation rate are respectively $\gamma(n-1)$ and $\theta(n-1)$. So the probability that isolation happens before fragmentation (and conversely) does not depend on its size and is equal
to   $\theta/(\gamma + \theta)$ (resp.  $\gamma/(\gamma + \theta))$. This nice property has inspired the idea of coupling in the next step. 

We finish this part by proving the following lemma.
\begin{lemma}\label{lem:modiGFIphase}
The modified GFI process has the following criteria of phases: it is supercritical ($\overline{\lambda} > 0$) when $\gamma > \theta$, critical ($\overline{\lambda} = 0$) when $\gamma = \theta$ , and subcritical $\overline{\lambda} < 0$ when $\gamma < \theta$.
\end{lemma}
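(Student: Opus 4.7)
My plan is to exploit the fact that the generator $\overline{\L}$ acts particularly simply on the constant function $\mathbf{1}$. A direct calculation from \eqref{eq:Generator2}, using the telescoping identity $\sum_{j=1}^{n-1} 1/(j(j+1)) = (n-1)/n$, gives
\[
\overline{\L}\mathbf{1}(n) = -\theta(n-1) + \gamma(n-1) = (\gamma - \theta)(n-1), \qquad n \geq 1.
\]
When $\gamma = \theta$ this vanishes identically, so the bounded positive function $\mathbf{1}$ is a right eigenvector of $\overline{\L}$ with eigenvalue $0$. Since $\overline{\L}$ differs from $\L$ only through the shift of the isolation coefficient from $\theta n$ to $\theta(n-1)$, Lemma~\ref{lem:HarrisV} carries over verbatim (with the same Lyapunov function $\psi$ and the same irreducibility argument), so Proposition~\ref{Perroneigen} applies to $\overline{\L}$ and the uniqueness of its Perron triplet forces $\overline{\lambda} = 0$ in this case.

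For $\gamma \neq \theta$, I would pair the identity above against the left Perron eigenvector of $\overline{\L}$. Let $(\overline{\lambda}, \overline{\pi}, \overline{h})$ denote the Perron triplet of $\overline{\L}$ supplied by Proposition~\ref{Perroneigen}; in particular $\overline{\pi}(n) > 0$ for every $n \geq 1$ and $\bracket{\overline{\pi}, \x{p}} < \infty$ for every $p > 0$. Pairing $\overline{\pi}\,\overline{M}_t = e^{\overline{\lambda} t}\overline{\pi}$ with $\mathbf{1}$ and swapping sums by Tonelli yields $\bracket{\overline{\pi}, \overline{M}_t\mathbf{1}} = e^{\overline{\lambda} t}$ for every $t \geq 0$. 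Using Lemma~\ref{lemma:duhamel}(iii), the linear bound $|\overline{\L}\mathbf{1}(n)| \leq |\gamma - \theta|\,n$, the estimate $\overline{M}_s\,\x{}(n) \leq C\,n$ from Lemma~\ref{lemma:duhamel}(i), and the finite first moment $\bracket{\overline{\pi}, \x{}} < \infty$ to justify Fubini, one differentiates at $t = 0$ to obtain
\[
\overline{\lambda} = \bracket{\overline{\pi}, \overline{\L}\mathbf{1}} = (\gamma - \theta)\sum_{n \geq 2}(n-1)\,\overline{\pi}(n).
\]

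Since $\overline{\pi}(n) > 0$ for every $n \geq 2$, the sum on the right is strictly positive and finite, so $\overline{\lambda}$ has exactly the sign of $\gamma - \theta$. This yields the three announced phases simultaneously: $\overline{\lambda} > 0$ when $\gamma > \theta$, $\overline{\lambda} < 0$ when $\gamma < \theta$, and $\overline{\lambda} = 0$ when $\gamma = \theta$ (recovering the first step). The only delicate point is the Fubini-based differentiation underlying the identity $\overline{\lambda} = \bracket{\overline{\pi}, \overline{\L}\mathbf{1}}$; this is routine given the linear growth of $\overline{\L}\mathbf{1}$ and the polynomial moments of $\overline{\pi}$ supplied by Proposition~\ref{Perroneigen}, and it bypasses the need for any coupling or strict monotonicity argument in $\gamma$.
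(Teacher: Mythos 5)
Your argument is correct, and it takes a genuinely different route from the paper's. The paper establishes the lemma by combining the shift relation between $\overline{\L}$ and $\L$ (they differ by a scalar multiple of the identity, so $\overline{\lambda}$ is a fixed additive shift of $\lambda$), the known value $\lambda(\beta,\theta,\theta)=-\theta$ read off from the explicit eigenvector $h\equiv\mathbf{1}$ of $\L$, and a strict monotonicity statement for $\lambda$. You never pass through $\L$: you compute $\overline{\L}\mathbf{1}(n)=(\gamma-\theta)(n-1)$ directly and pair against the left Perron eigenvector $\overline{\pi}$ to get the closed identity
\[
\overline{\lambda}=\bracket{\overline{\pi},\overline{\L}\mathbf{1}}=(\gamma-\theta)\,\bracket{\overline{\pi},\x{}-\mathbf{1}},
\]
with $\bracket{\overline{\pi},\x{}-\mathbf{1}}$ finite by the polynomial moments of $\overline{\pi}$ and strictly positive by its full support (both supplied by Proposition~\ref{Perroneigen}). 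This delivers all three phases from one formula, and the strict sign of $\overline{\lambda}$ drops out of full support rather than from a separate strict-monotonicity step --- an actual advantage here, since Lemma~\ref{lem:StrictTheta} as stated gives monotonicity in $\theta$ rather than $\gamma$, and extracting the strict inequality $\gamma>\theta\Rightarrow\lambda>-\theta$ from it is less immediate than the paper's one-line proof suggests. Your preliminary checks are also sound: Lemma~\ref{lem:HarrisV} and hence Proposition~\ref{Perroneigen} carry over to $\overline{\L}$ because the two generators differ by $\theta\,\mathrm{Id}$, a bounded perturbation that preserves all the Lyapunov/irreducibility estimates; and the Tonelli step behind $\bracket{\overline{\pi},\overline{M}_t\mathbf{1}}=e^{\overline{\lambda}t}$ together with the differentiation at $t=0$ is justified exactly as you say, by the linear bound on $\overline{\L}\mathbf{1}$, the estimate on $\overline{M}_s\x{}$, and $\bracket{\overline{\pi},\x{}}<\infty$. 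As you note, your Step~1 ($\gamma=\theta$) is then logically redundant, subsumed by the general identity; retaining it only as a sanity check is fine.
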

\begin{proof}
For the original GFI process, we know that the case $\gamma = \theta$ gives us $\lambda = - \theta$. Moreover, the monotonicity of the function $\gamma \mapsto \lambda(\beta, \theta, \gamma)$ in Lemma~\ref{lem:StrictTheta} tells us 
\begin{align*}
    \gamma > \theta \Longrightarrow \lambda > -\theta, \qquad \gamma < \theta \Longrightarrow \lambda < -\theta.
\end{align*}
Combing this with \eqref{eq:GeneratorEquation} proves Lemma~\ref{lem:modiGFIphase}.
\end{proof}

\subsubsection{Monotone coupling for the modified GFI processes}\label{subsubsec:MonotoneCoupling}

Let us fix the rates $\theta$ and $\gamma$, and consider an initial cluster of size $n$ in the modified GFI process with the infection rate $\beta$. This cluster may grow by infection for a random time $\tau_n^{\beta}$, called \textit{lifetime}, when  (independently of this time) it makes fragmentation with probability ${\gamma/(\gamma+\theta)}$ or  isolation with probability ${\theta/(\gamma+\theta)}$. When it splits, it leaves two child clusters whose sizes are distributed as the random variable 
$(Z^\beta_{n,1},Z^\beta_{n,2})$ following Proposition~\ref{prop:Split}. That is, $Z^\beta_{n,1}$ is the size of the first child, and $Z^\beta_{n,2}$ is for the second child. Here we write $(Z^\beta_{n,1},Z^\beta_{n,2}) = (0,0)$ by convention if this cluster is isolated.  


We say there is a coupling for two random variables $A$ and $B$ if we can find 
$A'$ distributed as $A$, $B'$  distributed as $B$,  and  $A', B'$ are defined in a common probability space. We sometimes skip the superscripts for these coupled random variables when the context of coupling is clear. 
Then we have the following lemma for $(Z^\beta_{n,1},Z^\beta_{n,2})$ and $\tau_n^{\beta}$ defined above.

\begin{lemma}\label{lem:DomSto}
For any $1\leq n\leq n'$ and $0< \beta\leq \beta'$, there exists a coupling for $\Ll(\tau_n^{\beta}, Z^\beta_{n,1},Z^\beta_{n,2}\Rr)$ and $\Ll(\tau_{n'}^{\beta'}, Z^{\beta'}_{n',1},Z^{\beta'}_{n',2}\Rr)$ such that 
\begin{align*}
\tau_n^{\beta} \geq \tau_{n'}^{\beta'}, \qquad   Z^\beta_{n,1} \leq Z^{\beta'}_{n',1}, \qquad Z^\beta_{n,2} \leq  Z^{\beta'}_{n',2}  \qquad a.s. .
\end{align*}
and in this coupling the two clusters either both reach fragmentation or both reach isolation. 
\end{lemma}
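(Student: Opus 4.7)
The plan is to construct the coupling explicitly by exploiting the structural simplifications of the modified GFI process. The crucial observation is that both fragmentation and isolation rates of a cluster of size $k$ are proportional to $(k-1)$, with respective constants $\gamma$ and $\theta$. Consequently, the probability that a terminating event is fragmentation (resp.\ isolation) equals $\gamma/(\gamma+\theta)$ (resp.\ $\theta/(\gamma+\theta)$), independent of $k$ and of the growth history. I would therefore sample a single Bernoulli variable $B$ of parameter $\gamma/(\gamma+\theta)$ and use it to determine the event type for both processes, so that automatically either both fragment ($B=1$) or both isolate ($B=0$).

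Next, I would couple the growth via Poisson thinning. Starting process~2 from a uniform RRT of size $n'$, I embed the initial tree of process~1 as the subtree on its first $n$ vertices. Each vertex of process~2 carries an independent Poisson birth clock of rate $\beta'$; whenever such a clock rings at a vertex also belonging to process~1's current tree, an independent Bernoulli of parameter $\beta/\beta'$ decides whether process~1 grows simultaneously at that vertex, attaching the same child. By construction, process~1's tree is a subtree of process~2's tree at all times $t$, and in particular $k_t^{(1)} \leq k_t^{(2)}$. Drawing one common unit exponential $U$, I then set $\tau^{(i)} = \inf\{t : (\gamma+\theta)\int_0^t (k_s^{(i)}-1)\,\d s \geq U\}$ for $i=1,2$, which automatically yields $\tau_n^{\beta} \geq \tau_{n'}^{\beta'}$ since process~2 accumulates edge-time faster.

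For the child sizes in the fragmentation case ($B=1$), one picks a uniform edge in each cluster to perform the split. When the chosen edge lies in the shared subtree, process~1's two fragments are contained in process~2's corresponding fragments, yielding $Z^{\beta}_{n,i} \leq Z^{\beta'}_{n',i}$ directly from subtree inclusion. To handle the general situation, I would rely on the monotonicity of the splitting law in Proposition~\ref{prop:Split}: the explicit probabilities show that the pair $(|T^0_K|, |T^*_K|)$ is stochastically increasing in $K$ in the coordinate-wise product order, and a Strassen-type argument then upgrades this into an almost-sure coupling of the child sizes conditional on the terminal cluster sizes.

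The main obstacle will be combining the three inequalities into a single joint coupling rather than verifying them separately. The natural shared-clock construction does not always align the fragmentation edges of the two processes: the minimum of the edge clocks for process~2 may be attained at an edge outside process~1's current subtree, in which case process~1 continues to grow and terminates later at a different edge, so the subtree inclusion no longer directly yields the required ordering of children. Resolving this requires either a finer tree-level coupling that enforces subtree preservation at the moment of fragmentation, or a post-hoc application of Strassen's theorem to the conditional joint laws of $(\tau, Z_1, Z_2)$ given the common event type $B$; checking the joint stochastic dominance of these triples is the technical heart of the argument, deferred to the appendix of the long version.
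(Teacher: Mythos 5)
Your high-level strategy lines up with the paper's at the level of the three pieces: a common Bernoulli of parameter $\gamma/(\gamma+\theta)$ decides fragmentation vs.\ isolation (this works exactly because the modified process puts both rates on edges), shared randomness drives the lifetime, and a monotone coupling (equivalently, stochastic dominance of the split distribution in Proposition~\ref{prop:Split}) handles the child sizes. Where the proposal actually diverges, and where it breaks down, is the lifetime/growth coupling.

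Your construction (subtree embedding plus one common exponential $U$ with the time change $\tau^{(i)}=\inf\{t:(\gamma+\theta)\int_0^t(k^{(i)}_s-1)\,\d s\geq U\}$) gives $k^{(1)}_s\leq k^{(2)}_s$ at all \emph{common} times and therefore $\tau_n^\beta\geq\tau_{n'}^{\beta'}$. But the child-size comparison forces you to compare the clusters at their own respective termination times, and this is where the argument fails: between $\tau^{(2)}$ and $\tau^{(1)}$ the smaller/slower cluster keeps growing while the other is already dead, so there is no almost-sure guarantee that $k^{(1)}_{\tau^{(1)}}\leq k^{(2)}_{\tau^{(2)}}$. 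Concretely, with $n=1,n'=2,\beta=\beta'$, if the first few growth clocks hit the extra vertex of process~2, process~2 terminates with a moderate size while process~1 still has time and growth clocks in hand and can overshoot. Once the terminal sizes can invert, the Strassen step you invoke (dominance of the split law in the terminal size) has nothing to grab onto, and the subtree inclusion at the fragmentation moment is vacuous for the same reason. You sense this tension ("the technical heart of the argument, deferred") but diagnose it as an edge-alignment issue; the actual obstruction is that the common-$U$ time change does not produce the terminal-size ordering at all.

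The paper resolves this with a genuinely different device, the \emph{backward time-change sampling} of Algorithm~\ref{algo:Backward}: rather than integrating the edge count against a single exponential, it draws a pre-waiting time $W(u)$ for the terminating event from the initial size alone, then at each growth event refreshes the remaining pre-waiting time by the deterministic factor $\tfrac{n+i-1}{n+i}$ (justified by the memoryless identity of Lemma~\ref{lem:Backward}). The closed-form for the predicted lifetime (Lemma~\ref{lem:LifeLengthI}) then shows, with the coupled clocks $W'(u)\leq W(u)$, $Z'_i(u)\leq Z_i(u)$, that the primed process simultaneously has \emph{more} growth events (so larger terminal size) and \emph{shorter} lifetime; this is the content of Proposition~\ref{prop:CouplingBranch}. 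This joint control is exactly what is missing from your construction. The child-size step (your Strassen appeal) is then made constructive in the paper via the explicit quantile coupling of Algorithm~\ref{algo:size}, proved monotone in Proposition~\ref{prop:fragDomination}; that part of your outline is sound in spirit once the terminal sizes are ordered.
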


This lemma implies that the larger cluster size or infection rate will result in shorter lifetime and larger sizes for child clusters. Concerning the second statement, since the probability of either event is independent of the cluster size, we can use a common Bernoulli random variable to determine whether they split or are isolated in both processes. For \eqref{lem:DomSto}, it is also natural as larger cluster size or infection rate speed up the growth events.

We now consider two modified GFI processes with common $\theta,\gamma$ but different infection rates $\beta \leq \beta'$ and initial cluster sizes $n_0 \leq n'_0$. We denote the active cluster processes by  $(\overline{\msc{X}}^\beta_{n_0,t})_{t\geq 0}, (\overline{\msc{X}}^{\beta'}_{n'_0,t})_{t\geq 0}$ and their size processes by  $(\overline{X}^{\beta}_{n_0, t})_{t \geq 0}$ and $(\overline{X}^{\beta'}_{n'_0, t})_{t \geq 0}$. 

We can construct a coupling of the two modified GFI processes in a common probability space as follows.  Notice that Lemma~\ref{lem:DomSto} allows us to couple the lifetimes and cluster sizes of any two clusters. In particular, the lemma tells us that either fragmentation or isolation occurs to both of them. We can continue to apply the coupling to their first child clusters and second child clusters if the fragmentation takes place.  If we start from the initial clusters of both process,  we can couple the whole processes in a common probability space denoted by   $\Ll(\Omega, \mcl G, \P^{\beta, \beta'}_{n_0, n'_0}\Rr)$. 
In particular, we can use the UHN labelling to describe the one-to-one mapping or coupling between clusters from the two processes. For instance, the label $ 12$, if it exists, maps the second child of the first child of the initial cluster from one process to another. 

In this coupling, due to Lemma~\ref{lem:DomSto}, any active cluster in $(\overline{\msc{X}}^\beta_{n_0,t})_{t\geq 0}$  has a longer lifetime and smaller child cluster sizes (if they exist) compared to the corresponding cluster in $(\overline{\msc{X}}^{\beta'}_{n'_0,t})_{t\geq 0}$, almost surely.

\subsubsection{Proof of Proposition~\ref{prop:betaMono}}\label{subsubsec:CouplingProof}
Let us prove the monotonicity result on the modified GFI, which will immediately give the result on the original process using the link \eqref{eq:LL} between the maximal eigenvalues.

To compare the Malthusian exponent and the eigenfunction, it suffices to compare $\E[\langle \overline{X}^{\beta}_{n_0, T}, 1 \rangle]$ and $\E[\langle \overline{X}^{\beta'}_{n'_0, T}, 1 \rangle]$ for a large $T$ thanks to the Malthusian behavior in \eqref{eq:Spectral}. 

We compare them in the common probability space $\Ll(\Omega, \mcl G, \P^{\beta, \beta'}_{n_0, n'_0}\Rr)$ constructed previously. 
It can be done using a stopping line as follows. Let us write $\overline{\mathcal U}^{\beta}_T$ 
the set of UHN labels of the active clusters in $\overline{\msc{X}}^{\beta}_{n, T}$. The corresponding clusters in $(\overline{\msc{X}}^{\beta’}_{n'_0, t})_{t \geq 0}$ labeled by $\overline{\mathcal U}^{\beta}_T$  
are active or have been active before time $T$ due to the coupling. Conversely, the active clusters in $\overline{\msc{X}}^{\beta’}_{n'_0, T}$ are issued from active clusters whose labels are in 
$\overline{\mathcal U}^\beta_T$, which allows us to use the branching property (see Figure \ref{fig:Coupling}):
\begin{align*}
\langle \overline{X}^{\beta'}_{n'_0, T},1\rangle=\sum_{u \in\overline{\mathcal U}^\beta_T} N_T^{\beta’}(u).    
\end{align*}
Here $N_T^{\beta’}(u)$ is the number of active clusters in $\overline{\msc{X}}^{\beta’}_{n'_0, T}$ that are issued from 
the cluster labeled by $u$ (either its descendants, or itself if it is still alive up to $T$)
in $(\overline{\msc{X}}^{\beta’}_{n'_0, t})_{0<t\leq T}$,  and we also notice that $\langle \overline{X}^{\beta}_{n_0, T}, 1 \rangle = \sum_{u \in\overline{\mathcal U}^\beta_T} 1$.

The comparison of $\langle \overline{X}^{\beta'}_{n'_0, T},1\rangle$ and $\langle \overline{X}^{\beta}_{n_0, T}, 1 \rangle$  is reduced to that between the mean of $N_T^{\beta’}(u)$ and $1$. For any active cluster labeled by $u$ that has evolved to time $T$ in $(\overline{\msc{X}}^{\beta}_{n_0, t})_{t \geq 0}$, the coupled evolution of the corresponding active cluster in $(\overline{\msc{X}}^{\beta’}_{n'_0, t})_{t \geq 0}$ is at a random time $\tau_T(u)<T$. Thus $N_T^{\beta’}(u)$ is the number of clusters issued from the cluster labeled $u$ in $(\overline{\msc{X}}^{\beta’}_{n'_0, t})_{t \geq 0}$ that evolves for the extra time $(T-\tau_T(u))$. We recall that each active cluster either fragments or be isolated with fixed probabilities whose difference is  $(\gamma-\theta)/(\gamma+\theta)$. Then, if $\gamma > \theta$ (resp.  $\gamma < \theta$), $N_T^{\beta’}(u)$ as the mean number of clusters issued from the cluster labeled by $u$ during the extra time $(T-\tau_T(u))$ is larger than $1$ (resp. smaller than $1$). This finishes the proof of Proposition~\ref{prop:betaMono}.  

\begin{figure}
\label{stoptheline}
    \centering
    \includegraphics[scale = 0.4]{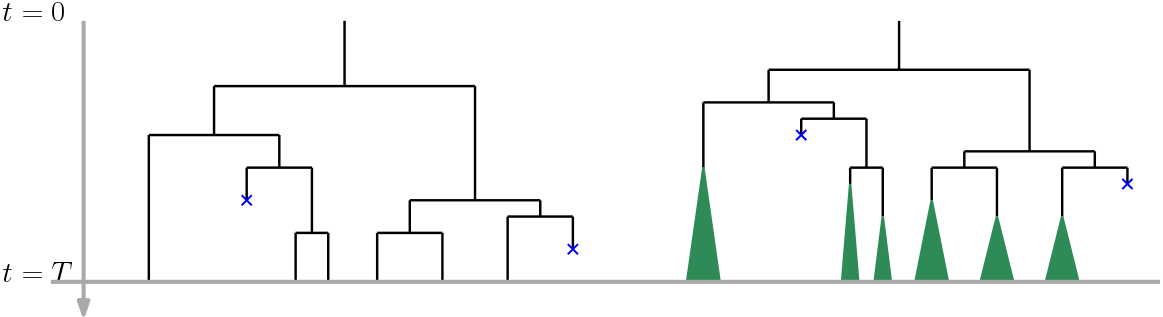}
    \caption{An illustration of the monotone coupling argument and the proof of Proposition~\ref{prop:betaMono}. The blue crosses represent isolation. The figure on the left is for $(\overline{\msc{X}}^{\beta}_{n_0, t})_{0 \leq t\leq T}$, and the one on the right is for $(\overline{\msc{X}}^{\beta'}_{n'_0, t})_{0 \leq t\leq T}$. With a monotone coupling, the evolution on the right is quicker than the one on the left, so each cluster at time $T$ on the left can find its coupled one on the right at a random moment before $T$. The rest of evolution on the right (the green cones) is not coupled with the left up to time $T$, thus the monotonicity depends on whether the extra time will make the population grow or diminish. That is, the monotonicity depends on the phase (supercricial, critical and subcritical) that the population belongs to. }
    \label{fig:Coupling}
\end{figure}

\subsubsection{Proof of Theorem~\ref{thm:Regularity}} We combine Proposition~\ref{prop:Regularity}, Proposition~\ref{prop:Curve} and Proposition~\ref{prop:betaMono} to obtain Theorem~\ref{thm:Regularity}. 

\section{Examples and simulations}\label{sec:examsimu}

In this part, we give some examples about how our model and theorems can be applied to perform the numerical simulation of epidemics. The key quantity of our model is the Malthusian exponent $\lambda$. It dictates whether the infection will grow or vanish, and moreover serves as the growth or decay rate. We described how $\lambda$ depends on $(\beta,\theta,\gamma)$ in Proposition~\ref{prop:Regularity} and ~\ref{prop:betaMono}. However, the function $\lambda(\beta,\theta,\gamma)$ is not explicit, so it is useful to investigate its values numerically. We use Finite Difference Method applied to \eqref{eq:defGenerator} to approximate $M_t$ and then use \eqref{eq:Spectral} to approximate $\lambda$. 

We simulated $\lambda$ for different values of the three parameters in Figure~\ref{fig:4}. Figure~\ref{fig:3d} gives a surface of the function $\lambda$ with $\beta=1$, which confirms the monotonicity of $\lambda$ on $\theta$ and $\gamma$ (see also Figure~\ref{fig:theta} and Figure~\ref{fig:gamma}). Thus to reduce infection, we should conduct more contact tracing (increase $\theta$) and avoid loss of contact information (decrease $\gamma$). These two approaches are natural and intuitive.

Figure~\ref{fig:beta}  confirms the monotonicity of $\lambda$ on $\beta$, see Proposition~\ref{prop:betaMono}. When $\theta=0.03,0.06,0.09$ (thus $\theta<\gamma$), $\lambda$ is increasing on $\beta$; when $\theta=0.12,0.15$ (thus $\theta>\gamma$), $\lambda$ is a decreasing function of $\beta$. 
\begin{figure}
    \begin{subfigure}[t]{0.5\textwidth}
    	\vskip 0pt
        \centering
        \includegraphics[width=1\linewidth]{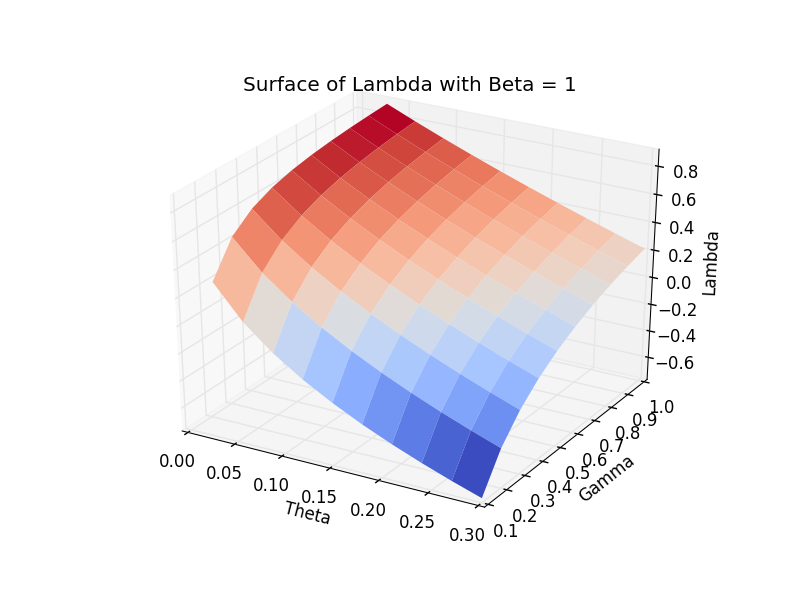} 
        \caption{The values of $\lambda$ with $\beta=1$.} \label{fig:3d}
    \end{subfigure}
    \hfill
    \begin{subfigure}[t]{0.45\textwidth}
     	\vskip 0pt
        \centering
        \includegraphics[width=1\linewidth]{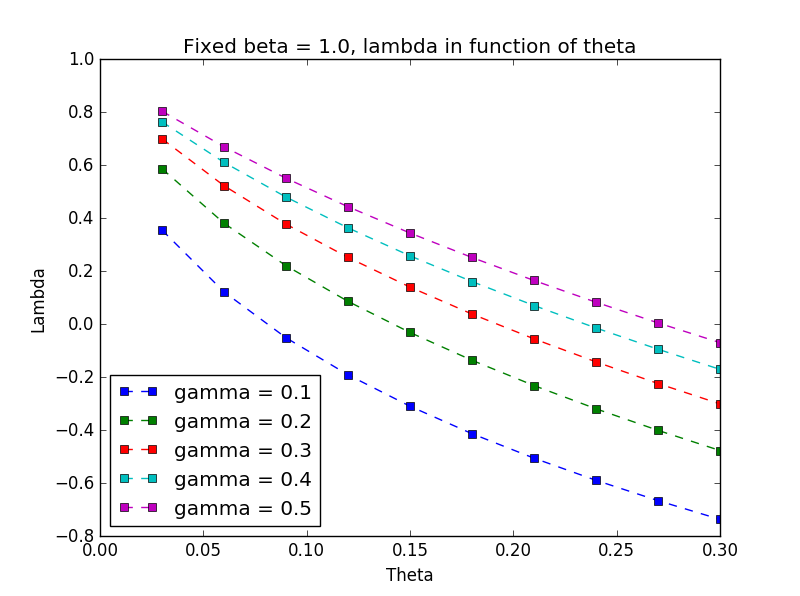} 
        \caption{Each curve describes the dependence of $\lambda$ on $\theta$, with $\gamma, \beta$ fixed.} \label{fig:theta}
    \end{subfigure}

    \begin{subfigure}[t]{0.45\textwidth}
     	\vskip 0pt
        \centering
        \includegraphics[width=1\linewidth]{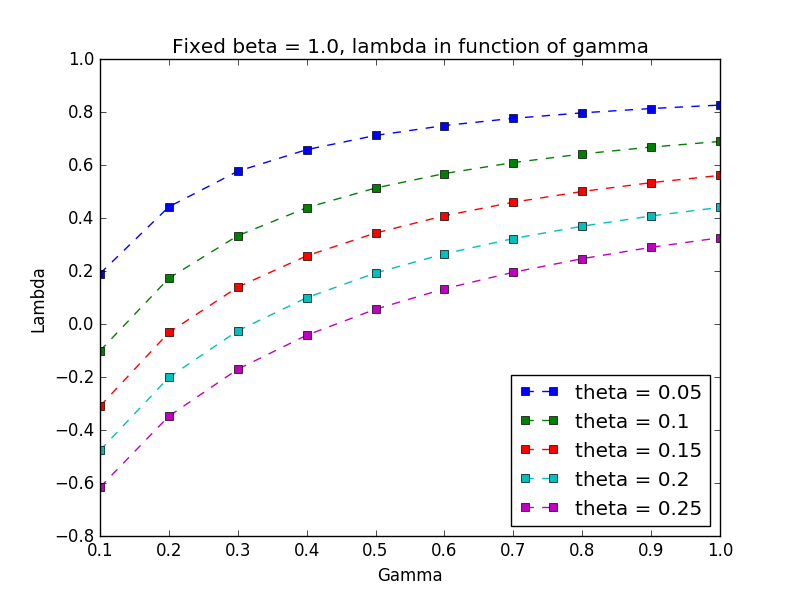} 
        \caption{Each curve describes the dependence of $\lambda$ on $\gamma$, with $\theta, \beta$ fixed.} \label{fig:gamma}
    \end{subfigure}
    \hfill
\begin{subfigure}[t]{0.45\textwidth}
    	\vskip 0pt
        \centering
        \includegraphics[width=1\linewidth]{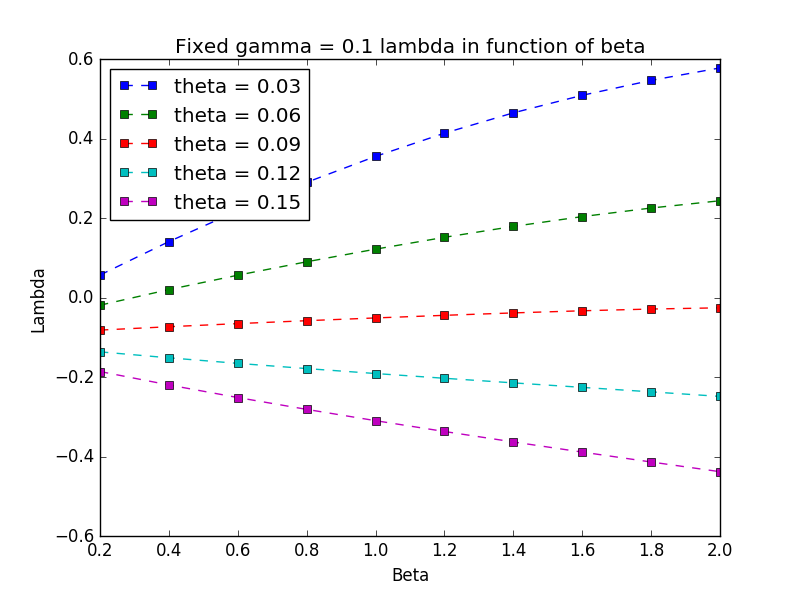} 
        \caption{Each curve describes the dependence of $\lambda$ on $\beta$, with $\theta, \gamma$ fixed.} \label{fig:beta}
    \end{subfigure}
    \caption{The dependence of $\lambda$ on $(\beta,\theta,\gamma)$.}
    \label{fig:4}
\end{figure}

\section{Complementary results and further discussions}\label{Sec:Comp}
In this part, we discuss some other properties and generalizations of our model.

\subsection{Fragmentation by removing vertices}
One can also consider a similar GFI process where the fragmentation is generated by removing vertices. We call the new process \textit{v-GFI process} for short. More precisely, this process has the same dynamics of growth and isolation, but every vertex rings independently with an exponential clock of parameter $\gamma$, and once it rings, the associated vertex is removed from the cluster to generate several subclusters. 

For the model of fragmentation by removing vertices on RRT (without isolation), some interesting properties about the size distribution have been discussed in \cite{kalay2014fragmentation}. Especially, a generalized splitting property should hold for our v-GFI process, so we can also follow the same strategy to study its size process $(X_t, Y_t)_{t \geq 0}$. Moreover, by \cite[eq.(7)]{kalay2014fragmentation} we deduce that the generator for the first moment semigroup of $(X_t)_{t\geq 0}$ is 
\begin{equation}
\begin{split}
\L_{\bullet} f(n) &= \beta n (f(n+1) - f(n)) - \theta n f(n) \\
& \qquad \qquad \qquad - \gamma n f(n) + \gamma n \sum_{j=1}^{n-1} \Ll(\frac{1}{j(j+1)} + \frac{1}{(n-j)(n-j+1)}\Rr) f(j).
\end{split}
\end{equation}
Compared to \eqref{eq:Generator}, we have 
$\L_{\bullet} f = \L f - \gamma f$, which implies that ${e^{\L_{\bullet}t} = e^{-\gamma t}e^{\L t} = e^{-\gamma t}M_t}$. Therefore we conclude that, quite surprisingly, $\L$ and $\L_{\bullet}$ have
exactly the same eigenelements $(\pi, h)$ given in Proposition~\ref{Perroneigen}, with a modification of Perron's root $\lambda \mapsto \lambda - \gamma$. We can expect to recover other similar properties for the v-GFI process.

\subsection{Role of the   initial condition}\label{sec: initial}
Although the main results are proved with the initial condition $G_0 = \{0\}$ (a patient zero), the proofs and results will not change much if the initial cluster is a RRT of any size. Indeed the reduction of the study to a growth fragmentation isolation process for sizes works similarly. 

Besides,  extension of the results to an initial condition
given by a collection of RRTs 
can be achieved using that each initial cluster evolves independently by branching property.

Finally, one may wish to start from one (or a collection of) deterministic finite tree(s).We believe that our work can be  adapted or used in that purpose.
For instance using the stopping line when the clusters have size $1$
to exploit the results of this paper.

\subsection{Model generalizations}\label{sec:generalise}

We can use the same method to deal with  generalizations such as allowing a cluster to lose several edges at the same time, or infect several individuals at the same time. More delicate generalizations could be interesting too: \begin{itemize}
\item Adding recovering;
\item Breaking the Markov property (adding an age would register the time for infection and would affect the individual rates in the epidemics: contamination,  loss of contact information, and recovery);
\item Improving and enriching the tracing procedure;
\item Adding heterogeneity within the population (for tracing and infection).
\end{itemize}
The reduction we have exploited (from collections of trees to collection of sizes, due to the
nice splitting property of RRT) should fail in general. We hope there is still some nice
probabilistic constructions (even if more complex) to find but it is still speculative.
We are confident on the extension of asymptotic  results on the Malthusian growth and
 empirical measures. It could be achieved without going through a
reduction of the state space and without describing the distribution of clusters at fixed times, but rather exploiting the general statement on semigroups that
we have used here on a more complex state space (space of trees). In particular, the Lyapunov functions that we have exhibited here may still be relevant to deal with processes in more general state spaces.

\subsection{Finite-distance contact tracing}
In our model, the contact tracing is implemented for the whole cluster despite of its size, which is very rare in reality. It is natural to ask whether the properties observed in our model also hold for the finite-distance contact tracing. The answer is probably true, but the justification requires some new mathematical techniques.

\begin{appendix}
\section{Conditional expectation and variance}\label{varianceApp} 
We prove here Lemma~\ref{appen} for the conditional expectation and variance  estimates  of
$$
Z_{k,p,K}:= \bracket{\widetilde{X}_{(k+1)\Delta-}, \x{p}_{>K}} - \bracket{\widetilde{X}_{k\Delta}, \x{p}_{>K}},$$
used in \eqref{eq:bcIntegralBound}. Here $(\widetilde{X}_t)_{t \in [k\Delta, (k+1)\Delta)}$, containing only growth and no fragmentation nor isolation, is a coupling of $(X_t)_{t\in [k\Delta,(k+1)\Delta)}$. 

\begin{proof}[Proof of Lemma \ref{appen}] For the conditional expectation, we decompose it using the genealogy of clusters
\begin{equation}\label{eq:BdExptDecom}
\E[Z_{k,p, K} \vert \mcl F_{k\Delta}] 
= \sum_{u \in \mcl U_{k\Delta}}\E\left[\x{p}_{>K}(\widetilde{X}_{(k+1)\Delta}^u) - \x{p}_{>K}(\widetilde{X}_{k\Delta}^u) \,\, \Big\vert\,\, \mcl F_{k\Delta}\right].
\end{equation} 
We observe that for any $0<a\leq b$, 
\begin{align*}
\x{p}_{>K}(b) - \x{p}_{>K}(a) 
&= (b^p - a^p)\Ind{b > K} + a^p \Ind{a \leq K < b}\Ind{a \leq K} + a^p\Ind{a \leq K < b}\Ind{a > K}. 
\end{align*}
On the right hand side, the second term can be bounded by ${K^p\Ind{a \leq K < b}}$ and the third term is zero, so we have 
\begin{align}\label{eq:IndicatorTrick}
\x{p}_{>K}(b) - \x{p}_{>K}(a) \leq (b^p - a^p) + K^p\Ind{a \leq K < b}.
\end{align}
Since there is only growth in the process $\widetilde{X}_t$ on $[k\Delta,(k+1)\Delta)$, $\widetilde{X}_{(k+1)\Delta-}^u \geq   \widetilde{X}_{k\Delta}^u$ for any $u \in \mcl U_{k\Delta}$. So we can apply \eqref{eq:IndicatorTrick} with $a = \widetilde{X}_{k\Delta}^u$ and $b = \widetilde{X}_{(k+1)\Delta-}^u$ to obtain
that 
\begin{multline}\label{eq:BdExptTerms}
\E[Z_{k,p, K}\vert \mcl F_{k\Delta}] \leq \E\left[\bracket{\widetilde{X}_{(k+1)\Delta-}, \x{p}} - \bracket{\widetilde{X}_{k\Delta}, \x{p}}  \,\,\vert \,\, \mcl F_{k\Delta}\right] \\ + {K^p \sum_{u \in \mcl U_{k\Delta}}\E\Ll[ \Ind{ \widetilde{X}_{k\Delta}^u \leq K < \widetilde{X}_{(k+1)\Delta}^u } \vert \mcl F_{k\Delta}\Rr]}.
\end{multline}
For the first term on the right hand side in \eqref{eq:BdExptTerms}, we follow 
the proof of 
Lemma~\ref{lemma:duhamel}-(i) to get   
\begin{equation*}
\begin{split}
\E\Ll[\bracket{\widetilde{X}_{(k+1)\Delta-}, \x{p}} - \bracket{\widetilde{X}_{k\Delta}, \x{p}}  \,\,\vert \,\, \mcl F_{k\Delta}\Rr] &\leq \Ll(e^{2^{p-1} p\beta  \Delta} - 1\Rr)\bracket{\widetilde X_{k \Delta}, \x{p}}. 
\end{split}
\end{equation*}
For the second term in \eqref{eq:BdExptTerms},  we can control it by the total number of active clusters of size smaller than $K$ at $k\Delta$ that grow within $[k\Delta,(k+1)\Delta)$: 
\begin{align*}
& K^p \sum_{u \in \mcl U_{k\Delta}}\E\Ll[ \Ind{\widetilde{X}_{(k+1)\Delta}^u > K} - \Ind{{\widetilde X}_{k\Delta}^u \leq K} \,\, \Big\vert\,\, \mcl F_{k\Delta}\Rr] &\\
 &\leq K^p \sum_{u \in \mcl U_{k\Delta}}\E\Ll[\Ind{{\widetilde X}_{k\Delta}^u \leq K, \text{ the cluster labeled by }u \text{ grows in } \widetilde{X}_t  \text{ within } [k\Delta, (k+1)\Delta)} \,\, \Big\vert\,\, \mcl F_{k\Delta}\Rr]\\
 &\leq (1-e^{-\beta  \Delta K}) K^p \bracket{\widetilde X_{k\Delta},1}. 
\end{align*}
Plugging in the two inequalities to \eqref{eq:BdExptTerms} yields \eqref{eq:BdExptConditional}. \\

 For the conditional variance, we have 
\begin{align*}
\var\Ll[Z_{k,p,K} \,\, \vert  \,\, \mcl F_{k\Delta}\Rr] = \var\Ll[\sum_{u \in \mcl U_{k\Delta}} \Ll(\x{p}_{>K}(\widetilde{X}_{(k+1)\Delta-}^u) - \x{p}_{>K}(\widetilde{X}_{k\Delta}^u)\Rr) \,\, \vert  \,\, \mcl F_{k\Delta}\Rr]. 
\end{align*}
By branching property,  
\begin{align*}
\var\Ll[Z_{k,p,K} \,\, \vert  \,\, \mcl F_{k\Delta}\Rr] &= \sum_{u \in \mcl U_{k\Delta}} \var\Ll[\Ll(\x{p}_{>K}(\widetilde{X}_{(k+1)\Delta-}^u) - \x{p}_{>K}(\widetilde{X}_{k\Delta}^u)\Rr)\,\, \vert  \,\, \mcl F_{k\Delta}\Rr]\\
&\leq \sum_{u \in \mcl U_{k\Delta}} \E\Ll[\Ll(\x{p}_{>K}(\widetilde{X}_{(k+1)\Delta-}^u) - \x{p}_{>K}(\widetilde{X}_{k\Delta}^u)\Rr)^2 \,\, \vert  \,\, \mcl F_{k\Delta}\Rr]\\
&\leq \sum_{u \in \mcl U_{k\Delta}} \E\Ll[\x{p}^2_{>K}(\widetilde{X}_{(k+1)\Delta}^u-) - \x{p}^2_{>K}(\widetilde{X}_{k\Delta}^u) \,\, \vert  \,\, \mcl F_{k\Delta}\Rr]\\
&= \E\Ll[\bracket{\widetilde{X}_{(k+1)\Delta-}, \x{2p}_{>K}} - \bracket{\widetilde{X}_{k\Delta}, \x{2p}_{>K}}\,\, \vert  \,\, \mcl F_{k\Delta}\Rr].
\end{align*}
From the second line to the third line we use $(a-b)^2 \leq a^2 - b^2$ for any $a>b>0$. The rest is the same as in the computation of conditional expectation and we obtain \eqref{eq:BdVarConditional}.
\end{proof}


\section{Coupling for modified GFI processes}\label{sec:CouplingDetails}
We present here the details of the proof of Proposition~\ref{prop:betaMono}. We will prove the monotone property Lemma~\ref{lem:DomSto} in Section~\ref{sec:CouplingOne}, which is divided into the part for size in Section~\ref{sec:CouplingSize} and the part for waiting time in Section~\ref{sec:CouplingTime}. Then we apply this coupling along one cluster to the whole process in $[0,T]$ and prove Proposition~\ref{prop:betaMono} in Section~\ref{sec:CouplingProof}

Throughout this section, we fix $\theta, \gamma, T > 0$, $\beta' \geq \beta >0$ and two positive integers $n'_0 \geq n_0$. Our object is a probability space $\Ll(\Omega, \mcl G, \P^{\beta, \beta'}_{n_0, n'_0}\Rr)$ as a coupling such that two modified GFI processes live in it: one is of parameters $(\beta, \theta, \gamma)$ starting from an initial RRT of size $n_0$, and the other is of parameters $(\beta', \theta, \gamma)$ starting from an initial RRT of size $n'_0$.  We denote by $(\overline{X}_t)_{0 \leq t \leq T}$ the size process of the former, and $(\overline{X}'_t)_{0 \leq t \leq T}$ the size process of the latter. We will usually abuse these two notations to indicate the two modified GFI processes.

Recall the Ulam-Harris-Neveu notation defined in Section~\ref{UHN}. For any cluster $u \in \mathcal{U}$, its birth time is the moment when his parent makes the fragmentation, and its end of lifetime is the moment for its fragmentation or isolation event. During its lifetime, it has several growth events but the label is unchanged.

\subsection{Coupling for one cluster}\label{sec:CouplingOne}

\subsubsection{Coupling for the size distribution}\label{sec:CouplingSize}
As we said, the construction consists of the coupling of the event and the size distribution. We focus at first the later one, and study the passage of size domination between the generation. 

\begin{algorithm}[Coupling for the size distribution]\label{algo:size}
For the two modified processes $(\overline{X}'_t)_{0 \leq t \leq T}$ and  $(\overline{X}_t)_{0 \leq t \leq T}$ defined above and each cluster labeled $u \in \mathcal{U}$ appearing in $(\overline{X}_t)_{0 \leq t \leq T}$, we use two independent random variables 
\begin{align*}
    &\widehat{U}(u) \sim \operatorname{Uniform}[0,1], \qquad  \widehat{V}(u) \sim \operatorname{Bernoulli}\Ll(\frac{\gamma}{\gamma + \theta}\Rr),
    \end{align*}
to sample the fragmentation/isolation event when it comes to the end of lifetime. We denote by $N', N$ respectively for the size of $u$ in $(\overline{X}'_t)_{0 \leq t \leq T}$ and  $(\overline{X}_t)_{0 \leq t \leq T}$ at the end of its lifetime.
\begin{enumerate}
    \item If $\widehat{V}(u) = 1$, the event is fragmentation in both processes. We then define
        \begin{align}\label{eq:SecondChild}
            j := \Ll\lfloor \frac{N}{(N-1)\widehat{U}(u) + 1}\Rr\rfloor, \qquad j' := \Ll\lfloor \frac{N'}{(N'-1)\widehat{U}(u) + 1}\Rr\rfloor,
        \end{align}
        and we set $(N-j, j)$ (resp. $(N'-j',j')$) for the size of the first child cluster and the second child cluster in $(\overline{X}_t)_{0 \leq t \leq T}$ (resp. $(\overline{X}'_t)_{0 \leq t \leq T}$). 
    
    \item If $\widehat{V}(u) = 0$, the event is isolation in both processes
\end{enumerate}
\end{algorithm}

We justify that this coupling of size gives us the desired passage of the size domination between the generation.
\begin{proposition}\label{prop:fragDomination}
Algorithm~\ref{algo:size} is consistent with the fragmentation and isolation in the modified GFI process. Moreover, supposing $N' \geq N$, we also have the domination for the size of child clusters 
\begin{align}\label{eq:sizeDomination}
    j' \geq j, \qquad N'-j' \geq N - j.
\end{align}
\end{proposition}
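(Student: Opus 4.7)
The plan is to separately verify (a) that the coupling reproduces the correct marginal laws of the fragmentation/isolation event and of the sizes of the child clusters in each modified GFI process, and (b) that, under the joint sampling using the shared variables $\widehat{V}(u),\widehat{U}(u)$, the two resulting pairs of child sizes are coordinatewise monotone in the size of the parent.

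For (a), the event marginal is immediate: in the modified GFI of Section~\ref{subsubsec:mGFI} a cluster of size $n\geq 2$ leaves fragmentation at rate $\gamma(n-1)$ and isolation at rate $\theta(n-1)$, so conditionally on the next event, fragmentation has probability $\gamma/(\gamma+\theta)$ irrespective of $n$. This matches the parameter of $\widehat{V}(u)$. For the size of the second child, I will check that the formula \eqref{eq:SecondChild} pushes forward $\mathrm{Uniform}[0,1]$ onto the law prescribed by Proposition~\ref{prop:Split}: given size $n$, one computes
\[
\Bigl\lfloor \frac{n}{(n-1)U+1} \Bigr\rfloor = j \iff \frac{n-j-1}{(n-1)(j+1)} < U \leq \frac{n-j}{(n-1)j},
\]
for $j\in\{1,\dots,n-1\}$, and the length of this interval simplifies exactly to $\frac{n}{(n-1)j(j+1)}$. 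Thus the marginal laws of $j$ and of $j'$ agree with those of the corresponding modified GFI processes.

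For (b), set $h_U(n):=n/((n-1)U+1)$ and $j(n):=\lfloor h_U(n)\rfloor$. A direct computation gives
\[
h_U(n+1)-h_U(n)=\frac{1-U}{(nU+1)((n-1)U+1)} \in [0,1-U]\subseteq [0,1].
\]
In particular $h_U$ is weakly increasing in $n$, hence so is $j(n)$, giving $j'\geq j$ as soon as $N'\geq N$ (by iterating the step $n\mapsto n+1$). Moreover, writing $\{h_U(n)\}:=h_U(n)-j(n)\in[0,1)$, the above inequality yields
\[
h_U(n+1)\leq j(n)+\{h_U(n)\}+1 < j(n)+2,
\]
so $j(n+1)\leq j(n)+1$, i.e.\ $(n+1)-j(n+1)\geq n-j(n)$. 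Iterating from $N$ to $N'$ gives $N'-j'\geq N-j$, completing the coordinatewise domination \eqref{eq:sizeDomination}.

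The only delicate point, and really the heart of the argument, is the monotonicity in (b): while monotonicity of $h_U$ alone only gives $j'\geq j$, the first-child domination requires the sharper fact that the increments of $j(\cdot)$ never exceed $1$, which is precisely what the bound $h_U(n+1)-h_U(n)\leq 1-U\leq 1$ ensures. Once this quantitative control on the quantile map is in hand, the rest of the proposition is essentially bookkeeping about the marginal distributions.
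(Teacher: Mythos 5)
Your proof is correct and takes essentially the same approach as the paper's: you verify consistency by checking that the inverse-quantile formula \eqref{eq:SecondChild} recovers the splitting law \eqref{eq:Split}, and you establish \eqref{eq:sizeDomination} by showing the quantity $h_U(n)=n/((n-1)U+1)$ is increasing in $n$ with increments at most $1$. The only stylistic difference is that you prove the floor-function bound by a one-step induction ($h_U(n+1)-h_U(n)\in[0,1-U]$), whereas the paper directly bounds the difference $h_U(N')-h_U(N)$ for arbitrary $N<N'$; both yield $j'-j\in\{0,\dots,N'-N\}$ and the two domination inequalities.
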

\begin{proof}
As defined at the beginning of Section~\ref{subsubsec:mGFI}, the fragmentation and isolation are all attached to the edge in the modified GFI process. Then conditioned that an event is fragmentation or isolation, the probability to be fragmentation is $\frac{\gamma}{\gamma + \theta}$, which is independent of its size. Moreover, following the splitting property \ref{prop:Split}, the event that a cluster of size $N$ has a second child of size $j$ can be sampled by 
\begin{align*}
    \Ll(\frac{N}{N-1}\Rr) \frac{1}{j+1} < \widehat{U}(u) + \frac{1}{N-1} \leq \Ll(\frac{N}{N-1}\Rr) \frac{1}{j},
\end{align*}
which is equivalent to \eqref{eq:SecondChild}. This proves the consistence of the coupling of size. 

We then prove the size domination. Notice that the event type is same for the two processes, because it is determined by the common random variables $\widehat{U}(u), \widehat{V}(u)$. The domination thus \eqref{eq:sizeDomination} can be reduced to prove the following inequality: for every $x \in (0,1]$ and two positive integers $N < N'$, we have 
\begin{align}\label{eq:fragDomination}
    0 \leq \Ll\lfloor \frac{N'}{N' x + (1-x)}\Rr\rfloor - \Ll\lfloor \frac{N}{N x + (1-x)}\Rr\rfloor \leq N'-N.
\end{align}
For the first inequality, we notice that 
\begin{align*}
 \frac{N}{N x + (1-x)} = \frac{1}{x + (1-x)/N},   
\end{align*}
so it is increasing with respect to $N$. For the second inequality, we have
\begin{align*}
\Ll\lfloor \frac{N'}{N' x + (1-x)}\Rr\rfloor - \Ll\lfloor \frac{N}{N x + (1-x)}\Rr\rfloor &\leq    \frac{N'}{N' x + (1-x)} - \Ll(\frac{N}{N x + (1-x)} - 1\Rr)\\
&= \frac{(N'-N)(1-x)}{(1+(N'-1)x)(1+(N-1)x)} + 1\\
&< (N'-N) + 1.
\end{align*}
From the second line to the third line, because $x > 0$, we have $1-x < 1$, ${1+(N'-1)x > 1}$, ${1+(N-1)x > 1}$ and ``='' cannot be attained. Since the difference is an integer, this implies the second inequality in \eqref{eq:fragDomination} and we finish the proof.
\end{proof}

\subsubsection{Coupling for the waiting time}\label{sec:CouplingTime}

We now turn to the coupling of the waiting time, and aim to let larger cluster produce more growths within shorter lifetime. The main idea is a \textit{backward time-change sampling}: we sample at first a \textit{pre-waiting time} for the fragmentation/isolation event, which is generated by the current cluster size by supposing no growth events during its lifetime. Then we need to fill in the lifetime with the growth events, and make necessary time change with respect to the latest cluster size. After this time change, the pre-waiting time becomes the real waiting time of the  fragmentation/isolation event.  

Now, we give the detailed description of the backward time-change sampling in a modified GFI process of parameter $(\beta, \theta, \gamma)$.
\begin{algorithm}[Backward time-change sampling]\label{algo:Backward}
For $u \in \mathcal{U}$, we sample the events in its lifetime as following. Here all the random variables $W(u), \widetilde{W}(u), (Z_i(u))_{i \in \N_+}$ are independent. We denote by $n$ the cluster size of $u$ at birth. 
\begin{enumerate}[label=(\roman*)]
    \item If $n \geq 2$, we apply the following steps: 
    \begin{enumerate}
        \item[(i.1)] Set $\tau_0(u) = 0$. 
        
        \item[(i.2)] Sample $W(u) \sim \operatorname{Exp}((\gamma + \theta)(n-1))$ as the pre-waiting time from $\tau_0(u)$ for the fragmentation/isolation event for $u$. To unify the notation, we set $\kappa_0(u) := W(u)$ as the pre-waiting time from $\tau_0(u)$. 
        
        \item[(i.3)] For any $i \in \N$, suppose that we have already obtained $\tau_i(u)$ for the $i$-th growth event and $\kappa_i(u)$ as the pre-waiting time of the fragmentation/isolation from $\tau_i(u)$.  We sample the ${(i+1)}$-th growth event, whose waiting time from $\tau_{i}(u)$ is sampled from a random variable ${Z_{i+1}(u) \sim \operatorname{Exp}(\beta (n+i))}$. Then there are two cases.
        \begin{itemize}
            \item If $Z_{i+1}(u) < \kappa_i(u)$, then the $(i+1)$-th growth event happens before the fragmentation/isolation event. We set
            \begin{align}\label{eq:LifetimeRefresh1}
                \tau_{i+1}(u) := \tau_{i}(u) + Z_{i+1}(u),
            \end{align}
            as the moment for the $(i+1)$-th growth event. We also refresh the pre-waiting time for the fragmentation/isolation event from $\tau_{i+1}(u)$ by 
            \begin{align}\label{eq:LifetimeRefresh2}
                \kappa_{i+1}(u) := \Ll(\frac{n+i-1}{n+i}\Rr)\Ll(\kappa_i(u) - Z_{i+1}(u)\Rr).
            \end{align}
            
            \item If $Z_{i+1}(u) \geq \kappa_i(u)$, then there are only $i$ growth events before the fragmentation/isolation event. Therefore, we stop the recurrence of (i.3) and set 
            \begin{align}\label{eq:LifeLength}
                \tau_{\dagger}(u) := \tau_i(u) + \kappa_i(u), 
            \end{align}
            as the length of lifetime of $u$.
        \end{itemize}
\end{enumerate}
    \item If $n = 1$ at birth, we apply the following steps:
    \begin{enumerate}
        \item[(ii.1)] Sample $\widetilde{W}(u) \sim \operatorname{Exp}(\beta)$ for the waiting time of the first growth event of $u$. Set $\tau_0 := \widetilde{W}(u)$.
        \item[(ii.2)] Repeat step (i.2)-(i.3) for $u$ as a cluster of size $2$. 
    \end{enumerate}
\end{enumerate}
\end{algorithm}

This algorithm is also consistent with the modified GFI process thanks to the memory-less property of the exponential random variables. More precisely, we make use of Lemma~\ref{lem:Backward}, which ensures that the time change in the step (i.3) gives us an new independent exponential random variable after the growth. 

\begin{lemma}\label{lem:Backward}
For three independent random variables $Z_1, Z_2, Z_3$ such that ${Z_i \sim \operatorname{Exp}(\alpha_i)}$, with ${\alpha >0}$ and ${i = 1,2,3}$, we have 
\begin{align*}
    \Ll(Z_1, \frac{\alpha_2}{\alpha_3}(Z_2 - Z_1)\Rr)\Ind{Z_1 \leq Z_2} \eqdist (Z_1, Z_3)\Ind{Z_1 \leq Z_2}.
\end{align*}
\end{lemma}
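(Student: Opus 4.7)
The plan is to establish the distributional identity by computing the joint tail function on both sides, restricted to the event $\{Z_1 \leq Z_2\}$, and checking that they coincide. Two elementary properties of the exponential distribution underlie the statement: the memoryless property (conditionally on $\{Z_1 \leq Z_2\}$, the overshoot $Z_2 - Z_1$ is independent of $Z_1$ and again $\operatorname{Exp}(\alpha_2)$-distributed) and the scaling relation $cX \sim \operatorname{Exp}(\alpha/c)$ for $X \sim \operatorname{Exp}(\alpha)$ and $c > 0$ (so that $\tfrac{\alpha_2}{\alpha_3}(Z_2 - Z_1)$ is $\operatorname{Exp}(\alpha_3)$-distributed on that event).

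Concretely, for arbitrary $s, t \geq 0$, I would condition on $Z_1 = u$ and use the independence of $Z_2$ from $Z_1$ to obtain
\begin{align*}
\P\bigl(Z_1 > s,\, \tfrac{\alpha_2}{\alpha_3}(Z_2 - Z_1) > t,\, Z_1 \leq Z_2\bigr)
&= \int_s^\infty \alpha_1 e^{-\alpha_1 u}\, \P\bigl(Z_2 > u + \tfrac{\alpha_3}{\alpha_2} t\bigr)\, \d u \\
&= e^{-\alpha_3 t} \int_s^\infty \alpha_1 e^{-(\alpha_1 + \alpha_2) u}\, \d u \\
&= \frac{\alpha_1}{\alpha_1 + \alpha_2}\, e^{-(\alpha_1 + \alpha_2)s}\, e^{-\alpha_3 t}.
\end{align*}
Symmetrically, using that $Z_3$ is independent of $(Z_1, Z_2)$,
\begin{align*}
\P\bigl(Z_1 > s,\, Z_3 > t,\, Z_1 \leq Z_2\bigr)
&= e^{-\alpha_3 t}\, \P(Z_1 > s,\, Z_1 \leq Z_2) \\
&= \frac{\alpha_1}{\alpha_1 + \alpha_2}\, e^{-(\alpha_1 + \alpha_2)s}\, e^{-\alpha_3 t}.
\end{align*}
The two tail functions agree on $[0,\infty)^2$; since each of the two vectors equals $(0,0)$ off the event $\{Z_1 \leq Z_2\}$, this identifies their joint distributions on $\R^2$.

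No significant obstacle is anticipated: the lemma is essentially a reformulation of the memoryless property combined with a deterministic rescaling. The only point requiring a small amount of care is that the identity is claimed for the random vectors multiplied by the indicator (and not for the conditional distributions), so that the factor $\P(Z_1 \leq Z_2) = \alpha_1/(\alpha_1 + \alpha_2)$ must appear in both tail functions; this falls out automatically from the computations above.
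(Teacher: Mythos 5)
Your proof is correct, and it is essentially the same computation as the paper's, just packaged differently: the paper integrates a bounded continuous test function $f(z_1,z_3)$ and performs the change of variables $z_3 = \tfrac{\alpha_2}{\alpha_3}(z_2-z_1)$, which identifies the law directly without further comment, whereas you compute the joint survival function and then observe that, together with the common atom at $(0,0)$ on $\{Z_1>Z_2\}$, it pins down the distribution on $\R^2$. The test-function route is marginally cleaner because it sidesteps the need for that closing remark, but both rest on the same memoryless-plus-rescaling observation and the same underlying integral.
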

\begin{proof}
It suffices to prove that, for any $f \in C_b(\R^2)$, we have 
\begin{align}\label{eq:ExpTest}
\E\Ll[f\Ll(Z_1, \frac{\alpha_2}{\alpha_3}(Z_2 - Z_1)\Rr)\Ind{Z_1 \leq Z_2} \Rr] = \E[f(Z_1, Z_3)\Ind{Z_1 \leq Z_2}].    
\end{align}
For the left-hand side, we have 
\begin{align*}
&\E\Ll[f\Ll(Z_1, \frac{\alpha_2}{\alpha_3}(Z_2 - Z_1)\Rr)\Ind{Z_1 \leq Z_2} \Rr]\\  & = \int_{0}^{\infty} \alpha_1 e^{-\alpha_1 z_1} \int_{z_1}^{\infty} \alpha_2 e^{-\alpha_2 z_2} f\Ll(z_1, \frac{\alpha_2}{\alpha_3}(z_2 - z_1)\Rr) \, \d z_2 \d z_1 \\
& = \int_{0}^{\infty} \alpha_1 e^{-(\alpha_1 + \alpha_2) z_1} \int_{z_1}^{\infty} \alpha_2 e^{-\alpha_2 (z_2-z_1)} f\Ll(z_1, \frac{\alpha_2}{\alpha_3}(z_2 - z_1)\Rr) \, \d z_2 \d z_1.
\end{align*}
By setting $z_3 = \frac{\alpha_2}{\alpha_3}(z_2 - z_1)$, we have 
\begin{multline}\label{eq:ExpTestLHS}
\E\Ll[f\Ll(Z_1, \frac{\alpha_2}{\alpha_3}(Z_2 - Z_1)\Rr)\Ind{Z_1 \leq Z_2} \Rr] \\  = \int_0^\infty  \alpha_1 e^{-(\alpha_1 + \alpha_2) z_1} \int_0^\infty \alpha_3 e^{-\alpha_3 z_3} f(z_1, z_3) \, \d z_3 \d z_1. 
\end{multline}
For the right-hand side of \eqref{eq:ExpTest}, we have 
\begin{equation}\label{eq:ExpTestRHS}
\begin{split}
& \E[f(Z_1, Z_3)\Ind{Z_1 \leq Z_2}] \\
& = \int_{0}^{\infty} \alpha_1 e^{-\alpha_1 z_1} \int_{z_1}^{\infty} \alpha_2 e^{-\alpha_2 z_2} \int_0^\infty \alpha_3 e^{-\alpha_3 z_3} f\Ll(z_1, z_3\Rr) \, \d z_3 \d z_2 \d z_1\\
& = \int_0^\infty  \alpha_1 e^{-(\alpha_1 + \alpha_2) z_1} \int_0^\infty \alpha_3 e^{-\alpha_3 z_3} f(z_1, z_3) \, \d z_3 \d z_1. 
\end{split}    
\end{equation}
We combine \eqref{eq:ExpTestLHS} and \eqref{eq:ExpTestRHS}, which proves Lemma~\ref{lem:Backward}.
\end{proof}

The advantage of Algorithm~\ref{algo:Backward} is that the waiting time of fragmentation/isolation event largely depends on one random variable $W(u)$. This helps us construct a coupling to ensure the domination of lifetime length.

\begin{algorithm}[Coupling of waiting time]\label{algo:Time}
For any $u \in \mathcal{U}$ appearing in $(\overline{X}_t)_{0 \leq t \leq T}$, let $n, n'$ be respectively its size in $(\overline{X}_t)_{0 \leq t \leq T}$ and $(\overline{X}'_t)_{0 \leq t \leq T}$ at birth. If $n \leq n'$, we have the following coupling for its evolution in two processes using Algorithm~\ref{algo:Backward}. Here for every quantity ($W(u), Z_i(u), \tau_i(u), \kappa_i(u) \cdots$) in Algorithm~\ref{algo:Backward}, we keep its original notation for that in $(\overline{X}_t)_{0 \leq t \leq T}$, and the notation with superscript ($W'(u), Z'_i(u), \tau'_i(u), \kappa'_i(u) \cdots$) for that in $(\overline{X}'_t)_{0 \leq t \leq T}$.
\begin{enumerate}[label=(\roman*)]
    \item Sample two sets of random variables $\Lambda(u), \widehat{\Lambda}(u)$ that 
    \begin{align*}
        \Lambda(u) := \{\chi(u), \eta(u), (\xi_i(u))_{i \in \N_+}\}, \qquad \widehat{\Lambda}(u) := \{\widehat{\chi}(u), \widehat{\eta}(u), (\widehat{\xi}_i(u))_{i \in \N_+}\}.
    \end{align*}
    All the random variables are independent and they satisfy
    \begin{align*}
    \chi(u), \eta(u), \xi_i(u), \widehat{\chi}(u), \widehat{\eta}(u), \widehat{\xi}_i(u) \sim \operatorname{Exp}(1), \quad \forall i \in \N_+.
    \end{align*}
	The random variables in $\widehat{\Lambda}(u)$ are used in both processes, while that in $\Lambda(u)$ are only used in $(\overline{X}_t)_{0 \leq t \leq T}$.  
    \item We use $\widehat{\eta}(u)$ to generate the common randomness concerning the waiting time of fragmentation/isolation 
    \begin{equation}\label{eq:Coupling1}
    \begin{split}
        W(u) := \frac{\widehat{\eta}(u)}{(\gamma + \theta) (n-1 + \Ind{n=1})}, \qquad W'(u) := \frac{\widehat{\eta}(u)}{(\gamma + \theta) (n'-1 + \Ind{n'=1})}.
    \end{split}
    \end{equation}
    Here the indicator function comes from the fact that there is a supplementary growth event before sampling the fragmentation/isolation event in (ii) of Algorithm~\ref{algo:Backward}; we will also see them in the following paragraphs. 
    \item For the coupling of the growth events, we distinguish two cases.
    \begin{enumerate}
        \item[(iii.1)] If $n'= n$, then we use the random variable $ \widehat{\Lambda}(u)$ to generate the common randomness of growths for the two processes. That is, we set 
        \begin{equation}\label{eq:Coupling2}
        \begin{split}
             &\widetilde{W}(u) = \frac{\widehat{\chi}(u)}{\beta}, \qquad Z_i(u) = \frac{\widehat{\xi}_i(u)}{\beta (n + i - 1 + \Ind{n=1})}, \quad \forall i \in \N_+, \\
            &\widetilde{W}'(u) = \frac{\widehat{\chi}(u)}{\beta'}, \qquad Z'_i(u) = \frac{\widehat{\xi}_i(u)}{\beta' (n + i - 1 + \Ind{n=1})},  \quad \forall i \in \N_+.
        \end{split}
        \end{equation}
        \item[(iii.2)] If $n' > n$, then our strategy is to let the growth in $(\overline{X}_t)_{0 \leq t \leq T}$ evolve independently until size $n'$, and then make coupling as (iii.1). That is, we let the first $(n'-n)$ growth events in $(\overline{X}_t)_{0 \leq t \leq T}$ be sampled independently from that of $(\overline{X}'_t)_{0 \leq t \leq T}$ using the random variables in $\Lambda(u)$ 
        \begin{align*}
            \widetilde{W}(u) = \frac{\chi(u)}{\beta}, \qquad Z_i(u) = \frac{\xi_i(u)}{\beta (n + i - 1 + \Ind{n=1})}, \quad \forall 1 \leq i \leq (n'-n-\Ind{n=1}). 
        \end{align*}
         If all the growth events above are before the fragmentation/isolation event in $(\overline{X}_t)_{0 \leq t \leq T}$, then the current cluster size of $u$ is also $n'$, and we let the rest of the the evolution of $u$ in $(\overline{X}_t)_{0 \leq t \leq T}$ be coupled with that in $(\overline{X}'_t)_{0 \leq t \leq T}$
        \begin{equation}\label{eq:Coupling3}
        \begin{split}
            Z'_i(u) = \frac{\widehat{\xi}_i(u)}{\beta' (n' + i - 1)},  \quad        Z_{i + n'-n-\Ind{n=1}}(u) = \frac{\widehat{\xi}_i(u)}{\beta (n' + i - 1)},  \quad \forall i \in \N_+.
        \end{split}
        \end{equation}
        Otherwise, the number of growth is less than $(n'-n)$ in $(\overline{X}_t)_{0 \leq t \leq T}$ and we only need the firth equation in \eqref{eq:Coupling3} for the growth events in $(\overline{X}'_t)_{0 \leq t \leq T}$. 
    \end{enumerate}
\end{enumerate}
\end{algorithm}

It is not hard to verify that Algorithm~\ref{algo:Time} gives a coupling of Algorithm~\ref{algo:Backward}. One only needs to check that all the random variables in Algorithm~\ref{algo:Time} have the proper parameters as in Algorithm~\ref{algo:Backward}. Meanwhile, some common random variables create correlations between the processes and this is the object of the coupling. As we said, thanks to the backward time-change sampling, we only need to sample one exponential random variable for the waiting time of fragmentation/isolation, and it is quite natural to use a common random variable $\widehat{\eta}(u)$ to create monotonicity. It is also the idea for the growth events, but the case $n < n'$ is a little more delicate.

\begin{proposition}\label{prop:CouplingBranch}
Let $u \in \mathcal{U}$ appearing in $(\overline{X}_t)_{0 \leq t \leq T}$, and suppose two positive integers $n' \geq n$ be respectively its size in $(\overline{X}'_t)_{0 \leq t \leq T}$ and $(\overline{X}_t)_{0 \leq t \leq T}$ at birth. Then in Algorithm~\ref{algo:Time}, the length of lifetime is shorter in $(\overline{X}'_t)_{0 \leq t \leq T}$ than that in $(\overline{X}_t)_{0 \leq t \leq T}$, and its size at the end of lifetime is also larger in the former process.  
\end{proposition}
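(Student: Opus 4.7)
The plan is to first recast the lifetime produced by Algorithm~\ref{algo:Backward} as the hitting time of an additive functional of the cluster size, and then to compare the two coupled processes through this functional. Unrolling the recursion $\kappa_{i+1}=\frac{n+i-1}{n+i}(\kappa_i-Z_{i+1})$ with $\kappa_0=W$ yields the closed form $(n+i-1)\kappa_i=(n-1)W-\sum_{j=1}^{i}(n+j-2)Z_j$. Summing the edge-time contributions piecewise between the growth jumps of $u$ and using $(n-1)W=\widehat{\eta}/(\gamma+\theta)$ for $n\ge 2$ (and, for $n=1$, noting that the integrand vanishes on $[0,\widetilde{W})$ before reducing to the size-$2$ case), one obtains the characterization
\[
\tau_\dagger=\inf\Ll\{t\ge 0\,:\,\int_0^t (n(s)-1)\,ds=\widehat{\eta}/(\gamma+\theta)\Rr\},
\]
where $n(s)$ denotes the cluster size at time $s$; the final size is $n(\tau_\dagger)$. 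The analogous identity holds for the primed process with $n'(s)$, $\tau'_\dagger$, and the same $\widehat{\eta}$.

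Because the budget $\widehat{\eta}$ is shared, both claims will follow from the pointwise comparison $n(s)\le n'(s)$ for every $s\ge 0$ on the coupled probability space. In case (iii.1), with $n=n'$, the shared seeds $\widehat{\xi}_i$ and the identity $Z'_i=(\beta/\beta')Z_i\le Z_i$ show by induction on the jump count that the primed process has at least as many growths as the unprimed one at every time, so pointwise domination holds. In case (iii.2), with $n<n'$, I split along the catch-up event. Before catch-up the unprimed cluster has size strictly less than $n'$ while the primed cluster always has size at least $n'$, giving $n(s)<n'\le n'(s)$. Once catch-up occurs at time $\tau_{k_0}$ (with $k_0=n'-n-\Ind{n=1}$), the explicit coupling in (iii.2) gives $\tau_{k_0+i}-\tau_{k_0}=(\beta'/\beta)\tau'_i\ge \tau'_i$, and therefore post catch-up $n(s)=n'+\#\{i:\tau'_i\le (\beta/\beta')(s-\tau_{k_0})\}\le n'+\#\{i:\tau'_i\le s\}=n'(s)$.

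The lifetime inequality $\tau'_\dagger\le\tau_\dagger$ is then immediate: by monotonicity of the integrand, the primed edge-time functional reaches the shared threshold $\widehat{\eta}/(\gamma+\theta)$ no later than the unprimed one. For the final-size comparison the two clusters are evaluated at different times, so a short time-change argument is needed. Setting $A:=\int_0^{\tau_{k_0}}(n(s)-1)\,ds\ge 0$, $U:=(\beta/\beta')(\tau_\dagger-\tau_{k_0})$, $G(t):=\#\{i:\tau'_i\le t\}$ and $F(t):=\int_0^t(n'-1+G(s))\,ds$, the substitution $u=(\beta/\beta')(s-\tau_{k_0})$ inside the post catch-up integral gives $(\beta'/\beta)F(U)=\widehat{\eta}/(\gamma+\theta)-A\le \widehat{\eta}/(\gamma+\theta)=F(\tau'_\dagger)$; since $\beta'/\beta\ge 1$ this forces $F(U)\le F(\tau'_\dagger)$, and the monotonicity of $F$ yields $U\le\tau'_\dagger$, hence $G(U)\le G(\tau'_\dagger)$, which is exactly the claim that the unprimed final size $n'+G(U)$ does not exceed the primed one $n'+G(\tau'_\dagger)$. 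The subcase in which the catch-up never occurs is simpler: the unprimed final size is strictly less than $n'$ while the primed one is at least $n'$.

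The main technical obstacle will be the bookkeeping required to establish the edge-time characterization uniformly for $n=1$ and $n\ge 2$, and to verify that the independent initial growths in the unprimed process do not interfere with the pointwise domination when the coupled post catch-up growths take over; once these two ingredients are in place, the remainder of the proof is an elementary monotonicity argument with no further probabilistic content.
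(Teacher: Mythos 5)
Your proposal is correct, but it is organized quite differently from the paper's proof, so let me compare. The paper stays with the discrete recursion of Algorithm~\ref{algo:Backward}: it proves the closed formula of Lemma~\ref{lem:LifeLengthI} for the predicted lifetimes $\tau_{\dagger,i}(u)$, shows this sequence decreases in $i$, and then compares the two coupled processes term by term ($W'\le W$, $Z_i'\le Z_i$, an induction giving $\kappa_i'\ge\kappa_i$ for the size claim, and in case (iii.2) the telescoping bound \eqref{eq:DominationWT} to reduce the post catch-up evolution to the equal-size case). You instead unroll the same recursion into the identity $(n+i-1)\kappa_i=(n-1)W-\sum_{j\le i}(n+j-2)Z_j$, which you correctly recognize as saying that the end of lifetime is the hitting time of the shared exponential budget $\widehat{\eta}/(\gamma+\theta)$ by the edge-exposure functional $\int_0^t(n(s)-1)\,\mathrm{d}s$ — i.e.\ you recover the natural forward description of the edge-driven fragmentation/isolation clock from the backward sampling. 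Both conclusions then reduce to the pointwise domination $n(s)\le n'(s)$ of the coupled pure-growth trajectories (immediate in case (iii.1) from the shared seeds, and in case (iii.2) from the ``below $n'$ before catch-up, deterministic $\beta/\beta'$ time change after catch-up'' dichotomy), plus your explicit time-change computation $(\beta'/\beta)F(U)\le F(\tau'_\dagger)$ for the final sizes, which correctly handles the fact that the two clusters are evaluated at different times. This buys a more conceptual argument: a single trajectory-wise statement yields both the lifetime and the size comparison, whereas the paper has to run separate inductions for the two claims and treat the no-catch-up subcase by a special lower bound on $\tau_{\dagger,i}$. One small expositional gap: your time-change paragraph is written only for case (iii.2); the final-size comparison in case (iii.1) (and for $n=1$, where the initial growth uses $\widehat{\chi}$) still needs a word, but it is the same computation with $\tau_{k_0}=0$ and $A=0$, since there the whole unprimed jump sequence is exactly $\beta'/\beta$ times the primed one, so nothing essential is missing.
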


Before proving Proposition~\ref{prop:CouplingBranch}, one intermediate step is to define a \textit{predicted length of lifetime} after $i$-th growth, as a generalization of \eqref{eq:LifeLength} in Algorithm~\ref{algo:Backward} 
\begin{align}\label{eq:LifeLengthI}
\tau_{\dagger,i}(u) := \tau_i(u) + \kappa_i(u).
\end{align}
The interpretation is that we have explored the information of events until the moment $\tau_i(u)$, and suppose no more growth event after it. Thus we make a sum of $\tau_i(u)$ and $\kappa_i(u)$. Heuristically, the more growth events we explore, the closer to $\tau_{\dagger}(u)$ this quantity is. We justify this observation.

\begin{lemma}\label{lem:LifeLengthI}
Let $n$ be the cluster size of $u$ at birth and $k$ be the number of growth events after $\tau_0(u)$ in Algorithm~\ref{algo:Backward}, then we have an expression that for every $0 \leq i \leq k$
\begin{align}\label{eq:LifeLengthIFormula}
\tau_{\dagger, i}(u) = \widetilde{W}(u)\Ind{n=1} + \frac{n-1+\Ind{n=1}}{n+i-1+\Ind{n=1}} W(u) + \sum_{j=1}^i \frac{i+1-j}{n+i-1+\Ind{n=1}} Z_j(u).
\end{align}
Moreover, the sequence $(\tau_{\dagger, i}(u))_{0 \leq i \leq k}$ is decreasing.
\end{lemma}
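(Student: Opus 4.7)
The plan is to proceed by induction on $i$, handling the case $n \geq 2$ first and then reducing the case $n = 1$ to it. In the latter case, step (ii) of Algorithm~\ref{algo:Backward} postpones the start of the fragmentation/isolation clock by $\widetilde{W}(u)$ and then runs step (i) with effective initial size $2$. Consequently $\tau_{\dagger,i}(u) = \widetilde{W}(u) + \widetilde{\tau}_{\dagger,i}(u)$, where $\widetilde{\tau}_{\dagger,i}(u)$ denotes the predicted lifetime of the corresponding size-$2$ process. The formula \eqref{eq:LifeLengthIFormula} for $n=1$ then reads off from the $n=2$ version since $\frac{n-1+\Ind{n=1}}{n+i-1+\Ind{n=1}}$ evaluates to $\frac{1}{i+1}$ in both expressions, and the indices of the $Z_j(u)$ match up.

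Assume now $n \geq 2$. The base case $i=0$ is immediate: $\tau_0(u) = 0$ and $\kappa_0(u) = W(u)$, so $\tau_{\dagger,0}(u) = W(u)$, which matches \eqref{eq:LifeLengthIFormula}. For the induction step, substituting \eqref{eq:LifetimeRefresh1} and \eqref{eq:LifetimeRefresh2} into the definition \eqref{eq:LifeLengthI} gives
$$\tau_{\dagger,i+1}(u) = \frac{n+i-1}{n+i}\,\tau_{\dagger,i}(u) + \frac{1}{n+i}\bigl(\tau_i(u) + Z_{i+1}(u)\bigr).$$
Plugging in the induction hypothesis together with $\tau_i(u) = \sum_{j=1}^{i} Z_j(u)$ and regrouping the coefficients of $W(u)$ and of each $Z_j(u)$ produces \eqref{eq:LifeLengthIFormula} at rank $i+1$. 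This step is elementary and I do not foresee any obstacle beyond a careful bookkeeping of the fractions $\frac{(i+1)-j}{n+i-1}$.

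The monotonicity follows from the same recurrences. A direct computation from the identity above (or equivalently from \eqref{eq:LifetimeRefresh1}--\eqref{eq:LifetimeRefresh2}) yields
$$\tau_{\dagger,i+1}(u) - \tau_{\dagger,i}(u) = -\frac{1}{n+i}\bigl(\kappa_i(u) - Z_{i+1}(u)\bigr), \qquad 0 \leq i \leq k-1.$$
By the assumption $i+1 \leq k$, the $(i+1)$-th growth event actually occurs, which per step (i.3) of Algorithm~\ref{algo:Backward} forces $Z_{i+1}(u) < \kappa_i(u)$. The right-hand side is therefore strictly negative, establishing that the sequence $(\tau_{\dagger,i}(u))_{0 \leq i \leq k}$ is decreasing. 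There is no serious obstacle; the only care needed is to keep track of the shift by $\widetilde{W}(u)$ when transferring the identities to the $n=1$ case.
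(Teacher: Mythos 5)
Your proposal is correct and follows essentially the same route as the paper: induction on $i$ for $n\geq 2$ using the refresh relations \eqref{eq:LifetimeRefresh1}--\eqref{eq:LifetimeRefresh2} (your recurrence $\tau_{\dagger,i+1}=\frac{n+i-1}{n+i}\tau_{\dagger,i}+\frac{1}{n+i}(\tau_i+Z_{i+1})$ is just an equivalent rearrangement of the paper's step), monotonicity from the difference $-\frac{1}{n+i}(\kappa_i-Z_{i+1})<0$, and reduction of the $n=1$ case to a size-$2$ cluster shifted by $\widetilde{W}(u)$. No gaps.
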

\begin{proof}
The indicator $\Ind{n=1}$ here is used to distinguish the special case $n=1$ in Algorithm~\ref{algo:Backward}. We prove \eqref{eq:LifeLengthIFormula} for the case $n \geq 2$ at first by recurrence. The case $i = 0$ is trivial by the expression that
\begin{align*}
\tau_{\dagger,0}(u) = \tau_0(u) + \kappa_0(u) = 0 + W(u) =  W(u).    
\end{align*}
Suppose that we have proved the case for $(i-1)$, then for the case $i$ we apply the formula \eqref{eq:LifetimeRefresh1} and \eqref{eq:LifetimeRefresh2} 
\begin{equation}\label{eq:LifeLengthIteration}
\begin{split}
\tau_{\dagger,i}(u) &= \tau_i(u) + \kappa_i(u)\\
&= \tau_{i-1}(u) + Z_i(u) + \Ll(\frac{n+i-2}{n+i-1}\Rr)\Ll(\kappa_{i-1}(u) - Z_{i}(u)\Rr) \\
&= \frac{1}{n+i-1}Z_i(u) + \tau_{i-1}(u) + \Ll(\frac{n+i-2}{n+i-1}\Rr)\kappa_{i-1}(u).
\end{split}
\end{equation}
We then apply the recurrence of \eqref{eq:LifeLengthIFormula} at $(i-1)$ and obtain 
\begin{align*}
\tau_{\dagger,i}(u) &= \frac{1}{n+i-1}(Z_i(u) + \tau_{i-1}(u)) + \Ll(\frac{n+i-2}{n+i-1}\Rr)(\kappa_{i-1}(u) + \tau_{i-1}(u))\\
&= \frac{1}{n+i-1} \sum_{j=1}^i Z_j(u) + \Ll(\frac{n+i-2}{n+i-1}\Rr)\Ll(\frac{n-1}{n+i-2} W(u) + \sum_{j=1}^{i-1} \frac{i-j}{n+i-2} Z_j(u)\Rr)\\
&= \frac{n-1}{n+i-1} W(u) + \sum_{j=1}^i \frac{i+1-j}{n+i-1} Z_j(u).
\end{align*}
This is the desired result of \eqref{eq:LifeLengthIFormula}. For the monotonicity, we apply the recurrence \eqref{eq:LifeLengthIteration}.
\begin{align*}
&\tau_{\dagger,i}(u) - \tau_{\dagger,i-1}(u) \\
&= \frac{1}{n+i-1}Z_i(u) + \tau_{i-1}(u) + \Ll(\frac{n+i-2}{n+i-1}\Rr)\kappa_{i-1}(u) - (\tau_{i-1}(u) + \kappa_{i-1}(u))\\
&= \frac{1}{n+i-1}(Z_i(u) - \kappa_{i-1}(u)).
\end{align*}
By (i.3) of Algorithm~\ref{algo:Backward}, the existence of $i$-th growth implies $Z_i(u) < \kappa_{i-1}(u)$. Therefore, the difference above is negative and we finish the proof.  

Then for the case $n=1$, it suffice to add the first growth event at $\tau_0(u)$, and treat the rest term as the evolution of a cluster of size $2$.
\end{proof}

\begin{proof}[Proof of Proposition~\ref{prop:CouplingBranch}]
We treat the two cases in the step (ii) of Algorithm~\ref{algo:Time}.
\begin{enumerate}
    \item For the case $n'= n$, the coupling in \eqref{eq:Coupling1} and \eqref{eq:Coupling2} gives us that 
    \begin{align}\label{eq:WZDomination}
    W'(u) = W(u), \qquad \widetilde{W}'(u) \leq \widetilde{W}(u), \qquad Z'_i(u) \leq Z(u), \forall i \in \N_+,
    \end{align}
    thanks to the fact $\beta' \geq \beta$ and they use the same random variables in $\widehat{\Lambda}(u)$. This implies in Algorithm~\ref{algo:Backward} that the waiting time for each growth event is always shorter in $(\overline{X}'_t)_{t \geq 0}$, while the pre-waiting time $\kappa_0(u), \kappa'_0(u)$ for the fragmentation/isolation event are same. By an induction in \eqref{eq:LifetimeRefresh2}, we also establish $\kappa'_{i}(u) \geq \kappa_{i}(u)$, which ensures that we can explore more growth events in $(\overline{X}'_t)_{t \geq 0}$. This finishes the statement about the size in Proposition~\ref{prop:CouplingBranch} for the case $n'=n$.
    
    Concerning the length of lifetime, we use the formula  \eqref{eq:LifeLengthIFormula} of the $i$-th predicted length of lifetime $\tau_{\dagger, i}(u)$ that 
    \begin{align*}
        \tau_{\dagger, i}(u) = \widetilde{W}(u)\Ind{n=1} + \frac{n-1+\Ind{n=1}}{n+i-1+\Ind{n=1}} W(u) + \sum_{j=1}^i \frac{i+1-j}{n+i-1+\Ind{n=1}} Z_j(u).
    \end{align*}
    Compare it with its version in $(\overline{X}'_t)_{t \geq 0}$, we have 
    \begin{multline*}
        \widetilde{W}(u)\Ind{n=1} + \frac{n-1+\Ind{n=1}}{n+i-1+\Ind{n=1}} W(u) + \sum_{j=1}^i \frac{i+1-j}{n+i-1+\Ind{n=1}} Z_j(u) \\ 
        \geq \widetilde{W}'(u)\Ind{n=1} + \frac{n-1+\Ind{n=1}}{n+i-1+\Ind{n=1}} W'(u) + \sum_{j=1}^i \frac{i+1-j}{n+i-1+\Ind{n=1}} Z'_j(u),
    \end{multline*}
    because of \eqref{eq:WZDomination}. This implies that $\tau_{\dagger, i}(u) \geq \tau'_{\dagger, i}(u)$. As we know, there exists a $k \in \N_+$ such that $\tau_{\dagger}(u) = \tau_{\dagger, k}(u)$, so we have 
    \begin{align*}
        \tau_{\dagger}(u) = \tau_{\dagger, k}(u) \geq \tau'_{\dagger, k}(u) \geq \tau'_{\dagger}(u).
    \end{align*} 
    Here the last inequality comes from the decreasing property in Lemma~\ref{lem:LifeLengthI} and the fact there are more growth events in $(\overline{X}'_t)_{t \geq 0}$.  This finishes the statement about the length of lifetime in Proposition~\ref{prop:CouplingBranch} for the case $n'=n$.
    
    \item For the case $n' > n$, there are also two cases.
    \begin{enumerate}
        \item There are less than $(n'-n)$ growth events for $u$ in $(\overline{X}_t)_{t \geq 0}$. For this case, the statement about the size is obvious as the cluster size is less than $n'$ in $(\overline{X}_t)_{t \geq 0}$ of $u$ at the end of lifetime. We consider the length of lifetime in $(\overline{X}_t)_{t \geq 0}$ that
        \begin{align*}
            \tau_{\dagger,i}(u) &= \widetilde{W}(u)\Ind{n=1} + \frac{n-1+\Ind{n=1}}{n+i-1+\Ind{n=1}} W(u) + \sum_{j=1}^i \frac{i+1-j}{n+i-1+\Ind{n=1}} Z_j(u)\\
            &\geq \frac{n-1+\Ind{n=1}}{n+i-1+\Ind{n=1}} W(u)\\
            &=\frac{\widehat{\eta}(u)}{(\gamma + \theta) (n+i-1+\Ind{n=1})}.
        \end{align*}
        In the last line, we make use of \eqref{eq:Coupling1}. Notice that $n+i-1+\Ind{n=1}$ is the cluster size of $u$ after several growth events in its lifetime, and it should be less than $n'$ by assumption. Thus, we obtain 
        \begin{align*}
            \tau_{\dagger,i}(u) \geq \frac{\widehat{\eta}(u)}{(\gamma + \theta)n'} = W'(u) = \tau'_{\dagger,0}(u) \geq  \tau'_{\dagger}(u),
        \end{align*}
        where the last inequality comes from the decreasing property in Lemma~\ref{lem:LifeLengthI}. We choose $i$ as the index of the last growth event of $u$ in $(\overline{X}_t)_{t \geq 0}$, and this concludes the statement about the length of lifetime. 
        
        \item There are at least $(n'-n)$ growth events for $u$ in $(\overline{X}_t)_{t \geq 0}$. From (ii).b of Algorithm~\ref{algo:Time}, after $(n'-n)$ growth events, $u$ has the same cluster size in two processes and the growth events are totally coupled. Equation \eqref{eq:Coupling3} gives us 
        \begin{align}\label{eq:ZZDomination}
              Z'_i(u) \leq Z_{i + n'-n-\Ind{n=1}}(u),  \qquad \forall i \in \N_+,
        \end{align}
        thus the domination of waiting time for the growth events is also established. For the pre-waiting time of the fragmentation/isolation event, \eqref{eq:LifetimeRefresh2} gives us 
        \begin{align*}
            \kappa_{i+1}(u) = \Ll(\frac{n+i-1+\Ind{n=1}}{n+i+\Ind{n=1}}\Rr)\Ll(\kappa_i(u) - Z_{i+1}(u)\Rr) \leq \Ll(\frac{n+i-1+\Ind{n=1}}{n+i+\Ind{n=1}}\Rr) \kappa_i(u).
        \end{align*}
        So a telescope formula entails
        \begin{equation}\label{eq:DominationWT}
        \begin{split}
            \kappa_{n'-n-\Ind{n=1}}(u) &\leq \frac{n-1+\Ind{n=1}}{n + (n'-n-\Ind{n=1}) - 1 +\Ind{n=1}}\kappa_{0}(u)\\
            &= \frac{n-1+\Ind{n=1}}{n' - 1} \frac{\widehat{\eta}(u)}{(\gamma + \theta) (n-1 + \Ind{n=1})}\\
            &= \kappa'_0(u).
        \end{split}
        \end{equation}
        Here we use the the fact that $\widetilde{W}'(u), \widetilde{W}(u)$ are sampled by the same random variable $\widehat{\eta}(u)$ in \eqref{eq:Coupling1}. Therefore, \eqref{eq:DominationWT} also gives us a domination of the pre-waiting time that ${\kappa_{n'-n-\Ind{n=1}}(u) \leq \kappa'_0(u)}$. Combining this with \eqref{eq:ZZDomination}, we can make use of a similar argument like the case $n'=n$ for the part of $u$ in $(\overline{X}_t)_{t \geq 0}$ after $(n'-n)$ growth events, together with $u$ in $(\overline{X}'_t)_{t \geq 0}$ from its birth. This concludes our proof.
    \end{enumerate}
\end{enumerate}
\end{proof}

\begin{proof}[Proof of Lemma~\ref{lem:DomSto}] A combination the coupling in Algorithm~\ref{algo:size} and Algorithm~\ref{algo:Time} gives us desired result; see Proposition~\ref{prop:fragDomination} and Proposition~\ref{prop:CouplingBranch} for proof.
\end{proof}

\subsection{Complete proof of Proposition~\ref{prop:betaMono}}\label{sec:CouplingProof}

We can now apply the coupling in previous sections to the whole process $(\overline{X}_t)_{0 \leq t \leq T}$ and $(\overline{X}'_t)_{0 \leq t \leq T}$, which gives us desired property in the previous discussion in Section~ \ref{subsec:Regularity2}.

\begin{algorithm}[Coupling for modified GFI processes]\label{algo:Coupling}
We define a set of random variables 
\begin{align}\label{eq:Gamma}
\Gamma(u) := \{\widehat{U}(u), \widehat{V}(u)\} \cup \widehat{\Lambda}(u) \cup \Lambda(u),
\end{align}
where $\widehat{U}(u), \widehat{V}(u)$ are defined in Algorithm~\ref{algo:size} and $\widehat{\Lambda}(u), \Lambda(u)$ are defined in Algorithm~\ref{algo:Time}. We construct our probability space $\Ll(\Omega, \mcl G, \P^{\beta, \beta'}_{n_0, n'_0}\Rr)$ and processes $(\overline{X}_t, \overline{X}'_t)_{0 \leq t \leq T}$ in it.
\begin{enumerate}[label = (\roman*)]
    \item Let $\Omega := (\Gamma(u))_{u \in \mathcal{U}}$ and define $\mcl G:= \sigma\Ll(\cup_{u \in \mcl U}\Gamma(u)\Rr)$, then sample i.i.d. random variable sets $(\Gamma(u))_{u \in \mcl U}$ which gives the probability $\P^{\beta, \beta'}_{n_0, n'_0}$. 
    \item Starting from the cluster of label $\emptyset$, follow the breadth-first search order, i.e. by the order $\emptyset, 1, 2, 11, 12, 21,22, 111 \cdots$ (see \cite[Chapter 22.2]{algorithm} for reference) and apply Algorithm~\ref{algo:Time} and Algorithm~\ref{algo:size} with respect to its birth time to each branch.
\end{enumerate}
\end{algorithm}

Recall the Ulam-Harris-Neveu notation defined in Section~\ref{UHN}, where we use $\mcl U_t$ for the labels of the alive clusters at moment $t$, $\mcl U_t(u)$ for the one issued from $u$ and $\overline{X}^u_t$ for the cluster size of label $u$. We keep the original version for the process $(\overline{X}_t)_{0 \leq t \leq T}$ and $\mcl U_t, \mcl U'_t(u), \overline{X}'^u_t$ for its version in $(\overline{X}'_t)_{0 \leq t \leq T}$. The following proposition captures the intuition that ``$(\overline{X}'_t)_{0 \leq t \leq T}$ evolves quicker than $(\overline{X}_t)_{0 \leq t \leq T}$, and for every cluster  the processes are no longer coupled from a certain random time''.

\begin{proposition}\label{prop:MonoCoupling}
The probability space $\Ll(\Omega, \mcl G, \P^{\beta, \beta'}_{n_0, n'_0}\Rr)$ defined in Algorithm~\ref{algo:Coupling} satisfies the following properties.
\begin{itemize}
\item $(\overline{X}_t)_{0 \leq t \leq T}$ and $(\overline{X}'_t)_{0 \leq t \leq T}$ live in it as a modified GFI process with respect to their natural filtration.
\item Under the filtration $(\mcl G_t)_{0 \leq t \leq T}$ defined as  
\begin{align}\label{eq:Gfiltration}
\mcl G_t := \sigma \Ll((\overline{X}'^u_s)_{0 \leq s \leq t}, (\overline{X}^u_s)_{0 \leq s \leq T}, u \in \mcl U\Rr),
\end{align}
for every $u \in \mcl U_T$, there exists a stopping time $\tau_T(u) \in [0,T]$ with respect to $(\mcl G_t)_{0 \leq t \leq T}$ such that $\overline{X}'^u_{\tau_T(u)} > 0$ and $\Ll(\sum_{v \in \mcl U'_t(u)}\delta_{\overline{X}'^v_t}\Rr)_{\tau_T(u) \leq t \leq T}$ is Markov process with respect to the filtration $(\mcl G_t)_{0 \leq t \leq T}$.
\end{itemize}
\end{proposition}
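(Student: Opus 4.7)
The plan is to prove the two assertions in turn, working from the construction in Algorithm~\ref{algo:Coupling}.

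First, I would verify that each marginal process is a modified GFI process of the required parameters with respect to its natural filtration. Because the $\Gamma(u)$ are i.i.d.\ across $u \in \mcl U$ and the clusters are processed in breadth-first order, the branching structure is automatic; what must be checked is that within each cluster, the lifetime and growth-event sequence have the correct law. For $(\overline{X}_t)$, the growth-event waiting times are sampled by the $Z_i(u)$ from Algorithm~\ref{algo:Backward} and the fragmentation/isolation waiting time by $W(u)$ (plus $\widetilde W(u)$ if $n=1$). Applying Lemma~\ref{lem:Backward} iteratively shows that each time we perform the backward time-change in (i.3), $\kappa_{i+1}(u)$ is an $\operatorname{Exp}((\gamma+\theta)(n+i))$ random variable independent of $\tau_{i+1}(u)$ given the past, which is precisely the memoryless property required for the next cluster-size regime. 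The fragmentation-or-isolation branching and the size of the child clusters are then correctly sampled by $\widehat V(u)$ and $\widehat U(u)$ as already verified in Proposition~\ref{prop:fragDomination}. The same reasoning applies to $(\overline{X}'_t)$ using the shared variables in $\widehat\Lambda(u)$ and $\widehat\eta(u)$, since time-changes only rescale independent exponentials.

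Second, I would construct the stopping time $\tau_T(u)$ for each $u \in \mcl U_T$ by induction along the ancestral chain from $\emptyset$ to $u$. For each ancestor $w$ on this chain, Lemma~\ref{lem:DomSto} together with Algorithm~\ref{algo:Time} identifies a primed-time $\sigma(w)$ at which the primed cluster $w$ has finished the coupled portion of its life (i.e.\ fragmented at the same fragmentation event as its unprimed counterpart, producing the coupled child $w1$ or $w2$ on the chain). Starting from $\sigma(\emptyset) = 0$ and accumulating along the chain, I arrive at $\tau_T(u)$ defined as the primed-time at which cluster $u$ has, under the coupling, caught up to the state in which the unprimed $u$ sits at time $T$. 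Two properties must then be established: (a) $\tau_T(u) \in [0,T]$ with $\overline X'^u_{\tau_T(u)} > 0$, which follows from the domination of lifetimes in Proposition~\ref{prop:CouplingBranch} (the primed cluster reaches every coupled milestone earlier), and (b) $\tau_T(u)$ is a stopping time of $(\mcl G_t)_{0 \leq t \leq T}$. Property (b) holds because $\mcl G_0$ already contains the full unprimed path, so the ancestral chain and all unprimed event-counts up to $T$ are $\mcl G_0$-measurable, while the coupled primed-times $\sigma(w)$ along the chain are read off from the primed trajectory up to $\tau_T(u)$.

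Third, I would establish the Markov property of $\bigl(\sum_{v \in \mcl U'_t(u)} \delta_{\overline X'^v_t}\bigr)_{\tau_T(u) \leq t \leq T}$ under $(\mcl G_t)$. The key observation is that after $\tau_T(u)$, the evolution of the primed subtree issued from $u$ is driven only by (i) residual pre-waiting times of $u$ whose conditional law given $\mcl G_{\tau_T(u)}$ is again exponential with the appropriate rate, by one final application of Lemma~\ref{lem:Backward}, and (ii) the collections $\Gamma(v)$ for $v$ strictly descending from $u$ in the primed tree, which are independent of $\mcl G_{\tau_T(u)}$ by the i.i.d.\ sampling in Algorithm~\ref{algo:Coupling}. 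In particular, none of these random inputs depends on the unprimed events happening after $\tau_T(u)$ outside of $u$'s subtree, so the conditional law of $(\sum_v \delta_{\overline X'^v_t})_{t \geq \tau_T(u)}$ given $\mcl G_{\tau_T(u)}$ depends only on the current primed state, which is the Markov property.

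The main obstacle is step two: making the definition of $\tau_T(u)$ precise enough that both its $(\mcl G_t)$-measurability and, more importantly, the clean decoupling in step three can be checked. Concretely, one must verify that for each ancestor $w$, the residual randomness in $\Gamma(w)$ not consumed before $\tau_T(u)$ either plays no further role in the primed subtree from $u$ (because $w$ has already fragmented) or reduces, via the backward time-change, to an independent exponential of the correct rate; the i.i.d.\ sampling across distinct labels then yields the unconditional independence needed to conclude. This bookkeeping across the genealogy of $u$ is where the proof is most delicate, but it is exactly the structure built into Algorithm~\ref{algo:Time} through the separation between the hatted variables (used for coupling) and the un-hatted ones in $\Lambda(u)$ (used only by the unprimed process).
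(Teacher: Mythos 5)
Your proposal is correct and follows essentially the same route as the paper's proof: you verify the marginal laws via the iterative application of Lemma~\ref{lem:Backward} and Proposition~\ref{prop:fragDomination} along the breadth-first order, construct $\tau_T(u)$ as the primed-time catch-up point along the ancestral chain using the dominations in Proposition~\ref{prop:CouplingBranch} (equivalently Lemma~\ref{lem:DomSto}), and obtain measurability and the Markov property from the fact that $\mcl G_0$ already contains the full unprimed path while the post-$\tau_T(u)$ primed evolution is driven by the residual exponentials of $\Gamma(u)$ and the fresh $\Gamma(v)$ for primed descendants $v$ of $u$. The only difference is one of exposition: the paper states the argument very tersely, while your version makes explicit the inductive definition of $\tau_T(u)$ and isolates the bookkeeping obstacle (verifying that the residual randomness in $\Gamma(w)$ along the ancestral chain either plays no further role or reduces to an independent exponential of the correct rate), which is indeed the most delicate point.
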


\begin{proof}[Proof of Proposition~\ref{prop:MonoCoupling}] 
The first statement is the result of the recurrence of Algorithm~\ref{algo:size} and \ref{algo:Time}. In this recurrence, Proposition~\ref{prop:CouplingBranch} ensures the monotonicity of the length of lifetime and cluster size when fragmentation, while Proposition~\ref{prop:fragDomination} pass this monotonicity to the child clusters. Therefore, $(\overline{X}'_t)_{0 \leq t \leq T}$ evolves quicker than $(\overline{X}_t)_{0 \leq t \leq T}$. 

Concerning the second statement, when a cluster of label $u$ stops its evolution at $T$ in $(\overline{X}_t)_{0 \leq t \leq T}$, its trajectory translates to the information about $\Gamma(v)_{v \in \mcl U}$, which then translates to its coupled one in $(\overline{X}'_t)_{0 \leq t \leq T}$: it is still alive at a moment $\tau_{T}(u)$. This dependence implies that $\tau_{T}(u)$ is a stopping time with respect to $(\mcl G_t)_{0 \leq t \leq T}$. Naturally, for the rest of process $(\overline{X}'^u)_{0 \leq t \leq T}$ after $\tau_T(u)$, because $(\overline{X}_t)_{0 \leq t \leq T}$ does not provide more information and then we obtain the Markov property. Thus we finish the proof of the second statement.
\end{proof}

Proposition~\ref{prop:MonoCoupling} gives a rigorous proof of Proposition~\ref{prop:betaMono}.
\begin{proof}[Proof of Proposition~\ref{prop:betaMono}]
We use $\E^{\beta}_{\delta_{n_0}}, \E^{\beta'}_{\delta_{n'_0}}$ respectively for the expectation on the probability space of $(\overline{X}_t)_{t \geq 0}$ and $(\overline{X}'_t)_{t \geq 0}$. Our main object is to compare $\E^{\beta}_{\delta_{n_0}}\Ll[\bracket{\overline{X}_T, 1}\Rr]$ and $\E^{\beta'}_{\delta_{n'_0}}\Ll[\bracket{\overline{X}'_T, 1}\Rr]$, which will illustrate Malthusian behavior for large $T$.

We use the common probability space $\P^{\beta, \beta'}_{n_0, n'_0}$ constructed in Algorithm~\ref{algo:Coupling}, and we write  $\E^{\beta, \beta'}_{n_0, n'_0}$ for the associated expectation. By Proposition~\ref{prop:MonoCoupling}, under $\P^{\beta, \beta'}_{n_0, n'_0}$, every cluster in $\mathcal{U}_T$ has its coupled one in $(\overline{X}'_t)_{0 \leq t \leq T}$, which is still alive at the moment $\tau_T(u)$ and produces the clusters in $\mathcal{U}'_T$. Then we have
\begin{align*}
\E^{\beta'}_{\delta_{n'_0}}\Ll[\bracket{\overline{X}'_T , 1}\Rr] = \E^{\beta, \beta'}_{n_0, n'_0}\Ll[\sum_{v \in \mathcal{U}'_T} 1\Rr] = \E^{\beta, \beta'}_{n_0, n'_0}\Ll[\sum_{u \in \mathcal{U}_T } \sum_{v \in \mathcal{U}'_T(u)} 1  \Rr]. 
\end{align*}
Since the modified GFI process is c\`adl\`ag, we apply the strong Markov property that 
\begin{align*}
\E^{\beta'}_{\delta_{n'_0}}\Ll[\bracket{\overline{X}'_T , 1}\Rr] &= \E^{\beta, \beta'}_{n_0, n'_0}\Ll[\sum_{u \in \mathcal{U}_T }  \E^{\beta, \beta'}_{n_0, n'_0}\Ll[\sum_{v \in \mathcal{U}'_T(u)} 1  \Bigg\vert \mathcal{G}_{\tau_T(u)} \Rr]\Rr]\\
&= \E^{\beta, \beta'}_{n_0, n'_0}\Ll[\sum_{u \in \mathcal{U}_T } \E^{\beta'}_{\delta_{\overline{X}'^u_{\tau_T(u)}}}\Ll[\sum_{v \in \mathcal{U}'_T(u)} 1  \Bigg\vert \mathcal{G}_{\tau_T(u)} \Rr]\Rr],
\end{align*}
and the quantity ${\E^{\beta'}_{\delta_{\overline{X}'^u_{\tau_T(u)}}}\Ll[\sum_{v \in \mathcal{U}'_T(u)} 1  \Bigg\vert \mathcal{G}_{\tau_T(u)} \Rr]}$ can be seen as the mean of offspring for a modified GFI process issued from a RRT of size $\overline{X}'^u_{\tau_T(u)}$.
 
Lemma~\ref{lem:modiGFIphase} gives us some indication about the phases of the modified GFI process, but here we need a more precise result. In fact, the mapping $t \mapsto \E^{\beta'}_{\delta_{n'_0}}[\langle\overline{X}'_t, 1 \rangle]$ is increasing (resp. decreasing) when $\gamma > \theta$ (resp. $\gamma < \theta$). The idea is to use the first moment generator \eqref{eq:Generator2} and a constant test function $\mathbf{1}$ that 
\begin{align*}
    \overline{\L} \mathbf{1} (n) = (\gamma - \theta)(n-1).
\end{align*}
Therefore, the sign depends on $(\gamma - \theta)$. For the case $\gamma > \theta$, we obtain 
\begin{align*}
\forall u \in \mathcal{U}_T, \qquad  \E^{\beta'}_{\delta_{\overline{X}'^u_{\tau_T(u)}}}\Ll[\sum_{v \in \mathcal{U}'_T(u)} 1  \Bigg\vert \mathcal{G}_{\tau_T(u)} \Rr] \geq 1,
\end{align*}
and from this we deduce 
\begin{align}\label{eq:Mono1}
\E^{\beta'}_{\delta_{n'_0}}\Ll[\bracket{\overline{X}'_T , 1}\Rr] \geq \E^{\beta, \beta'}_{n_0, n'_0}\Ll[\sum_{u \in \mathcal{U}_T } 1\Rr] = \E^{\beta}_{\delta_{n_0}}\Ll[\bracket{\overline{X}_T , 1}\Rr].
\end{align}
Because of the Malthusian behavior \eqref{eq:Spectral}, for a large $T$ we have
\begin{align*}
\E^{\beta'}_{\delta_{n'_0}}\Ll[\bracket{\overline{X}'_T , 1}\Rr] \sim h'(n'_0)e^{\overline{\lambda}'T},  \qquad  \E^{\beta}_{\delta_{n_0}}\Ll[\bracket{\overline{X}_T , 1}\Rr] \sim h(n_0)e^{\overline{\lambda}T},
\end{align*}
where $h',h$ are the associated eigenvectors. The monotonicity \eqref{eq:Mono1} thus implies for the case $\gamma > \theta$: 
\begin{itemize}
    \item when $\beta' > \beta$, we have $\overline{\lambda}' \geq \overline{\lambda}$ ;
    \item when $\beta' = \beta, n_0' > n_0$, we have $h(n'_0) \geq h(n_0)$.
\end{itemize}
The case $\gamma < \theta$ is similar. Finally, recall ${\lambda(\beta, \theta, \gamma) = \overline{\lambda}(\beta, \theta, \gamma) + \theta}$ as discussed in Section~\ref{subsubsec:mGFI}, we thus obtain the desired result. 
\end{proof}

\end{appendix}

\begin{acks}[Acknowledgments]
This work was partially funded by the Chair ``Mod\'elisation Math\'ematique et Biodiversit\'e" of VEOLIA-Ecole Polytechnique-MNHN-F.X and ANR ABIM 16-CE40-0001 and ANR NOLO 20-CE40-0015. L.Y. acknowledges the support of the National Natural Science Foundation of China (Youth Programme, Grant: 11801458).
\end{acks}

\bibliographystyle{imsart-number} 
\bibliography{RRTRef}       

\begin{thebibliography}{34}

\bibitem{AGGM}
\begin{barticle}[author]
\bauthor{\bsnm{Akian},~\bfnm{Marianne}\binits{M.}},
  \bauthor{\bsnm{Ganassali},~\bfnm{Luca}\binits{L.}},
  \bauthor{\bsnm{Gaubert},~\bfnm{St{\'e}phane}\binits{S.}} \AND
  \bauthor{\bsnm{Massouli{\'e}},~\bfnm{Laurent}\binits{L.}}
(\byear{2020}).
\btitle{Probabilistic and mean-field model of COVID-19 epidemics with user
  mobility and contact tracing}.
\bjournal{arXiv preprint arXiv:2009.05304}.
\end{barticle}
\endbibitem

\bibitem{AsmussenHering}
\begin{barticle}[author]
\bauthor{\bsnm{Asmussen},~\bfnm{Soren}\binits{S.}} \AND
  \bauthor{\bsnm{Hering},~\bfnm{Heinrich}\binits{H.}}
(\byear{1976}).
\btitle{Strong limit theorems for general supercritical branching processes
  with applications to branching diffusions}.
\bjournal{Z. Wahrscheinlichkeitstheorie und Verw. Gebiete}
\bvolume{36}
\bpages{195--212}.
\bdoi{10.1007/BF00532545}
\bmrnumber{420889}
\end{barticle}
\endbibitem

\bibitem{Athreyarapid}
\begin{barticle}[author]
\bauthor{\bsnm{Athreya},~\bfnm{K.~B.}\binits{K.~B.}}
(\byear{2012}).
\btitle{Coalescence in the recent past in rapidly growing populations}.
\bjournal{Stoch. Proc. Appl.}
\bvolume{122}
\bpages{3757--3766}.
\end{barticle}
\endbibitem

\bibitem{athreya1968some}
\begin{barticle}[author]
\bauthor{\bsnm{Athreya},~\bfnm{Krishna~Balasundaram}\binits{K.~B.}}
(\byear{1968}).
\btitle{Some results on multitype continuous time {M}arkov branching
  processes}.
\bjournal{Ann. Math. Statist.}
\bvolume{39}
\bpages{347--357}.
\bdoi{10.1214/aoms/1177698395}
\bmrnumber{221600}
\end{barticle}
\endbibitem

\bibitem{BallDonnelly}
\begin{barticle}[author]
\bauthor{\bsnm{Ball},~\bfnm{Frank}\binits{F.}} \AND
  \bauthor{\bsnm{Donnelly},~\bfnm{Peter}\binits{P.}}
(\byear{1995}).
\btitle{Strong approximations for epidemic models}.
\bjournal{Stochastic Process. Appl.}
\bvolume{55}
\bpages{1--21}.
\bdoi{10.1016/0304-4149(94)00034-Q}
\bmrnumber{1312145}
\end{barticle}
\endbibitem

\bibitem{BCG}
\begin{barticle}[author]
\bauthor{\bsnm{Bansaye},~\bfnm{Vincent}\binits{V.}},
  \bauthor{\bsnm{Cloez},~\bfnm{Bertrand}\binits{B.}} \AND
  \bauthor{\bsnm{Gabriel},~\bfnm{Pierre}\binits{P.}}
(\byear{2020}).
\btitle{Ergodic behavior of non-conservative semigroups via generalized
  {D}oeblin's conditions}.
\bjournal{Acta Appl. Math.}
\bvolume{166}
\bpages{29--72}.
\bdoi{10.1007/s10440-019-00253-5}
\bmrnumber{4077228}
\end{barticle}
\endbibitem

\bibitem{bansaye2019non}
\begin{barticle}[author]
\bauthor{\bsnm{Bansaye},~\bfnm{Vincent}\binits{V.}},
  \bauthor{\bsnm{Cloez},~\bfnm{Bertrand}\binits{B.}},
  \bauthor{\bsnm{Gabriel},~\bfnm{Pierre}\binits{P.}} \AND
  \bauthor{\bsnm{Marguet},~\bfnm{Aline}\binits{A.}}
(\byear{2022}).
\btitle{A non-conservative Harris' ergodic theorem}.
\bjournal{J. London Math. Soc.}
\bvolume{2}
\bpages{1--52}.
\end{barticle}
\endbibitem

\bibitem{BDMT}
\begin{barticle}[author]
\bauthor{\bsnm{Bansaye},~\bfnm{Vincent}\binits{V.}},
  \bauthor{\bsnm{Delmas},~\bfnm{Jean-Fran\c{c}ois}\binits{J.-F.}},
  \bauthor{\bsnm{Marsalle},~\bfnm{Laurence}\binits{L.}} \AND
  \bauthor{\bsnm{Tran},~\bfnm{Viet~Chi}\binits{V.~C.}}
(\byear{2011}).
\btitle{Limit theorems for {M}arkov processes indexed by continuous time
  {G}alton-{W}atson trees}.
\bjournal{Ann. Appl. Probab.}
\bvolume{21}
\bpages{2263--2314}.
\bdoi{10.1214/10-AAP757}
\bmrnumber{2895416}
\end{barticle}
\endbibitem

\bibitem{Barlow}
\begin{barticle}[author]
\bauthor{\bsnm{Barlow},~\bfnm{Martin~T}\binits{M.~T.}}
\btitle{A branching process with contact tracing}.
\bjournal{Preprint 2020}
\bpages{available via https://www.math.ubc.ca/~barlow/preprints/112-bpct5.pdf}.
\end{barticle}
\endbibitem

\bibitem{baur2014cutting}
\begin{bincollection}[author]
\bauthor{\bsnm{Baur},~\bfnm{Erich}\binits{E.}} \AND
  \bauthor{\bsnm{Bertoin},~\bfnm{Jean}\binits{J.}}
(\byear{2014}).
\btitle{Cutting edges at random in large recursive trees}.
In \bbooktitle{Stochastic analysis and applications 2014}.
\bseries{Springer Proc. Math. Stat.}
\bvolume{100}
\bpages{51--76}.
\bpublisher{Springer, Cham}.
\bdoi{10.1007/978-3-319-11292-3\_3}
\bmrnumber{3332709}
\end{bincollection}
\endbibitem

\bibitem{JB2}
\begin{barticle}[author]
\bauthor{\bsnm{Bertoin},~\bfnm{Jean}\binits{J.}}
(\byear{2012}).
\btitle{Fires on trees}.
\bjournal{Ann. Inst. Henri Poincar\'{e} Probab. Stat.}
\bvolume{48}
\bpages{909--921}.
\bdoi{10.1214/11-AIHP435}
\bmrnumber{3052398}
\end{barticle}
\endbibitem

\bibitem{bertoin3}
\begin{barticle}[author]
\bauthor{\bsnm{Bertoin},~\bfnm{Jean}\binits{J.}}
(\byear{2017}).
\btitle{Markovian growth-fragmentation processes}.
\bjournal{Bernoulli}
\bvolume{23}
\bpages{1082--1101}.
\bdoi{10.3150/15-BEJ770}
\bmrnumber{3606760}
\end{barticle}
\endbibitem

\bibitem{bertoin2022model}
\begin{barticle}[author]
\bauthor{\bsnm{Bertoin},~\bfnm{Jean}\binits{J.}}
(\byear{2022}).
\btitle{A model for an epidemic with contact tracing and cluster isolation, and
  a detection paradox}.
\bjournal{arXiv preprint arXiv:2201.01924}.
\end{barticle}
\endbibitem

\bibitem{bertoinwatson}
\begin{barticle}[author]
\bauthor{\bsnm{Bertoin},~\bfnm{Jean}\binits{J.}} \AND
  \bauthor{\bsnm{Watson},~\bfnm{Alexander~R}\binits{A.~R.}}
(\byear{2018}).
\btitle{A probabilistic approach to spectral analysis of growth-fragmentation
  equations}.
\bjournal{J. Funct. Anal.}
\bvolume{274}
\bpages{2163--2204}.
\bdoi{10.1016/j.jfa.2018.01.014}
\bmrnumber{3767431}
\end{barticle}
\endbibitem

\bibitem{BW}
\begin{barticle}[author]
\bauthor{\bsnm{Bertoin},~\bfnm{Jean}\binits{J.}} \AND
  \bauthor{\bsnm{Watson},~\bfnm{Alexander~R.}\binits{A.~R.}}
(\byear{2020}).
\btitle{The strong Malthusian behavior of growth fragmentation processes}.
\bjournal{Ann. H. Lebesgue}
\bvolume{3}
\bpages{795-823}.
\end{barticle}
\endbibitem

\bibitem{algorithm}
\begin{bbook}[author]
\bauthor{\bsnm{Cormen},~\bfnm{Thomas~H.}\binits{T.~H.}},
  \bauthor{\bsnm{Leiserson},~\bfnm{Charles~E.}\binits{C.~E.}},
  \bauthor{\bsnm{Rivest},~\bfnm{Ronald~L.}\binits{R.~L.}} \AND
  \bauthor{\bsnm{Stein},~\bfnm{Clifford}\binits{C.}}
(\byear{2009}).
\btitle{Introduction to algorithms},
\bedition{Third} ed.
\bpublisher{MIT Press, Cambridge, MA}.
\bmrnumber{2572804}
\end{bbook}
\endbibitem

\bibitem{du2022contact}
\begin{barticle}[author]
\bauthor{\bsnm{Du},~\bfnm{Marvin}\binits{M.}}
(\byear{2022}).
\btitle{Contact tracing as a measure to combat COVID-19 and other infectious
  diseases}.
\bjournal{American Journal of Infection Control}
\bvolume{50}
\bpages{638--644}.
\end{barticle}
\endbibitem

\bibitem{EHK}
\begin{barticle}[author]
\bauthor{\bsnm{Engl\"{a}nder},~\bfnm{J\'{a}nos}\binits{J.}},
  \bauthor{\bsnm{Harris},~\bfnm{Simon~C.}\binits{S.~C.}} \AND
  \bauthor{\bsnm{Kyprianou},~\bfnm{Andreas~E.}\binits{A.~E.}}
(\byear{2010}).
\btitle{Strong law of large numbers for branching diffusions}.
\bjournal{Ann. Inst. Henri Poincar\'{e} Probab. Stat.}
\bvolume{46}
\bpages{279--298}.
\bdoi{10.1214/09-AIHP203}
\bmrnumber{2641779}
\end{barticle}
\endbibitem

\bibitem{fetzer2021measuring}
\begin{barticle}[author]
\bauthor{\bsnm{Fetzer},~\bfnm{Thiemo}\binits{T.}} \AND
  \bauthor{\bsnm{Graeber},~\bfnm{Thomas}\binits{T.}}
(\byear{2021}).
\btitle{Measuring the scientific effectiveness of contact tracing: Evidence
  from a natural experiment}.
\bjournal{Proceedings of the National Academy of Sciences}
\bvolume{118}
\bpages{e2100814118}.
\end{barticle}
\endbibitem

\bibitem{gu2020mathematical}
\begin{barticle}[author]
\bauthor{\bsnm{Gu},~\bfnm{Chenlin}\binits{C.}},
  \bauthor{\bsnm{Jiang},~\bfnm{Wei}\binits{W.}},
  \bauthor{\bsnm{Zhao},~\bfnm{Tianyuan}\binits{T.}} \AND
  \bauthor{\bsnm{Zheng},~\bfnm{Ban}\binits{B.}}
(\byear{2020}).
\btitle{Mathematical recommendations to fight against COVID-19}.
\bjournal{Available at SSRN 3551006}.
\end{barticle}
\endbibitem

\bibitem{neutrons}
\begin{barticle}[author]
\bauthor{\bsnm{Harris},~\bfnm{Simon}\binits{S.}},
  \bauthor{\bsnm{Horton},~\bfnm{Emma}\binits{E.}} \AND
  \bauthor{\bsnm{Kyprianou},~\bfnm{Andreas}\binits{A.}}
(\byear{2020}).
\btitle{Stochastic Methods for the Neutron Transport Equation II: Almost Sure
  Growth}.
\bjournal{Ann. Appl. Probab.}
\bvolume{30}
\bpages{2815-2845}.
\end{barticle}
\endbibitem

\bibitem{harris2020coalescent}
\begin{barticle}[author]
\bauthor{\bsnm{Harris},~\bfnm{Simon~C}\binits{S.~C.}},
  \bauthor{\bsnm{Johnston},~\bfnm{Samuel~GG}\binits{S.~G.}} \AND
  \bauthor{\bsnm{Roberts},~\bfnm{Matthew~I}\binits{M.~I.}}
(\byear{2020}).
\btitle{The coalescent structure of continuous-time Galton--Watson trees}.
\bjournal{Ann. Appl. Probab.}
\bvolume{30}
\bpages{1368--1414}.
\end{barticle}
\endbibitem

\bibitem{watson}
\begin{barticle}[author]
\bauthor{\bsnm{Horton},~\bfnm{Emma}\binits{E.}} \AND
  \bauthor{\bsnm{Watson},~\bfnm{Alexander~R}\binits{A.~R.}}
(\byear{2020}).
\btitle{Strong laws of large numbers for a growth-fragmentation process with
  bounded cell sizes}.
\bjournal{arXiv preprint arXiv:2012.03273}.
\end{barticle}
\endbibitem

\bibitem{kalay2014fragmentation}
\begin{barticle}[author]
\bauthor{\bsnm{Kalay},~\bfnm{Z.}\binits{Z.}} \AND
  \bauthor{\bsnm{Ben-Naim},~\bfnm{E.}\binits{E.}}
(\byear{2015}).
\btitle{Fragmentation of random trees}.
\bjournal{J. Phys. A}
\bvolume{48}
\bpages{0405001, 15}.
\bdoi{10.1088/1751-8113/48/4/045001}
\bmrnumber{3300251}
\end{barticle}
\endbibitem

\bibitem{keeling2020efficacy}
\begin{barticle}[author]
\bauthor{\bsnm{Keeling},~\bfnm{Matt~J}\binits{M.~J.}},
  \bauthor{\bsnm{Hollingsworth},~\bfnm{T~Deirdre}\binits{T.~D.}} \AND
  \bauthor{\bsnm{Read},~\bfnm{Jonathan~M}\binits{J.~M.}}
(\byear{2020}).
\btitle{Efficacy of contact tracing for the containment of the 2019 novel
  coronavirus (COVID-19)}.
\bjournal{J Epidemiol Community Health}
\bvolume{74}
\bpages{861--866}.
\end{barticle}
\endbibitem

\bibitem{kesten1966limit}
\begin{barticle}[author]
\bauthor{\bsnm{Kesten},~\bfnm{H.}\binits{H.}} \AND
  \bauthor{\bsnm{Stigum},~\bfnm{B.~P.}\binits{B.~P.}}
(\byear{1966}).
\btitle{A limit theorem for multidimensional {G}alton-{W}atson processes}.
\bjournal{Ann. Math. Statist.}
\bvolume{37}
\bpages{1211--1223}.
\bdoi{10.1214/aoms/1177699266}
\bmrnumber{198552}
\end{barticle}
\endbibitem

\bibitem{kurtz1997conceptual}
\begin{bincollection}[author]
\bauthor{\bsnm{Kurtz},~\bfnm{Thomas}\binits{T.}},
  \bauthor{\bsnm{Lyons},~\bfnm{Russell}\binits{R.}},
  \bauthor{\bsnm{Pemantle},~\bfnm{Robin}\binits{R.}} \AND
  \bauthor{\bsnm{Peres},~\bfnm{Yuval}\binits{Y.}}
(\byear{1997}).
\btitle{A conceptual proof of the {K}esten-{S}tigum theorem for multi-type
  branching processes}.
In \bbooktitle{Classical and modern branching processes ({M}inneapolis, {MN},
  1994)}.
\bseries{IMA Vol. Math. Appl.}
\bvolume{84}
\bpages{181--185}.
\bpublisher{Springer, New York}.
\bdoi{10.1007/978-1-4612-1862-3\_14}
\bmrnumber{1601737}
\end{bincollection}
\endbibitem

\bibitem{Lambert}
\begin{barticle}[author]
\bauthor{\bsnm{Lambert},~\bfnm{Amaury}\binits{A.}}
(\byear{2021}).
\btitle{A mathematical assessment of the efficiency of quarantining and contact
  tracing in curbing the COVID-19 epidemic}.
\bjournal{Math. Mod. Nat. Phen.}
\bvolume{16}.
\end{barticle}
\endbibitem

\bibitem{Marguet}
\begin{barticle}[author]
\bauthor{\bsnm{Marguet},~\bfnm{Aline}\binits{A.}}
(\byear{2019}).
\btitle{A law of large numbers for branching {M}arkov processes by the
  ergodicity of ancestral lineages}.
\bjournal{ESAIM Probab. Stat.}
\bvolume{23}
\bpages{638--661}.
\bdoi{10.1051/ps/2018029}
\bmrnumber{4011569}
\end{barticle}
\endbibitem

\bibitem{Marzouk}
\begin{barticle}[author]
\bauthor{\bsnm{Marzouk},~\bfnm{Cyril}\binits{C.}}
(\byear{2016}).
\btitle{Fires on large recursive trees}.
\bjournal{Stochastic Process. Appl.}
\bvolume{126}
\bpages{265--289}.
\bdoi{10.1016/j.spa.2015.08.006}
\bmrnumber{3426519}
\end{barticle}
\endbibitem

\bibitem{meir1974cutting}
\begin{barticle}[author]
\bauthor{\bsnm{Meir},~\bfnm{A.}\binits{A.}} \AND
  \bauthor{\bsnm{Moon},~\bfnm{J.}\binits{J.}}
(\byear{1974}).
\btitle{Cutting down recursive trees}.
\bjournal{Bellman Prize in Mathematical Biosciences}
\bvolume{21}
\bpages{173-181}.
\end{barticle}
\endbibitem

\bibitem{MT}
\begin{barticle}[author]
\bauthor{\bsnm{Meyn},~\bfnm{Sean~P.}\binits{S.~P.}} \AND
  \bauthor{\bsnm{Tweedie},~\bfnm{R.~L.}\binits{R.~L.}}
(\byear{1993}).
\btitle{Stability of {M}arkovian processes. {III}. {F}oster-{L}yapunov criteria
  for continuous-time processes}.
\bjournal{Adv. in Appl. Probab.}
\bvolume{25}
\bpages{518--548}.
\bdoi{10.2307/1427522}
\bmrnumber{1234295}
\end{barticle}
\endbibitem

\bibitem{MS}
\begin{barticle}[author]
\bauthor{\bsnm{Mischler},~\bfnm{S.}\binits{S.}} \AND
  \bauthor{\bsnm{Scher},~\bfnm{J.}\binits{J.}}
(\byear{2016}).
\btitle{Spectral analysis of semigroups and growth-fragmentation equations}.
\bjournal{Ann. Inst. H. Poincar\'{e} Anal. Non Lin\'{e}aire}
\bvolume{33}
\bpages{849--898}.
\bdoi{10.1016/j.anihpc.2015.01.007}
\bmrnumber{3489637}
\end{barticle}
\endbibitem

\bibitem{TVB}
\begin{barticle}[author]
\bauthor{\bsnm{Tomasevic},~\bfnm{Milica}\binits{M.}},
  \bauthor{\bsnm{V{\'e}ber},~\bfnm{Amandine}\binits{A.}} \AND
  \bauthor{\bsnm{Bansaye},~\bfnm{Vincent}\binits{V.}}
(\byear{2020}).
\btitle{Ergodic behaviour of a multi-type growth-fragmentation process
  modelling the mycelial network of a filamentous fungus}.
\bjournal{Available via https://hal.inria.fr/hal-03087196}.
\end{barticle}
\endbibitem

\end{thebibliography}

\end{document}